\documentclass[english]{article}
\usepackage{mystyle_arxiv}
\bibliography{references.bib}

\title{Homogenization of Interaction Energy of Dislocation Loops}
\author{Pascal Steinke}
\date{\today}

\begin{document}
	
	\maketitle
	\begin{abstract}
		We consider the sum of interaction energies of an array of dislocation loops placed on a 
		homogenization lattice as the lattice spacing and diameter of the loops 
		tend to zero. Under suitable well-separation and $\lp^{ 2 
		}$-boundedness assumptions, a formula for the limit of the total interaction energy 
		is derived in terms of the weak limit, the generated $\Hm$-measure and 
		the generated Wigner measure of the associated Burgers vectors and 
		oriented surface areas.
	\end{abstract}
	
	\tableofcontents
	\section{Introduction}

Dislocations are a type of crystallographic defect in the atomic structure of metals. 
Already in 
\cite{volterra_07_equilibrium_of_elastic_bodies_with_multiple_components}, 
Volterra described a procedure in which an elastic body is cut, displaced 
across the cut surface, and then allowed to respond elastically.
This is essentially the slip-surface picture we will adopt below, and it  
gave a reasonable prediction of the force required to initiate plastic slip.
From an experimental side, the works \cite{muegge_mineralogie} and \cite{ewing_rosenhain_1899} make early contributions relevant to plasticity of crystal structures, and, in particular, observe slip phenomena. 
By using x-rays, the crystallographic structure of metals was established, and subsequently the first discrepancies between theoretical computations and experimental results of yield strength were discovered. 
To explain this phenomenon, \cite{orowan_34_kristallplastizitaetI}, \cite{orowan_34_kristallplastizitaetIII}, \cite{polanyi_34_gitterstoerung} and \cite{taylor_34_mechanism_of_plastic_deformation_of_crystals} first described the concept of an edge dislocation. In \cite{burgers_39_proceedings_I} and \cite{burgers_39_proceedings_II}, the description of a screw dislocation was added. 
Peierls \cite{peierls_40_size_of_a_dislocation} and Nabarro \cite{nabarro_47_dislocations} subsequently refined this picture by coupling the elastic description of a dislocation to the periodic structure of the crystal lattice, leading to the classical Peierls--Nabarro model, adopted by a number of mathematical treatments, and to the notion of a lattice resistance to glide.
During the following years, more evidence was provided for the existence of dislocations (see for example \cite{dash_57_observation_of_dislocations_in_silicon} and \cite{gilman_johnston_57_origin_and_growth_of_glide_bands_in_lithium_fluoride_crystals}).

Consider the elastic strain $ F $ and the associated strain energy
\begin{equation*}
	\frac{1}{2}
	\int_{ \Omega }
	\eltensor F
	\colon 
	F
	\dd{ x },
\end{equation*}
where $ \eltensor $ is the elastic tensor. If $ F $ is the strain of two disjoint dislocations, the energy splits into diagonal terms and off-diagonal (cross) terms. The diagonal terms are what we call the self-energy, and the off-diagonal terms are what we refer to as the interaction energy.
The self-energy is the energetic cost of creating a dislocation, which is a local defect of the crystal lattice. 
Actually, the integral is not well-defined, and by cutting out a cylinder of diameter $ \eps > 0 $, we note that it diverges logarithmically in $ \eps  $, which can be thought of as the atomic lattice spacing, see for example the textbook \cite[Sct.~6]{anderson_hirth_lothe_theory_of_dislocations}. Moreover in the dilute regime, this is the dominant energy contribution.

Besides the self-energy, the \emph{interaction energy} can be interpreted in the sense that the presence of already existing dislocations may favor or impede the formation and movement of other dislocations. Moreover two dislocations might repel or attract each other, see \cite[Sct.~4]{anderson_hirth_lothe_theory_of_dislocations}. We also want to point out that this energy can be negative, and we will in fact show that rapid oscillations in the orientations of a large number of dislocations can form a large total negative interaction energy, see also \Cref{cor:negative_interaction}.

The interaction energy of separate dislocation loops does not share the same logarithmic divergence of the self-energy as $ \eps \to 0 $, and is thus of lower order. 
In the present work, we focus solely on the interaction energy, and thus aim to complement the work \cite{conti_garroni_ortiz_the_line_tension_approximation_as_the_diluate_limit_of_linear_elastic_dislocations}, which identifies the line-tension approximation as the $ \Gamma $-limit of regularized linear elastic energy. 
The natural question is to investigate the interaction energy of dislocations in a three-dimensional setting.
Our aim is to contribute to the understanding of interaction energies of dislocations in three dimensions, under the geometric constraint that the dislocations are placed on a lattice whose spacing we will send to zero.

There have been a vast number of papers treating dislocations from a variational perspective. The main guiding question has been how to arrive at a continuum model after starting from a discrete dislocation model.
In the early work \cite{ortiz_plastic_yielding_as_phase_transition}, the author proposes a phase-field model which develops a statistical mechanics theory for forest hardening, which is supplemented by \cite{cuitino_koslowski_ortiz_a_phase_field_theory_of_dislocation_dynamics_strain_hardening_and_hysteresis_in_ductile_single_crystals} to explain a wider range by phenomena like strain hardening. Moreover it has been extended to the multiphase case in \cite{koslowski_ortiz_2004_multi-phase_field_model_of_planar_dislocation_networks}.

Building on this model, the authors prove in 
\cite{garroni_mueller_gamma_limit_of_a_phase_field_model_of_dislocations} 
the $ \Gamma $-convergence of an appropriate phase-field model in two 
dimensions with a discrete array of obstacles in the dilute regime, so that 
the leading order term of the energy is given by a cell-formula. If the 
number of obstacles scales critically like $ (\eps \abs{ \log ( \eps ) 
})^{-1}$, 
they 
show in 
\cite{garroni_mueller_a_variational_model_for_dislocations_in_the_line_tension_limit}
that a line-tension model is obtained in the $ \Gamma $-limit. 
See also 
\cite{conti_garroni_mueller_singular_kernels_multiscale_decomposition_of_microstructure_and_dislocation_models}
for the vectorial case, and 
\cite{conti_garroni_mueller_23_derivation_of_strain_gradient_plasticity_from_a_generalized_peierls_nabarro_model}
for higher-order terms which lead to self-energy and interaction energy in 
the $ \Gamma $-limit.
Also from a variational perspective, the authors considered in 
\cite{cermelli_leoni_05_renormalized_energy_and_forces_on_dislocations} the 
case of point defects in the plane which model dislocations.
In \cite{ponsiglione_2007_elastic_energy_screw_dislocations_from_discrete_to_continuous} the author showed $ \Gamma $-convergence for an elastic model with a core-cutoff in a three-dimensional cylinder, which is essentially a reduction to the two-dimensional case.
In 
\cite{garroni_leoni_ponsiglione_2010_gradient_theory_for_plasticity_via_homogenization_of_discrete_dislocations},
the authors develop a gradient theory for plasticity, starting from a 
discrete model for dislocations. The limiting energy in the critical $ \eps 
\log( \eps ) $-regime is given by the elastic energy of the macroscopic 
strain, and a line-tension energy of its curl. 
In the work \cite{ginster_19_plasticity_as_Gamma_without_separation}, the 
author also considered the derivation of plasticity from a two-dimensional 
model of dislocation energy, but critically managed to remove the 
assumption of well-separation. Moreover in 
\cite{ginster_19_strain_gradient_plast_mixed_growth} he considered the case 
of subquadratic growth at the cores, removing the necessity for a 
core-cutoff.
We also note the works 
\cite{scardia_zeppieri_2012_line_tension_model_for_plasticity_as_gamma_limit_of_nonlinear_dislocation_energy,
	mueller_scardia_zeppieri_14_geometric_rigidity_straing_gradient_pl}
on the case of non-linear elasticity.

In three dimensions, the literature is sparser. The authors showed in \cite{conti_garroni_ortiz_the_line_tension_approximation_as_the_diluate_limit_of_linear_elastic_dislocations} that both for a core-cutoff and core-mollification model, the elastic energies converge after a $ \log ( \eps ) $-rescaling to a line-tension energy. 
A first step to a higher-order version of this result, and the 
higher-dimensional equivalent of 
\cite{garroni_leoni_ponsiglione_2010_gradient_theory_for_plasticity_via_homogenization_of_discrete_dislocations},
has been done in 
\cite{fortuna_garroni_25_homogenization_of_line_tension_energies}, where 
the authors show a  homogenization result for the line-tension energies, 
which are represented by rectifiable $1 $-currents. 
Moreover the authors identified in 
\cite{fonseca_ginster_wojtowytsch_2021_on_the_motion_of_curved_dislocations_in_3d}
that the Peach--Köhler force of a single dislocation approaches the mean 
curvature of the dislocation, at 
least in the setting of simplified elasticity.

The result in \cite{conti_garroni_ortiz_the_line_tension_approximation_as_the_diluate_limit_of_linear_elastic_dislocations} provides a $ \Gamma $-limit which only sees the self-energy of the dislocations. Hence the purpose of the current paper is to study the limit of the total elastic interaction energy in three dimensions of an array of dislocations, while neglecting their self-energy. 
We consider a dislocation density model.
Our setup is that we place dislocation loops $ b \otimes \tau \hm^{ 1 } \llcorner_{ \gamma } $, where $ b $ is the Burgers vector, $ \gamma $ the dislocation line, and $ \tau $ its tangent vector, at each point of a homogenization lattice $ \lattice_{ \rho }$, see also \Cref{fig:homogenization}. We assume that the loops are well-separated in the sense that their diameter is much smaller than their distance $ \rho $, see also assumption \ref{item:wellSeperated}. The first observation, made in \Cref{sct:interaction_energy_via_oriented_surface_area}, is that to leading order, we can express the total interaction energy of these dislocations as a sum
\begin{equation}
	\label{eq:discrete_sum}
	\frac{1}{2}
	\sum_{ \substack{x, y \in \lattice_{ \rho } \\ x \neq y }}
	\of_{ \rho } ( x ) \colon M ( x - y ) \of_{ \rho } ( y ).
\end{equation}
Here $ \of_{ \rho } ( x ) \coloneqq \int_{ S } b \otimes n \dd{ \hm^{ 2 } } $ is the oriented surface area of the corresponding slip surface $ S $ with oriented normal $ n $ of the dislocation loop placed at the lattice point $ x \in \lattice_{ \rho } $. 
The kernel $ M $ is $ - 3 $-homogeneous, since it is precisely given by $ \eltensor (\diff^{ 2 } K) \eltensor^{ \top } $, where $ \eltensor $ is the elasticity tensor and $ K $ Green's function of the operator $\mathcal{L}( u ) = - \divg ( \eltensor \diff u ) $.
We note that the relevant quantity $ \of_{\rho } ( x ) $ can be uniquely identified by the Burgers vector and the oriented dislocation line $ \gamma $, as long as $ \partial S = \gamma $ holds (in the sense of currents). This is a consequence of Stokes Theorem.

The important relation between the atomic lattice spacing $ \eps > 0 $, the diameter of the loops $ r > 0 $ and the lattice spacing of the homogenization lattice $ \rho > 0 $ is summarized as
\begin{equation*}
	\eps \ll r \ll \rho.
\end{equation*}
This key geometric assumption is essential for our work and can not be removed. We believe that the periodicity in the model might be removed, as often demonstrated in the modern framework of (stochastic) homogenization.

We want to note that the interaction energy $ \of_{ \rho } ( x ) \colon M ( x - y ) \of_{ \rho } ( y ) $ is akin to what we would expect as an interaction energy from a physical perspective. 
Namely if $ \gamma_{ 1 } $ and $ \gamma_{ 2 } $ denote the dislocation lines for the two loops, with tangent vectors $ \tau_{ 1 } $ respectively $ \tau_{ 2 } $, then the interaction energy between them should be of the form
\begin{equation*}
	\int_{ \gamma_{ 1 } }
	\int_{ \gamma_{ 2 } }
	b_1 \otimes \tau_{ 1 } \colon K ( x - y ) b_2 \otimes \tau_{ 2 }
	\dd{ \hm^{ 1 } ( y ) } 
	\dd{ \hm^{ 1 } ( x ) }.
\end{equation*}
See also \Cref{rmk:alt_derivation} for a note on why these formulas coincide.
If $ \gamma_1 $ and $ \gamma_2 $ are, however, close to each other, then it is difficult to split this interaction energy from the self-energy.

The energy in (\ref{eq:discrete_sum}) describes the interaction of an array of tensors with a kernel of critical scaling. The analogous but simpler question of an array of dipoles has been studied in \cite{james_mueller_internal_variables_and_fine_scale_oscillations_in_micromagnetics}. In that case the kernel is determined by the Maxwell equations and given by the second derivative of the fundamental solution of the Laplace operator $ - \Delta $. In that setting, it has been noted in \cite{james_mueller_internal_variables_and_fine_scale_oscillations_in_micromagnetics} and \cite{firoozye_93_homogenization_on_lattices} that in order to compute a limiting energy, we have to make a case distinction depending on the scale of oscillations, which are referred to as \emph{long-range} and \emph{short-range} oscillations, see \Cref{def:weak_long} and \Cref{def:weak_short}. 

Returning to the energy in (\ref{eq:discrete_sum}), we first explore the case of long-range oscillations in \Cref{sct:weak_long}. The strategy in this case is to use the distributional convergence of $ \fourier (\chi_{ \R^{ 3 } \setminus B_r ( 0 ) } M ) $ to $ \fourier (M) - \fint_{ \Sph^{ 2 } } \fourier M $ and approximate the sum (\ref{eq:discrete_sum}) by a double integral. This approximation, however, is only valid due to the assumption that no short-range oscillations occur. Passing to the limit then reveals that the limiting energy can be expressed in terms of the weak $\lp^{ 2 } $-limit of the oriented surface areas $ \of_{ \rho } $, and the $ \Hm $-measure generated by their oscillations, as introduced in \cite{tartar_H_measures_a_new_approach}.

In the case of short-range oscillations, this integral approximation is no longer valid, and instead, inspired by \cite{firoozye_93_homogenization_on_lattices}, we use a Fourier series approach. It then turns out that the associated discrete Wigner measure, whose continuous version was originally introduced for dispersive partial differential equations (mainly the Schrödinger equation) in \cite{wigner_1932_quantum_correction,lions_paul_93_on_wigner_measures}, captures the total interaction energy. 
The definition of Wigner measure is similar to those of the $ \Hm $-measure, the difference being that a scale has to be fixed beforehand. 
The advantage of the Wigner measure is that it can capture the magnitude of the oscillations. 
For a more thorough discussion, see \cite{gerard_markowich_mauser_97_hom_limits_and_wigner_transforms}.
Passing to the limit of the Wigner measures is non-trivial due to the lack of continuity of the associated Fourier series of $ M $ at zero. In the case of short-range oscillations however, we can show that the Wigner measures exhibit no concentration in zero, which lets us pass to the limit.

To obtain a limiting energy for general sequences of arrays of dislocation loops, in \Cref{sct:scale_separation} we split the sequence through a convolution into its long- and short-range oscillatory parts. By showing that the total energy splits additively when choosing the mollification parameter suitably, we are thus able to combine the result of the long- and short-range case to get that the limit of the total interaction energy can be written as 
\begin{equation*}
	\frac{1}{2}
	\left( \inner*{ \fourier  \of }{\Psi \fourier  \of }_{ \lp^{2 } ( \R^{ 3 } ) } 
	+ 
	\int_{ \R^{ 3 } \times \Sph^{2 } } \Psi ( \nu ) \cdot \dd{ \mu_{ \Hm } ( x , \nu ) }
	+
	\int_{ U^{ \ast } } \idfs{ M } ( \xi ) \cdot \dd{ \mu_{ \mathrm{W } } ( \xi ) } 
	\right).
\end{equation*}
Here $ \Psi = \fourier M - \fint_{ \Sph^{ 2 } } \fourier M + S $, where  $ S $ is a lattice sum, is $ 0 $-homogeneous,  $\of $ is the weak limit of $ \of_{ \rho } $, $ \mu_{ \Hm } $ the generated $ \Hm $-measure of the long-range oscillatory part, and $ \mu_{ \mathrm{W}} $ the limit of the Wigner measures of the short-range oscillations. See \Cref{sct:main_result} for a more thorough explanation of these notions, and \Cref{thm:main_theorem} for the main result. Let us also note that given an array of dislocation loops, the associated $ \Hm $-measures and Wigner measures can be explicitly computed, see also \Cref{ex:h_measures_of_simple_form}.

At this point, we turn to addressing consequences of our main result. 
\Cref{cor:negative_interaction} states that even though the weak-limit of 
the oriented surface areas $ \of_{ \rho } $ might be zero, we can create a 
negative limit of the total interaction energy by suitably chosen 
oscillations. 
To this end, we study the indefiniteness of the operator $ \mu \mapsto \int_{ \R^{ 3 } \times \Sph^{ 2 } } \Psi ( \nu ) \cdot \dd{ \mu_{ \Hm } ( x , \nu ) } $ on the space of $ \Hm $-measures in \Cref{sct:relaxation}. 
We do this in the case when the elasticity tensor $ \eltensor $ is 
isotropic and the homogenization lattice $ \lattice_{ 1 } $ has cubic 
symmetry in order to explicitly compute the lattice sum $ S $ and thus 
understand the structure of the map $ \Psi $.
Moreover we show in \Cref{ex:rotations_energy} how our result implies that 
given a constant configuration of dislocation loops, it is energetically 
more favourable to have fine-scale oscillations of the directions of the 
dislocation loops.

We will introduce our model and show how the interaction energy can be expressed in terms of the slip surface in \Cref{sct:model_and_main_result}. Our main results can then be found in \Cref{sct:main_result}. The core of the mathematical work then happens in \Cref{sct:from_loops_to_surfaces}, where we prove our main result in several steps as described above. We finish the main part in \Cref{sct:relaxation} by showing the relaxation result, and we have moved some of the proofs which we deemed to be of lesser interest to the reader to \Cref{sct:appendix}.

\subsection{Notation}

\begin{itemize}
	\item For $ f \in \lp^{ 1 } ( \R^{ n } ) $, we denote its Fourier transform by $ \fourier f $, which is given by
	\begin{equation*}
		\fourier f ( \xi )
		\coloneqq
		\int_{ \R^{ n } } f ( x ) \exp ( - 2 \pi i \inner*{x}{\xi } )
		\dd{ x },
	\end{equation*}
	and extend this definition to $ \lp^{ 2 } ( \R^{  n } ) $ via Plancherel's 
	identity. We sometimes also write $ \hat{ f } $ for the Fourier transform 
	when notationally convenient.
	\item We define the inverse discrete Fourier series $ \idfs{f} $ by equation (\ref{eq:inverse_discrete_fourier_series}).
	\item For $ f \in \cont^{ 1 } ( \R^{ n } ; \R^{ m } ) $, we define its Jacobian by $ \diff f ( x )_{ i, j } \coloneqq \partial_{ x_j } f_{ i } $.
	\item If $ A, B \in \R^{ 3 \times 3 } $, we denote by $ A \colon B \coloneqq \sum_{ i j } A_{ i j } B_{ i j } $ their inner product. Similarly, we define for $ v, w \in \R^{ 3 } $ their inner product by $ v \cdot w = \inner*{ v }{w} = \sum_{ i } v_i w_i $, and the same notation is also used for $  S, T \in \R^{ (3\times 3) \times (3 \times 3 ) } $.
	\item We denote by $ \eltensor $ the elastic tensor, which is an element of $ \R^{ (3 \times 3 ) \times (3 \times 3 ) } $, see also \Cref{sct:derivation_of_energy}.
	\item We denote by $ \lattice_{ 1 } $ our Bravais homogenization lattice, defined in \Cref{sct:main_result}.
	\item We denote by $ E_3 $ the identity matrix in three dimensions.
\end{itemize}

\section{Model}
\label{sct:model_and_main_result}

\subsection{Derivation of the Energy}
\label{sct:derivation_of_energy}

Let us first assume that $\disdens \in \ccinf\left(\R^3; \R^{3 \times 3}\right)$ is a smooth dislocation density with $ \divg \disdens = 0 $. Let $ \eltensor \in \mathrm{Lin}\left(\R^{3\times3}; \R^{3 \times 3}\right)$ be a symmetric elastic tensor, which means that $\eltensor F \colon G =F \colon \eltensor G$ for the standard scalar product on $\R^{3\times3}$, $\eltensor W = 0$ if $W^{ \top } = - W$ and $\eltensor F \colon F \ge c |F|^2$  for all $F\in \R^{ 3 \times 3 }$ with  $F^{ \top } = F$ and some $c > 0$.
The \emph{strain field} associated with $\disdens$ is the unique  solution  in $\lp^2\left(\R^{ 3 }; \R^{3 \times 3}\right)$ of the system
\begin{align}
	\label{eq:curl_for_strain_field}
	\cur F &= \disdens, \\
	\label{eq:div_for_strain_field}
	\divg \eltensor F &= 0,
\end{align}
see also \cite[Thm.~4.1]{conti_garroni_ortiz_the_line_tension_approximation_as_the_diluate_limit_of_linear_elastic_dislocations} for existence and uniqueness, and \Cref{lemma:existence_of_strain_field} for a simpler proof in our setting.
Here $\cur$ and $\divg$ are taken rowwise.
We call $\sigma := \eltensor F$ the associated stress field. The elastic energy induced by $\disdens $ is defined as
\begin{equation*}
	\energy \coloneqq \frac12 \int_{\R^3}  \eltensor F \colon F  \dd{x}.
\end{equation*}

If $\disdens = \disdens_1 + \disdens_2$ and $F_i$ is the strain field of $\disdens_i$, then we have $\energy= \energy_{11}+ \energy_{22} + \energy_{12} + \energy_{21}$,
where $\energy_{11} + \energy_{22}$ is the combined self-energy of the dislocation densities $\disdens_1$ and $\disdens_2$,
and 
\begin{equation} 
	\label{eq:def_interaction_energy_primal}
	\interactionEnergy := \energy_{12} + \energy_{21} = \frac12  \int_{\R^3} \eltensor F_1 \colon F_2 + \eltensor F_2 \colon F_1 \dd{x}  = \int_{\R^3} \eltensor F_1 \colon F_2 \dd{ x }
\end{equation}
is the interaction energy. 
Our goal is now to express the interaction energy in terms of the slip surfaces associated to $ \disdens_1 $ respectively $ \disdens_2 $.

To this end assume that there exist $ G_{ i } \in \lp^{ 2 } \left( \R^{ 3 } ; \R^{ 3 \times 3 } \right) $ such that
\begin{equation}  \label{eq:curlG_rho}  \cur G_i = \disdens_i \quad \text{and}   \quad  \dist(\spt G_1, \spt G_2) > 0.
\end{equation}

\begin{example}
	\label{ex:dislocation_density_and_slip_surface}
	For us the following example is the most important one. 
	Assume that $\gamma$ is an oriented and closed $\cont^1$-curve (the dislocation line) and $S$ is an oriented surface with $\partial S = \gamma$ (the slip surface).
	More precisely this means that $\cur  n \mathcal H^2|_S = \tau \mathcal H^1|_\gamma$ where
	$n$ is the unit normal of $S$ and $\tau$ is the unit tangent vector of $\gamma$.
	Let $b \in \mathbb{R}^3$ be a constant Burgers vector. 
	Then 
	\begin{equation}
		\label{eq:curl_of_surface_is_curve} 
		\cur \left[ (b\otimes n) \mathcal H^2|_S\right] =  (b \otimes \tau) \mathcal H^1|_\gamma.
	\end{equation}
	If we consider $ b \otimes [S, n , 1 ] $ as a vector-valued 2-current on $ \R^{ 3 } $, and $ b \otimes [\gamma, \tau , 1 ] $ as a vector-valued 1-current on $ \R^{ 3 } $, then equation (\ref{eq:curl_of_surface_is_curve}) can equivalently be written as
	\begin{equation*}
		\partial ( b \otimes [S, n , 1 ] )
		=
		b \otimes [\gamma, \tau , 1 ]
	\end{equation*}
	in the sense of currents.

	\begin{figure}
		\tikzset{every picture/.style={line width=0.75pt}} 
		\hspace{2cm}
		\begin{tikzpicture}[x=0.75pt,y=0.75pt,yscale=-1,xscale=1]
			
			\draw  [color={rgb, 255:red, 65; green, 117; blue, 5 }  ,draw opacity=1 ][fill={rgb, 255:red, 74; green, 144; blue, 226 }  ,fill opacity=1 ] (100.54,95) .. controls (120.54,85) and (237.04,40) .. (205.5,79) .. controls (173.96,118) and (202.54,175) .. (204.54,150.5) .. controls (205.68,136.53) and (180.82,135.24) .. (156.82,136.43) .. controls (138.72,137.33) and (121.11,139.64) .. (115.5,139) .. controls (102.46,137.5) and (80.54,105) .. (100.54,95) -- cycle ;
			\draw    (140.54,93.67) -- (140.89,65.22) ;
			\draw [shift={(140.92,63.22)}, rotate = 90.71] [color={rgb, 255:red, 0; green, 0; blue, 0 }  ][line width=0.75]    (10.93,-3.29) .. controls (6.95,-1.4) and (3.31,-0.3) .. (0,0) .. controls (3.31,0.3) and (6.95,1.4) .. (10.93,3.29)   ;
			\draw  [color={rgb, 255:red, 65; green, 117; blue, 5 }  ,draw opacity=1 ][fill={rgb, 255:red, 74; green, 144; blue, 226 }  ,fill opacity=1 ] (442.54,66.5) .. controls (462.54,58) and (481.54,103.5) .. (461.54,123.5) .. controls (441.54,143.5) and (393.04,171) .. (413.04,201) .. controls (433.04,231) and (442.54,146) .. (422.54,116) .. controls (402.54,86) and (422.54,75) .. (442.54,66.5) -- cycle ;
			\draw    (419.75,185.32) -- (396.67,170.26) ;
			\draw [shift={(395,169.17)}, rotate = 33.13] [color={rgb, 255:red, 0; green, 0; blue, 0 }  ][line width=0.75]    (10.93,-3.29) .. controls (6.95,-1.4) and (3.31,-0.3) .. (0,0) .. controls (3.31,0.3) and (6.95,1.4) .. (10.93,3.29)   ;
			\draw    (201.78,84.28) -- (216.44,63.2) ;
			\draw [shift={(217.58,61.56)}, rotate = 124.82] [color={rgb, 255:red, 0; green, 0; blue, 0 }  ][line width=0.75]    (10.93,-3.29) .. controls (6.95,-1.4) and (3.31,-0.3) .. (0,0) .. controls (3.31,0.3) and (6.95,1.4) .. (10.93,3.29)   ;
			\draw    (429.58,72.22) -- (454.21,60.05) ;
			\draw [shift={(456,59.17)}, rotate = 153.7] [color={rgb, 255:red, 0; green, 0; blue, 0 }  ][line width=0.75]    (10.93,-3.29) .. controls (6.95,-1.4) and (3.31,-0.3) .. (0,0) .. controls (3.31,0.3) and (6.95,1.4) .. (10.93,3.29)   ;
			
			\draw (123.13,84.23) node [anchor=north west][inner sep=0.75pt]  [font=\footnotesize]  {$n_{1}$};
			\draw (379.73,170.17) node [anchor=north west][inner sep=0.75pt]    {$n_{2}$};
			\draw (219.92,61.4) node [anchor=north west][inner sep=0.75pt]  [font=\footnotesize]  {$\tau _{1}$};
			\draw (160.58,140) node [anchor=north west][inner sep=0.75pt]  [font=\footnotesize,color={rgb, 255:red, 65; green, 117; blue, 5 }  ,opacity=1 ]  {$\gamma _{1}$};
			\draw (90.25,75.07) node [anchor=north west][inner sep=0.75pt]  [font=\footnotesize,color={rgb, 255:red, 74; green, 144; blue, 226 }  ,opacity=1 ]  {$S_{1}$};
			\draw (447.92,40.07) node [anchor=north west][inner sep=0.75pt]  [font=\footnotesize]  {$\tau _{2}$};
			\draw (472.25,76.07) node [anchor=north west][inner sep=0.75pt]  [font=\footnotesize,color={rgb, 255:red, 74; green, 144; blue, 226 }  ,opacity=1 ]  {$S_{2}$};
			\draw (392.92,92.07) node [anchor=north west][inner sep=0.75pt]  [font=\footnotesize,color={rgb, 255:red, 65; green, 117; blue, 5 }  ,opacity=1 ]  {$\gamma _{2}$};

		\end{tikzpicture}
		\caption{An illustration of two dislocation loops $ \gamma_{ 1 } $ and $ \gamma_2 $ with their respective slip surfaces $ S_1 $ and $ S_2 $.}
	\end{figure}
	
\end{example}

We will now express the interaction energy $ \interactionEnergy $  in terms of the slip surfaces $G_1$ and $G_2$. 
\begin{lemma}
	\label{lem:interaction_energy_via_surface}
	Let $ (\disdens_i)_{ i =1,2} \subseteq \ccinf ( \R^{ 3 } ; \R^{ 3 \times 3 } ) $ be dislocation densities and  $ (G_{ i })_{ i = 1,2 } $ be corresponding slip surfaces as in equation (\ref{eq:curlG_rho}) with disjoint support. Then
	\begin{equation}
		\label{eq:precise_interaction_energy}
		\interactionEnergy
		= 
		\int_{\R^3}  \int_{\R^3}   
		\sum_{i,j,i',j'}     M_{iji'j'}(x-y) (G_1)_{i'j'}(y) (G_2)_{ij}(x)\dd{ x } \dd{ y }
	\end{equation}
	for some $- 3$ homogeneous smooth function $ M \colon \R^{ 3 } \setminus \{ 0 \} \to \R^{ (3 \times 3) \times ( 3 \times 3 ) } $ which is symmetric and zero on skew-symmetric matrices in the sense that for all $ x \in \R^{ 3 } \setminus \{ 0 \} $, we have
	\begin{alignat*}{2}
		M ( x ) A \colon B & = A \colon M ( x ) B \quad &&\text{for all }A, B \in \R^{ 3 \times 3 },
		\\
		M( x ) S & = 0 &&\text{for all }S\in \R^{ 3 \times 3 }_{ \mathrm{skew}}. 
	\end{alignat*}
\end{lemma}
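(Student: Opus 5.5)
The idea is to split each strain field into a plastic part and an elastic corrector. Set $F_i = G_i + \diff u_i$ with $u_i \colon \R^3 \to \R^3$. Since the curl of a gradient vanishes rowwise, $\cur F_i = \cur G_i = \disdens_i$ holds automatically. Equation (\ref{eq:div_for_strain_field}) then becomes
\begin{equation*}
	-\divg ( \eltensor \diff u_i ) = \divg ( \eltensor G_i ),
\end{equation*}
that is, $\mathcal{L} u_i = \divg(\eltensor G_i)$. By Korn's inequality and the ellipticity of $\eltensor$ on symmetric matrices, this equation has a solution with $\diff u_i \in \lp^2$, and by uniqueness in \Cref{lemma:existence_of_strain_field} we get $F_i = G_i + \diff u_i$. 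With the Green's function $K$ of $\mathcal{L}$, which is smooth on $\R^3\setminus\{0\}$ and $(-1)$-homogeneous by standard Fourier analysis of the symbol $(\eltensor(\xi\otimes\cdot)\xi)^{-1}$ evaluated at $4\pi^2$, we can write
\begin{equation*}
	u_i = K * \divg(\eltensor G_i), \qquad \diff u_i = -\diff^2 K * \eltensor G_i
\end{equation*}
after moving the derivative onto $K$. Here $\diff^2K$ is a $(-3)$-homogeneous Calderón--Zygmund kernel, which we understand in the principal value sense together with a delta contribution at the origin.

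Next we insert this into (\ref{eq:def_interaction_energy_primal}):
\begin{equation*}
	\interactionEnergy = \int \eltensor F_1 \colon (G_2 + \diff u_2)\dd{x}.
\end{equation*}
The key step is to test the equation for $u_1$ with $u_2$. Since $\divg \eltensor F_1 = 0$, integration by parts gives $\int \eltensor F_1 \colon \diff u_2 \dd{x} = 0$, so
\begin{equation*}
	\interactionEnergy = \int \eltensor F_1\colon G_2 \dd{x} = \int \eltensor G_1 \colon G_2 \dd{x} + \int \eltensor \diff u_1 \colon G_2 \dd{x}.
\end{equation*}
Because $\spt G_1\cap\spt G_2=\emptyset$, the first term vanishes. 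The second term is
\begin{equation*}
	-\int G_2(x) \colon \eltensor \bigl(\diff^2K * \eltensor G_1\bigr)(x) \dd{x}.
\end{equation*}
Since $\dist(\spt G_1,\spt G_2)>0$, the convolution is only ever evaluated at $x-y$ bounded away from $0$. There the kernel is smooth, so the singular part (principal value and delta) plays no role, and the expression becomes an honest double integral. This gives (\ref{eq:precise_interaction_energy}) with
\begin{equation*}
	M_{iji'j'}(z) = -\sum_{k,l,m,n} \eltensor_{ijkl}\,\partial_l\partial_n K_{km}(z)\,\eltensor_{mni'j'},
\end{equation*}
matching the $\eltensor(\diff^2K)\eltensor^\top$ form announced in the introduction. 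The precise index placement will be fixed during the computation.

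The structural properties then follow directly from this formula. $M$ is smooth and $(-3)$-homogeneous because $K$ is $(-1)$-homogeneous. $M(z)$ vanishes on skew matrices because $\eltensor W=0$ for skew $W$, and the leftmost factor does the same thing by the symmetry of $\eltensor$. Symmetry of $M(z)$ as a bilinear form follows from three facts: the major symmetry of $\eltensor$, the symmetry $K_{km}=K_{mk}$ (since $\mathcal{L}$ is formally self-adjoint), and the fact that $\partial_l\partial_n$ commute. Together these make the quadruple expression invariant under $(ij)\leftrightarrow(i'j')$.

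I expect the main obstacle to be justifying the integration by parts $\int\eltensor F_1\colon \diff u_2=0$ and the representation of $\diff u_1$ outside the support of $G_1$, since $u_2$ need not be in $\lp^2$ and $K$ is singular. My first attempt would be to work with $G_i$ smooth and compactly supported (or mollified), truncate $u_2$ by cutoffs $\varphi_R$, and use the decay $\diff u_2 = O(\abs{x}^{-3})$ and $u_2=O(\abs{x}^{-2})$, which holds since $\divg\eltensor G_2$ is a divergence with compact support. The boundary terms then vanish as $R\to\infty$. For the formula for $\diff u_1$ away from $\spt G_1$ I would differentiate under the integral sign, which is legitimate because the kernel is smooth there.
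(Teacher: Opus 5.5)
Your proposal follows the paper's proof essentially step for step: you split $F_i = G_i + \diff u_i$ with $\mathcal{L} u_i = \divg(\eltensor G_i)$, discard $\int \eltensor F_1 \colon \diff u_2$ via $\divg \eltensor F_1 = 0$ (your cutoff-and-decay justification is more careful than the paper, which simply uses $u_2 \in \dot{H}^1$), drop $\int \eltensor G_1 \colon G_2$ by disjointness, and express $\diff u_1$ on $\mathrm{spt}\, G_2$ through second derivatives of the $(-1)$-homogeneous Green's function; the symmetry and skew-annihilation of $M$ then follow from the major symmetry of $\eltensor$, $K_{km} = K_{mk}$ and $\eltensor W = 0$. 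The one slip is a sign: moving a derivative across a convolution introduces no sign, $\partial_l (K * \partial_n g) = (\partial_l \partial_n K) * g$, so $\diff u_1 = +\diff^2 K * \eltensor G_1$ and $M = +\eltensor \, \diff^2 K \, \eltensor$ as in the paper (consistent with $\fourier M$ being negative semidefinite, i.e.\ elastic relaxation lowers the energy), whereas with your minus sign the claimed identity would fail.
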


\begin{proof}
	For the associated strain fields as in equations (\ref{eq:curl_for_strain_field}) and (\ref{eq:div_for_strain_field}), we first write
	$$ F_i = G_i + H_i.$$
	Then $\cur H_i = 0$ by equations (\ref{eq:curl_for_strain_field}) and (\ref{eq:curlG_rho}) and thus there exist $u_i \in \dot{\ha}^{1} ( \R^3 ; \R^{ 3 } ) $ such that 
	$$ \diff u_i = H_i.$$
	Moreover $\divg \eltensor H_i = - \divg \eltensor G_i$ by equation (\ref{eq:div_for_strain_field}). Thus $u_i$ is the unique solution to the partial differential equation
	\begin{equation*}
		\mathcal{L} u_{ i }
		\coloneqq
		- \divg \left(\eltensor \diff u_{ i } \right)
		=
		\divg \eltensor G_{ i }
	\end{equation*}
	in the homogeneous space $ \dot{\ha}^{1} ( \R^{ 3 } ; \R^{ 3 }) $.
	Therefore we get
	\begin{align*}
		\interactionEnergy & = \int_{\R^3} \eltensor F_1 \colon F_2 \dd{ x } \\
		&= \int_{\R^3} \eltensor F_1 \colon G_2 \dd{x} + \int_{\R^3} \eltensor F_1\colon \diff u_2 \dd{ x } \\
		&= \int_{\R^3} \eltensor F_1 \colon G_2 \dd{ x }
	\end{align*}
	since $  F_2 = G_2 + \diff u_2 $ and $ \divg \eltensor F_{ 1 } = 0 $ by equation (\ref{eq:div_for_strain_field}). 
	Using again that $F_1 = G_1 + \diff u_1$ and that $\dist(\mathrm{spt} G_1, \mathrm{spt} G_2) > 0$, this proves 
	\begin{equation}
		\label{eq:interaction_energy_via_G}
		\interactionEnergy
		=
		\int_{ \R^{ 3 } }
		\eltensor \diff u_{ 1 } \colon G_{ 2 }
		\dd{ x }.
	\end{equation}
	Define the fundamental solution $K_{ij}$ by 
	\begin{equation*}  
		{\mathcal L}^{-1} (\delta_0 e_j )= \sum_{i=1}^3 K_{ij} e_i.
	\end{equation*}
	The inverse operator is a priori not well-defined since $ \mathcal{L} $ is not injective, but we choose the representative which is $-1$-homogeneous, see also \Cref{lem:symmetry_properties} (the inverse Fourier transform of a $ - 2 $ homogeneous function is $ - 1 $ homogeneous since we are in three dimensions).  
	Moreover, for $x \notin \spt G_1$
	$$  u_1(x) = \int_{\R^3}  \sum_{i,j,k} e_i  {K_{ij}}(x-y)  \partial_k  (\eltensor G_1)_{jk}(y) \dd{y }. $$
	Thus
	\begin{align*}(\diff u_1)_{il}(x)&  =  \sum_{j,k} \int_{\R^3}  \partial_l  K_{ij}(x-y) \,   \partial_k(\eltensor G_1)_{jk}(y) \dd{y}, 
		\shortintertext{or, after relabeling indices,}
		(\diff u_1)_{kl}(x) 
		& =  
		\sum_{k',l'} 
		\int_{\R^3}   \partial_l  K_{kk'}(x-y) \partial_{l'}(\eltensor G_1)_{k'l'}(y) \dd{y}.
	\end{align*}
	It follows that
	\begin{align*} (\eltensor \diff u_1)_{ij} 
		&= 
		\sum_{kl} \eltensor_{ijkl} ( \diff u_1)_{kl}  \\
		& = 
		\sum_{k,l} \sum_{k',l'} \int_{\R^3}  \eltensor_{ijkl}  \partial_l K_{kk'}(x-y) \, \partial_{l'} (\eltensor G_1)_{k'l'}(y) \dd{y}   \\
		& = 
		\sum_{k,l} \sum_{k',l'} \sum_{i',j'}  \int_{\R^3}  \eltensor_{ijkl}  \partial_l  K_{kk'}(x-y)  \eltensor_{k'l'i'j'} \, \partial_{l'} (G_1)_{i'j'}(y) \dd{y } 
	\end{align*}
	Finally we get from equation \eqref{eq:interaction_energy_via_G} and the assumption that $ G_{ 1 } $ and $ G_{ 2 } $ have compact, disjoint supports that
	\begin{align*}
		\notag 
		\interactionEnergy
		& =    
		\int_{\R^3} \int_{\R^3} \sum_{i,j,i',j',k,l,k',l'}  
		\eltensor_{ijkl} \partial_l  K_{kk'}(x-y) \eltensor_{k'l'i'j'}  \partial_{l'} (G_1)_{i'j'}(y) 
		(G_2)_{ij}(x) 
		\dd{ x }\dd{ y } 
		\\
		& = 
		\int_{\R^3}  \int_{\R^3}   
		\sum_{i,j,i',j'}     M_{iji'j'}(x-y) (G_1)_{i'j'}(y) (G_2)_{ij}(x) \dd{ x } \dd{ y }
	\end{align*}
	with 
	\begin{equation}  \label{eq:formula_M}
		M_{iji'j'}(z) \coloneqq
		\sum_{k,l, k', l'}   \eltensor_{ijkl}  \partial_{l l'}^{ 2 } K_{kk'}(z)   \eltensor_{k'l'i'j'}.
	\end{equation}
	In particular $M$ is homogeneous of degree $-3$ since $ K $ is homogeneous of degree $ - 1 $. 
	Moreover, we know that $K$ is symmetric because its Fourier transform is symmetric, see \Cref{lem:symmetry_properties}.
	Thus it follows from the properties of the elastic tensor that $M$ is symmetric, i.e. $M_{iji'j'} =M_{i'j'ij}$ for all $ i, j, i', j' \in \{1,2,3\}$, and that $ M $ is zero on skew-symmetric matrices, i.e. $ M_{ i j i' j' } = M_{ j i i' j' } $ for all $ i, j, i', j' \in \{1,2,3\}$.
\end{proof}

Using the representation of the interaction energy (\ref{eq:precise_interaction_energy}), we now define the interaction energy of non-regular dislocation densities. As in \Cref{ex:dislocation_density_and_slip_surface}, let $ \gamma_{ 1 } , \gamma_{ 2 } $ be closed $ \hm^{ 1 } $-rectifiable curves representing dislocation loops with Burgers vectors $ b_{ 1 } $ and $ b_{ 2 } $ respectively. Let $ S_{ 1 } $ and $ S_{ 2 } $ respectively be slip surfaces with disjoint support of the two dislocation loops with unit normals $ n_{ 1 } $ and $ n_{ 2 } $ respectively. 
Motivated by equation (\ref{eq:precise_interaction_energy}), we define the interaction energy of the two dislocation loops as
\begin{equation}
	\label{eq:interaction_energy_non_regular}
	\interactionEnergy
	\coloneqq
	\int_{ S_{ 1 } }
	\int_{ S_{ 2 } }
	b_{1}(x) \otimes n_{1} ( x ) 
	\colon
	M ( x - y )
	b_{2 }(y) \otimes n_{2} ( y ) 
	\dd{ \hm^{ 2 } ( y ) }
	\dd{ \hm^{ 2 } ( x ) }.
\end{equation}
Note that by the proof of \Cref{lem:interaction_energy_via_surface}, the definition does not depend on the choice of $ S_{ 1 } $ and $ S_{ 2 } $, but only on the dislocation lines and the Burgers vectors.
We also remark that if $ S_{ 1 } $ and $ S_{ 2 } $ are far away from each other relative to their diameter, then the double integral in formula (\ref{eq:interaction_energy_non_regular}) should not depend up to leading order on the individual interaction of points on $ S_{ 1 } $ with points on $ S_{ 2 } $, but only on the mean of their interactions. If $ S_{ 1 } $ is centered at $ x_{ 0 } $ and $ S_{ 2 } $ at $ y_{ 0 } $, we thus propose the approximation
\begin{equation}
	\label{eq:interaction_energy_approximation}
	\interactionEnergy
	\approx
	\left(  \int_{ S_{ 1 } }
	b_{ 1 } (x)
	\otimes
	n_{ 1 } ( x ) 
	\dd{ \hm^{ 2 } ( x ) }
	\right)
	\colon
	M(x_{ 0 } - y_{ 0 } )
	\left( 
	\int_{ S_{ 2 } } b_{ 2 } \otimes n_2 ( y ) \dd{ \hm^{ 2 } ( y ) }
	\right)
\end{equation}
which will be made rigorous in \Cref{prop:energy_asymptotics}.

Assuming that we have a countable collection of dislocation densities $ ( \disdens_{ n } )_{ n \in \N } $, we define the total interaction energy as
\begin{equation}
	\label{eq:def_total_interaction_energy}
	\interactionEnergy ( \disdens )
	\coloneqq
	\frac{ 1 }{ 2 }
	\sum_{ n \neq m \in \N }
	\interactionEnergy_{ n m }
\end{equation}
for the interaction energy $ \interactionEnergy_{nm} $ as in equation 
(\ref{eq:interaction_energy_non_regular}). Note that we will assume that 
the dislocation loops are confined to a bounded domain $ \Omega $ so that 
the above sum is finite and always well-defined.

\begin{remark}
	\label{rmk:alt_derivation}
	Another way of arriving at equality 
	(\ref{eq:interaction_energy_non_regular}) would be to use the formula 
	for the interaction energy given in 
	\cite[Eq.~(4.4)]{anderson_hirth_lothe_theory_of_dislocations}, which 
	was first obtained in \cite{blin_energy_of_dislocation_in_a_crystal}. 
	We, however, wanted to give a self-contained derivation of the interaction energy. 
	Moreover the derivation in \cite[Sct.~4.5]{anderson_hirth_lothe_theory_of_dislocations}
	also yields a formula for the interaction energy via the one-dimensional dislocation lines, see  \cite[Eq.~(4.40)]{anderson_hirth_lothe_theory_of_dislocations}. 
	The authors show that the interaction energy can also be written as
	\begin{equation*}
		\int_{ \gamma_{ 1 } }
		\int_{ \gamma_{ 2 } }
		b_{ 1 } \otimes \tau_{ 1 } ( x )\colon 
		N ( x- y ) 
		b_{ 2 } \otimes \tau_{ 2 } ( y )
		\dd{ \hm^{ 1 } ( y ) }
		\dd{ \hm^{ 1 } ( x ) }
	\end{equation*}
	for a $ -1 $-homogeneous kernel $ N $ by applying Stokes Theorem to the right hand side of (\ref{eq:interaction_energy_non_regular}).
\end{remark}

\subsection{Formula for the Kernel}
In this section, we discuss a more explicit formula for the kernel $ M $ used to define the interaction energy
(\ref{eq:interaction_energy_non_regular}).
\begin{lemma}
	\label{lem:symmetry_properties}
	The Fourier transform of the Green's function $  K $ of the operator $ \mathcal{L} ( u ) = - \divg \eltensor \diff u  $ can for $ \xi \in \R^{ 3 } \setminus \{ 0 \} $  be written as
	\begin{align*}
		\fourier K ( \xi ) & = \frac{1}{4 \pi^2} A^{ - 1 } ( \xi )
		\shortintertext{with}
		A_{ki}(\xi)  &= \sum_{l}   [ \eltensor (e_i \otimes \xi)]_{kl} 
		\xi_l.
	\end{align*}
	As a consequence, $ \fourier K $ is symmetric, positive-definite and $ 
	- 2 $-homogeneous. In the case of isotropic elasticity, where the 
	elasticity tensor $ \eltensor $ is given for some $ a > -1/3 $ by
	\begin{equation} 
		\label{eq:def_isotropic_elasticity_tensor}
		\eltensor A \coloneqq \frac{A+A^{ \top }}{2} + a \tr (A) E_{ 3 } ,
	\end{equation} 
	we have 
	\begin{equation}
		\label{eq:fourier_K_formula}
		\fourier K ( \xi )
		=
		\frac{ 1 }{2 \pi^2 \abs{ \xi }^{ 2 } }
		\left(
		E_{ 3 }
		-
		\frac{ 1+2a}{ 2+2a }
		\frac{ \xi }{ \abs{ \xi } } \otimes \frac{ \xi }{ \abs{ \xi } } 
		\right)
		\quad
		\text{for all }
		\xi \in \R^{ 3 } \setminus \{0\}.
	\end{equation}
	The Fourier transform of the kernel $ M $ defined in equation (\ref{eq:formula_M}) is also symmetric and negative-semidefinite as a bilinear form on $ \R^{ 3 \times 3 } \times \R^{ 3 \times 3 } $ and satisfies $ \hat{ M } ( \xi ) = \hat{ M } ( -\xi) $ for all $ \xi \in \R^{ 3 } \setminus \{ 0 \} $. Moreover $ \hat{M} ( \xi ) F = 0 $ for all $ F \in \R^{ 3 \times 3 }_{ \mathrm{skew} } $ and all $ \xi \in \R^{ 3 } \setminus \{ 0 \} $.
\end{lemma}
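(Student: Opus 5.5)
The plan is to Fourier transform the defining equation of $K$, read off the symmetry and homogeneity properties from the resulting symbol, then specialize to the isotropic tensor, and finally transfer everything to $\hat M$ through formula (\ref{eq:formula_M}).

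\textbf{Fourier transform of $K$.} By definition $\mathcal{L} K e_j = \delta_0 e_j$, i.e.
\begin{equation*}
-\sum_{k,l} \partial_l \eltensor_{ikk'l'}\partial_{l'} K_{k'j} = \delta_{ij}\delta_0
\end{equation*}
(with the index placement of $\eltensor_{ijkl}$ acting as $(\eltensor F)_{ij}=\sum \eltensor_{ijkl}F_{kl}$). Under our convention $\partial_l \mapsto 2\pi i \xi_l$, so $-\partial_l\partial_{l'}\mapsto 4\pi^2 \xi_l\xi_{l'}$. This gives $4\pi^2 A(\xi)\hat K(\xi) = E_3$ with
\begin{equation*}
A_{ki}(\xi)=\sum_{l,l'} \eltensor_{klil'}\xi_{l'}\xi_l = \sum_l [\eltensor(e_i\otimes\xi)]_{kl}\xi_l .
\end{equation*}

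\textbf{Properties of $A$ and $\hat K$.} $A(\xi)$ is symmetric, since
\begin{equation*}
v\cdot A(\xi)w = \eltensor(w\otimes\xi):(v\otimes\xi)
\end{equation*}
and $\eltensor$ is symmetric. It is positive definite: $\eltensor$ annihilates skew parts and is symmetric, so $\eltensor F:F=\eltensor F^{s}:F^{s}$. Hence
\begin{equation*}
v\cdot A v \ge c\,|\mathrm{sym}(v\otimes \xi)|^2 \ge \tfrac{c}{2}|v|^2|\xi|^2,
\end{equation*}
which is the step needing a short computation, using $|\mathrm{sym}(v\otimes\xi)|^2=\tfrac12(|v|^2|\xi|^2+(v\cdot\xi)^2)$. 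So $A$ is invertible for $\xi\neq0$, and its inverse is symmetric, positive definite and $-2$-homogeneous, because $A$ is $2$-homogeneous. The inverse Fourier transform of this locally integrable $-2$-homogeneous function is a $-1$-homogeneous distribution, which is the representative chosen in the paper. Since $\hat K$ is even, $K$ is real and even.

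\textbf{Isotropic case.} For the isotropic tensor, $\eltensor(v\otimes\xi)=\tfrac12(v\otimes\xi+\xi\otimes v)+a(v\cdot\xi)E_3$. Hence
\begin{equation*}
A(\xi)v = \tfrac12\bigl(|\xi|^2 v + (v\cdot\xi)\xi\bigr) + a(v\cdot\xi)\xi,
\end{equation*}
that is, $A=\tfrac{|\xi|^2}{2}\bigl(E_3+(1+2a)\,\hat\xi\otimes\hat\xi\bigr)$ with $\hat\xi=\xi/|\xi|$. Invert by the Sherman--Morrison formula: $(E_3+\beta P)^{-1}=E_3-\tfrac{\beta}{1+\beta}P$ for the projection $P=\hat\xi\otimes\hat\xi$, with $\beta=1+2a$. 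Then $1+\beta=2+2a>0$, since $a>-1/3$ (even $a>-1$ suffices here). Multiplying by $\tfrac{1}{4\pi^2}\cdot\tfrac{2}{|\xi|^2}$ gives (\ref{eq:fourier_K_formula}).

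\textbf{Properties of $\hat M$.} From (\ref{eq:formula_M}),
\begin{equation*}
\hat M_{iji'j'}(\xi) = -4\pi^2\sum \eltensor_{ijkl}\,\xi_l\xi_{l'}\,\hat K_{kk'}(\xi)\,\eltensor_{k'l'i'j'}.
\end{equation*}
As a bilinear form this reads
\begin{equation*}
\hat M(\xi)F:G = -4\pi^2\,\hat K(\xi)\,[(\eltensor F)\xi]\cdot[(\eltensor G)\xi],
\end{equation*}
using the symmetry of $\eltensor$. The needed properties then follow directly:
\begin{itemize}
\item Symmetry in $(F,G)$ follows from the symmetry of $\hat K$.
\item Negative semidefiniteness follows from the positive definiteness of $\hat K$.
\item Evenness in $\xi$ holds because the expression is quadratic in $\xi$ and $\hat K$ is even.
\item Vanishing on skew matrices follows from $\eltensor W=0$.
\end{itemize}

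\textbf{Main obstacle.} The only delicate point is making the derivative $\partial^2_{ll'}K$ and the homogeneous representative rigorous distributionally. Away from $0$ everything is smooth, and the Fourier identities hold as tempered distributions with $\hat M$ understood as the $0$-homogeneous symbol. I would state that these identities are taken for $\xi\neq0$, as in the lemma.
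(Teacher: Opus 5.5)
Your proposal is correct and follows essentially the same route as the paper. You Fourier-transform $\mathcal L K e_j=\delta_0 e_j$ to get $4\pi^2 A(\xi)\hat K(\xi)=E_3$, obtain symmetry and positivity of $A$ from $v\cdot A(\xi)w=\eltensor(w\otimes\xi):(v\otimes\xi)$, invert $A$ explicitly in the isotropic case, and read off the properties of $\hat M$ from $\hat M(\xi)F:G=-4\pi^2\,\hat K(\xi)[(\eltensor F)\xi]\cdot[(\eltensor G)\xi]$. You are in fact slightly more explicit than the paper on the coercivity estimate and on the evenness $\hat M(\xi)=\hat M(-\xi)$. One small index slip: your first display should read $-\sum_{l,k',l'}\partial_l\eltensor_{ilk'l'}\partial_{l'}K_{k'j}=\delta_{ij}\delta_0$, though the matrix $A$ you derive from it is the correct one.
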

\begin{proof}
	Rewriting the equation
	\begin{equation}
		\label{eq:elliptic_equation}
		- \divg \eltensor \diff u = f
	\end{equation}
	in Fourier space yields
	\begin{equation*} 
		4 \pi^2
		\sum_{ l = 1 }^{ 3 }  \xi_l  ( \eltensor (\mathcal F  u(\xi) \otimes \xi))_{kl} = (\mathcal F f_k)(\xi)
	\end{equation*}
	for all $ k \in \{ 1, 2 , 3 \} $.
	For $f = \delta_0 e_j$ we have $\mathcal F f =  e_j $. Moreover if $ u $ solves equation (\ref{eq:elliptic_equation}) for this choice of $ f $, then $ u_i =  K_{ij}$.  
	Hence
	\begin{align}
		\notag
		4 \pi^2
		\sum_{l}  \xi_l  \left( \eltensor  \sum_i \fourier K_{ij} (\xi) e_i  \otimes \xi \right)_{kl} &= \delta_{kj},
		\shortintertext{which is equal to}
		\label{eq:greens_fct}
		4\pi^2
		\sum_{l,i} \mathcal F K_{ij}(\xi) ( \eltensor(e_i \otimes \xi))_{kl} \xi_l & =  \delta_{kj}.
	\end{align}
	If we define the matrix-valued function $ A: \R^{ 3 } \to\R^{3 \times 3}$ by
	\begin{equation} 
		\label{eq:def_of_A}
		A_{ki}(\xi)  = \sum_{l}   [ \eltensor (e_i \otimes \xi)]_{kl} \xi_l,  
	\end{equation}
	then equation (\ref{eq:greens_fct}) is equivalent to
	\begin{equation*}
		\sum_i  \fourier K_{ij}(\xi)  A_{ki}(\xi) = 
		\frac{1}{4\pi^2}\delta_{kj}.
	\end{equation*}
	Assuming for now that $ A $ is invertible for every $ \xi \neq 0 $, we thus deduce 
	\begin{equation}  
		\label{eq:greens_function_as_inverse}   
		\fourier K(\xi) = 
		\frac{1}{4 \pi^2}A^{-1}(\xi) \text{ for }\xi \neq 0.
	\end{equation}
	In coordinate-free notation $A$ can be expressed through its action on $ a \in \R^{ 3 } $ via
	\begin{equation*}
		A(\xi) a = \eltensor (a \otimes \xi) \xi.
	\end{equation*}
	This implies that $ A $ is symmetric since $ \eltensor $ is symmetric and
	\begin{equation*}
		\inner*{ A(\xi) a }{b} = \inner*{ \eltensor (a \otimes \xi)}{  b \otimes \xi}.
	\end{equation*}
	Moreover
	\begin{equation*}
		\inner*{A a}{ a } 
		= 
		\inner*{ \eltensor ( a\otimes \xi)}{ a \otimes \xi } 
		\geq 
		c \abs{  \frac{ a \otimes \xi  + \xi \otimes a}{2} }^2 
		\gtrsim 
		\abs{ a }^{ 2 } 
		\abs{ \xi }^{ 2 }.
	\end{equation*}
	It follows that $A$ is positive definite, and hence invertible, for $\xi \ne 0$. 
	We thus deduce from (\ref{eq:greens_function_as_inverse}) that also $ \fourier K $ is symmetric and positive definite.
	
	We now want to study the kernel $ M $ defined by equation (\ref{eq:formula_M}). Its Fourier transform is given by
	\begin{equation}
		\label{eq:fourier_transform_of_M}
		(\fourier M_{iji'j'})(\xi) 
		= 
		- 4 \pi^2
		\sum_{k,l,k',l'} \eltensor_{ijkl}  (\fourier K_{kk'})(\xi) \xi_l \xi_{l'}  \eltensor_{k'l'i'j'}. 
	\end{equation}
	Thus, for $A, B \in \mathbb{R}^{3 \times 3}$ we get
	\begin{align}  
		\notag
		\inner*{\fourier M(\xi) A}{ B }
		& =  
		- 4 \pi^2
		\sum_{k,l,k',l'} (\eltensor A)_{k'l'} \xi_{l'} (\fourier K_{kk'})(\xi) (\eltensor B)_{kl}  \xi_l  
		\\
		\label{eq:fourer_M_on_matrices}
		& =  
		- 4 \pi^2 \inner*{ \fourier K(\xi) a }{ b }    
	\end{align}
	with
	\begin{equation*} 
		a_{k'}  =  \sum_{l'}  (\eltensor A)_{k'l'}  \xi_{l'}, \quad  b_k = \sum_l   ( \eltensor B)_{kl} \xi_l.
	\end{equation*}
	Since a bilinear form is determined by its actions on matrices and $ \eltensor $ is zero on skew-symmetric matrices, we see that
	$$ \fourier M(\xi)  A = \fourier M(\xi) \,  \mathrm{sym} \, A$$
	and that $\fourier M(\xi)$ is a negative-semidefinite bilinear  form on $\R^{3 \times 3}$ for all $ \xi \in \R^{ 3 } \setminus \{ 0 \}$. The symmetry of $ \fourier M $ follows immediately from the symmetry of $ \fourier K $.
	
	Lastly assume that $ \eltensor $ is isotropic in the sense of (\ref{eq:def_isotropic_elasticity_tensor}). We recall the definition of $ A $ from equation (\ref{eq:def_of_A}) to compute
	\begin{align*}
		A_{ k i } ( \xi )
		&=
		\frac{ \abs{ \xi }^{ 2 } }{ 2 } \left(
		E_3 + \frac{\xi}{\abs{ \xi } } \otimes \frac{\xi}{\abs{ \xi } } 
		\right)_{ k i }
		+
		\sum_{ l }
		\left(
		a \tr ( e_{ i } \otimes \xi ) E_3
		\right)_{ k l }
		\xi_{ l }
		\\
		& =
		\frac{ \abs{ \xi }^{ 2 } }{ 2 } \left(
		E_3 + \frac{\xi}{\abs{ \xi } } \otimes \frac{\xi}{\abs{ \xi } } 
		\right)_{ k i }
		+
		a \xi_{ i } \xi_{ k },
	\end{align*}
	thus
	\begin{equation*}
		A ( \xi ) = \frac{ \abs{ \xi }^{ 2 } }{ 2 } \left(
		E_3 + (1+2a) \frac{\xi}{\abs{ \xi } } \otimes \frac{\xi}{\abs{ \xi } } 
		\right).
	\end{equation*}
	This matrix is invertible since $ a \geq -1/3 $ and its inverse is
	\begin{equation}
		\label{eq:A_inverse_isotropic}
		A^{ - 1 } ( \xi )
		=
		\frac{ 2 }{ \abs{ \xi }^{ 2 } }
		\left(
		E_{ 3 }
		-
		\frac{ 1+2a}{ 2+2a }
		\frac{ \xi }{ \abs{ \xi } } \otimes \frac{ \xi }{ \abs{ \xi } } 
		\right),
	\end{equation}
	which finishes the proof.
\end{proof}

\section{Main Result}
\label{sct:main_result}
We want to place dislocation loops on a \emph{Bravais lattice} $ \lattice_{ 1 } $. For 3 linearly independent vectors $ v_{1}, v_{ 2 }, v_{ 3 } \in \R^{ 3 } $, this lattice is given by
\begin{align}
	\notag
	\lattice_{ 1 }
	& \coloneqq
	\left\{ \sum_{ i = 1 }^{ 3 } \nu^{ i } v_{ i } \, \colon \, \nu_{ i } \in \Z \text{ for all } i\in \{1,2,3\}\right\}
	\shortintertext{with unit cell}
	\label{eq:unit_cell}
	U  &\coloneqq
	\left\{ \sum_{ i = 1 }^{ 3 } \alpha^{ i } v_{ i } \, \colon \, \alpha_{ i } \in [0,1] \text{ for all } i \in \{1,2,3\}
	\right\}.
\end{align}
For simplicity, we will assume that the $ v_{i } $ are chosen such that $ \lm^{ 3 } ( U ) = 1 $. 
The corresponding lattice of size $ \rho > 0 $ is then defined by the rescaling $ \lattice_{ \rho } \coloneqq \rho \lattice_{ 1 } $. 
We want to emphasize that the lattice $ \lattice_{ \rho } $ (which we will sometimes refer to as the \emph{homogenization lattice}) is a different lattice than the atomic lattice, see also \Cref{fig:homogenization}.

Our material is confined to $ \Omega \subseteq \R^{ 3 } $ open and bounded.
Around each lattice point $ x \in \lattice_{ \rho } \cap \Omega $ we place a $ 1 $-rectifiable oriented dislocation loop $ \gamma_{ \rho } ( x ) $ together with a Burgers vector $ b_{ \rho } ( x ) $.  If moreover $ x \in \lattice_{ \rho } \setminus \Omega $, then no dislocation loop is placed at this point.
Let $S_{ \rho } ( x ) $ be a corresponding \emph{slip surface}, which is a  $2$-rectifiable oriented surface with  $ \mathrm{diam} ( S_{ \rho } ( x ) ) \leq r $ (up to nullsets) and $ \partial S_{ \rho } ( x ) = \gamma_{ \rho } ( x ) $. We have illustrated our setting in \Cref{fig:homogenization}.

Later on, we will also allow generalized arrays of dislocation loops, which
may take the form 
\begin{equation*}
	\sum_{ i = 1 }^{ n }  \lambda_{ i } b_i \otimes \tau_i \hm^{1} 
	\llcorner_{ 
		\gamma_{ i } }.
\end{equation*}
Here $ \lambda_{ i } \in \R $ for $ i \in \{1, \ldots, n \} $. We note that 
this is not a single loop any more, and that the Burgers vector may not lie 
in the crystalline lattice due to the multiplication with $ \lambda $. All 
the statements below also apply to these generalized arrays of dislocations.
\begin{figure}
	\centering
	
	\tikzset{every picture/.style={line width=0.75pt}} 
	
	\begin{tikzpicture}[x=0.75pt,y=0.75pt,yscale=-0.8,xscale=0.85]
		
		\draw  [color={rgb, 255:red, 65; green, 117; blue, 5 }  ,draw opacity=1 ][fill={rgb, 255:red, 74; green, 144; blue, 226 }  ,fill opacity=1 ] (242.69,290.42) .. controls (252.63,268.16) and (246.66,288.83) .. (253.95,289.62) .. controls (261.24,290.42) and (260.89,292.43) .. (261.24,306.32) .. controls (261.6,320.21) and (251.3,325.4) .. (242.02,315.06) .. controls (232.74,304.73) and (232.74,312.68) .. (242.69,290.42) -- cycle ;
		\draw  [color={rgb, 255:red, 65; green, 117; blue, 5 }  ,draw opacity=1 ][fill={rgb, 255:red, 74; green, 144; blue, 226 }  ,fill opacity=1 ] (165.14,280.88) .. controls (144.6,299.96) and (178.4,276.11) .. (159.18,294.39) .. controls (139.96,312.68) and (160.51,297.57) .. (165.81,314.27) .. controls (171.11,330.96) and (190.99,329.37) .. (185.03,317.45) .. controls (179.06,305.52) and (185.69,261.8) .. (165.14,280.88) -- cycle ;
		\draw  [color={rgb, 255:red, 65; green, 117; blue, 5 }  ,draw opacity=1 ][fill={rgb, 255:red, 74; green, 144; blue, 226 }  ,fill opacity=1 ] (75.01,292.01) .. controls (68.38,265.78) and (75.67,294.39) .. (94.23,289.62) .. controls (112.79,284.85) and (97.55,283.26) .. (94.89,305.52) .. controls (92.24,327.78) and (77,320.63) .. (71.7,308.7) .. controls (66.4,296.78) and (81.64,318.24) .. (75.01,292.01) -- cycle ;
		\draw  [color={rgb, 255:red, 65; green, 117; blue, 5 }  ,draw opacity=1 ][fill={rgb, 255:red, 74; green, 144; blue, 226 }  ,fill opacity=1 ] (75.01,199) .. controls (86.94,190.25) and (94.89,186.28) .. (96.88,204.56) .. controls (98.87,222.85) and (92.91,214.1) .. (90.26,217.28) .. controls (87.6,220.46) and (87.6,211.72) .. (75.67,213.31) .. controls (63.75,214.9) and (63.08,207.74) .. (75.01,199) -- cycle ;
		\draw  [color={rgb, 255:red, 65; green, 117; blue, 5 }  ,draw opacity=1 ][fill={rgb, 255:red, 74; green, 144; blue, 226 }  ,fill opacity=1 ] (148.58,195.82) .. controls (166.47,186.28) and (149.24,167.2) .. (177.07,188.66) .. controls (204.91,210.13) and (182.38,199.79) .. (189.67,210.13) .. controls (196.96,220.46) and (185.34,232.81) .. (173.1,217.28) .. controls (160.86,201.75) and (130.68,205.36) .. (148.58,195.82) -- cycle ;
		\draw  [color={rgb, 255:red, 65; green, 117; blue, 5 }  ,draw opacity=1 ][fill={rgb, 255:red, 74; green, 144; blue, 226 }  ,fill opacity=1 ] (241.36,86.11) .. controls (263.23,94.85) and (261.9,84.52) .. (265.22,94.85) .. controls (268.53,105.19) and (287.75,125.86) .. (262.57,120.29) .. controls (237.38,114.73) and (251.3,157.66) .. (244.67,112.34) .. controls (238.05,67.03) and (219.49,77.36) .. (241.36,86.11) -- cycle ;
		\draw  [fill={rgb, 255:red, 0; green, 0; blue, 0 }  ,fill opacity=1 ] (80.46,301.58) .. controls (80.46,298.83) and (82.31,296.61) .. (84.6,296.61) .. controls (86.89,296.61) and (88.74,298.83) .. (88.74,301.58) .. controls (88.74,304.32) and (86.89,306.54) .. (84.6,306.54) .. controls (82.31,306.54) and (80.46,304.32) .. (80.46,301.58) -- cycle ;
		\draw  [fill={rgb, 255:red, 0; green, 0; blue, 0 }  ,fill opacity=1 ] (80.46,202.2) .. controls (80.46,199.46) and (82.31,197.23) .. (84.6,197.23) .. controls (86.89,197.23) and (88.74,199.46) .. (88.74,202.2) .. controls (88.74,204.95) and (86.89,207.17) .. (84.6,207.17) .. controls (82.31,207.17) and (80.46,204.95) .. (80.46,202.2) -- cycle ;
		\draw  [fill={rgb, 255:red, 0; green, 0; blue, 0 }  ,fill opacity=1 ] (163.3,202.2) .. controls (163.3,199.46) and (165.15,197.23) .. (167.44,197.23) .. controls (169.73,197.23) and (171.58,199.46) .. (171.58,202.2) .. controls (171.58,204.95) and (169.73,207.17) .. (167.44,207.17) .. controls (165.15,207.17) and (163.3,204.95) .. (163.3,202.2) -- cycle ;
		\draw  [fill={rgb, 255:red, 0; green, 0; blue, 0 }  ,fill opacity=1 ] (163.3,301.58) .. controls (163.3,298.83) and (165.15,296.61) .. (167.44,296.61) .. controls (169.73,296.61) and (171.58,298.83) .. (171.58,301.58) .. controls (171.58,304.32) and (169.73,306.54) .. (167.44,306.54) .. controls (165.15,306.54) and (163.3,304.32) .. (163.3,301.58) -- cycle ;
		\draw  [fill={rgb, 255:red, 0; green, 0; blue, 0 }  ,fill opacity=1 ] (246.14,301.58) .. controls (246.14,298.83) and (248,296.61) .. (250.28,296.61) .. controls (252.57,296.61) and (254.43,298.83) .. (254.43,301.58) .. controls (254.43,304.32) and (252.57,306.54) .. (250.28,306.54) .. controls (248,306.54) and (246.14,304.32) .. (246.14,301.58) -- cycle ;
		\draw  [color={rgb, 255:red, 65; green, 117; blue, 5 }  ,draw opacity=1 ][fill={rgb, 255:red, 74; green, 144; blue, 226 }  ,fill opacity=1 ] (77.66,88.49) .. controls (110.8,86.11) and (96.88,87.7) .. (102.18,103.6) .. controls (107.49,119.5) and (90.26,133.81) .. (80.98,117.11) .. controls (71.7,100.42) and (79.65,136.99) .. (75.01,122.68) .. controls (70.37,108.37) and (44.53,90.88) .. (77.66,88.49) -- cycle ;
		\draw  [color={rgb, 255:red, 65; green, 117; blue, 5 }  ,draw opacity=1 ][fill={rgb, 255:red, 74; green, 144; blue, 226 }  ,fill opacity=1 ] (160.51,93.26) .. controls (190.33,75.77) and (176.41,86.11) .. (183.7,103.6) .. controls (190.99,121.09) and (168.46,83.72) .. (181.71,113.93) .. controls (194.97,144.14) and (159.18,100.42) .. (163.82,120.29) .. controls (168.46,140.17) and (130.68,110.75) .. (160.51,93.26) -- cycle ;
		\draw  [fill={rgb, 255:red, 0; green, 0; blue, 0 }  ,fill opacity=1 ] (80.46,102.83) .. controls (80.46,100.08) and (82.31,97.86) .. (84.6,97.86) .. controls (86.89,97.86) and (88.74,100.08) .. (88.74,102.83) .. controls (88.74,105.57) and (86.89,107.8) .. (84.6,107.8) .. controls (82.31,107.8) and (80.46,105.57) .. (80.46,102.83) -- cycle ;
		\draw  [fill={rgb, 255:red, 0; green, 0; blue, 0 }  ,fill opacity=1 ] (163.3,102.83) .. controls (163.3,100.08) and (165.15,97.86) .. (167.44,97.86) .. controls (169.73,97.86) and (171.58,100.08) .. (171.58,102.83) .. controls (171.58,105.57) and (169.73,107.8) .. (167.44,107.8) .. controls (165.15,107.8) and (163.3,105.57) .. (163.3,102.83) -- cycle ;
		\draw  [fill={rgb, 255:red, 0; green, 0; blue, 0 }  ,fill opacity=1 ] (246.14,102.83) .. controls (246.14,100.08) and (248,97.86) .. (250.28,97.86) .. controls (252.57,97.86) and (254.43,100.08) .. (254.43,102.83) .. controls (254.43,105.57) and (252.57,107.8) .. (250.28,107.8) .. controls (248,107.8) and (246.14,105.57) .. (246.14,102.83) -- cycle ;
		\draw   (60.74,136.6) -- (71.1,130.65) -- (71.1,133.63) -- (91.82,133.63) -- (91.82,130.65) -- (102.18,136.6) -- (91.82,142.55) -- (91.82,139.58) -- (71.1,139.58) -- (71.1,142.55) -- cycle ;
		\draw    (280.46,67.03) -- (264,83.96) ;
		\draw [shift={(261.9,86.11)}, rotate = 314.2] [fill={rgb, 255:red, 0; green, 0; blue, 0 }  ][line width=0.08]  [draw opacity=0] (8.93,-4.29) -- (0,0) -- (8.93,4.29) -- cycle    ;
		\draw    (147.91,230) -- (172.63,212.64) ;
		\draw [shift={(175.09,210.92)}, rotate = 144.92] [fill={rgb, 255:red, 0; green, 0; blue, 0 }  ][line width=0.08]  [draw opacity=0] (8.93,-4.29) -- (0,0) -- (8.93,4.29) -- cycle    ;
		\draw  [color={rgb, 255:red, 65; green, 117; blue, 5 }  ,draw opacity=1 ][fill={rgb, 255:red, 74; green, 144; blue, 226 }  ,fill opacity=1 ] (246,190.25) .. controls (251.3,182.3) and (254.61,183.1) .. (261.24,193.43) .. controls (267.87,203.77) and (267.21,202.18) .. (264.56,217.28) .. controls (261.9,232.39) and (258.59,239.54) .. (242.02,214.1) .. controls (225.45,188.66) and (240.7,198.2) .. (246,190.25) -- cycle ;
		\draw  [fill={rgb, 255:red, 0; green, 0; blue, 0 }  ,fill opacity=1 ] (246.14,202.2) .. controls (246.14,199.46) and (248,197.23) .. (250.28,197.23) .. controls (252.57,197.23) and (254.43,199.46) .. (254.43,202.2) .. controls (254.43,204.95) and (252.57,207.17) .. (250.28,207.17) .. controls (248,207.17) and (246.14,204.95) .. (246.14,202.2) -- cycle ;
		\draw   (167.44,301.58) -- (188.15,295.12) -- (188.15,298.35) -- (229.57,298.35) -- (229.57,295.12) -- (250.28,301.58) -- (229.57,308.03) -- (229.57,304.8) -- (188.15,304.8) -- (188.15,308.03) -- cycle ;
		\draw   (260.02,201.83) .. controls (260.02,197.64) and (262.68,194.24) .. (265.96,194.24) .. controls (269.23,194.24) and (271.89,197.64) .. (271.89,201.83) .. controls (271.89,206.02) and (269.23,209.42) .. (265.96,209.42) .. controls (262.68,209.42) and (260.02,206.02) .. (260.02,201.83)(257.67,201.83) .. controls (257.67,196.34) and (261.38,191.89) .. (265.96,191.89) .. controls (270.53,191.89) and (274.24,196.34) .. (274.24,201.83) .. controls (274.24,207.32) and (270.53,211.77) .. (265.96,211.77) .. controls (261.38,211.77) and (257.67,207.32) .. (257.67,201.83) ;
		\draw    (274.24,201.83) -- (360.5,201.79) ;
		\draw   (416.65,247.09) .. controls (416.65,245.81) and (417.49,244.77) .. (418.52,244.77) .. controls (419.55,244.77) and (420.39,245.81) .. (420.39,247.09) .. controls (420.39,248.37) and (419.55,249.41) .. (418.52,249.41) .. controls (417.49,249.41) and (416.65,248.37) .. (416.65,247.09) -- cycle ;
		\draw   (540.64,153.96) .. controls (540.64,152.68) and (541.48,151.64) .. (542.51,151.64) .. controls (543.54,151.64) and (544.38,152.68) .. (544.38,153.96) .. controls (544.38,155.25) and (543.54,156.29) .. (542.51,156.29) .. controls (541.48,156.29) and (540.64,155.25) .. (540.64,153.96) -- cycle ;
		\draw   (532.04,200.92) .. controls (532.04,199.63) and (532.88,198.59) .. (533.91,198.59) .. controls (534.95,198.59) and (535.78,199.63) .. (535.78,200.92) .. controls (535.78,202.2) and (534.95,203.24) .. (533.91,203.24) .. controls (532.88,203.24) and (532.04,202.2) .. (532.04,200.92) -- cycle ;
		\draw   (506.63,153.96) .. controls (506.63,152.68) and (507.47,151.64) .. (508.5,151.64) .. controls (509.53,151.64) and (510.37,152.68) .. (510.37,153.96) .. controls (510.37,155.25) and (509.53,156.29) .. (508.5,156.29) .. controls (507.47,156.29) and (506.63,155.25) .. (506.63,153.96) -- cycle ;
		\draw   (493.92,200.92) .. controls (493.92,199.63) and (494.76,198.59) .. (495.79,198.59) .. controls (496.82,198.59) and (497.66,199.63) .. (497.66,200.92) .. controls (497.66,202.2) and (496.82,203.24) .. (495.79,203.24) .. controls (494.76,203.24) and (493.92,202.2) .. (493.92,200.92) -- cycle ;
		\draw   (450.57,200.92) .. controls (450.57,199.63) and (451.4,198.59) .. (452.44,198.59) .. controls (453.47,198.59) and (454.3,199.63) .. (454.3,200.92) .. controls (454.3,202.2) and (453.47,203.24) .. (452.44,203.24) .. controls (451.4,203.24) and (450.57,202.2) .. (450.57,200.92) -- cycle ;
		\draw   (527.93,247.41) .. controls (527.93,246.13) and (528.77,245.08) .. (529.8,245.08) .. controls (530.83,245.08) and (531.67,246.13) .. (531.67,247.41) .. controls (531.67,248.69) and (530.83,249.73) .. (529.8,249.73) .. controls (528.77,249.73) and (527.93,248.69) .. (527.93,247.41) -- cycle ;
		\draw   (490.56,247.41) .. controls (490.56,246.13) and (491.39,245.08) .. (492.43,245.08) .. controls (493.46,245.08) and (494.3,246.13) .. (494.3,247.41) .. controls (494.3,248.69) and (493.46,249.73) .. (492.43,249.73) .. controls (491.39,249.73) and (490.56,248.69) .. (490.56,247.41) -- cycle ;
		\draw   (405.34,153.5) .. controls (405.34,152.21) and (406.18,151.17) .. (407.21,151.17) .. controls (408.24,151.17) and (409.08,152.21) .. (409.08,153.5) .. controls (409.08,154.78) and (408.24,155.82) .. (407.21,155.82) .. controls (406.18,155.82) and (405.34,154.78) .. (405.34,153.5) -- cycle ;
		\draw   (471.87,154.43) .. controls (471.87,153.14) and (472.71,152.1) .. (473.74,152.1) .. controls (474.77,152.1) and (475.61,153.14) .. (475.61,154.43) .. controls (475.61,155.71) and (474.77,156.75) .. (473.74,156.75) .. controls (472.71,156.75) and (471.87,155.71) .. (471.87,154.43) -- cycle ;
		\draw   (412.07,201.38) .. controls (412.07,200.1) and (412.91,199.06) .. (413.94,199.06) .. controls (414.97,199.06) and (415.81,200.1) .. (415.81,201.38) .. controls (415.81,202.67) and (414.97,203.71) .. (413.94,203.71) .. controls (412.91,203.71) and (412.07,202.67) .. (412.07,201.38) -- cycle ;
		\draw   (441.22,154.89) .. controls (441.22,153.61) and (442.06,152.57) .. (443.09,152.57) .. controls (444.12,152.57) and (444.96,153.61) .. (444.96,154.89) .. controls (444.96,156.18) and (444.12,157.22) .. (443.09,157.22) .. controls (442.06,157.22) and (441.22,156.18) .. (441.22,154.89) -- cycle ;
		\draw   (453.56,246.94) .. controls (453.56,245.66) and (454.39,244.62) .. (455.43,244.62) .. controls (456.46,244.62) and (457.29,245.66) .. (457.29,246.94) .. controls (457.29,248.23) and (456.46,249.27) .. (455.43,249.27) .. controls (454.39,249.27) and (453.56,248.23) .. (453.56,246.94) -- cycle ;
		\draw   (434.87,108.4) .. controls (434.87,107.12) and (435.71,106.08) .. (436.74,106.08) .. controls (437.77,106.08) and (438.61,107.12) .. (438.61,108.4) .. controls (438.61,109.69) and (437.77,110.73) .. (436.74,110.73) .. controls (435.71,110.73) and (434.87,109.69) .. (434.87,108.4) -- cycle ;
		\draw   (472.62,107.47) .. controls (472.62,106.19) and (473.46,105.15) .. (474.49,105.15) .. controls (475.52,105.15) and (476.36,106.19) .. (476.36,107.47) .. controls (476.36,108.76) and (475.52,109.8) .. (474.49,109.8) .. controls (473.46,109.8) and (472.62,108.76) .. (472.62,107.47) -- cycle ;
		\draw   (546.62,107.94) .. controls (546.62,106.65) and (547.46,105.61) .. (548.49,105.61) .. controls (549.52,105.61) and (550.36,106.65) .. (550.36,107.94) .. controls (550.36,109.22) and (549.52,110.26) .. (548.49,110.26) .. controls (547.46,110.26) and (546.62,109.22) .. (546.62,107.94) -- cycle ;
		\draw   (397.5,108.4) .. controls (397.5,107.12) and (398.33,106.08) .. (399.36,106.08) .. controls (400.4,106.08) and (401.23,107.12) .. (401.23,108.4) .. controls (401.23,109.69) and (400.4,110.73) .. (399.36,110.73) .. controls (398.33,110.73) and (397.5,109.69) .. (397.5,108.4) -- cycle ;
		\draw   (509.62,108.4) .. controls (509.62,107.12) and (510.46,106.08) .. (511.49,106.08) .. controls (512.52,106.08) and (513.36,107.12) .. (513.36,108.4) .. controls (513.36,109.69) and (512.52,110.73) .. (511.49,110.73) .. controls (510.46,110.73) and (509.62,109.69) .. (509.62,108.4) -- cycle ;
		\draw    (399.36,110.73) -- (407.21,151.17) ;
		\draw    (407.21,155.82) -- (413.94,199.06) ;
		\draw    (413.94,203.71) -- (418.52,244.77) ;
		\draw    (420.39,247.09) -- (453.56,246.94) ;
		\draw    (457.29,246.94) -- (490.56,247.41) ;
		\draw    (494.3,247.41) -- (527.93,247.41) ;
		\draw    (401.23,108.4) -- (434.87,108.4) ;
		\draw    (438.61,108.4) -- (472.62,107.47) ;
		\draw    (476.36,107.47) -- (509.62,108.4) ;
		\draw    (513.36,108.4) -- (546.62,107.94) ;
		\draw    (409.08,153.5) -- (441.22,154.89) ;
		\draw    (436.74,110.73) -- (443.09,152.57) ;
		\draw    (444.96,154.89) -- (471.87,154.43) ;
		\draw    (475.61,154.43) -- (506.63,153.96) ;
		\draw    (510.37,153.96) -- (540.64,153.96) ;
		\draw    (474.49,109.8) -- (473.74,152.1) ;
		\draw    (511.49,110.73) -- (508.5,151.64) ;
		\draw    (548.49,110.26) -- (542.51,151.64) ;
		\draw    (415.81,201.38) -- (450.57,200.92) ;
		\draw    (443.09,157.22) -- (452.44,198.59) ;
		\draw    (454.3,200.92) -- (493.92,200.92) ;
		\draw    (497.66,200.92) -- (532.04,200.92) ;
		\draw    (508.5,156.29) -- (495.79,198.59) ;
		\draw    (542.51,156.29) -- (533.91,198.59) ;
		\draw    (452.44,203.24) -- (455.43,244.62) ;
		\draw    (495.79,203.24) -- (492.43,245.08) ;
		\draw    (533.91,203.24) -- (529.8,245.08) ;
		\draw   (420.7,255.37) -- (428.8,251.34) -- (428.8,253.35) -- (445.01,253.35) -- (445.01,251.34) -- (453.11,255.37) -- (445.01,259.41) -- (445.01,257.39) -- (428.8,257.39) -- (428.8,259.41) -- cycle ;
		\draw   (370.52,184.98) .. controls (370.52,113.84) and (417.83,56.17) .. (476.19,56.17) .. controls (534.55,56.17) and (581.86,113.84) .. (581.86,184.98) .. controls (581.86,256.12) and (534.55,313.8) .. (476.19,313.8) .. controls (417.83,313.8) and (370.52,256.12) .. (370.52,184.98)(360.21,184.98) .. controls (360.21,108.15) and (412.13,45.86) .. (476.19,45.86) .. controls (540.24,45.86) and (592.17,108.15) .. (592.17,184.98) .. controls (592.17,261.82) and (540.24,324.11) .. (476.19,324.11) .. controls (412.13,324.11) and (360.21,261.82) .. (360.21,184.98) ;
		\draw [color={rgb, 255:red, 65; green, 117; blue, 5 }  ,draw opacity=1 ]   (438.71,234.92) .. controls (449.12,241.73) and (491.73,140.09) .. (492.2,125.33) ;
		\draw [color={rgb, 255:red, 208; green, 2; blue, 27 }  ,draw opacity=1 ]   (469.95,131) -- (431.24,130.46) ;
		\draw [shift={(429.24,130.44)}, rotate = 0.8] [color={rgb, 255:red, 208; green, 2; blue, 27 }  ,draw opacity=1 ][line width=0.75]    (10.93,-3.29) .. controls (6.95,-1.4) and (3.31,-0.3) .. (0,0) .. controls (3.31,0.3) and (6.95,1.4) .. (10.93,3.29)   ;
		
		\draw (203.86,306.71) node [anchor=north west][inner sep=0.75pt]  [font=\footnotesize]  {$\rho $};
		\draw (63.54,149.51) node [anchor=north west][inner sep=0.75pt]  [font=\footnotesize,rotate=-359.44]  {$r\ \ll \ \rho $};
		\draw (276.7,36.21) node [anchor=north west][inner sep=0.75pt]  [font=\footnotesize,color={rgb, 255:red, 65; green, 117; blue, 5 }  ,opacity=1 ]  {$\gamma _{\rho }( x)$};
		\draw (235.01,100.41) node [anchor=north west][inner sep=0.75pt]  [font=\footnotesize]  {$x$};
		\draw (132.89,234.95) node [anchor=north west][inner sep=0.75pt]  [font=\footnotesize,color={rgb, 255:red, 74; green, 144; blue, 226 }  ,opacity=1 ]  {$S_{\rho }( y)$};
		\draw (178.01,164) node [anchor=north west][inner sep=0.75pt]    {$y$};
		\draw (420.42,269.11) node [anchor=north west][inner sep=0.75pt]  [font=\tiny]  {$\varepsilon \ \ll r$};
		\draw (94.98,16.88) node [anchor=north west][inner sep=0.75pt]   [align=left] {Homogenization lattice};
		\draw (431.88,10.92) node [anchor=north west][inner sep=0.75pt]   [align=left] {Atomic lattice};
		\draw (416.56,121.63) node [anchor=north west][inner sep=0.75pt]  [color={rgb, 255:red, 208; green, 2; blue, 27 }  ,opacity=1 ]  {$b$};

	\end{tikzpicture}
	\caption{Dislocation loops $ \gamma_{ \rho } ( x ) $ are placed on the 
		homogenization lattice $ \lattice_{ \rho } $ with corresponding slip 
		surfaces $ S_{ \rho } ( x ) $ such that $ \partial S_{ \rho } = 
		\gamma_{ \rho } $. Their diameter $ r $  is much smaller than their 
		distance $ \rho $, and both are on a larger scale than the atomic 
		lattice spacing $ \eps $, so that $ \eps \ll r \ll \rho $. }
	\label{fig:homogenization}
\end{figure}

As we have seen in equation (\ref{eq:interaction_energy_approximation}), the oriented surface area function $ \of \colon \lattice_{ \rho } \to \R^{ 3 \times 3 } $ defined by
\begin{equation}
	\label{eq:def_or_area_function}
	\of_{ \rho } ( x ) 
	\coloneqq 
	\int_{ S_{ \rho } ( x ) }b_{ \rho } ( z ) \otimes n ( z ) \dd{ \hm^{ 2 } ( z ) }
\end{equation}
should dictate the leading order term in the interaction energy. Given any function $ f \colon \lattice_{ \rho } \to \R^{ 3 \times 3 } $ on the lattice, we define the corresponding piecewise constant extension as
\begin{align}
	\label{eq:def_extension_function}
	\widetilde{f} ( y ) & \coloneqq \frac{ 1 }{ \rho^{ 3 } } f ( x ) \quad \text{for all } y \in x + \rho U.	
\end{align}
Due to the unit cell (\ref{eq:unit_cell}) having volume 1, we thus have for each $ x \in \lattice_{\rho } $ that
\begin{equation*}
	\of_{ \rho } ( x ) 
	=
	\int_{ x + \rho U }
	\widetilde{\of_{ \rho }} ( y )
	\dd{ y }.
\end{equation*}
We make two assumptions on the array of dislocations.
\begin{enumerate}[labelindent=0pt,labelwidth=\widthof{\ref{item:wellSeperated}},label=(A\arabic*),ref=(A\arabic*),itemindent=0em,leftmargin=!]
	\item \label{item:l2BoundOnArea}
	The dislocation density stays $ \lp^{ 2 } $-bounded in the sense that
	\begin{equation*}
		\sup_{ \rho > 0 }
		\sum_{ x \in \lattice_{ \rho } }
		\frac{ 1 }{ \rho^{ 3 } }
		\left( \int_{ S_{ \rho } ( x ) } \abs{ b ( z ) } \dd{ \hm^{ 2 } ( z ) }
		\right)^{ 2 } 
		< 
		\infty.
	\end{equation*}
	\item \label{item:wellSeperated}
	The dislocation loops are well-separated in the sense that as $ \rho \to 0 $,
	\begin{equation*}
		r(\rho) \coloneqq 
		\esssup_{ x \in \lattice_{ \rho } } \esssup_{ y \in S_{ \rho } ( x ) } \abs{ x - y }
		\in o ( \rho ).
	\end{equation*}
\end{enumerate} 
We comment on these assumptions in \Cref{rmk:assumptions}. 

The sequence of arrays of dislocation loops will be denoted by
\begin{equation}
	\label{eq:disdens_def}
	\disdens_{ \rho } ( x )
	\coloneqq
	\left( b_{ \rho } ( x ) \otimes \tau \hm^{ 1 } \llcorner_{ \gamma_{ \rho } ( x ) } \right)_{ x \in \lattice_{ \rho } }.
\end{equation}
The first step in analyzing $ \lim_{ \rho \to 0 } \interactionEnergy ( 
\disdens_{ \rho }) $ is the following Proposition which yields an 
asymptotic representation of the total interaction energy via a lattice sum 
over the oriented surface areas.
\begin{proposition}
	\label{prop:energy_asymptotics}
	Let $ \disdens_{ \rho } = \left( b_{ \rho } ( x ) \otimes \tau \hm^{ 1 } \llcorner_{ \gamma_{ \rho } ( x ) } \right)_{ x \in \lattice_{ \rho } } $ be a sequence of arrays of dislocation loops satisfying assumptions \ref{item:l2BoundOnArea} and \ref{item:wellSeperated}. 
	Then there exists a non-relabelled subsequence of $ \rho \to 0 $ such 
	that
	\begin{equation}
		\label{eq:asymptotic_representations_of_energy}
		\lim_{ \rho \to 0 }
		\interactionEnergy(\disdens_{ \rho })
		=
		\lim_{ \rho \to 0 }
		\frac{ 1 }{ 2 }
		\sum_{ \substack{x, y \in \lattice_{ \rho } \\ x \neq y } }
		\of_{ \rho } ( x ) 
		\colon 
		M ( x - y ) \of_{ \rho } ( y ).
	\end{equation}
\end{proposition}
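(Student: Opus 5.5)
Since the interaction energy is
\[
\interactionEnergy(\disdens_\rho)=\tfrac12\sum_{x\neq y}\interactionEnergy_{xy},
\]
with $\interactionEnergy_{xy}$ given by (\ref{eq:interaction_energy_non_regular}) for the slip surfaces $S_\rho(x)$ and $S_\rho(y)$, I would estimate each pair error
\[
R_{xy}\coloneqq \interactionEnergy_{xy}-\of_\rho(x)\colon M(x-y)\of_\rho(y)
\]
and show that $\sum_{x\neq y}|R_{xy}|\to 0$. Two remarks are in order. First, both sums are finite for fixed $\rho$, because $\Omega$ is bounded. Second, I expect the right-hand side of (\ref{eq:asymptotic_representations_of_energy}) to be bounded uniformly in $\rho$; this follows from \ref{item:l2BoundOnArea} together with a discrete Calderón--Zygmund/Schur-type bound for the $-3$-homogeneous kernel $M$, which is a standard lattice Hilbert-transform estimate. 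Hence a subsequence along which it converges exists, and along that subsequence the left-hand side then has the same limit. So the real content is the error estimate.

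For the pair estimate, fix $x\neq y$ in $\lattice_\rho$. For $z\in S_\rho(x)$ and $w\in S_\rho(y)$ we have $|z-x|,|w-y|\le r$ by \ref{item:wellSeperated}, and $|x-y|\ge c\rho\gg r$. Taylor's theorem together with the $-4$-homogeneity of $\nabla M$ (smooth away from $0$) gives
\[
|M(z-w)-M(x-y)|\lesssim \frac{r}{|x-y|^4}
\]
for $\rho$ small. Subtracting the integrand of (\ref{eq:interaction_energy_non_regular}) evaluated at $x-y$ then yields exactly $\of_\rho(x)\colon M(x-y)\of_\rho(y)$ plus an error
\[
|R_{xy}|\lesssim \frac{r}{|x-y|^4}\,a_\rho(x)\,a_\rho(y),
\qquad
a_\rho(x)\coloneqq\int_{S_\rho(x)}|b_\rho|\,\dd{\hm^2}.
\]

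It remains to sum these errors. Write $a_\rho(x)=\rho^{3/2}\alpha(x)$, so that \ref{item:l2BoundOnArea} says $\sum_x\alpha(x)^2\le C$. Then
\[
\sum_{x\neq y}|R_{xy}|\lesssim r\rho^3\sum_{x\neq y}\frac{\alpha(x)\alpha(y)}{|x-y|^4}.
\]
Writing $x-y=\rho k$ with $k\in\lattice_1\setminus\{0\}$ turns this into $(r/\rho)\sum_{k}|k|^{-4}\sum_x\alpha(x)\alpha(x+\rho k)$. By Cauchy--Schwarz each inner sum is at most $C$, and $\sum_{k\neq0}|k|^{-4}<\infty$ in three dimensions. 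Hence the total error is $O(r/\rho)=o(1)$ by \ref{item:wellSeperated}.

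The main obstacle is the uniform boundedness of the main term, which the subsequence extraction needs. The naive Schur bound fails, since $\sum_k|k|^{-3}$ diverges logarithmically. I would try first to handle this by comparing the lattice sum with the principal-value convolution $\widetilde{\of_\rho}\ast(\chi_{|\cdot|>c\rho}M)$: the truncated kernel has Fourier transform bounded uniformly in $\rho$ (using that $M$ has vanishing spherical mean, since it is a second derivative of the homogeneous $K$), so Plancherel bounds the convolution by $\|\widetilde{\of_\rho}\|_{\lp^2}^2$, which is controlled by \ref{item:l2BoundOnArea}. The difference between the lattice sum and the integral involves $\nabla M$ and is summable by the same argument as above. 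If this turns out to be delicate, an alternative is to note that the statement only requires equality of limits along a subsequence; one can pass to a subsequence on which the (possibly infinite) limit exists, and the error estimate still gives equality.
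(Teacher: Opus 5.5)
Your proposal is correct and follows the paper's proof. The paper uses the same pairwise Taylor estimate $|M(z-w)-M(x-y)|\lesssim r|x-y|^{-4}$, obtained from the $-4$-homogeneity of $\nabla M$ and well-separation, and then the same Cauchy--Schwarz/Young summation over $k\in\lattice_1\setminus\{0\}$, which gives an $O(r/\rho)$ error. Your discussion of the subsequence is an addition the paper leaves implicit. Your fallback already suffices without the uniform bound: extract a subsequence along which the main term converges in $[-\infty,\infty]$.
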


In order to compute the limiting energy $ \lim_{ \rho \to 0 } \interactionEnergy ( \disdens_{ \rho } ) $, we employ different strategies depending on the type of oscillations of the sequence $ \disdens_{ \rho } $, respectively the oriented surface area function $ \of_{ \rho } $. We distinguish two different types of oscillation of $ \of_{ \rho } $. 
The oscillations of weak-long type encode that the oscillations of the oriented surface area happen on a scale which is larger than the scale $ \rho $ of the homogenization lattice. An example of this is the sequence 
\begin{equation}
	\label{eq:example_weak_long}
	\widetilde{ \of_{ \rho} } ( x ) =
	\sin ( \rho^{-1/2} x_1) \psi ( x ) \quad \text{for a suitable test function } \psi. 
\end{equation}
The definition has been used before in the context of micromagnetics.
\begin{definition}[\cite{james_mueller_internal_variables_and_fine_scale_oscillations_in_micromagnetics}]
	\label{def:weak_long}
	We say that the sequence $ (\disdens_{ \rho } )_{ \rho > 0 } $ satisfies the weak-long oscillation condition if the corresponding oriented surface area function $ \of_{ \rho } $ satisfies
	\begin{equation}
		\label{eq:weak_long}
		\limsup_{ \rho \to 0 }
		\sup_{ \abs{ \xi } \leq \rho }
		\norm{ \widetilde{\of_{ \rho }} ( \xi + \cdot ) - \widetilde{ \of_{ \rho } } ( \cdot ) }_{ \lp^{ 2 } ( \R^{ 3 } ) } = 0.
	\end{equation}
\end{definition}
We first analyze the limit of the energies in the weak-long case. 
For the definition of $ \Hm $-measures, see \Cref{sct:H_measures}.
\begin{proposition}
	\label{prop:weak_long_energy_convergence}
	Let $ \disdens_{ \rho } = \left( b_{ \rho } ( x ) \otimes \tau \hm^{ 1 } \llcorner_{ \gamma_{ \rho } ( x ) } \right)_{ x \in \lattice_{ \rho } }$ be a sequence of arrays of dislocation loops which satisfies assumptions \ref{item:l2BoundOnArea},  \ref{item:wellSeperated} and only has long-range oscillations (\ref{eq:weak_long}).
	Then there exists a non-relabelled subsequence of $ \rho \to 0 $, a function $ \of \in \lp^{ 2 } \left( \R^{ 3} ; \R^{ 3 \times 3 } \right) $ and an $ \Hm $-measure $ \mu_{ \Hm } $ generated by $ (\widetilde{ \of_{ \rho } } - \of )_{ \rho } $ such that $ \widetilde{ \of_{ \rho } } \rightharpoonup \of $ in $ \lp^{ 2 } ( \R^{ 3 } ; \R^{ 3 \times 3}) $ as $ \rho \to 0 $ and
	\begin{equation}
		\label{eq:main_thm}
		\lim_{ \rho \to 0 }
		\interactionEnergy(\disdens_{ \rho })
		=
		\frac{ 1 }{ 2 } 
		\left( \inner*{ \fourier  \of }{\Psi \fourier  \of }_{ \lp^{2 } ( \R^{ 3 } ) } 
		+ 
		\int_{ \R^{ 3 } \times \Sph^{2 } } \Psi ( \nu ) \cdot \dd{ \mu_{ \Hm } ( x , \nu ) } 
		\right).
	\end{equation}
	Here $ \Psi \colon \R^{ 3 } \setminus \{ 0 \}  \to \R^{ (3 \times 3 ) \times ( 3 \times 3 )} $  is a smooth, $ 0 $-homogeneous function which only depends on the lattice $ \lattice_{ 1 } $ and the elastic tensor $ \eltensor $, and $ \fourier $ denotes the Fourier transform.
\end{proposition}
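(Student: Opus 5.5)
Using \Cref{prop:energy_asymptotics}, I will pass to a subsequence and reduce the claim to the limit of the discrete sum
\[
E_\rho \coloneqq \frac12 \sum_{x\neq y} \of_\rho(x)\colon M(x-y)\of_\rho(y).
\]
By \ref{item:l2BoundOnArea}, $\widetilde{\of_\rho}$ is bounded in $\lp^2$, since $\norm{\widetilde{\of_\rho}}_{\lp^2}^2=\sum_x\rho^{-3}\abs{\of_\rho(x)}^2$. I therefore extract a weak limit $\of$ and, by Tartar's compactness, an $\Hm$-measure $\mu_{\Hm}$ generated by $\widetilde{\of_\rho}-\of$. The supports lie in a fixed compact set because $\Omega$ is bounded.

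The next step is to write $E_\rho$ as a double integral. Since $\of_\rho(x)=\int_{x+\rho U}\widetilde{\of_\rho}$, we have
\[
E_\rho=\frac12\int\!\!\int \widetilde{\of_\rho}(x)\colon M_\rho(x,y)\widetilde{\of_\rho}(y)\dd{x}\dd{y},
\]
where $M_\rho(x,y)=M(\lfloor x\rfloor_\rho-\lfloor y\rfloor_\rho)$ off the diagonal cells and $0$ on them. I then compare this with the translation-invariant kernel $M\chi_{\abs{z}>R\rho}$ together with a correction. On pairs of cells at distance $\geq R\rho$, the smoothness and $-3$-homogeneity of $M$ give $\abs{M_\rho - M(x-y)}\lesssim\rho\abs{x-y}^{-4}$, which is an integrable tail kernel of $\lp^2\to\lp^2$ norm $O(1/R)$ by Young's inequality. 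On the finitely many near pairs ($\abs{z}< R\rho$), the weak-long condition (\ref{eq:weak_long}), iterated over at most $R$ lattice steps, lets me replace $\widetilde{\of_\rho}(y)$ by $\widetilde{\of_\rho}(x)$ up to $o(1)$ for fixed $R$. The near-field contribution then becomes
\[
\frac12\int \widetilde{\of_\rho}\colon S_R\,\widetilde{\of_\rho},
\]
where $S_R$ is a constant tensor built from lattice sums $\sum_{0<\abs{k}<R}M(k)$ minus the corresponding continuum integral of $M$ over $\{\abs{z}<R\rho\}$ at the scale of the cells. Letting $R\to\infty$, conditional convergence of the lattice sum, using the cancellation $\int_{\Sph^2}M=0$ which holds on shells because $\fourier M$ is $0$-homogeneous, yields a constant $S$ depending only on $\lattice_1$ and $\eltensor$. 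This replacement step, in particular the uniform control of the near field and the correct identification of $S$, is where I expect the main difficulty, since it is exactly where short-range oscillations would break the argument.

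For the far-field part I work in Fourier space. Up to $O(1/R)+o(1)$,
\[
\frac12\int\!\!\int \widetilde{\of_\rho}(x)\colon (M\chi_{\abs{\cdot}>R\rho})(x-y)\,\widetilde{\of_\rho}(y)
=\frac12\inner*{\fourier\widetilde{\of_\rho}}{\fourier(M\chi_{\abs{\cdot}>R\rho})\fourier\widetilde{\of_\rho}}.
\]
The multiplier $\fourier(M\chi_{\abs{\cdot}>\delta})(\xi)$ is uniformly bounded and converges to $\fourier M(\xi)-\fint_{\Sph^2}\fourier M$ for $\abs{\xi}\ll1/\delta$. The weak-long condition implies that $\fourier\widetilde{\of_\rho}$ carries asymptotically no $\lp^2$ mass at frequencies $\gtrsim 1/\rho$, because $\norm{(e^{2\pi i\xi\cdot h}-1)\fourier f}$ is small for all $\abs{h}\leq\rho$. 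I can therefore replace the multiplier by $\Psi_0=\fourier M-\fint_{\Sph^2}\fourier M$, which is $0$-homogeneous and smooth away from the origin.

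Finally I set $\Psi=\Psi_0+S$ and split $\widetilde{\of_\rho}=\of+(\widetilde{\of_\rho}-\of)$. The cross terms vanish by weak convergence, since $\Psi$ is a bounded multiplier. The main term gives $\inner{\fourier\of}{\Psi\fourier\of}$. For the oscillating part, the definition of $\mu_{\Hm}$, tested with a cutoff $\phi\equiv1$ on the common compact support and the symbol $\Psi(\xi/\abs{\xi})$, gives
\[
\int_{\R^3\times\Sph^2}\Psi\cdot\dd{\mu_{\Hm}}.
\]
Here the constant $S$ enters through $\mu_{\Hm}$ with the symbol $S$, which is the $\lp^2$ defect. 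Summing these terms yields (\ref{eq:main_thm}).
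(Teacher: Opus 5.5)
Your proposal is correct. It has the same architecture as the paper: \Cref{prop:energy_asymptotics}, then \Cref{lem:weak_long_energy_rewritten}, then the $\Hm$-measure splitting. In both, the discrete kernel becomes a radially truncated translation-invariant kernel whose multiplier tends to $\fourier M-\fint_{\Sph^2}\fourier M$, and the weak-long condition turns the remaining local part into the lattice sum $S$. Where you differ is in how the error terms are controlled.

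\emph{Near field.} The paper truncates smoothly at scale $\omega\rho$, below the lattice spacing. It treats the whole discrepancy $\tilde M^{(\rho)}-M^{(\rho)}$ at once via an integrable majorant $h(x-y)$ of the rescaled kernel. So $S$ appears directly as $\int(\tilde M^{(1)}-M^{(1)})(x,y)\dd{y}$, with no auxiliary parameter. Your truncation at $R\rho$ costs three extra ingredients:
\begin{itemize}
\item a double limit, first $\rho\to0$ and then $R\to\infty$;
\item the conditional convergence of $\sum_{0<\abs{k}<R}M(k)$ as a separate input. To justify it, compare $M(k)$ with its average over the cell centred at $k$ (error $O(\abs{k}^{-4})$, which is summable) and use $\int_{\{r<\abs{z}<R\}}M=0$;
\item an $O(1/R)$ Schur-test bound for the shell where ``cell distance $\geq R\rho$'' and ``$\abs{x-y}\geq R\rho$'' disagree. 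You do not mention this, but it is harmless.
\end{itemize}
No continuum term belongs in $S_R$. The truncated continuum kernel vanishes on the near region, and the principal-value integral of $M$ over a ball is zero anyway. Hence $S_R=\sum_{0<\abs{k}<R}M(k)$, and your $S$ is exactly (\ref{eq:def_lattice_sum}).

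\emph{Far field.} The paper compares $M^{(\rho)}$ with $M^{(\eps)}$. It then uses Calder\'on--Zygmund $\lp^2$-bounds, a mollification at scale $\rho$, and the cancellation of $M$ to get an $\lp^1$ bound. Your Fourier-side argument is shorter and more transparent. Averaging $\abs{e^{2\pi i\xi\cdot h}-1}^2$ over $h\in B_\rho(0)$ shows that the weak-long condition forces $\int_{\abs{\xi}\geq\lambda/\rho}\abs{\fourier\widetilde{\of_\rho}}^2\to0$ for every $\lambda>0$. By homogeneity, $\fourier(M\chi_{\abs{\cdot}>R\rho})(\xi)=m(R\rho\xi)$ with $m\coloneqq\fourier(M\chi_{\abs{\cdot}>1})$. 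Moreover $m-(\fourier M-\fint_{\Sph^2}\fourier M)=-\fourier(\mathrm{p.v.}\,M\chi_{B_1(0)})$, which is continuous and vanishes at $0$. This gives exactly the uniform convergence of the multiplier on $\abs{\xi}\leq\lambda/\rho$ that your argument requires.
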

The proof of \Cref{prop:weak_long_energy_convergence} is the content of \Cref{sct:weak_long}.

The weak-short counterpart encodes that oscillations occur on the size of 
the lattice, and that the local average is zero, see equation 
(\ref{eq:weak_short_def}). An example of this is the sequence $ 
\widetilde{\of_{ \rho }} ( x ) = \sin ( \rho^{ - 1} x_1 ) \psi ( x ) $, for 
a suitable test function $ \psi $. The reason why we only consider short 
oscillations when they cancel each other out is that we will later consider 
an additive decomposition $ \of_{ \rho } ( x ) = \of_{ \rho }^{ \mathrm{S} 
} ( x ) + \of_{ \rho }^{ \mathrm{L} } $ into a weak-short and weak-long 
oscillating part, and the non-cancelling terms will be automatically 
absorbed into $ \of_{ \rho }^{ \mathrm{L} } $.

For the formal definition we fix some radially symmetric function $ \phi \in \ccinf ( \R^{ 3 } ;[0,1]) $ which is radially decreasing, has support in $ B_{ 1 } ( 0 ) $ and satisfies $ \phi ( 0 ) = 1 $. We define $ \Phi \coloneqq \fourier^{-1 } \phi $ and its dilation by $ \Phi_{ \eps } ( x ) \coloneqq \eps^{ - 3 } \Phi ( x/\eps ) $. 

\begin{definition}[\cite{firoozye_93_homogenization_on_lattices}]
	\label{def:weak_short}
	We say that the sequence $ ( \disdens_{ \rho })_{ \rho > 0 }$ satisfies the weak-short oscillation condition if the corresponding oriented surface area function $ \of_{ \rho } $ satisfies
	\begin{equation}
		\label{eq:weak_short_def}
		\lim_{ \eps \to 0 }
		\lim_{ \rho \to 0 }
		\norm{ \Phi_{ \rho/\eps } \ast \widetilde{\of_{\rho}} }_{ \lp^{ 2 } ( \R^{ 3} ) }
		=
		0.
	\end{equation} 
\end{definition}
Assuming that the sequence $ \disdens_{ \rho } $ has only oscillations of weak-short type, we arrive at the following convergence result. Here $ U^{ \ast } $ denotes the unit cell of the reciprocal lattice $ \lattice_{ 1 }^{ \ast } $, and $ \idfs{ M } $ denotes the inverse discrete Fourier series $ \sum_{ x \in \lattice_{ 1 } \setminus \{0\}} M ( x ) \exp ( -2\pi i x \cdot \xi ) $. The \emph{Wigner measure} $ \mu_{ \mathrm{W}} $ associated to $ \of_{ \rho } $ is the weak-limit of the measures $ \mu_{ \rho } \coloneqq \rho^{ -3 }\idfs{ \of_{ \rho } ( \rho \cdot )} \otimes \overline{ \idfs{ \of_{ \rho } ( \rho \cdot )}} \dd{ \lm^{ 3 } }$, see \Cref{sct:weak_short} for more in-depth definitions.
\begin{proposition}
	\label{prop:weak_short_energy_convergence}
	Let $ \disdens_{ \rho } =  \left( b_{ \rho } ( x ) \otimes \tau \hm^{ 1 } \llcorner_{ \gamma_{ \rho } ( x ) } \right)_{ x \in \lattice_{ \rho } } $ be a sequence of arrays of dislocation loops which satisfies assumptions \ref{item:l2BoundOnArea},  \ref{item:wellSeperated} and only has short-range oscillations (\ref{eq:weak_short_def}).
	Then there exists a non-relabelled subsequence of $ \rho \to 0 $ and a Wigner measure $ \mu_{ \mathrm{W}} $ generated by $ (\of_{ \rho })_{ \rho } $ such that
	\begin{equation}
		\lim_{ \rho \to 0 }
		\interactionEnergy(\disdens_{ \rho })
		=
		\frac{ 1 }{ 2 } \int_{ U^{ \ast } } \idfs{ M } ( \xi ) \cdot \dd{ \mu_{ \mathrm{W } } ( \xi ) }.
	\end{equation}
\end{proposition}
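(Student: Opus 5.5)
We start from \Cref{prop:energy_asymptotics}, which reduces the problem (along a subsequence) to computing the limit of the lattice sum
\begin{equation*}
	E_\rho \coloneqq \frac12 \sum_{x\neq y \in \lattice_\rho} \of_\rho(x)\colon M(x-y)\of_\rho(y).
\end{equation*}
By $-3$-homogeneity, $M(\rho z)=\rho^{-3}M(z)$. Writing $x=\rho k$ and $y=\rho l$ with $k,l\in\lattice_1$, and setting $f_\rho(k)\coloneqq \rho^{-3/2}\of_\rho(\rho k)$, we get
\begin{equation*}
	E_\rho=\frac12\sum_{k\neq l} f_\rho(k)\colon M(k-l) f_\rho(l).
\end{equation*}
Assumption \ref{item:l2BoundOnArea} says exactly that $\norm{f_\rho}_{\ell^2(\lattice_1)}$ is bounded. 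The sum is a discrete convolution, so Parseval on the torus $U^\ast$ should give
\begin{equation*}
	E_\rho=\frac12\int_{U^\ast}\idfs{M}(\xi)\colon\bigl(\idfs{f_\rho}(\xi)\otimes\overline{\idfs{f_\rho}(\xi)}\bigr)\dd{\xi}=\frac12\int_{U^\ast}\idfs{M}\cdot\dd{\mu_\rho},
\end{equation*}
with $\mu_\rho$ as defined before the proposition. Care is needed because $M(k)\sim|k|^{-3}$ is not summable, so $\idfs{M}$ must be understood as a conditionally convergent or distributional Fourier series. I would define it as the limit of the truncated sums $\sum_{0<|k|<R}$ and verify the Parseval identity for finitely supported $f_\rho$ (here $\Omega$ is bounded). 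Since the partial sums of a Calderón--Zygmund-type lattice kernel give uniformly bounded multipliers, passing $R\to\infty$ is justified for each fixed $\rho$.

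Next I would establish regularity of $\idfs{M}$. The function $\idfs{M}$ should be smooth on $\overline{U^\ast}$ minus the reciprocal lattice points, bounded, and, near $\xi=0$, equal to a $0$-homogeneous function plus a continuous correction. This is the discontinuity at zero mentioned in the introduction, and it is where the lattice sum $S$ and $\hat M-\fint_{\Sph^2}\hat M$ appear. I would obtain it by a splitting of Ewald type: write $M=M\chi_{B_1}+M\chi_{B_1^c}$, and compare the lattice sum with the integral via Poisson summation applied to a smooth cutoff of the far part. This yields $\idfs{M}=\hat M(\xi)-\fint_{\Sph^2}\hat M+(\text{continuous})$ near $0$.

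With $\mu_\rho$ bounded in total mass by \ref{item:l2BoundOnArea}, we extract a weak-$\ast$ convergent subsequence $\mu_\rho\rightharpoonup\mu_{\mathrm W}$ on the torus $U^\ast$. Since $\idfs{M}$ is continuous away from $0$, weak convergence gives convergence of the integrals over $U^\ast\setminus B_\delta(0)$. The main obstacle is the neighbourhood of $0$. There we must show $\limsup_\rho\mu_\rho(B_\delta(0))\to0$ as $\delta\to0$, which also gives $\mu_{\mathrm W}(\{0\})=0$; boundedness of $\idfs{M}$ then controls the remainder. This no-concentration estimate comes from the weak-short condition \eqref{eq:weak_short_def}. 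Under the rescaling, $\fourier(\widetilde{\of_\rho})(\zeta)$ is, up to the factor $\widehat{\chi_U}(\rho\zeta)$, equal to $\idfs{f_\rho}(\rho\zeta)$ (suitably normalized). Hence $\norm{\Phi_{\rho/\eps}\ast\widetilde{\of_\rho}}_{\lp^2}^2$ is comparable to $\int\phi(\xi/\eps)^2|\idfs{f_\rho}(\xi)|^2\dd{\xi}$, because $\widehat{\chi_U}$ is close to $1$ near $0$ and $\phi$ is supported in $B_1$. As $\phi$ is radially decreasing with $\phi(0)=1$, it is bounded below by $1/2$ on $B_{c\eps}$. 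It follows that $\limsup_\rho\mu_\rho(B_{c\eps})\lesssim\lim_\rho\norm{\Phi_{\rho/\eps}\ast\widetilde{\of_\rho}}^2\to0$ as $\eps\to0$. Combining the two regions gives $\lim E_\rho=\frac12\int_{U^\ast}\idfs{M}\cdot\dd{\mu_{\mathrm W}}$.
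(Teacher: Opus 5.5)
Your proposal is correct, and its skeleton is the paper's: reduction via \Cref{prop:energy_asymptotics}, rescaling to $\lattice_1$, the Parseval identity $E_\rho=\frac12\int_{U^\ast}\idfs{M}\cdot\dd{\mu_\rho}$, weak-$\ast$ compactness of the Wigner measures, and non-concentration at $\xi=0$. The last step comes from the weak-short condition through $\fourier\widetilde{\of_\rho}(\zeta)=\rho^{3/2}\idfs{\rof_\rho}(\rho\zeta)\,\fourier(\chi_U)(\rho\zeta)$, which is the content of \Cref{lem:weak_short_equivalence} and \Cref{lem:weak_short_implies_no_zero}. Your uniform version, $\lim_{\delta\to0}\limsup_{\rho\to0}\tr\mu_\rho(B_\delta(0))=0$, is exactly the form needed to integrate the bounded but discontinuous $\idfs{M}$. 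The paper states only $\abs{\mu_{\mathrm W}}(\{0\})=0$ and tacitly uses positivity of $\tr\mu_\rho$ to upgrade it.

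The genuine difference is the analytic input on $\idfs{M}$. The paper damps by $e^{-\eps\abs{x}}$ and applies Poisson summation to the resulting Schwartz function. It then gets the uniform $\lp^\infty$ bound and continuity off $\lattice_1^\ast$ (\Cref{prop:Feps_behaviour}) from a spherical-harmonic expansion, Bessel asymptotics and Wainger's estimates. Your Ewald-type splitting with a smooth cutoff $\chi$ instead gives $\idfs{M}(\xi)=\sum_{\zeta\in\lattice_1^\ast}\hat g(\xi+\zeta)$ with $g=(1-\chi)M$. Since $\hat g=\hat M-\fourier(\chi M)$ is bounded, smooth off $0$, and rapidly decaying (every derivative of $g$ of order at least one is integrable), boundedness and continuity follow at once. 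For radial $\chi$ you also see the jump at $0$ explicitly, as $\hat M-\fint_{\Sph^2}\hat M$ plus a continuous term, which is the natural bridge to $\Psi$. This route is more elementary; its cost is a distributional Poisson summation for the non-integrable $g$ rather than for a Schwartz function.

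Two small remarks. First, since $\rof_\rho$ has finite support, the density of $\mu_\rho$ is a trigonometric polynomial. The Parseval step therefore uses only finitely many Fourier coefficients of $\idfs{M}$ and needs no uniform bound on spherical partial sums; boundedness of $\idfs{M}$ is needed only when passing to the limit. Second, the convergence over $U^\ast\setminus B_\delta(0)$ should be done with a continuous cutoff, or with $\delta$ chosen so that $\tr\mu_{\mathrm W}(\partial B_\delta(0))=0$, together with the bound $\abs{(\mu_\rho)_{ij}}\le\tr\mu_\rho$.
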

The proof of \Cref{prop:weak_short_energy_convergence} is the content of \Cref{sct:weak_short}.

By decomposing the sequence $ \disdens_{ \rho } $ into its weak-long and 
weak-short oscillating part and using 
\Cref{prop:weak_long_energy_convergence} and 
\Cref{prop:weak_short_energy_convergence}, we thus obtain our main result. 
Here we use the notation $ T_z ( a ) \coloneqq a + z $ for a translation by 
$ z $, and denote the corresponding push-forward by $ (T_z )_{ \#} $. 
Consequently, we have
\begin{equation*}
	(T_z)_\# ( b \otimes \tau \hm^{ 1 } \llcorner_{ \gamma } )
	=
	b \otimes \tau \hm^{ 1 } \llcorner_{ \gamma + z }.
\end{equation*}
\begin{theorem}
	\label{thm:main_theorem}
	Let $ \disdens_{ \rho } =  \left( b_{ \rho } ( x ) \otimes \tau \hm^{ 1 } \llcorner_{ \gamma_{ \rho } ( x ) } \right)_{ x \in \lattice_{ \rho } } $ be a sequence of arrays of dislocation loops satisfying assumptions \ref{item:l2BoundOnArea} and \ref{item:wellSeperated}. 
	Then there exists a non-relabelled subsequence of $ \rho \to 0 $ and 
	$ \rho \mapsto \delta ( \rho ) $ such that $ \rho \ll \delta ( \rho ) 
	\to 0 $ and such that we can decompose $ \disdens_{ \rho } $ into 
	generalized dislocation arrays
	\begin{align*}
		\disdens_{ \rho } ( x )
		& =
		\disdens_{ \rho }^{ \mathrm{L}} ( x ) 
		+
		\disdens_{ \rho }^{ \mathrm{S} } ( x ) 
		\\
		& \coloneqq
		\sum_{ y \in \lattice_{ \rho }}
		\rho^{ 3 } \Phi_{ \rho / \delta ( \rho ) } ( x - y )
		(T_{ x- y } )_{\#} \disdens_{ \rho } ( y )  
		+
		\left(\disdens_{ \rho } ( x ) - 
		\sum_{ y \in \lattice_{ \rho } }
		\rho^{ 3 } \Phi_{ \rho / \delta ( \rho ) } ( x - y ) 
		(T_{ x- y } )_{\#} \disdens_{ \rho } ( y )   \right).
	\end{align*} 
	Moreover, $\disdens_{ \rho }^{ \mathrm{ L } } $ satisfies the weak-long 
	condition (\ref{eq:weak_long}) and $ \disdens_{ \rho }^{ \mathrm{S}} $ 
	satisfies the weak-short condition (\ref{eq:weak_short_def}). Further 
	the mixed interaction energy vanishes in the sense that
	\begin{equation*}
		\lim_{ \rho \to 0}
		\sum_{ \substack{x, y \in \lattice_{ \rho } \\ x \neq y } }
		\interactionEnergy ( \disdens_{ \rho }^{ \mathrm{S} } ( x ) , \disdens_{ \rho }^{ \mathrm{ L }  } ( y ) )
		=
		0.
	\end{equation*}
	Finally, we have
	\begin{equation*}
		\lim_{ \rho \to 0 }
		\interactionEnergy ( \disdens_{ \rho } )
		=
		\frac{1}{2}
		\left( \inner*{ \fourier  \of }{\Psi \fourier  \of }_{ \lp^{2 } ( \R^{ 3 } ) } 
		+ 
		\int_{ \R^{ 3 } \times \Sph^{2 } } \Psi ( \nu ) \cdot \dd{ \mu_{ \Hm } ( x , \nu ) }
		+
		\int_{ U^{ \ast } } \idfs{ M } ( \xi ) \cdot \dd{ \mu_{ \mathrm{W } } ( \xi ) } 
		\right),
	\end{equation*}
	where $ \of $ and $ \mu_{ \Hm } $ are generated by $ \disdens_{ \rho }^{ \mathrm{ L }} $ as in \Cref{prop:weak_long_energy_convergence} and the Wigner measure $ \mu_{ \mathrm{W}} $ is generated by $ \disdens_{ \rho }^{ \mathrm{ S }} $ as in \Cref{prop:weak_short_energy_convergence}.
\end{theorem}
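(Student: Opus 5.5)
By \Cref{prop:energy_asymptotics}, it suffices to compute the limit of the lattice sum
\[
\mathcal{B}_\rho(\of_\rho,\of_\rho) \coloneqq \tfrac12\sum_{x\neq y}\of_\rho(x)\colon M(x-y)\of_\rho(y).
\]
This is a bounded symmetric bilinear form on $\ell^2$ with weights $\rho^{-3}$, i.e.\ on $\lp^2$ for the piecewise constant extensions. Boundedness follows from the Fourier series picture: in rescaled variables $\mathcal{B}_\rho$ is multiplication by $\idfs{M}$ on $\lp^2(U^\ast)$, and $\idfs{M}$ is bounded because $M$ is a $-3$-homogeneous Calderón--Zygmund-type kernel with vanishing spherical means on the lattice. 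The decomposition is linear. The oriented surface area of the convolved array at $x$ is
\[
\of^{\mathrm L}_\rho(x)=\sum_y \rho^3\Phi_{\rho/\delta}(x-y)\of_\rho(y),
\]
since translation does not change the oriented surface area. Hence $\of_\rho=\of_\rho^{\mathrm L}+\of_\rho^{\mathrm S}$. In Fourier series variables (lattice frequency $\xi\in U^\ast$ after rescaling), $\of^{\mathrm L}_\rho$ is $\idfs{\of_\rho(\rho\cdot)}$ multiplied by the periodization of $\phi(\delta\,\cdot/\rho\cdot\rho)$, roughly $\phi(\delta\xi/\rho)$, which is supported in $|\xi|\le \rho/\delta\to0$. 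The short part gets the multiplier $1-\phi(\delta\xi/\rho)$.

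The first step is to choose $\delta(\rho)$ by a diagonal argument. For the weak-long condition, $\|\widetilde{\of^{\mathrm L}_\rho}(\cdot+h)-\widetilde{\of^{\mathrm L}_\rho}\|_{\lp^2}$ with $|h|\le\rho$ is controlled by $|h|$ times the frequency support $\lesssim\delta^{-1}$, giving $\rho/\delta\to0$, up to a piecewise-constant error that is handled the same way because the mollifier is smooth on scale $\delta\gg\rho$. For the weak-short condition, $\Phi_{\rho/\eps}\ast\widetilde{\of^{\mathrm S}_\rho}$ has multiplier $\phi(\eps\cdot)(1-\phi(\delta\,\cdot))$ in continuous frequency. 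For fixed $\eps$ and $\delta\to0$ this vanishes uniformly on the support, so its $\lp^2$ norm tends to $0$ by \ref{item:l2BoundOnArea}. The double limit in (\ref{eq:weak_short_def}) therefore holds provided $\delta(\rho)\to0$. Any $\delta$ with $\rho\ll\delta\to0$, for instance $\delta=\sqrt\rho$, should work. I would still pass to subsequences so that the $\Hm$-measure, the weak limit $\of$, and the Wigner measures converge. Both parts also inherit \ref{item:l2BoundOnArea} (Young's inequality, since $\|\rho^3\sum_y|\Phi_{\rho/\delta}|\|$ is bounded) and \ref{item:wellSeperated} (the generalized loops at $x$ are translates centred at $x$). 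Strictly speaking, the convolution has infinite support because $\Phi$ is not compactly supported. I would truncate $\Phi$ at a large radius and control the tail by its Schwartz decay.

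The main step is the vanishing of the mixed term $\mathcal{B}_\rho(\of^{\mathrm S}_\rho,\of^{\mathrm L}_\rho)$; \Cref{prop:energy_asymptotics} applied to sums of arrays, together with polarization, reduces the cross interaction energy to this. By Plancherel on $U^\ast$,
\[
\mathcal{B}_\rho(\of^{\mathrm S}_\rho,\of^{\mathrm L}_\rho)=\tfrac12\int_{U^\ast}\idfs{M}(\xi)\,\phi(\delta\xi/\rho)\bigl(1-\phi(\delta\xi/\rho)\bigr)\,\hat f_\rho\cdot\overline{\hat f_\rho}\,\dd\xi,
\]
with $\hat f_\rho=\rho^{-3/2}\idfs{\of_\rho(\rho\cdot)}$, which is bounded in $\lp^2$. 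The product multiplier is supported in the annulus $\rho/\delta\cdot c\le|\xi|\le\rho/\delta$ only if $\phi\equiv1$ near $0$. For a general radially decreasing $\phi$ this is false, and this is the main obstacle. My first attempt would be to choose $\phi$ (it is fixed but arbitrary) with $\phi=1$ on $B_{1/2}$. Then the overlap is a shrinking annulus at frequency scale $\rho/\delta$, which in continuous variables $\zeta=\xi/\rho$ is the annulus $|\zeta|\sim\delta^{-1}$. There $\idfs{M}(\xi)\approx\hat M(\xi)-\fint\hat M+S$ is bounded, and I would show that the $\lp^2$-mass of $\hat{\widetilde{\of_\rho}}$ in a dyadic annulus $|\zeta|\sim\delta^{-1}$ tends to $0$ along a suitable choice of $\delta(\rho)$. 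Since the total mass is bounded, for each $\rho$ at most $K$ of the dyadic annuli between $1$ and $\rho^{-1}$ can carry mass $\ge C/K$. A pigeonhole selection of $\delta(\rho)$ in $[\rho^{1-\eta},\rho^{\eta}]$ therefore makes the annular mass $\lesssim 1/\log\log$, which tends to $0$. If $\phi$ cannot be changed, I would instead use $\phi(1-\phi)\le$ a cutoff to a region where pigeonholing over a range of dyadic scales still applies, since $\phi(1-\phi)$ is small both near $0$ and outside $B_1$.

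Finally, expand
\[
\mathcal{B}_\rho(\of_\rho,\of_\rho)=\mathcal{B}_\rho(\of^{\mathrm L}_\rho,\of^{\mathrm L}_\rho)+2\mathcal{B}_\rho(\of^{\mathrm L}_\rho,\of^{\mathrm S}_\rho)+\mathcal{B}_\rho(\of^{\mathrm S}_\rho,\of^{\mathrm S}_\rho).
\]
Apply \Cref{prop:weak_long_energy_convergence} to the first term, which gives $\of$ and $\mu_{\Hm}$, and \Cref{prop:weak_short_energy_convergence} to the third, which gives $\mu_{\mathrm W}$. The middle term vanishes by the previous step, and this yields the stated formula.
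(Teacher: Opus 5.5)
\paragraph{Gap: the choice of $\delta(\rho)$.} Your proposal fails at the choice of $\delta(\rho)$, and the failure comes from a scaling error. Since $\rho^{3}\Phi_{\rho/\delta}(\rho x)=\delta^{3}\Phi(\delta x)=\Phi_{1/\delta}(x)$, (\ref{eq:phi_idfs}) shows that the long part has multiplier $\phi_{\delta}(\xi)=\sum_{\zeta\in\lattice_{1}^{\ast}}\phi((\xi+\zeta)/\delta)$ in the rescaled variable $\xi\in U^{\ast}$. This multiplier is supported in $\abs{\xi}\le\delta$, not $\abs{\xi}\le\rho/\delta$. Equivalently, $\Phi_{\rho/\delta}$ mollifies at spatial scale $\rho/\delta$, not $\delta$, and $\Phi_{\rho/\eps}$ in (\ref{eq:weak_short_def}) corresponds to the multiplier $\phi_{\eps}$.

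The weak-short test for $\of_{\rho}^{\mathrm{S}}$ is therefore $\int_{U^{\ast}}\phi_{\eps}^{2}(1-\phi_{\delta(\rho)})^{2}\dd{\tr\mu_{\rho}}$. For fixed $\eps$ and $\delta\to0$ this multiplier tends to $\phi_{\eps}^{2}$ away from $\xi=0$, so it does not become small. What you actually need is that $\mu_{\rho}$ carries asymptotically no mass in the intermediate region $\delta(\rho)\lesssim\abs{\xi}\lesssim\eps$. This fails for any $\delta$ chosen independently of the sequence.

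Concretely, take $\lattice_{1}=\Z^{3}$ and $\of_{\rho}(x)=\rho^{3}f(x)\sin(2\pi\rho^{-3/4}x_{1})e_{1}\otimes e_{3}$. This is realizable as in the proof of \Cref{cor:negative_interaction} and is of weak-long type. Its Wigner mass sits near $\xi=\pm\rho^{1/4}e_{1}$, where $\phi_{\sqrt{\rho}}=0$ while $\phi_{\eps}\to1$. So for your choice $\delta=\sqrt{\rho}$ you get $\of_{\rho}^{\mathrm{S}}\approx\of_{\rho}$, and $\norm{\Phi_{\rho/\eps}\ast\widetilde{\of_{\rho}^{\mathrm{S}}}}_{\lp^{2}}\not\to0$ for every fixed $\eps$. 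Oscillations at rescaled frequency $\sqrt{\delta(\rho)}$ defeat any prescribed $\delta$ in the same way. Oscillations at spatial scale $\rho\log(1/\rho)$ force $\delta(\rho)\gg1/\log(1/\rho)$, so your pigeonhole range $[\rho^{1-\eta},\rho^{\eta}]$ for the mixed term is also incompatible with the weak-short requirement.

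\paragraph{The missing idea.} Let $\delta$ depend on the sequence through its Wigner measures, as in \Cref{lem:decomposition}. After extracting $\mu_{\rho}\rightharpoonup\mu$ and $\abs{\mu_{\rho}}\rightharpoonup\omega$, you have $\mu(\{0\})=\lim_{\eps\to0}\lim_{\rho\to0}\int\phi_{\eps}\dd{\mu_{\rho}}$. The same holds with $\phi_{\eps}^{2}$ in place of $\phi_{\eps}$, and with $\abs{\mu_{\rho}},\omega$ in place of $\mu_{\rho},\mu$. A diagonal argument then gives $\delta(\rho)\to0$ with $\rho/\delta\to0$ such that:
\begin{itemize}
\item $\int\phi_{\delta(\rho)}\dd{\mu_{\rho}}$ and $\int\phi_{\delta(\rho)}^{2}\dd{\mu_{\rho}}$ both tend to $\mu(\{0\})$;
\item the analogous integrals against $\abs{\mu_{\rho}}$ both tend to $\omega(\{0\})$.
\end{itemize}

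Expanding $(1-\phi_{\delta})^{2}=1-2\phi_{\delta}+\phi_{\delta}^{2}$ then gives the weak-short property of $\of_{\rho}^{\mathrm{S}}$ via \Cref{lem:weak_short_equivalence}. Since $0\le\phi_{\delta}-\phi_{\delta}^{2}$, the mixed term is bounded by $\norm{\idfs{M}}_{\lp^{\infty}(U^{\ast})}\int(\phi_{\delta}-\phi_{\delta}^{2})\dd{\abs{\mu_{\rho}}}$, which tends to $\omega(\{0\})-\omega(\{0\})=0$. One $\delta$ thus handles both requirements for the given fixed $\phi$, with no need for $\phi\equiv1$ near $0$ or for pigeonholing.

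Your weak-long estimate survives the corrected scaling: the continuous frequency support is $\abs{\zeta}\lesssim\delta/\rho$, so for $\abs{h}\le\rho$ the bound is of order $\delta\to0$. The rest of your outline matches the paper's proof: linearity of the decomposition, inheritance of \ref{item:l2BoundOnArea} and \ref{item:wellSeperated}, polarization, and applying the two propositions to the long and short parts.
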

The proof of \Cref{thm:main_theorem} can be found in \Cref{sct:scale_separation}.

We close the paper by showing that we can actually generate negative interaction energy "out of nowhere" through oscillations if the lattice has cubic symmetry and the elastic tensor is isotropic. In the case of anisotropic elasticity or with different lattice symmetries, we still expect this to hold. However, our proof relies on explicit computations, which are much easier under these assumptions.
\begin{definition}
	\label{def:point_group_and_cubic_symmetry}
	Given a Bravais lattice $ \lattice_{ 1 } $, we define its \emph{point 
		group} $ P_1 $ as 
	\begin{equation*}
		P_1 \coloneqq \{ R \in \orth ( 3 ) \colon R \lattice_{ 1 } = \lattice_{ 1 } \}.
	\end{equation*}
	We say that $ \lattice_{ 1 } $ has \emph{cubic symmetry} if 
	\begin{equation}
		\label{eq:cubic_symmetry}
		\{ R \in \orth( 3 ) \colon R ( [-1,1]^3) = [-1,1]^3\}
		\subseteq P_1
	\end{equation}
\end{definition}

\begin{corollary}
	\label{cor:negative_interaction}
	Assume that $ \lattice_{ 1 } $ has cubic symmetry and that $ \eltensor $ is isotropic. Then we find a sequence of arrays of dislocation loops $ (\disdens_{ \rho })_{ \rho > 0 } $ which satisfies assumptions \ref{item:l2BoundOnArea} and \ref{item:wellSeperated}. Additionally, it satisfies
	\begin{align*}
		\liminf_{ \rho \to 0 }
		\interactionEnergy ( \disdens_{ \rho } )
		&<0
		\shortintertext{and}
		\widetilde{ \of_{ \rho } }
		&\rightharpoonup 0.
	\end{align*}
\end{corollary}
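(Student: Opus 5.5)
Since $\widetilde{\of_\rho}\rightharpoonup 0$ is required, the term $\inner*{\fourier\of}{\Psi\fourier\of}$ vanishes. I will therefore construct a sequence with only long-range oscillations and apply \Cref{prop:weak_long_energy_convergence}, so that the limit energy reduces to $\frac12\int_{\R^3\times\Sph^2}\Psi(\nu)\cdot\dd{\mu_{\Hm}(x,\nu)}$. The task is then to find an $\Hm$-measure, generated by admissible oriented surface areas, that makes this integral negative. The natural candidate comes from a simple oscillation, as in (\ref{eq:example_weak_long}):
\begin{equation*}
	\of_\rho(x)=\rho^3\sin(\rho^{-1/2}\,x\cdot\nu_0)\,\psi(x)\,F
\end{equation*}
for a fixed direction $\nu_0\in\Sph^2$, a cutoff $\psi\in\ccinf(\Omega)$ and a fixed symmetric matrix $F$. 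Then $\widetilde{\of_\rho}$ is, up to an $o(1)$ error in $\lp^2$, the function $\sin(\rho^{-1/2}x\cdot\nu_0)\psi(x)F$. This converges weakly to $0$, satisfies (\ref{eq:weak_long}) because its gradient is $O(\rho^{-1/2})\ll\rho^{-1}$, and, by the standard computation for a single-direction oscillation, generates
\begin{equation*}
	\mu_{\Hm}=\tfrac14\abs{\psi}^2\,\dd x\otimes(\delta_{\nu_0}+\delta_{-\nu_0})\,F\otimes F .
\end{equation*}
The limit energy is therefore $\frac14\norm{\psi}_{\lp^2}^2\,\Psi(\nu_0)F\colon F$, using $\Psi(\nu)=\Psi(-\nu)$, which follows from $\hat M(\xi)=\hat M(-\xi)$ and the lattice symmetry.

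To realize this $\of_\rho$ with actual loops satisfying \ref{item:l2BoundOnArea} and \ref{item:wellSeperated}, at each $x\in\lattice_\rho\cap\Omega$ I will place generalized dislocation arrays: a sum of at most finitely many planar circular loops of radius $r=\rho^2$, with fixed Burgers vectors and normals, weighted by coefficients $\lambda_i(x)$. For planar loops $b\otimes n$ times area $\pi r^2$, and since rank-one matrices $b\otimes n$ span $\R^{3\times3}$, any $F$ can be written as $\sum_i\lambda_i\, b_i\otimes n_i$. The weights needed are of size $\rho^3/r^2=\rho^{-1}$ times a bounded factor. Under this choice \ref{item:l2BoundOnArea} amounts to the $\lp^2$ bound on $\widetilde{\of_\rho}$, which holds, and $r\in o(\rho)$ gives \ref{item:wellSeperated}. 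If only genuine loops were wanted, one could instead use $N$ concentric loops, but the generalized arrays are explicitly allowed.

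The main obstacle is showing that $\Psi(\nu_0)F\colon F<0$ for some $\nu_0$ and symmetric $F$; this is exactly where cubic symmetry and isotropy enter. Write $\Psi=\hat M-\fint_{\Sph^2}\hat M+S$. The first term is negative semidefinite by \Cref{lem:symmetry_properties}, and by (\ref{eq:fourer_M_on_matrices}) it vanishes on $F$ whenever $\eltensor F\,\nu_0=0$. I will compute $\fint_{\Sph^2}\hat M$ explicitly using (\ref{eq:fourier_K_formula}). For the lattice sum $S$, the plan is to argue that cubic symmetry forces $S$ to be an isotropic-type fourth-order tensor invariant under the cubic group, and that tracelessness-type identities (it arises from a regularization of a $-3$-homogeneous kernel with vanishing spherical mean) pin down its contribution. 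The result should be an explicit quadratic form in $F$.

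First I will test the choice $\nu_0=e_3$, $F=e_1\otimes e_2+e_2\otimes e_1$, for which $\eltensor F\nu_0=0$, so that $\hat M(\nu_0)F=0$. The energy then reduces to $(-\fint\hat M+S)F\colon F$, and $-\fint\hat M$ is positive semidefinite, which works against us. If this sign does not come out negative, I will instead take $F$ maximizing $-\hat M(\nu_0)F\colon F$, for example $F=e_3\otimes e_3$ with $\nu_0=e_3$, and exploit the fact that the average over the sphere is strictly smaller in magnitude than the extremal direction. If needed, I will also optimize over all $\nu_0$ and use a superposition of several directions, which gives a measure $\sum_k\delta_{\pm\nu_k}F_k\otimes F_k$. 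The remaining work is the explicit evaluation of $S$ for the cubic lattice, which I expect to require a careful symmetry argument rather than numerics.
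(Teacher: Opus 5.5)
Your construction is essentially the paper's, and it is fine. You use a single-direction oscillation $\sin(\rho^{-1/2}x\cdot\nu_0)\psi(x)F$ at an intermediate scale. Its $\Hm$-measure is $\frac14\abs{\psi}^2\,\dd x\otimes(\delta_{\nu_0}+\delta_{-\nu_0})\,F\otimes F$. You realize it by small circular loops with $r=o(\rho)$ and real weights, and then apply \Cref{prop:weak_long_energy_convergence}. This reduces the corollary to finding $\nu_0$ and $F$ with $\Psi(\nu_0)F\colon F<0$.

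\textbf{The gap.} You never establish that inequality, and the route you propose for it cannot work. Cubic symmetry and isotropy determine $S$ only up to a single scalar, $S=\lambda S_0$, with $S_0$ the fixed tensor in (\ref{eq:structure_of_lattice_sum}). Concretely, $S(e_i\otimes e_i)\colon e_i\otimes e_i=\lambda$, and $S(e_i\otimes e_j)\colon e_i\otimes e_j=-\lambda/2$ for $i\neq j$. The sign of $\lambda$ is a conditionally convergent lattice sum, and no symmetry argument fixes it. As a result, both of your test cases remain undecided:
\begin{itemize}
\item For $\nu_0=e_3$ and $F=e_1\otimes e_2+e_2\otimes e_1$ you get $-\fint_{\Sph^2}\hat M F\colon F-2\lambda$, a positive number minus $2\lambda$. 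Moreover, $e_3$ is the worst direction for this $F$: since $\hat M\le 0$, the value $\hat M(e_3)F\colon F=0$ is the maximum.
\item For $F=e_3\otimes e_3$ with a minimizing $\nu_0$ you get a strictly negative term plus $\lambda$.
\end{itemize}
Superposing several directions does not help either. For fixed $F$ it only averages $\Psi(\nu)F\colon F$, so it cannot beat the best single direction.

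\textbf{The missing idea.} You never need the value of $\lambda$, only the fact that $S$ is zero or indefinite. Choose $F$ according to the sign of $\lambda$:
\begin{itemize}
\item if $\lambda\le 0$, take $F=e_i\otimes e_i$;
\item if $\lambda\ge 0$, take $F=e_i\otimes e_j$ with $i\neq j$.
\end{itemize}
In either case $SF\colon F\le 0$. For both types of $F$, the function $\nu\mapsto\hat M(\nu)F\colon F$ is non-constant on $\Sph^2$; this is a direct computation from (\ref{eq:fourier_K_formula}), see \Cref{lem:M_not_constant}. So you can pick $\nu_0$ where this function lies strictly below its spherical mean. Then $\Psi(\nu_0)F\colon F=(\hat M(\nu_0)-\fint_{\Sph^2}\hat M)F\colon F+SF\colon F<0$, which is what the paper does in \Cref{lem:indefiniteness}.

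This is also what your "tracelessness" remark should be used for. We have $\fint_{\Sph^2}\Psi=S$, and $S$ has zero trace. If every $\Psi(\nu)$ were positive semidefinite, $S$ would be positive semidefinite with zero trace, hence $S=0$. Then $\hat M(\nu)-\fint_{\Sph^2}\hat M\ge 0$ with mean zero, which forces $\hat M$ to be constant, a contradiction.
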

The proof of \Cref{cor:negative_interaction} is the content of \Cref{sct:relaxation}.

\begin{remark}
	\label{rmk:assumptions}
	We want to remark on the assumptions \ref{item:l2BoundOnArea} and \ref{item:wellSeperated}.
	
	The bound \ref{item:l2BoundOnArea} is a stronger version of the $ \lp^{ 2 } $-boundedness of $ \widetilde{ \of_{ \rho } } $ since
	\begin{equation}
		\label{eq:l2_bound_weaker}
		\sum_{ x \in \lattice_{ \rho } } \frac{1}{\rho^{ 3 } }
		\left(\int_{ S_{ \rho } ( x ) } \abs{ b_{ \rho } ( x' ) } \dd{ \hm^{ 2 } ( x'  ) } \right)^{ 2 }
		\geq
		\sum_{ x \in \lattice_{ \rho } }
		\frac{1}{\rho^{ 6 }} 
		\int_{ x + \rho U } 
		\rho^{ 6 } 
		\abs{ \widetilde{ \of_{ \rho} } ( y ) }^{ 2 }
		\dd{ y }
		=
		\int_{ \R^{ 3 } }
		\abs{ \widetilde{ \of_{ \rho} } ( y ) }^{ 2 }
		\dd{ y }.
	\end{equation}
	From a physical and variational perspective, it would be more natural to assume a priori only boundedness of the total interaction energy $ \interactionEnergy_{ \rho } $ and deduce the $ \lp^{ 2 } $-boundedness of the sequence $ \disdens_{ \rho } $. However, since we subtracted the self-energy from the total elastic energy, we cannot expect any coercivity and have to introduce it artificially. 
	In fact, looking at the approximation of the interaction energy (\ref{eq:interaction_energy_approximation}), we see that from the boundedness of the total interaction energy, we may at best expect a bound for the oriented surface area of the dislocation loops. But an estimate of the form
	\begin{equation}
		\label{eq:non_estimate}
		\hm^{ 2 } ( S )
		\leq
		C 
		\abs{ \int_{ S } n 
			\dd{ \hm^{ 2 } } }
	\end{equation}
	can not hold since the right hand side might be zero, while the left hand side is not, see Figure \ref{fig:or_surf_area_zero} for an example of such a surface whose boundary is given by a closed curve which violates (\ref{eq:non_estimate}). Furthermore, since the oriented surface area only depends on $ \partial S $, the choice of the surface $ S $ does not affect the failure of inequality (\ref{eq:non_estimate}). 
	The precise scaling of the norm comes from the passage from the discrete to the continuum setting as will become apparent in the proof of \Cref{prop:energy_asymptotics}.
	\begin{figure}
		
		
		\tikzset{every picture/.style={line width=0.75pt}} 
		
		\begin{tikzpicture}[x=0.75pt,y=0.75pt,yscale=-1,xscale=1]
			\path (65,0); 
			
			\draw    (220,200) -- (370,200) ;
			\draw    (220,160) -- (370,160) ;
			\draw  [dash pattern={on 0.84pt off 2.51pt}]  (220,160) -- (220,200) ;
			\draw  [dash pattern={on 0.84pt off 2.51pt}]  (370,160) -- (370,200) ;
			\draw    (370,160) -- (440,70) ;
			\draw    (370,200) -- (440,110) ;
			\draw  [dash pattern={on 0.84pt off 2.51pt}]  (440,70) -- (440,110) ;
			\draw    (290,110) -- (440,110) ;
			\draw [fill={rgb, 255:red, 65; green, 117; blue, 5 }  ,fill opacity=1 ]   (290,70) -- (440,70) ;
			\draw    (290,70) -- (290,110) ;
			\draw    (290,60) -- (220,200) ;
			\draw    (290,20) -- (220,160) ;
			\draw    (290,20) -- (290,60) ;
			\draw    (410,130) -- (438,130) ;
			\draw [shift={(440,130)}, rotate = 180] [color={rgb, 255:red, 0; green, 0; blue, 0 }  ][line width=0.75]    (10.93,-3.29) .. controls (6.95,-1.4) and (3.31,-0.3) .. (0,0) .. controls (3.31,0.3) and (6.95,1.4) .. (10.93,3.29)   ;
			\draw    (360,90) -- (378.59,71.41) ;
			\draw [shift={(380,70)}, rotate = 135] [color={rgb, 255:red, 0; green, 0; blue, 0 }  ][line width=0.75]    (10.93,-3.29) .. controls (6.95,-1.4) and (3.31,-0.3) .. (0,0) .. controls (3.31,0.3) and (6.95,1.4) .. (10.93,3.29)   ;
			\draw    (300,180) -- (281.41,198.59) ;
			\draw [shift={(280,200)}, rotate = 315] [color={rgb, 255:red, 0; green, 0; blue, 0 }  ][line width=0.75]    (10.93,-3.29) .. controls (6.95,-1.4) and (3.31,-0.3) .. (0,0) .. controls (3.31,0.3) and (6.95,1.4) .. (10.93,3.29)   ;
			\draw    (260,100) -- (232,100) ;
			\draw [shift={(230,100)}, rotate = 360] [color={rgb, 255:red, 0; green, 0; blue, 0 }  ][line width=0.75]    (10.93,-3.29) .. controls (6.95,-1.4) and (3.31,-0.3) .. (0,0) .. controls (3.31,0.3) and (6.95,1.4) .. (10.93,3.29)   ;

		\end{tikzpicture}

		
		\caption{An example of a non-trivial oriented surface whose oriented surface area is zero and its boundary is a closed curve.}
		
		\label{fig:or_surf_area_zero}
	\end{figure}
	
	The well-separatedness assumption \ref{item:wellSeperated} serves to differentiate the microscopic and macroscopic scales of the problem. Furthermore it enables us to treat the interaction of the dislocation loops as the interaction of two point forces. If the dislocation loops were too close to each other, then the interatomic forces will play a role in their interaction energy. 
\end{remark}

\begin{remark}
	\label{rmk:literature_comparison}
	We want to put our result into the context of the literature. In \cite{conti_garroni_ortiz_the_line_tension_approximation_as_the_diluate_limit_of_linear_elastic_dislocations} the authors consider a core-cutoff and core-mollification model for dilute dislocations represented by measures
	\begin{equation*}
		\mu = \sum_{ i } b_{ i } \otimes \tau \hm^{ 1 } \llcorner_{ \gamma_{ i } },
	\end{equation*}
	where the Burgers vectors have been rescaled to be in a fixed 
	atomic lattice $ \mathcal{B} $. They consequently show $ \Gamma 
	$-convergence  of the elastic energy, after a core-cutoff 
	respectively a core mollification, to a line tension energy $ F_{ 0 
	} $ under the topology of weak convergence of measures. Moreover 
	the energy has been rescaled by $ \abs{\log \eps } $.
	If $ \mu $ is like in our model of the form
	\begin{equation*}
		\label{eq:dilute_measure}
		\mu = \sum_{ x \in \lattice_{ \rho } \cap \Omega }
		b_{ \rho } ( x ) \otimes 
		\tau \hm^{ 1 } \llcorner_{ \gamma_{ \rho } ( x ) }
	\end{equation*}
	with dislocation loops of size $ r \ll \rho $ and unit length Burgers vector $ b_{ \rho } $, then
	\begin{equation}
		\label{eq:mass_mu}
		\abs{ \mu }
		\approx
		r \# ( \lattice_{ \rho } \cap \Omega )
		\approx
		\frac{r}{\rho^{ 3 } }.
	\end{equation}
	The $ \lp^{ 2 } $-norm of the associated surface area (see \ref{item:l2BoundOnArea}) is 
	\begin{equation}
		\label{eq:l2_area}
		\sum_{ x \in \lattice_{ \rho } \cap Q }
		\frac{1}{\rho^{ 3 } }
		(r^{ 2 })^{ 2 }
		\approx 
		\frac{r^{ 4 }}{ \rho^{ 6 } }
		=
		\left( \frac{r}{\rho} \right)^{ 3 } \frac{r}{ \rho^{ 3 } },
	\end{equation}
	which tends to zero if $ \abs{ \mu } $ stays bounded because of the well-separatedness assumption \ref{item:wellSeperated} which yields $ r \ll \rho $. Thus it follows from our computations that no interaction energy should survive in the limit $ \eps \to 0 $, which coincides with the result that the $ \Gamma $-limit in \cite{conti_garroni_ortiz_the_line_tension_approximation_as_the_diluate_limit_of_linear_elastic_dislocations} is the line-tension energy and does not consider any interaction energy of the dislocations.
	
	By considering a different scaling of the measure, but with the 
	same setup as in 
	\cite{conti_garroni_ortiz_the_line_tension_approximation_as_the_diluate_limit_of_linear_elastic_dislocations},
	we could hope to recover a higher-order $ \Gamma $-limit. The 
	two-dimensional analogue is the paper  
	\cite{garroni_leoni_ponsiglione_2010_gradient_theory_for_plasticity_via_homogenization_of_discrete_dislocations}.
	This would lead to the consideration of measures of the form as in 
	equation (\ref{eq:dilute_measure}).
	However we now say that $ \mu_{ \eps } 
	\to \mu $ if 
	\begin{equation*}
		\frac{1}{ \abs{\log ( \eps ) } }
		\mu_{ \eps }
		\rightharpoonup^{ \ast }
		\mu
	\end{equation*}
	and the elastic core-cutoff/core-mollification energy is rescaled 
	by $ \abs{ \log ( \eps ) }^{ 2 } $. 
	We then hope to recover a $ 
	\Gamma $-limit which both considers the self- and interaction 
	energy. 
	In order to compare this to our result, we note that if the radius 
	of the dislocation loops $ r $ and their distance $ \rho $ are 
	chosen such that $ ( \rho / r  )^{ 3 } = \abs{ \log( \eps ) } $, 
	then both $ \mu / \abs{ \log( \eps ) } $ and the $ \lp^{ 2 } $-norm 
	of the associated surface area may stay bounded, see the above 
	equations (\ref{eq:mass_mu}) and (\ref{eq:l2_area}). 
	This gives rise to potentially non-trivial limits for both 
	energies. It will be an upcoming task to compare these energies.
\end{remark}

\section{Dislocation Loops, Micromagnetics and Oscillations}
\label{sct:from_loops_to_surfaces}

The aim of this section is to first approximate the total interaction energy (\ref{eq:def_total_interaction_energy}) in terms of the oriented surface areas of the dislocation loops, see \Cref{prop:energy_asymptotics}. 
Afterwards we make a quick remark about the case when $ (\of_{ \rho } )_\rho $ converges strongly in $ \lp^{ 2 } ( \R^{ 3 } ; \R^{ 3 \times 3 } ) $.

Next we consider the case of $ (\disdens_{ \rho })_\rho  $ only having 
long-range oscillations as in (\ref{eq:weak_long}). We first pass to a 
double integral by piecewise constant extensions and by applying the 
Fourier transform. Due to the critical regularity of the kernel $ M $ 
however, we have to be careful at this step and crucially use that only 
long-range oscillations occur. We use ideas from 
\cite{james_mueller_internal_variables_and_fine_scale_oscillations_in_micromagnetics}.

\subsection{Interaction Energy via the Oriented Surface Area}

\label{sct:interaction_energy_via_oriented_surface_area}

\begin{proof}[Proof of \Cref{prop:energy_asymptotics}]
	Given the setting of \Cref{prop:energy_asymptotics}, we can use the definition of the interaction energy (\ref{eq:interaction_energy_non_regular}) and the total interaction energy (\ref{eq:def_total_interaction_energy}) to deduce 
	\begin{equation*}
		\interactionEnergy ( \disdens_{ \rho } )
		=
		\frac{ 1 }{ 2 }
		\sum_{ \substack{x, y \in \lattice_{ \rho } \\ x \neq y } }
		\int_{ S_{ \rho } ( x ) }
		\int_{ S_{ \rho } ( y ) }
		b_{ \rho } ( x' ) \otimes n ( x' ) 
		\colon 
		M ( x' - y' ) b_{ \rho } ( y' ) \otimes n ( y' )
		\dd{ \hm^{ 2 } ( y' ) }
		\dd{ \hm^{ 2 } ( x' ) }.
	\end{equation*}
	By approximating $ M ( x' - y' ) $ via $ M ( x - y ) $ and using the definition (\ref{eq:def_or_area_function}) of $ \of_{ \rho } $, we get that the total interaction energy is approximated by
	\begin{align}
		\label{eq:interaction_energy_two_loops_approx}
		\interactionEnergy ( \disdens_{ \rho } ) 
		& \approx  
		\frac{1 }{ 2 }
		\sum_{ \substack{x, y \in \lattice_{ \rho } \\ x \neq y } }
		\left( \int_{ S_{ \rho } ( x ) } b_{\rho } ( x' ) \otimes n ( x' ) \dd{ \hm^{ 2 } ( x' ) } 
		\right)
		\colon
		M ( x - y ) 
		\left( \int_{ S_{ \rho } ( y ) } b_{\rho } ( y' ) \otimes n ( y' ) \dd{ \hm^{ 2 } ( y' ) } 
		\right)
		\\
		\notag
		& =
		\frac{ 1 }{ 2 }
		\sum_{ \substack{x, y \in \lattice_{ \rho } \\ x \neq y } }
		\of_{ \rho } ( x ) \colon M ( x - y ) \of_{ \rho } ( y ),
	\end{align}
	which is what we wanted to show.
	It remains to prove that the error vanishes in the limit $ \rho \to 0 $.
	By the well-separatedness condition \ref{item:wellSeperated}, the definition of $ r > 0 $ there within and the $-4$-homogeneity of $ \diff M $, we have for all $ x \neq y \in \lattice_{ \rho } $ and $ x' \in S_{ \rho } ( x ) , y' \in S_{ \rho } ( y ) $ that
	\begin{align*}
		\abs{ M ( x - y ) - M ( x' - y' ) }
		& \leq
		\sup_{ \xi \in [x-y, x'-y']}
		\abs{ \diff M ( \xi ) }
		\abs{ x - y - ( x' - y' ) }
		\\
		& \lesssim
		(\abs{ x - y } - 2r)^{ -4 } r.
	\end{align*}
	Again using the well-separatedness condition \ref{item:wellSeperated}, we have for all $ x, y \in \lattice_{ \rho } $ with $ x \neq y $ that $ \abs{x - y } - 2r \gtrsim \abs{ x - y } $. Therefore the error we make in equation (\ref{eq:interaction_energy_two_loops_approx}) is bounded from above up to a constant by
	\begin{equation}
		\label{eq:error_latice_form}
		r \sum_{ \substack{x, y \in \lattice_{ \rho } \\ x \neq y } }
		\abs{ x - y }^{ -4 } 
		\int_{ S_{ \rho } ( x ) }\abs{ b_{ \rho } ( x' ) } \dd{ \hm^{ 2 } ( x' ) }  
		\int_{ S_{ \rho } ( y ) }\abs{ b_{ \rho } ( y' ) } \dd{ \hm^{ 2 } ( y' ) }.
	\end{equation}
	Applying the Cauchy--Schwarz inequality and Young's integral inequality to the term (\ref{eq:error_latice_form}) yields that the error can be estimated up to a constant by
	\begin{equation*}
		r 
		\sum_{ x \in \lattice_{ \rho } \setminus \{0\}}
		\abs{x}^{ - 4 }
		\sum_{ x \in \lattice_{ \rho } } 
		\left( \int_{ S_{ \rho } ( x ) } \abs{ b_{ \rho } ( x' ) } \dd{ \hm^{ 2 } ( x' ) } \right)^{ 2 }
		\lesssim
		\frac{ r }{ \rho } \sum_{ x \in \lattice_{ \rho } } \frac{1}{ \rho^{ 3 } }\left( \int_{ S_{ \rho } ( x ) } \abs{ b_{ \rho } ( x' ) } \dd{ \hm^{ 2 } ( x' ) } \right)^{ 2 },
	\end{equation*}
	which tends to zero by the boundedness of the surface areas \ref{item:l2BoundOnArea} and the well-separatedness condition \ref{item:wellSeperated}. This finishes the proof.
\end{proof}

\subsection{Macroscopic Integral Operator and Strong Convergence}

Using the asymptotic representation of the total interaction energy derived in \Cref{prop:energy_asymptotics}, we make the following observation. The total interaction energy is similar to the energy (6.1) in \cite{james_mueller_internal_variables_and_fine_scale_oscillations_in_micromagnetics} which describes the total energy of dipoles on a lattice. One difference from the dislocation setting is that the oriented surface area function $ \of_{ \rho } $ is $ \R^{ 3 \times 3 } $-valued in contrast to the $ \R^{ 3 } $-valued dipole function. The kernel in the dipole setting is $ -3 $-homogeneous (to be precise, it is the second derivative of the fundamental solution to the Laplace equation in 3 dimensions) and $ \R^{ 3 \times 3 } $-valued. Since our kernel $ M $ is also $ - 3 $-homogeneous, but $ \R^{ (3\times 3) \times ( 3 \times 3 ) } $-valued, it should be possible to analyze the asymptotic behaviour of the total interaction energy by using the tools from \cite{james_mueller_internal_variables_and_fine_scale_oscillations_in_micromagnetics}.

In \cite[Thm.~6.2]{james_mueller_internal_variables_and_fine_scale_oscillations_in_micromagnetics}, the authors show that there exists some $ C \in \R^{ 3\times 3} $ such that if the array of dipoles $ f_\rho $ converges strongly in $ \lp^{ 2 } ( \R^{ 3 } ; \R^{ 3 } ) $ to some $ f \in \lp^{ 2} ( \R^{ 3 } ; \R^{ 3 } ) $, then the total interaction energy converges to
\begin{equation}
	\label{eq:energy_convergence_strong_case}
	\lim_{ \rho \to 0 } 
	\interactionEnergy_\rho
	=
	\frac{ 1 }{ 2 }
	\inner*{f}{M \ast f + C f }_{ \lp^{ 2 } }.
\end{equation}
Note that because $ M $ is not locally integrable, the convolution $ M \ast f $ is a priori not well-defined. The term is understood in a distributional sense by using that $ M $ can be written as the derivative of a $-2 $-homogeneous function, which is locally integrable. Since we are going to adapt a Fourier picture of the setting, we are not going to delve into the details, for which we refer to (\cite[Sct.~2 and Rmk.~6.3]{james_mueller_internal_variables_and_fine_scale_oscillations_in_micromagnetics}).

\subsection{Weak-Long Oscillations}
\label{sct:weak_long}

Because of the quadratic nature of the limit in equation (\ref{eq:energy_convergence_strong_case}), we do not expect that without strong convergence of the function $ \widetilde{ \of_{ \rho } } $, this result still holds. Since we exclude escape to infinity by confining the loops to a bounded domain, we can only get weak convergence without strong convergence if oscillations occur, like in equation (\ref{eq:example_weak_long}). If the oscillations of $ \widetilde{\of_{ \rho }} $ take place on a scale much larger than the scale of the lattice, we might expect that the macroscopic operator $ (M + C \delta_0 ) \ast $ in equation (\ref{eq:energy_convergence_strong_case}) still plays a role for the limit of the total interaction energy.

In the case of oscillations on the lattice however, we do not expect that the macroscopic operator can be used to give us information about the limiting behaviour of the total interaction energy. We will instead rescale the lattice and use the Fourier series of $ \of_{ \rho } $ to analyze the limiting behaviour, see \Cref{sct:weak_short}.

The following Lemma is an equivalent of \cite[Prop.~7.1]{james_mueller_internal_variables_and_fine_scale_oscillations_in_micromagnetics}. We will use slightly different methods for the proof by adapting a Fourier picture.

\begin{lemma}
	\label{lem:weak_long_energy_rewritten}
	Let $ \disdens_{ \rho } =\left( b_{ \rho } ( x ) \otimes \tau \hm^{ 1 } \llcorner_{ \gamma_{ \rho } ( x ) } \right)_{ x \in \lattice_{ \rho } } $ be a sequence of arrays of dislocation loops which satisfies assumptions \ref{item:l2BoundOnArea},  \ref{item:wellSeperated} and only has long-range oscillations (\ref{eq:weak_long}). Then
	\begin{equation}
		\lim_{ \rho \to 0 }
		\interactionEnergy(\disdens_{ \rho })
		=
		\lim_{ \rho \to 0 }
		\frac{ 1 }{ 2 } 
		\inner*{ \fourier \widetilde{ \of_{ \rho } } }{\Psi \fourier \widetilde{ \of_{ \rho } } }_{ \lp^{2 } ( \R^{ 3 } ) } 
		.
	\end{equation}
	Here $ \Psi \colon \R^{ 3 } \setminus \{ 0 \}  \to \R^{ (3 \times 3 ) \times ( 3 \times 3 )} $  is a smooth, $ 0 $-homogeneous function defined in (\ref{eq:Psi_def}) which only depends on the lattice $ \lattice_{ 1 } $ and the elastic tensor $ \eltensor $.
\end{lemma}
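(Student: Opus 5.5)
We start from \Cref{prop:energy_asymptotics}, which reduces the problem to the lattice sum
\begin{equation*}
	E_\rho \coloneqq \frac12 \sum_{\substack{x,y\in\lattice_\rho\\ x\neq y}} \of_\rho(x)\colon M(x-y)\of_\rho(y).
\end{equation*}
Since $M$ is $-3$-homogeneous, $M(x-y) = \rho^{-3} M((x-y)/\rho)$. We rewrite $E_\rho$ as a double integral of the piecewise constant extensions against a kernel $M_\rho$ that is constant on cells $\rho U \times \rho U$:
\begin{equation*}
	E_\rho = \frac12\int\int \widetilde{\of_\rho}(x')\colon M_\rho(x'-y')\widetilde{\of_\rho}(y')\dd{x'}\dd{y'},
\end{equation*}
where $M_\rho(z) = \sum_{w \in \lattice_\rho\setminus\{0\}} M(w)\,(\chi_{\rho U}\ast\chi_{-\rho U})(z-w)$. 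The next step is to compare $M_\rho$ with the truncated continuum kernel $\chi_{\R^3\setminus B_{c\rho}}M$. Their difference has two parts:
\begin{itemize}
	\item a far-field part, controlled by $|\diff M|\rho \lesssim \rho|z|^{-4}$, which is integrable away from $0$;
	\item a near-field part of size $O(\rho^{-3})$ supported in $B_{C\rho}$.
\end{itemize}
Writing $M_\rho = \chi_{\R^3\setminus B_{c\rho}}M + R_\rho$, the remainder $R_\rho$ is an $L^1$-kernel of bounded mass that concentrates at scale $\rho$. Hence $\fourier R_\rho(\xi) = \fourier R_1(\rho\xi)$, and this tends pointwise to the constant matrix $S\coloneqq\fourier R_1(0)$, which is the lattice sum correction.

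For the continuum part we use that $\fourier(\chi_{\R^3\setminus B_{c\rho}}M)(\xi) = m(\rho\xi)$ with $m$ bounded, and $m(\rho\xi)\to \fourier M(\xi) - \fint_{\Sph^2}\fourier M$ as $\rho\to0$. This is the standard principal value fact for $-3$-homogeneous kernels whose Fourier transform is $0$-homogeneous; the spherical mean appears because the truncated kernel has zero-mean behaviour near the origin. We therefore define
\begin{equation}
	\label{eq:Psi_def}
	\Psi \coloneqq \fourier M - \fint_{\Sph^2}\fourier M + S.
\end{equation}
By Plancherel, $E_\rho = \frac12\langle \fourier\widetilde{\of_\rho}, (m(\rho\cdot)+\fourier R_1(\rho\cdot))\fourier\widetilde{\of_\rho}\rangle$. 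It remains to show that replacing the multiplier by $\Psi$ produces an error that vanishes.

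\textbf{Main obstacle.} The convergence of the multiplier to $\Psi$ is only pointwise and uniform on $\{|\xi|\le \delta/\rho\}$ for fixed small $\delta$; it fails at frequencies $|\xi|\gtrsim \rho^{-1}$, while $\fourier\widetilde{\of_\rho}$ is merely $L^2$-bounded. This is where the weak-long condition (\ref{eq:weak_long}) is needed. By Plancherel,
\begin{equation*}
	\|\widetilde{\of_\rho}(\cdot+h)-\widetilde{\of_\rho}\|_{L^2}^2 = \int |e^{2\pi i h\cdot\xi}-1|^2|\fourier\widetilde{\of_\rho}|^2\dd{\xi}.
\end{equation*}
Averaging over $|h|\le\rho$ gives a lower bound $\gtrsim \int_{|\xi|\ge \delta/\rho}|\fourier\widetilde{\of_\rho}|^2$ for every fixed $\delta$, with constants depending on $\delta$. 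Hence (\ref{eq:weak_long}) implies $\int_{|\xi|\ge\delta/\rho}|\fourier\widetilde{\of_\rho}|^2\to0$: the high frequencies carry vanishing mass. On $|\xi|\le\delta/\rho$ the multipliers differ from $\Psi$ by at most $\omega(\delta)$, a modulus coming from the smoothness of $m$ and $\fourier R_1$ at $0$ in the rescaled variable. The multipliers are uniformly bounded, so the total error is $\lesssim \omega(\delta)\sup_\rho\|\widetilde{\of_\rho}\|_{L^2}^2 + o(1)$. Letting $\rho\to0$ and then $\delta\to0$, using \ref{item:l2BoundOnArea} via (\ref{eq:l2_bound_weaker}), finishes the proof.

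Finally, we verify that $\Psi$ is $0$-homogeneous and smooth on $\R^3\setminus\{0\}$, which is immediate from \Cref{lem:symmetry_properties}, and that it depends only on $\lattice_1$ and $\eltensor$.
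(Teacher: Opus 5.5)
\textbf{There is a genuine gap, and it is in your first step, on which everything else rests.} The piecewise constant extensions give
$E_\rho=\frac12\int\int\widetilde{\of_{\rho}}(x)\colon\tilde M^{(\rho)}(x,y)\,\widetilde{\of_{\rho}}(y)\dd{x}\dd{y}$
with the cell kernel $\tilde M^{(\rho)}(x,y)=\sum_{v\neq w}\chi_{v+\rho U}(x)M(v-w)\chi_{w+\rho U}(y)$.

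\emph{This kernel is not translation invariant.} It depends on which cells contain $x$ and $y$, not only on $x-y$. Indeed, a function of $x-y$ that is constant on all products of cells must be constant.

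\emph{Your $M_\rho$ does not reproduce $E_\rho$.} Up to a missing factor $\rho^{-3}$, your $M_\rho$ is the average of $\tilde M^{(\rho)}$ over simultaneous translations $(x,y)\mapsto(x+t,y+t)$. The tents $(\chi_{\rho U}\ast\chi_{-\rho U})(\cdot-w)$ for neighbouring $w$ overlap. Consequently $\int_{v+\rho U}\int_{v'+\rho U}M_\rho(x-y)\dd{x}\dd{y}$ is $\rho^6$ times a weighted average of $M(w)$ over several $w$ near $v-v'$, rather than $\rho^6M(v-v')$. In general it also does not vanish for $v=v'$, so it creates self-interaction terms that $E_\rho$ does not contain.

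Therefore the multiplier formula for $E_\rho$, the scaling $\fourier R_\rho(\xi)=\fourier R_1(\rho\xi)$ and the identification $S=\fourier R_1(0)$ all concern a different quadratic form. That form does have the same weak-long limit: the difference of the two lattice kernels is summable with total sum zero, and (\ref{eq:weak_long}) then kills the difference. But that is exactly the argument that is missing.

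\textbf{How the paper handles this point.} It splits $\tilde M^{(\rho)}=M^{(\rho)}+(\tilde M^{(\rho)}-M^{(\rho)})$ with $M^{(\rho)}=(1-\eta(\cdot/\rho))M$, and uses Plancherel only for the convolution kernel $M^{(\rho)}$. The non-translation-invariant part is treated in physical space:
\begin{itemize}
\item it has a majorant $\rho^{-3}h((x-y)/\rho)$ with $h\in\lp^1$;
\item its row integrals $\int(\tilde M^{(\rho)}-M^{(\rho)})(x,y)\dd{y}$ equal the lattice sum $S$ for every $x$, using $\int_{\Sph^2}M=0$;
\item the remainder $\int(\tilde M^{(\rho)}-M^{(\rho)})(x,y)(\widetilde{\of_{\rho}}(y)-\widetilde{\of_{\rho}}(x))\dd{y}$ vanishes in $\lp^2$ by Minkowski's inequality and (\ref{eq:weak_long}).
\end{itemize}
So $S$ arises in physical space, not as $\fourier R_1(0)$.

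\textbf{What in your proposal is correct.} Your frequency splitting is sound. Condition (\ref{eq:weak_long}) forces $\int_{\abs{\xi}\geq\delta/\rho}\abs{\fourier\widetilde{\of_{\rho}}}^2\to0$. Moreover $m(k)-\fourier M(k)+\fint_{\Sph^2}\fourier M\to0$ uniformly as $k\to0$; note that this difference, not $m$ itself, is continuous at $0$. This is a cleaner treatment of the $M^{(\rho)}$-term than the paper's $\eps$--$\rho$ mollifier argument. Combined with the paper's physical-space treatment of the cell kernel, it yields a complete proof.

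\textbf{If you want to stay entirely in Fourier space.} An exact representation does exist: $E_\rho=\frac12\langle\fourier\widetilde{\of_{\rho}},\idfs{M}(\rho\,\cdot)\fourier\widetilde{\of_{\rho}}\rangle$. It follows from \Cref{lem:firoozye_rewriting} together with $\sum_{\zeta\in\lattice_1^\ast}\abs{\fourier\chi_U(\cdot+\zeta)}^2=1$, and your splitting applies to it verbatim. However, the corresponding remainder $\sum_{u\in\lattice_1\setminus\{0\}}M(u)\delta_u-\chi_{\R^3\setminus B_c}M$ has infinite total variation, so your $\lp^1$ argument no longer applies. You would then need to prove $\sup_{0<\abs{k}\leq\delta}\abs{\idfs{M}(k)-\Psi(k)}\to0$ by a Poisson-summation analysis as in \Cref{prop:Feps_behaviour}, including identifying the limiting constant with $S$.
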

\begin{proof}
	We first rewrite the asymptotic energy representation in equation (\ref{eq:interaction_energy_two_loops_approx}) through a double integral. To this end we discretize the kernel $ M $ on the lattice $ \lattice_{ \rho } $ and introduce a suitable cutoff since we only sum over non-equal lattice points. 
	In fact let $ \eta \in \ccinf \left( \R^{ 3 } ;[0, 1] \right) $ be radially symmetric with $ \eta = 1  $ in $ B_{ \omega/2 } ( 0 ) $ and $ \eta = 0 $ outside of $ B_{ \omega } ( 0 ) $, where $ \omega > 0 $ is chosen sufficiently small such that $ \lattice_{ 1 } \setminus \{ 0 \} \subseteq \R^{ 3 } \setminus B_{ \omega } ( 0 ) $. Then we define
	\begin{align*}
		\tilde{ M }^{ ( \rho ) }
		( x , y )
		& \coloneqq 
		\sum_{ \substack{
				v, w \in \lattice_{\rho }\\ v \neq w } }
		\chi_{ v + \rho U } ( x )
		M ( v - w )
		\chi_{ w + \rho U } ( y )
		\shortintertext{and the regularized kernel}
		M^{ ( \rho ) }
		( z )
		& \coloneqq 
		( 1 - \eta ( z/\rho ) ) M ( z ).
	\end{align*}
	We first note that $ \widetilde{ \of_{ \rho } } $ is $ \lp^{ 2 } $-bounded by inequality (\ref{eq:l2_bound_weaker}) and the assumption \ref{item:l2BoundOnArea}. Using the above definitions, we can rewrite the approximate total interaction energy as
	\begin{align}
		\notag
		\frac{ 1 }{ 2 }
		\sum_{ \substack{x, y \in \lattice_{ \rho } \\ x \neq y } }
		\of_{ \rho } ( x ) \colon M ( x - y ) \of_{ \rho } ( y )
		& =
		\frac{ 1 }{ 2 }
		\inner*{ \widetilde{ \of_{ \rho } } }{ \tilde{ M }^{ (\rho) } \ast \widetilde{ \of_{ \rho } } }_{ \lp^{ 2 } } 
		\\
		\label{eq:split_of_interaction_energy}
		&=
		\frac{ 1 }{ 2 }
		\inner*{ \widetilde{ \of_{\rho} } }{ M^{ (\rho) } \ast \widetilde{ \of_{ \rho } } }_{ \lp^{ 2 } }
		+
		\frac{ 1 }{ 2 }
		\inner*{ \widetilde{ \of_{ \rho } } }{ \left(\tilde{ M }^{ (\rho ) } - M^{ (\rho) } \right)  \ast \widetilde{ \of_{ \rho } } }_{ \lp^{ 2 } }.
	\end{align}
	Let us consider the first summand in the term (\ref{eq:split_of_interaction_energy}).
	We apply the Plancherel identity to obtain
	\begin{equation*}
		\frac{ 1 }{ 2 }
		\inner*{ \widetilde{ \of_{\rho} } }{ M^{ (\rho) } \ast \widetilde{ \of_{ \rho } } }_{ \lp^{ 2 } }
		=
		\frac{ 1}{ 2 }
		\inner*{\fourier \widetilde{\of_{ \rho } } }{ \fourier M^{ (\rho ) } \fourier \widetilde{\of_{ \rho } } }_{ \lp^{ 2 } } 
		=
		\frac{ 1 }{ 2 }
		\inner*{\fourier \widetilde{\of_{ \rho } } }{ \fourier \left(  (1- \eta\left( \cdot /\rho \right) ) M  \right) \fourier \widetilde{\of_{ \rho } } }_{ \lp^{ 2 } }.
	\end{equation*}
	We claim that, in the sense of tempered distributions (and 
	pointwise), we have
	\begin{equation}
		\label{eq:distribution_convergence_cutoff}
		\fourier \left( ( 1- \eta ( \cdot /\rho )  ) M \right)
		\to 
		\fourier M - \fint_{ \Sph^{ 2 } }\fourier M ( \xi ) \dd{ \hm^{ 2 } ( \xi ) }.
	\end{equation}
	Note that $ \fourier M $ is uniquely associated to a smooth, $0$-homogeneous function on $ \R^{ 3 } \setminus \{ 0 \} $. In order to show the claim, we compute
	\begin{align*}
		\fourier \left(
		\varphi \left(\frac{ \cdot }{\delta } \right)M
		\right) ( \xi )
		& =
		\delta^{ n } 
		\fourier \varphi  (\delta \cdot )
		\ast
		\fourier M (\xi )
		\\
		& =
		\int_{ \mathbb{R}^{ n } }
		\fourier \varphi ( y )
		\fourier M \left(\frac{ \delta \xi - y }{ \abs{ \delta \xi - y } }\right)
		\dd{ y }
		\\
		& \to
		\int_{ \mathbb{R}^{ n } }
		\fourier \varphi ( y )
		\fourier M ( - y )
		\dd{ y }.
	\end{align*} 
	The last equality follows from the $0$-homogeneity of $ \fourier M $ and the convergence follows from the dominated convergence theorem. Since $ \varphi $ is radially symmetric, so is its Fourier transform. Additionally using the $ 0$-homogeneity of $ \fourier M $ again, we get by the coarea formula that
	\begin{align*}
		\int_{ \mathbb{R}^{ n } }
		\fourier \varphi ( y ) 
		\fourier M ( - y )
		\dd{ y }
		& =
		\int_{ 0 }^{ \infty }
		\fourier \varphi ( r )
		r^{ n - 1 }
		\int_{ \mathbb{ S }^{ n - 1 } }
		\fourier M (-yr)
		\dd{ \hm^{ n - 1 } }
		\dd{ r }
		\\
		& =
		\int_{ \mathbb{ S }^{ n - 1 } }
		\fourier M ( y )
		\dd{ \hm^{ n - 1 } ( y ) }
		\int_{ 0 }^{ \infty }
		\fourier \varphi ( r )
		r^{ n - 1 }
		\dd{ r }
		\\
		& =
		\int_{ \mathbb{ S }^{ n - 1 } }
		\fourier M ( y )
		\dd{ \hm^{ n - 1 } ( y ) }
		\frac{ 1}{ \abs{ \mathbb{ S }^{ n - 1 } } }
		\varphi ( 0 ).
	\end{align*}
	Here the last equality follows from the Fourier inversion formula.
	We thus conclude that 
	\begin{equation*}
		\lim_{ \delta \to 0 }
		\fourier \left(
		\varphi \left(\frac{ \cdot }{\delta } \right)M
		\right) ( \xi )
		=
		\fint_{ \mathbb{ S }^{ n - 1 } }
		\fourier M ( y )
		\dd{ \hm^{ n - 1 } },
	\end{equation*}
	which proves the claim (\ref{eq:distribution_convergence_cutoff}).
	
	Let $ 0 < \varepsilon < \rho $. Then we have
	\begin{align}
		\label{eq:maxwell_self_energy-convergence}
		& \inner*{\fourier \widetilde{\of_{ \rho } }}{ \fourier M^{ (\rho) }  \fourier \widetilde{\of_{ \rho } } }
		-
		\inner*{\fourier \widetilde{\of_{ \rho } }}{\left( \fourier M - (\fourier M)_{ \Sph^{ 2 }} \right) \fourier \widetilde{\of_{ \rho } } }
		\\
		\notag
		={} &
		\inner*{\fourier \widetilde{\of_{ \rho } }}{ \fourier M^{ (\varepsilon ) } \fourier \widetilde{\of_{ \rho } } }
		-
		\inner*{\fourier \widetilde{\of_{ \rho } }}{\left( \fourier M - (\fourier M)_{ \Sph^{ 2 }} \right) \fourier \widetilde{\of_{ \rho } } }
		+
		\inner*{\fourier \widetilde{\of_{ \rho } }}{\left(\fourier M^{ (\rho ) } - \fourier M^{ (\varepsilon ) }  \right) \fourier \widetilde{\of_{ \rho } } }
	\end{align}
	The difference of the first summands converges to zero if $ \rho>0 $ is fixed and $ \varepsilon \to 0 $ since $ \fourier M^{ \varepsilon }  $ is uniformly bounded in $ \lp^{ \infty } $, see also the proof of \Cref{prop:Feps_behaviour}. 
	To see that the last term vanishes, we use that $ \widetilde{ \of_{ \rho} } $ has only long-range oscillations. In fact, we argue as in \cite[(7.13)]{james_mueller_internal_variables_and_fine_scale_oscillations_in_micromagnetics}, for which we first need to show that the kernel $ M $ has the cancellation property
	\begin{equation*}
		\int_{ \Sph^{ 2 } }
		M ( x ) 
		\dd{ \hm^{ 2 } ( x ) }
		= 0,
	\end{equation*}
	which we moved to the appendix (see \Cref{lem:cancellation_of_M}). Applying Plancherel then yields that
	\begin{equation}
		\label{eq:third_summand_after_plancherel}
		\inner*{\fourier \widetilde{\of_{ \rho } }}{\left(\fourier M^{ (\rho ) } - \fourier M^{ (\varepsilon ) }  \right) \fourier \widetilde{\of_{ \rho } } }
		=
		\inner*{ \widetilde{ \of_{ \rho} }}{(M^{ ( \rho ) } -M^{ ( \eps )} ) \ast \widetilde{ \of_{ \rho} }}
	\end{equation}
	We then note that the convolution operator $ M^{ (\rho )} \ast $ is 
	uniformly bounded as $ \rho \to 0 $ as a map from $ \lp^{ 2 } $ to 
	$ \lp^2 $ by 
	\cite[Ch.~II,~Sct.~3,~Thm.~2]{stein_singular_integrals_and_differentiability_properties_of_functions}.
	
	Since $ \of_{ \rho } $ only has long-range oscillations, it follows in particular that for a radial symmetric standard mollifier $ \varphi \in \ccinf ( B_\omega ( 0 ) ) $, we have that
	\begin{equation}
		\label{eq:g_eps_replaced_with_convolution}
		\lim_{ \rho \to 0 }
		\lim_{ \eps \to 0 }
		\norm{(M^{ ( \rho ) } -M^{ ( \eps )} ) \ast (\widetilde{ \of_{ 
					\rho} } - \varphi_{ \rho } \ast \widetilde{ \of_{ \rho} } ) }_{ 
			\lp^{ 2 } ( \R^{ 3 } ) } 
		=
		0.
	\end{equation}
	Moreover we note that by Minkowski's integral inequality and because $ M^{ ( \rho ) } - M^{ ( \eps ) } $ is supported in $ B_{ \omega \rho } ( 0 ) $, we have that
	\begin{align}
		\notag
		&\int_{ \R^{ 3 } }
		\abs{
			\varphi_\rho \ast ( M^{ (\rho )} - M^{ ( \eps ) } ) \ast \widetilde{ \of_{ \rho} }
		}^{ 2 }
		\dd{ x }
		\\
		\notag
		= {} &
		\int_{ \R^{ 3 } }
		\abs{ \int_{ B_{ 2\omega \rho } ( 0 ) }  \varphi_{ \rho } \ast ( M^{ ( \rho ) } - M^{ ( \eps ) } )( y ) \widetilde{ \of_{ \rho} } ( x - y )
			\dd{ y } }^2
		\dd{ x }
		\\
		\notag
		={} &
		\int_{ \R^{ 3 } }
		\abs{ \int_{ B_{ 2 \omega \rho } ( 0 ) }  \varphi_{ \rho } \ast ( M^{ ( \rho ) } - M^{ ( \eps ) } )( y ) ( \widetilde{ \of_{ \rho} } ( x - y ) - \widetilde{ \of_{ \rho} } ( x ) ) \dd{ y }
		}^2
		\dd{ x }
		\\
		\label{eq:kernel_computation}
		\leq {} &
		\left( \int_{ B_{ 2 \omega \rho } ( 0 ) }
		\abs{ \varphi_{ \rho } \ast ( M^{ ( \rho ) } - M^{ ( \eps ) } ) ( y ) } 
		\left(
		\int_{ \R^{ 3 } } 
		\abs{ \widetilde{ \of_{ \rho} } ( x - y ) - \widetilde{ \of_{ \rho} } ( x ) }^{ 2 }
		\dd{ x }
		\right)^{1/2}
		\dd{ y } 
		\right)^2.
	\end{align} 
	Since $ \of_{ \rho} $ has only long-range oscillations, it thus suffices to show that $ \varphi_{ \rho } \ast ( M^{ ( \rho ) } - M^{ ( \eps ) } ) $ is uniformly bounded in $ \lp^{ 1 } ( \R^{ 3 } ) $.
	In fact, using again that $ \int_{ \Sph^{ 2 } } M = 0 $ together with the radial symmetry of $ \varphi $, we estimate
	\begin{align*}
		&
		\int_{ B_{ 2\omega \rho } ( 0 ) }
		\abs{ \varphi_{ \rho } \ast ( M^{ (\rho ) } - M^{ ( \eps )})}
		\dd{ x }
		\\
		={} &
		\int_{ B_{ 2 \omega \rho } ( 0 ) }
		\abs{ 
			\int_{ \R^{ 3 } } ( M^{ ( \rho ) } ( x - y ) - M^{ ( \eps  
				) }  ( x - y  ) ) ( \varphi_{ \rho } ( y) - \varphi_{ \rho 
			} ( x ) )
			\dd{ y }  
		}
		\dd{ x }
		\\
		={} &
		\int_{ B_{ 2 \omega \rho } ( 0 ) }
		\abs{
			\int_{ 0 }^1 
			\chi_{
				B_{ 2 \omega  \rho ( 0 ) } } ( x - y)
			( M^{ ( \rho ) } ( x - y ) - M^{ ( \eps ) } ( x - y ) )
			\nabla ( \varphi_{ \rho } ) ( x + t ( y - x ) ) \cdot ( y - x  )
			\dd{ y }
			\dd{ t } }
		\dd{ x }
		\\
		\lesssim 
		{} &
		\frac{1}{\rho }
		\int_{ B_{ 2 \omega \rho } ( 0 ) }
		\frac{1}{\abs{z}^{ 2 } }
		\dd{ z },
	\end{align*}
	which stays uniformly bounded as $ \rho \to 0 $ independent of $ \eps > 0 $. Combining this with (\ref{eq:g_eps_replaced_with_convolution}) and (\ref{eq:kernel_computation}) concludes that (\ref{eq:third_summand_after_plancherel}) vanishes in the limit when we first send $ \eps $ to zero and then $ \rho $ to zero.
	
	We have thus proven that
	\begin{equation}
		\label{eq:convergence_maxwell_self_energy}
		\lim_{ \rho \to 0 }
		\abs{\inner*{ \widetilde{ \of_{ \rho} } }{ M^{ (\rho) } \ast \widetilde{ \of_{ \rho} } }_{ \lp^{ 2 } }
			-
			\inner*{\fourier \widetilde{ \of_{ \rho} } }{\left( \fourier M  - (\fourier M )_{ \Sph^{ 2 }} \right) \fourier \widetilde{ \of_{ \rho} } }
		}
		=0.
	\end{equation}
	
	For the second summand in the term (\ref{eq:split_of_interaction_energy}), we first note that 
	\begin{equation*}
		(\tilde{ M }^{ (\rho) } - M^{ ( \rho ) })  ( x , y ) 
		= 
		\frac{1}{\rho^3}(\tilde{ M }^{ ( 1 ) } - M^{ ( 1 ) } ) \left( \frac{x}{\rho},\frac{y}{\rho} \right).
	\end{equation*} 
	Moreover we note due to the $ - 3 $ homogeneity of $ M $ that there exists some integrable majorant $ h \in \lp^{ 1 } ( \R^{ 3 } ; [0, \infty) ) $ such that $ \abs{ \tilde{ M }^{ ( 1 ) } - M^{ (1)} } ( x, y ) \leq h ( x - y ) $.
	We thus write
	\begin{align*}
		& \int_{ \R^{ 3 } } ( \tilde{ M }^{ ( \rho ) } - M^{ (\rho ) } ) ( x, y ) \widetilde{ \of_{ \rho} } ( y ) 
		\dd{ y }
		\\
		={} &
		\int_{ \R^{ 3 } }
		( \tilde{ M }^{ ( \rho ) } - M^{ (\rho ) } ) ( x, y )
		\dd{ y }
		\widetilde{ \of_{ \rho} } ( x )
		+
		\int_{ \R^{ 3 } }
		( \tilde{ M }^{ ( \rho ) } - M^{ (\rho ) } ) ( x, y )
		( \widetilde{ \of_{ \rho} } ( y ) - \widetilde{ \of_{ \rho} } ( x ) )
		\dd{ y }.
	\end{align*}
	For the first summand we note that due to the integrable majorant and the cancellation property of $ M^{ ( \rho ) } $, we have
	\begin{align*}
		\int_{ \R^{ 3} } 
		( \tilde{ M }^{ ( \rho ) } - M^{ (\rho ) } ) ( x, y )
		\dd{ y }
		& =
		\lim_{ n \to \infty }
		\int_{ B_{n } ( x ) }
		( \tilde{ M }^{ ( \rho ) } - M^{ (\rho ) } ) ( x, y )
		\dd{ y }
		\\
		\\
		& =
		\lim_{ n \to \infty }
		\int_{ B_{n } ( x ) }
		\tilde{ M }^{ ( \rho ) } ( x, y )
		\dd{ y }
		\\
		& =
		\lim_{ R \to \infty }
		\sum_{ x \in \lattice_{ 1 } \setminus \{0\} \cap B_{ R } ( 0 )}
		M ( x )
		\eqqcolon S.
	\end{align*}
	For the second summand, we use the majorant $ h $, the transformation formula and Minkowski's integral inequality to estimate
	\begin{align*}
		&
		\left(
		\int_{ \R^{ 3 } }
		\abs{
			\int_{ \R^{ 3 } } 
			( \tilde{ M }^{ ( \rho ) } - M^{ (\rho ) } ) ( x, y ) ( \widetilde{ \of_{ \rho} } ( x ) - \widetilde{ \of_{ \rho} } ( y ) ) 
			\dd{ y }
		}^{ 2 }
		\dd{ x }
		\right)^{1/2}
		\\
		\leq{}
		&
		\left(
		\int_{ \R^{ 3 } }
		\left(
		\int_{ \R^{ 3 } } 
		\frac{1}{\rho^3} h\left(\frac{ x - y }{\rho }\right) 
		\abs{ \widetilde{ \of_{ \rho} } ( x ) - \widetilde{ \of_{ \rho} } ( y ) } 
		\dd{ y }
		\right)^{ 2 }
		\dd{ x }
		\right)^{1/2}
		\\
		={}&
		\left(
		\int_{ \R^{ 3 } }
		\abs{
			\int_{ \R^{ 3 } } 
			h(y) 
			( \widetilde{ \of_{ \rho} } ( x ) - \widetilde{ \of_{ \rho} } ( x - \rho y  ) ) 
			\dd{ y }
		}^{ 2 }
		\dd{ x }
		\right)^{1/2}
		\\
		\leq{} &
		\int_{ \R^{ 3 } }
		h ( y ) 
		\left(
		\int_{ \R^{ 3 } }
		\abs{ \widetilde{ \of_{ \rho} } ( x ) - \widetilde{ \of_{ \rho} } ( x - \rho y  ) }^2 
		\dd{ x }
		\right)^{1/2}
		\dd{ y }.
	\end{align*}
	Using the integrability of $ h $, splitting the integral into $ B_N ( 0 ) $ and its complement, and then sending $ N \to \infty $ yields that this term vanishes due to $ \of_{ \rho } $ only having long-range oscillations. We have thus shown that
	\begin{equation}
		\label{eq:convergence_local_lorentz_energy}
		\lim_{ \rho \to 0 }
		\abs{ \inner*{ \widetilde{ \of_{ \rho} } }{ \left(\tilde{ M }^{ (\rho ) } - M^{ (\rho) } \right)  \ast \widetilde{ \of_{ \rho} } }_{ \lp^{ 2 } }
			-
			\inner*{ \widetilde{ \of_{ \rho} } }{ S \widetilde{ \of_{ \rho} } }_{ \lp^{ 2 } } }
		=
		0,
	\end{equation}
	where $ S \in \R^{ (3 \times 3 )\times ( 3 \times 3 ) } $ is given by
	\begin{equation}
		\label{eq:def_lattice_sum}
		S
		\coloneqq
		\lim_{ R \to \infty }
		\sum_{ x \in \lattice_{ 1 } \setminus \{ 0 \} \cap B_{ R } ( 0 ) }
		M ( x ).
	\end{equation}
	By equations (\ref{eq:convergence_maxwell_self_energy}) and (\ref{eq:convergence_local_lorentz_energy}), we thus conclude that
	\begin{equation}
		\label{eq:weak_long_asymptotic_convergence_of_energy}
		\lim_{ \rho \to 0 }
		\abs{ 
			\frac{ 1 }{ 2 }
			\sum_{ \substack{x, y \in \lattice_{ \rho } \\ x \neq y } }
			\of_{ \rho } ( x ) \colon M ( x - y ) \of_{ \rho } ( y )
			-
			\frac{ 1 }{ 2 }
			\inner*{\fourier \widetilde{ \of_{ \rho} } }{ \left( \hat{M} - (\hat{M})_{ \Sph^{ 2 } } + S \right) \fourier \widetilde{ \of_{ \rho} } }_{ \lp^{ 2 } }
		}
		=
		0,
	\end{equation}
	and thus by defining 
	\begin{equation}
		\label{eq:Psi_def} 
		\Psi \coloneqq \hat{M} - ( \hat{M})_{ \Sph^{ 2 } } + S,
	\end{equation}
	the proof is complete.
\end{proof}

\subsection{\texorpdfstring{$ \Hm $}{H}-measures}
\label{sct:H_measures}

In this section, we will further investigate the right hand side of equation (\ref{eq:main_thm}) by introducing $ \Hm $-measures and computing some examples. We close the Section by proving \Cref{prop:weak_long_energy_convergence}.
Let $ N \coloneqq 3 \times 3 = 9 $ for notational simplicity.
By \Cref{lem:weak_long_energy_rewritten}, we have reduced the weak-long case to the study of
\begin{equation}
	\label{eq:l2_lim_we_study}
	\lim_{ \rho \to 0 }
	\frac{ 1 }{ 2 }
	\inner*{\fourier \widetilde{ \of_{ \rho} } }{ \Psi \fourier \widetilde{ \of_{ \rho} } }_{ \lp^{ 2 } }
\end{equation}
under the assumption that $ \widetilde{\of_{ \rho }} $ is weakly converging in $ \lp^{ 2 }  \left( \R^{ 3 } ; \R^{ N }\right)$ and $ \Psi \colon \R^{ 3 }\setminus\{ 0 \} \to \R^{ N \times N } $ is some smooth, $ 0 $-homogeneous function. 

One of the tools to study limits of oscillating sequences are Young measures introduced in \cite{young_42_generalized_surfaces_in_calcvar}, see also the textbook \cite{rindler_18_calcvar} for a more modern treatment. 
However, we can quickly see that these are not suitable for the study 
of limits such as (\ref{eq:l2_lim_we_study}) since they can only detect 
the values between which a sequence oscillates, but not in which 
direction. This is illustrated in the following example.
\begin{example} 
	\label{ex:h_measure_young_measure}
	Consider the sequence $ f_{ n } ( x ) = f ( x ) \exp ( 2 \pi i n \inner*{ \nu }{ x } ) $ for some fixed $ f \in \lp^{ 2 } ( \R^{ 3 } ) $. Then by Weyl's equidistribution Theorem, $ f_n $ generates the Young measure 
	\begin{equation*}
		\nu_x = \frac{1}{ 2 \pi \abs{ f ( x ) } }
		\hm^{ 1 } \llcorner_{ \abs{ f ( x ) } \Sph^{ 1 } },
	\end{equation*}
	with $ \nu_x = \delta_0 $ if $ f ( x ) = 0 $. Since its Fourier transform is given by $ \fourier f_n ( \xi ) = \fourier f ( \xi - n \nu ) $, $ \fourier f_n $ generates the Homogeneous Young measure $ \delta_0 $. 
	However, in the context of the limit (\ref{eq:l2_lim_we_study}), we get by the $ 0 $-homogeneity of $ \Psi $ that
	\begin{align*}
		\int_{ \R^3 } \hat{ f_{ n } } ( \xi ) \overline{\Psi ( \xi ) \hat{ f_{ n } } ( \xi ) } \dd{ \xi }
		& =
		\int_{ \R^{ 3 } }
		\abs{ \hat{ f } ( \xi - n \nu ) }^{2 }
		\Psi ( \xi ) 
		\dd{ \xi }
		\\
		& =
		\int_{ \R^3}
		\abs{ \hat{ f } ( \xi ) }^{ 2 }
		\Psi ( \xi + n \nu )
		\dd{ \xi  }
		\\
		& \to 
		\int_{ \R^3}
		\abs{ \hat{ f } ( \xi ) }^{ 2 }
		\dd{ \xi }
		\Psi ( \nu ),
	\end{align*}
	where the last convergence follows from the dominated convergence theorem.
\end{example}
Thus the direction of oscillation plays a role for the limit of the $ \lp^{ 2 } $-inner product. L. Tartar proved the following (see \cite[Thm.~1.1]{tartar_H_measures_a_new_approach}).
\begin{theorem}
	\label{thm:H_measures}
	Let $ (f_{ n } )_{ n \in \N } \subseteq \lp^{ 2 } \left( \R^{ d } ; \R^{ N } \right) $ be a sequence converging weakly in $ \lp^{ 2 } $ to zero. Then there exists some non-relabelled subsequence and a family $ ( \mu_{ i j } )_{ i, j \in \{ 1, \ldots, N \} } $ of complex-valued Radon measures on $ \R^{ d } \times \Sph^{ d - 1 } $ such that for every $ \phi_{ 1 }, \phi_{ 2 } \in \cont_{ 0 } \left( \R^{ d } \right) $, every $ 0 $-homogeneous $ \psi \in \cont \left( \R^{ d } \setminus \{ 0 \}\right)  $ and all $ i, j \in \{ 1 , \ldots , N \} $, we have
	\begin{equation*}
		\lim_{ n \to \infty }
		\inner*{  \fourier \left( \phi_{ 1 } f_{ n }^{ i } \right) }{ \psi \fourier \left( \phi_{ 2 } f_{n }^{ j } \right) }_{ \lp^{ 2 } }
		= 
		\int_{ \R^{ d } \times \Sph^{ d - 1 } }
		\phi_{ 1 } ( x ) \overline{ \psi ( \xi ) \phi_{ 2 } ( x ) }
		\dd{ \mu_{ i j } ( x , \xi ) }.
	\end{equation*}
	Furthermore the measure $ \mu $  is hermitian non-negative in the sense that 
	\begin{enumerate}
		\item $ \mu_{i j } = \overline{ \mu_{j i } } $ for all $ i, j \in \{ 1 , \ldots, N \} $ and
		\item For all $\phi_1,\ldots,\phi_N \in
		\cont_0(\R^d;\C)$ and all
		$\psi\in\cont(\Sph^{d-1};\R)$ with $\psi\geq0$, we have
		\begin{equation*}
			\sum_{i,j=1}^N
			\int_{\R^d\times\Sph^{d-1}}
			\phi_i(x)\overline{\phi_j(x)}\,\psi(\xi)
			\dd{\mu_{ij}(x,\xi)}
			\geq 0.
		\end{equation*}
	\end{enumerate}
\end{theorem}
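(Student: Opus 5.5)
The statement is Tartar's existence theorem for $\Hm$-measures, so I would reproduce the standard argument. It has three parts: a commutator lemma, a bilinear-form construction with a diagonal subsequence, and a positivity argument.

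\textbf{Step 1: the commutator lemma.} For $\phi \in \cont_0(\R^d)$ let $M_\phi$ be multiplication by $\phi$. For a $0$-homogeneous $\psi \in \cont(\R^d\setminus\{0\})$ let $P_\psi = \fourier^{-1}\psi\fourier$, which is bounded on $\lp^2$ with norm at most $\sup_{\Sph^{d-1}}\abs{\psi}$. The claim is that the commutator $[M_\phi, P_\psi]$ is compact on $\lp^2(\R^d)$.

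I would prove this by density. First take $\phi$ in the Schwartz class and $\psi$ smooth on the sphere. Away from the origin, the symbol $\psi$ is a classical symbol of order $0$, so the commutator is an operator of order $-1$ composed with a cutoff in $x$. Near the origin in $\xi$, replace $\psi$ by $\psi(1-\chi)$ with $\chi$ compactly supported. The error term $M_\phi P_{\psi\chi}$ is compact because it is a product of two functions decaying in $x$ and $\xi$ respectively, hence Hilbert--Schmidt. A cruder alternative is Tartar's original argument through the Calderón--Zygmund kernel of the Riesz-type operator. Then pass to general $\phi$ and $\psi$ by uniform approximation; this is legitimate because the compact operators are norm-closed. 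I expect this lemma to be the main obstacle, and I would follow Tartar's proof closely.

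\textbf{Step 2: existence of the limit and a bilinear form.} I would also use the fact that compact operators map weakly null sequences to strongly null ones. Apply this to $M_\phi$ composed with a smooth frequency cutoff $P_\chi$; the composition is compact since it is Hilbert--Schmidt. It follows that the low-frequency part of $\phi f_n$ tends to zero in $\lp^2$, so only the behaviour at large $\abs{\xi}$ matters. This is consistent with testing against $\psi$ on the sphere only.

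Next, for countable dense sets of $\phi_1,\phi_2$ and $\psi$, the quantities
\begin{equation*}
	\inner*{\fourier(\phi_1 f_n^i)}{\psi\fourier(\phi_2 f_n^j)}
\end{equation*}
are bounded by $\norm{\phi_1}_\infty\norm{\phi_2}_\infty\norm{\psi}_\infty\sup_n\norm{f_n}^2$. A diagonal argument extracts a subsequence along which all of them converge, and density extends the convergence to all admissible $\phi_1,\phi_2,\psi$.

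By Step 1, the limit $L_{ij}(\phi_1,\phi_2,\psi)$ depends only on the product $\phi_1\overline{\phi_2}$. Indeed, $\phi_2$ can be moved across $P_\psi$ at the cost of a compact operator, and that error vanishes in the limit. Hence the limit defines a bilinear functional $\ell_{ij}(\Phi\otimes\psi)$ with $\Phi = \phi_1\overline{\phi_2}$. Products of this form span a dense subset of $\cont_0(\R^d\times\Sph^{d-1})$, by Stone--Weierstrass together with factoring every $\Phi \in \cont_c$ as a product of two functions.

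\textbf{Step 3: positivity, extension, and the hermitian property.} For $\psi\ge0$, write $\psi = (\sqrt\psi)^2$ and use Step 1 again to obtain
\begin{equation*}
	\sum_{i,j}\inner*{\fourier(\phi_i f^i_n)}{\psi\,\fourier(\phi_j f^j_n)}
	=
	\norm{P_{\sqrt\psi}\textstyle\sum_i\phi_i f_n^i}^2 + o(1)
	\geq o(1),
\end{equation*}
which gives the non-negativity in the limit. Taking $N=1$, this shows each $\ell_{ii}$ is a positive functional, bounded by $C\norm{\Phi}_\infty\norm{\psi}_\infty$.

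For the off-diagonal entries I would argue as follows. Since $\Phi$ is split as $\phi_1\overline{\phi_2}$ with $\phi_1 = \abs\Phi^{1/2}$ and $\phi_2$ the complementary factor, Cauchy--Schwarz gives
\begin{equation*}
	\abs{\ell_{ij}(\Phi\otimes\psi)} \le C\norm{\Phi}_\infty\norm{\psi}_\infty .
\end{equation*}
The functionals therefore extend continuously to $\cont_0(\R^d\times\Sph^{d-1})$, and the Riesz representation theorem yields the Radon measures $\mu_{ij}$. Finally, the hermitian property $\mu_{ij}=\overline{\mu_{ji}}$ follows directly from the conjugate symmetry of the inner product, using that $\psi$ is real.
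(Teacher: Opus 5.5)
Your Steps 1 and 2 are sound and follow Tartar's argument. The paper gives no proof of its own and simply quotes Tartar's Theorem 1.1. The gap is at the end of Step 3, where you pass from the bound $|\ell_{ij}(\Phi\otimes\psi)|\le C\|\Phi\|_\infty\|\psi\|_\infty$ to a continuous extension to $\cont_0(\R^d\times\Sph^{d-1})$.

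That bound only concerns elementary tensors. For a finite sum $F=\sum_k\Phi_k\otimes\psi_k$ it gives $|\ell_{ij}(F)|\le C\sum_k\|\Phi_k\|_\infty\|\psi_k\|_\infty$, which is a projective tensor norm. The Riesz representation needs $|\ell_{ij}(F)|\le C\|F\|_\infty$, with the supremum taken over $\R^d\times\Sph^{d-1}$.

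This implication is false in general. On $\Sph^1$ with normalized arc length, let $H$ be the Hilbert transform and consider $(\phi,\psi)\mapsto\int_{\Sph^1}\phi\,H\psi$. It satisfies $|\int\phi\,H\psi|\le\|\phi\|_\infty\|\psi\|_\infty$. Yet it is not of the form $\int\phi(x)\psi(y)\,d\mu(x,y)$ for any Radon measure $\mu$ on $\Sph^1\times\Sph^1$, since its kernel is a multiple of $\mathrm{p.v.}\cot\frac{x-y}{2}$.

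Your Stone--Weierstrass density remark does not help without this continuity. The diagonal entries have the same problem. What you proved is $\ell_{ii}(\Phi\otimes\psi)\ge 0$ for $\Phi,\psi\ge 0$, which is not yet positivity of a functional on $\cont_0(\R^d\times\Sph^{d-1})$.

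The missing ingredient is the step where Tartar uses the elementary nonnegative case of Schwartz's kernel theorem: a form with $\ell(\Phi\otimes\psi)\ge0$ whenever $\Phi,\psi\ge0$ is a positive Radon measure. Here is how to prove it.

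Let $F=\sum_k\Phi_k\otimes\psi_k$ with all $\Phi_k$ supported in a compact set $K$. Take nonnegative partitions of unity $(\alpha_a)$ near $K$ and $(\beta_b)$ on $\Sph^{d-1}$, with supports of small diameter around points $x_a$ and $\xi_b$. Replace $\Phi_k$ by $\sum_a\Phi_k(x_a)\alpha_a$ and $\psi_k$ by $\sum_b\psi_k(\xi_b)\beta_b$. This changes $\ell(F)$ arbitrarily little, by your elementary-tensor bound applied to finitely many terms. The replaced sum is $\sum_{a,b}F(x_a,\xi_b)\,\alpha_a\otimes\beta_b$. Since each $\ell(\alpha_a\otimes\beta_b)\ge0$, we get $|\ell(\sum_{a,b}F(x_a,\xi_b)\,\alpha_a\otimes\beta_b)|\le\|F\|_\infty\sum_{a,b}\ell(\alpha_a\otimes\beta_b)\le C\|F\|_\infty$. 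This handles $\ell_{ii}$.

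For $i\neq j$, replace Cauchy--Schwarz by polarization. Run your Step 3 computation with $\phi_i=\phi$, $\phi_j=c\phi$ for $|c|=1$, and all other $\phi_k=0$. This shows that $Q_c\coloneqq\ell_{ii}+\ell_{jj}+\bar c\,\ell_{ij}+c\,\ell_{ji}$ is nonnegative on such tensors, so each $Q_c$ is a positive measure. Then $\ell_{ij}=\frac14\sum_{c^4=1}c\,Q_c$ is a complex Radon measure.

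With this in place the rest of your argument goes through. Note also that the identity in Step 3 is exact by Plancherel, so no commutator is needed there.
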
 
Building on \Cref{ex:h_measure_young_measure}, we want to examine more closely what possible $ \Hm $-measures look like.
\begin{example}
	\label{ex:h_measures_of_simple_form}
	Let the dimension $ d $ and co-dimension $ N $ be given as in 
	\Cref{thm:H_measures}. Furthermore let $ f \in \lp^{ 2 } ( \R^{ d } 
	) $, $ \nu \in \Sph^{ d - 1 } $ and $ A \in \C^{ N \times N } $ be 
	a hermitian non-negative matrix. We want to show that there exists 
	a sequence $ f_{ n } \in \lp^{ 2 } ( \R^{ d } ; \C^{ N } ) $ 
	converging weakly to zero in $ \lp^{ 2 } ( \R^{ d } ; \C^{ N } ) $ 
	which generates the $ \Hm$-measure $ \mu = \abs{ f }^{ 2 } A \dd{ 
		\lm^{ d } } \otimes \delta_{ \nu } $, which acts on $ \varphi \in 
	\cont ( \R^{ d } \times \Sph^{ d - 1 } ; \C^{ N \times N } ) $ via
	\begin{equation*}
		\inner*{ \mu }{ \varphi }
		=
		\int_{ \R^{ d } }
		\sum_{ i, j = 1 }^{ N }
		A_{ i j } \varphi_{ i j }(x , \nu )
		\abs{ f(x )  }^{ 2 }
		\dd{ x }.
	\end{equation*}
	To this end, we first assume that $ A $ is a diagonal matrix. Then all eigenvalues $ \lambda_{ 1 }, \ldots, \lambda_{ N } $ are non-negative. Take pairwise different positive numbers $ \theta_{ 1 }, \ldots , \theta_{ N } $  and define
	\begin{equation*}
		f_{ n } ( x )  
		\coloneqq 
		f ( x ) \sum_{ j=1 }^{ N } 
		\sqrt{ \lambda_{ j } } 
		\exp \left( 2 \pi i \theta_{ j } n \inner*{ x }{ \nu } \right)
		e_{ j }.
	\end{equation*}
	Then $ f_{ n } $ is uniformly bounded in $ \lp^{ 2 } $ and we compute for every $ \phi \in \ccinf ( \R^{ d } ) $ by using the convolution theorem for the Fourier transform
	\begin{equation*}
		\fourier ( \phi f_{ n } ) ( \xi )
		= 
		\sum_{ j = 1 }^{ N }
		\sqrt{ \lambda_{ j } } \hat{ \phi } \ast \hat{ f } ( \xi - \theta_{ j } n \nu ) e_{ j }. 
	\end{equation*}
	If $ \phi_{ 1 }, \phi_{ 2 } \in \ccinf ( \R^{ d } ) $ and $ \psi $ is continuous and $ 0 $-homogeneous, then we have for all $ i, j  \in \{ 1, \ldots , N \} $ that
	\begin{equation*}
		\int_{ \R^{ d } }
		\fourier \left( \phi_{ 1 } f_{ n }^{ i } \right) ( \xi ) 
		\overline{ \fourier \left( \phi_{ 2 } f_{ n }^{ j } \right) ( 
			\xi ) 
			\psi ( \xi )}
		\dd{ \xi }
		=
		\sqrt{ \lambda_{ i } } \sqrt{ \lambda_{ j } } 
		\int_{ \R^{ d } }
		\hat{\phi_{ 1 } } \ast \hat{ f } ( \xi )
		\overline{ \hat{ \phi_{ 2 } } \ast \hat{ f } ( \xi - ( \theta_{ 
				j } - \theta_{ i } ) n \nu ) 
			\psi ( \xi + \theta_{ i } n \nu )} 
		\dd{ \xi }.
	\end{equation*}
	By the dominated convergence theorem and the $ 0 $-homogeneity of $ \psi $, we have that $ \hat{ \phi } \ast \hat{f} \psi ( \cdot + \theta_{ i } n \nu ) $ converges to $ \hat{ \phi } \ast \hat{f} \psi ( \nu ) $ in $ \lp^{ 2 } $. Moreover we have the weak convergence
	\begin{equation*}
		\hat{ \phi_{ 2 } } \ast \hat{ f } ( \cdot - ( \theta_{ j } - \theta_{ i } ) n \nu ) \rightharpoonup 
		\begin{cases*}
			\hat{ \phi_{ 2 } } \ast \hat{ f }, & \text{ if } i = j 
			\\
			0, &\text{ else}. 
		\end{cases*}
	\end{equation*}
	Combining weak and strong convergence thus yields by applying the Fourier convolution theorem again that
	\begin{align*}
		\lim_{ n \to \infty }
		\int_{ \R^{ d } }
		\fourier \left( \phi_{ 1 } f_{ n }^{ i } \right) ( \xi ) 
		\overline{ \fourier \left( \phi_{ 2 } f_{ n }^{ j } \right) ( 
			\xi ) 
			\psi ( \xi )}
		\dd{ \xi }
		& =
		A_{ i j }
		\int_{ \R^{ d } }
		\hat{ \phi_{ 1 } } \ast \hat{ f }
		\overline{ \hat{ \phi_{ 2 } } \ast \hat{ f } }
		\dd{ \xi }
		\overline{
			\psi ( \nu )}
		\\
		& = 
		A_{ i j }
		\int_{ \R^{ d } }
		\phi_{ 1 } ( x ) \overline{ \phi_{ 2 } ( x ) } 
		\abs{ f ( x ) }^{ 2 }
		\dd{ x }
		\overline{\psi( \nu )} .
	\end{align*}
	This finishes the construction in the case where $ A $ is diagonal. 
	
	If $ A $ is any hermitian non-negative matrix, then we find a unitary matrix $ U $ and a diagonal matrix $ D $ with ordered non-negative eigenvalues $ \lambda_{ 1 } , \ldots , \lambda_{ N } $ such that $ A = U D \overline{ U }^{ \top } $. 
	In this case defining 
	\begin{equation*}
		f_{ n } ( x ) 
		= 
		f( x )
		\sum_{ j = 1 }^{ N }
		\sum_{ l = 1 }^{ N }
		U_{ j l } \sqrt{ \lambda_{ l } }
		\exp( 2 \pi  i \theta_{ l } n \inner*{x }{ \nu } )
		e_j
	\end{equation*}
	yields by a similar computation that $ f_{ n } $ generates the $ \Hm $-measure $ \mu $.
\end{example}

\begin{example}
	\label{ex:h_measure_generated_by_sin_cos}
	For applications to arrays of dislocation loops, we are interested in $ \Hm $-measures generated by real-valued sequences. We first note that if $ g $ is a real-valued $ \lp^{ 2 } $-function, we have the identity $ \fourier ( g ) ( \xi ) = \overline{ \fourier ( g ) ( - \xi ) } $. 
	Using this and partitioning the test functions $ \phi_{ 1 } $ and $ 
	\phi_{ 2 } $ into their real and imaginary part yields that if a 
	real-valued sequence $ f_{ n } $ generates an $ \Hm $-measure $ \mu 
	$, then for all $ \phi_{1 }, \phi_{ 2 } \in \cont_{ 0 } ( \R^{ d } 
	) $ and $ \psi \in \cont ( \R^{ d  } \setminus \{ 0 \} ) $, which 
	is 
	$ 0 $-homogeneous, we have
	\begin{equation*}
		\int_{ \R^{ d } \times \Sph^{ d - 1 } }
		\phi_{ 1 } ( x ) \overline{ \phi_{  2} ( x ) \psi ( \xi ) } 
		\dd{ \mu ( x , \xi ) }
		=
		\int_{ \R^{ d } \times \Sph^{ d - 1 } }
		\phi_{ 1 } ( x ) \overline{ \phi_{  2} ( x )  \frac{\psi ( \xi 
				)+ \psi ( - \xi ) }{ 2  } }
		\dd{ \mu ( x , \xi ) }.
	\end{equation*}
	We recall the trigonometric identities
	\[
	\sin(x)=\frac{\exp(ix)-\exp(-ix)}{2i},
	\qquad
	\cos(x)=\frac{\exp(ix)+\exp(-ix)}{2}.
	\]
	Then, for $f\in\lp^2(\R^d)$ and $\nu\in\Sph^{d-1}$,
	the sequence
	\[
	f_n(x)\coloneqq f(x)\sin(n\inner*{x}{\nu})
	\]
	generates the $\Hm$-measure
	\[
	\mu=\abs{ f}^2\dd{\lm^d}
	\otimes\frac{\delta_\nu+\delta_{-\nu}}{4}.
	\]
\end{example}

\begin{remark}
	One can in fact show that every hermitian non-negative measure $ \mu $ in the sense specified in \Cref{thm:H_measures} is an $ \Hm $-measure, meaning that there exists some sequence converging weakly to zero in $ \lp^{ 2 } $ which generates $ \mu $. The first step to proving this is that the space of $ \Hm $-measures is convex (with a similar argument as in \Cref{ex:h_measures_of_simple_form}) and closed under weak convergence (by a diagonal sequence argument). Furthermore it holds that linear combinations of Diracs are dense in the space of Radon measures on $ \Sph^{ d - 1 } $. Moreover finite Radon measures on $ \R^{ d } $ can be approximated by linear combinations of the Lebesgue measure restricted to dyadic cubes. Finally we can use the Disintegration of Radon measures \cite[Thm.~2.28]{ambrosio_fusco_pallara_functions_of_bv_and_free_discontinuity_problems} to conclude the claim. Since we do not make use of this statement, the full detailed proof is beyond the scope of this paper.
\end{remark}

Using \Cref{thm:H_measures}, we are able to conclude the proof of \Cref{prop:weak_long_energy_convergence}.
\begin{proof}[Proof of \Cref{prop:weak_long_energy_convergence}]
	Using \Cref{lem:weak_long_energy_rewritten}, we find that for a non-relabelled subsequence, we have
	\begin{equation}
		\label{eq:macroscopic_equation}
		\lim_{ \rho \to 0 } \interactionEnergy ( \disdens_{ \rho } )
		=
		\lim_{ \rho \to 0 } 
		\frac{ 1 }{ 2 } 
		\inner*{ \fourier \widetilde{ \of_{ \rho} } }{ \Psi \fourier \widetilde{ \of_{ \rho} } }_{ \lp^{ 2 } }.
	\end{equation}
	We first reduce the right-hand side to the case that $ \widetilde{ \of_{ \rho} } $ converges weakly to zero. In fact, by recalling the uniform $ \lp^{ 2 } $-bound (\ref{eq:l2_bound_weaker}) and passing to a non-relabelled subsequence , let $ \of $ be the weak $ \lp^{ 2}$-limit of $ \widetilde{ \of_{ \rho} } $ and rewrite by the symmetry of $ \Psi $ the right-hand side of equation (\ref{eq:macroscopic_equation}) as
	\begin{equation*}
		\inner*{ \fourier \widetilde{ \of_{ \rho} } }{ \Psi \fourier \widetilde{ \of_{ \rho} } }_{ \lp^{ 2 }}
		=
		\inner*{ \fourier( \widetilde{ \of_{ \rho} }-  \of  )}{ \Psi \fourier( \widetilde{ \of_{ \rho} }-  \of ) }_{ \lp^{ 2 }} 
		-
		\inner*{\fourier \of  }{ \Psi \fourier  \of }_{ \lp^{ 2 }}
		+
		2 \inner*{ \fourier \widetilde{ \of_{ \rho} }}{ \Psi \fourier  \of }_{ \lp^{ 2 } }.
	\end{equation*}
	Notice that by weak convergence, the sum of the last two summands converges to $ \inner*{\fourier \of }{ \Psi \fourier  \of }_{ \lp^{ 2 }} $, so we are only left with taking care of the first summand, which concerns a sequence converging weakly to zero. By recalling that $ \widetilde{ \of_{ \rho} } $ has compact support since we only placed dislocation loops inside the bounded set $ \Omega $ and applying \Cref{thm:H_measures}, we thus obtain for a non-relabelled subsequence that
	\begin{equation*}
		\lim_{ \rho \to 0 } 
		\frac{ 1 }{ 2 }
		\inner*{ \fourier \widetilde{ \of_{ \rho} } }{ \Psi \fourier \widetilde{ \of_{ \rho} } }_{ \lp^{ 2 } }
		=
		\frac{ 1 }{ 2 }
		\inner*{ \fourier \of }{ \Psi \fourier \of }_{ \lp^{ 2 } }
		+
		\frac{ 1 }{ 2 }
		\sum_{ i, j = 1 }^{ N }
		\int_{ \R^{ 3 } \times \Sph^{ 2 } }
		\Psi_{ i j } ( \xi )
		\dd{ \mu_{ i j } ( x, \xi ) },
	\end{equation*}
	where $ (\mu_{ i j })_{i, j = 1 , \ldots, N  } $ is the $ \Hm $-measure generated by $ \widetilde{\of_{ \rho }} - \of $. 
\end{proof}

\subsection{Weak-Short Oscillations}
\label{sct:weak_short}

To analyze weak-short oscillations, we take another approach inspired by techniques from \cite{firoozye_93_homogenization_on_lattices}.
First we treat the case in which there are only weak-short oscillations in the sense of \Cref{def:weak_short}, and we later decompose the sequence into a part which has only weak-long oscillations, and one which has only weak-short oscillations.

To get started, we recall the definition of the \emph{reciprocal lattice}. Given the Bravais lattice $ \lattice_{ 1 } = \{ \sum_{ i = 1 }^{3 } \nu^{ i } v_i \colon \nu^{ i } \in \Z \} $, we can always choose unique $ w_{ i } \in \R^{ 3 } $, $ i \in \{1,2,3\} $, such that $ \inner*{v_i}{w_j} = \delta_{ i j } $ for all $ i, j \in \{1,2,3\} $. We denote by  
\begin{align*}
	\lattice_{ 1 }^{ \ast } 
	&\coloneqq 
	\left\{ \sum_{ i = 1 }^{ 3 } \mu^{ i } w_{ i } \colon \mu_{ i } \in \Z \text{ for all } i \in \{1,2,3 \}\right\}
	\shortintertext{the reciprocal lattice with unit cell}
	U^{ \ast } 
	&\coloneqq 
	\left\{ \sum_{ i = 1 }^{ 3 } \lambda^{ i } w_{ i } \colon \lambda^{i } \in [0,1) \text{ for all }i \in \{1,2,3\} \right\}.
\end{align*}
Given a summable function on the lattice $ f \in \lps^{ 1 } ( \lattice_{ 1 } ; \C) $, we define the \emph{inverse discrete Fourier series of $ f $} as
\begin{align}
	\notag
	\idfs{f} \colon U^{ \ast } & \to \C
	\\
	\label{eq:inverse_discrete_fourier_series}
	\xi & \mapsto \sum_{ m \in \lattice_{ 1 } } 
	f ( m ) \exp( -2 \pi i \xi \cdot m ).
\end{align}
The name is motivated by the fact that due to the Fourier inversion 
formula, $ \idfs{f} $ is the Fourier series of the inverse Fourier 
transform of $ f $.
We note that $ \idfs{f} $ is periodic on $ U^{ \ast } $, and when it is 
convenient for us, we will interpret $ \idfs{f} $ as a function on $ 
\R^{ 3 } $.

By definition of the reciprocal unit cell, $ \idfs{f} $ is periodic on $ U^{ \ast } $, and satisfies for $ f, g \in \lps^{1 } ( \lattice_{ 1 }) \cap \lps^{ 2 } ( \lattice_{ 1 }) $ the Plancherel identity
\begin{equation}
	\label{eq:plancherel_idfs}
	\inner*{f}{g}_{ \lps^{ 2 } ( \lattice_{ 1 } ) }
	=
	\inner*{\idfs{f}}{\idfs{g}}_{ \lp^{ 2 } ( U^{ \ast})}.
\end{equation}
By density of $ \lps^{ 1 }(\lattice_{ 1 }) \cap \lps^{ 2 } ( \lattice_{ 1 }) $ in $ \lps^{ 2 } (\lattice_{ 1 }) $, we extend the definition of the inverse discrete Fourier series to $ \lps^{ 2 } ( \lattice_{ 1 } ) $ and preserve the above Plancherel formula.

In order to handle weak-short oscillations, we will use the following rescaling $ \rof_{ \rho } \colon \lattice_{ 1 } \to \R^{ 3 \times 3 } $ of $ \of_\rho $ (see (\ref{eq:def_or_area_function})) defined as
\begin{equation}
	\label{eq:rof_definition}
	\rof_\rho ( x ) \coloneqq \rho^{-3/2} \of_{ \rho } ( \rho x ).
\end{equation} 
The resulting function $ \rof_{ \rho } $ is then an element of $ \lps^{ 2 } ( \lattice_1 ) $ since
\begin{equation}
	\label{eq:l2_bound_h_rho}
	\sum_{ y \in \lattice_{ 1 } } 
	\abs{ \rof_{ \rho} ( y ) }^{ 2 }
	=
	\sum_{ x \in \lattice_{ \rho } }
	\frac{1}{\rho^{ 3 } } \abs{ \of_{ \rho } ( x ) }^{ 2 }
	\leq
	\sum_{ x \in \lattice_{ \rho } }
	\frac{1}{\rho^{ 3 } }
	\left( \int_{ S_{ \rho } (  x ) } \abs{ b ( z ) } \dd{ \hm^{ 2 } ( z ) } 
	\right)^2,
\end{equation}
which is bounded by assumption \ref{item:l2BoundOnArea}.
Moreover, we introduce a cutoff which also provides exponential decay at infinity.
Let $ \psi \in \cont^{\infty} ( \R^{ 3} ; [0,1]) $ be radially symmetric such that $ \psi (x  ) = 0 $  for $ \abs{x } \leq \omega/2 $ and $ \psi ( x ) = 1 $ for $ \abs{ x } \geq \omega $, where $ \omega > 0 $ is chosen such that $ \lattice_{ 1 }\setminus\{0\} \subseteq \R^{ 3 } \setminus B_{ \omega } ( 0 )$. We then set
\begin{equation*}
	M_\eps ( x ) \coloneqq \psi ( x ) \exp ( - \eps \abs{ x } ) M ( x ),
\end{equation*}
which is a Schwartz function on $ \R^{ 3 } $ mapping into $ \R^{ 3 \times 3 \times 3 \times 3 } $.
Sometimes we also interpret $ M_\eps $  as a function on $ \lattice_{ 1 } $.
We then obtain the following.
\begin{lemma}
	\label{lem:firoozye_rewriting}
	We have
	\begin{equation*}
		\frac{1}{2}
		\sum_{ x \neq y \in \lattice_{ \rho } }
		\of_\rho ( x ) \colon M ( x - y ) \of_\rho ( y )
		=
		\lim_{ \eps \to 0 }
		\frac{1}{2}
		\int_{ U^{ \ast }}
		\idfs{M_\eps} ( \xi ) \cdot 
		\left( 
		\idfs{\rof_{\rho}} ( \xi ) \otimes  \overline{ \idfs{\rof_{\rho}} ( \xi )}
		\right)
		\dd{ \xi }.
	\end{equation*}
\end{lemma}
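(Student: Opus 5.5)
Since the loops are confined to the bounded set $\Omega$, $\of_\rho$ is nonzero at only finitely many points of $\lattice_\rho$, so for fixed $\rho$ the left-hand side is a finite sum. The plan is to first pass to the rescaled lattice, then replace $M$ by $M_\eps$ inside this finite sum, and finally apply the Plancherel identity (\ref{eq:plancherel_idfs}) together with the convolution structure of the discrete Fourier series.

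First, substitute $x=\rho m$, $y=\rho n$ with $m\neq n\in\lattice_1$. By the $-3$-homogeneity of $M$ we have $M(\rho(m-n))=\rho^{-3}M(m-n)$. Using the definition (\ref{eq:rof_definition}), $\of_\rho(\rho m)=\rho^{3/2}\rof_\rho(m)$, the factors of $\rho$ cancel exactly:
\begin{equation*}
\frac12\sum_{x\neq y\in\lattice_\rho}\of_\rho(x)\colon M(x-y)\of_\rho(y)=\frac12\sum_{m\neq n\in\lattice_1}\rof_\rho(m)\colon M(m-n)\rof_\rho(n).
\end{equation*}
Next, on $\lattice_1\setminus\{0\}$ the cutoff satisfies $\psi=1$, since $\omega$ was chosen so that $\lattice_1\setminus\{0\}$ lies outside $B_\omega(0)$. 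Also $\psi(0)=0$, so $M_\eps(0)=0$. Hence $M_\eps(m-n)=e^{-\eps|m-n|}M(m-n)$ for $m\neq n$, and $M_\eps$ vanishes on the diagonal. This means we may sum over all pairs $m,n$ with $M_\eps$ in place of $M$. Because only finitely many terms are nonzero, we get pointwise convergence $e^{-\eps|m-n|}\to1$ and therefore
\begin{equation*}
\frac12\sum_{m\neq n}\rof_\rho(m)\colon M(m-n)\rof_\rho(n)=\lim_{\eps\to0}\frac12\sum_{m,n\in\lattice_1}\rof_\rho(m)\colon M_\eps(m-n)\rof_\rho(n).
\end{equation*}

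The core step is to identify the sum on the right for fixed $\eps>0$ with the integral over $U^\ast$. Since $M_\eps$ decays exponentially, $M_\eps\in\lps^1(\lattice_1)$, so $\idfs{M_\eps}$ is continuous and bounded on $U^\ast$. Moreover, $\rof_\rho$ is finitely supported, so $\idfs{\rof_\rho}$ is a trigonometric polynomial. The lattice convolution $g\coloneqq M_\eps\ast\rof_\rho$, defined by $g(m)=\sum_n M_\eps(m-n)\rof_\rho(n)$, is again in $\lps^1$. Exchanging the finite and absolutely convergent sums gives the convolution theorem $\idfs{g}=\idfs{M_\eps}\,\idfs{\rof_\rho}$, using $e^{-2\pi i\xi\cdot m}=e^{-2\pi i\xi\cdot(m-n)}e^{-2\pi i\xi\cdot n}$. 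Applying Plancherel (\ref{eq:plancherel_idfs}) componentwise in $\R^{3\times3}$ then yields
\begin{equation*}
\sum_{m,n}\rof_\rho(m)\colon M_\eps(m-n)\rof_\rho(n)=\inner*{\rof_\rho}{g}_{\lps^2}=\int_{U^\ast}\idfs{\rof_\rho}(\xi)\cdot\overline{\idfs{M_\eps}(\xi)\idfs{\rof_\rho}(\xi)}\dd{\xi}.
\end{equation*}

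The main point of care is the final rewriting of this integrand as $\idfs{M_\eps}(\xi)\cdot\bigl(\idfs{\rof_\rho}(\xi)\otimes\overline{\idfs{\rof_\rho}(\xi)}\bigr)$. The concern is that complex conjugation might land on $\idfs{M_\eps}$. To handle this, we use that $M_\eps$ is real-valued and even, since $M(-x)=M(x)$ by the $-3$-homogeneity and evenness of $M$, which follows from $M$ being a second derivative of the even function $K$. Evenness and reality imply that $\idfs{M_\eps}(\xi)=\sum_m M_\eps(m)\cos(2\pi\xi\cdot m)$ is real. In addition, $M_\eps$ inherits the symmetry $M_{iji'j'}=M_{i'j'ij}$ from \Cref{lem:interaction_energy_via_surface}. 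With $\idfs{M_\eps}(\xi)$ a real symmetric bilinear form, the contraction with the tensor product can be reordered freely, and the identity follows. Taking $\eps\to0$ and combining with the previous two displays completes the proof.
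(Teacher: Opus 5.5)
Your proof is correct and follows the paper's argument essentially step by step. You rescale to $\lattice_{1}$ via the $-3$-homogeneity of $M$, replace $M$ by $M_\eps$ using the finite support of $\rof_{\rho}$ (you also check that $\psi=1$ on $\lattice_{1}\setminus\{0\}$), and then apply the discrete convolution theorem and Plancherel. Your justification that the discrete Fourier series of $M_\eps$ is real, via the evenness of $M$ inherited from $K$, is more precise than the paper's brief appeal to the ``symmetry of $M$''.
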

\begin{proof}
	First we note that both sides are well-defined since $ \of_{ \rho } 
	$ and $ \rof_{ \rho } $ have compact support.
	By using the $ - 3 $-homogeneity of $ M $ and the definition of $ \rof_{ \rho} $, we have
	\begin{align}
		\notag
		\frac{1}{2}
		\sum_{ \substack{x, y \in \lattice_{ \rho } \\ x \neq y } }
		\of_\rho ( x ) \colon M ( x - y ) \of_\rho ( y )
		& =
		\frac{1}{2}
		\sum_{ \substack{x, y \in \lattice_{ 1 } \\ x \neq y } }
		\rho^{-3/2} \of_\rho ( \rho x ) \colon M (x-y) \rho^{-3/2} \of_\rho (\rho y )
		\\
		\label{eq:firoozye_sum}
		& =
		\frac{1}{2}
		\sum_{ \substack{x, y \in \lattice_{ 1 } \\ x \neq y } }
		\rof_\rho ( x ) \colon M ( x - y ) \rof_\rho ( y ),
	\end{align}
	We note that due to the compact support of $ \rof_{ \rho } $ and 
	the pointwise convergence of $ M_\eps $ to $ M $, we have
	\begin{equation*}
		\frac{1}{2}
		\sum_{ \substack{x, y \in \lattice_{ \rho }\\x \neq y } }
		\of_{ \rho } (x)\colon M ( x - y ) \of_{ \rho } (y)
		=
		\lim_{ \eps \to 0 }
		\frac{1}{2}
		\sum_{ x, y \in \lattice_{ 1 } }
		\rof_\rho ( x ) \colon M_\eps ( x - y ) \rof_\rho ( y ).
	\end{equation*}
	Applying the Plancherel identity (\ref{eq:plancherel_idfs}) yields 
	that the term on the right hand side is equal to
	\begin{equation}
		\label{eq:sum_plancharel_applied}
		\lim_{ \eps \to 0 }
		\frac{1}{2}
		\int_{U^{\ast}}\idfs{\rof_{\rho}} (\xi ) \colon \overline{\idfs{M_\eps\ast\rof_{ \rho}} ( \xi )} 
		\dd{ \xi }
		=
		\lim_{ \eps \to 0 }
		\frac{1}{2}
		\int_{ U^{ \ast }}
		\idfs{\rof_{\rho}} ( \xi ) \colon \overline{\idfs{M_\eps} ( \xi ) \idfs{\rof_{\rho}} ( \xi )}
		\dd{ \xi }
	\end{equation}
	which is what we wanted to show since $ \overline{\idfs{M_\eps} } = \idfs{M_\eps} $ due to the symmetry of $ M $.
\end{proof}
We now have to show that as $ \eps \to 0 $, $ \idfs{ M_{ \eps } } $ indeed has a limit. These limits have been investigated in \cite{wainger_65_special_trigonometric_series_in_kd}, and we add the observation of uniform boundedness in $ \lp^{ \infty } $. 
\begin{proposition}
	\label{prop:Feps_behaviour}
	Let $n \geq 3 $ and $ K \in C^{ \infty } ( \R^{ n }\setminus \{0\} ) $ be a $ -n $-homogeneous smooth function such that
	\begin{equation*}
		\int_{ \Sph^{ n - 1 } } K ( x ) \dd{ \hm^{ n - 1 } }
		=
		0.
	\end{equation*}
	Let the cutoff $ \psi $ be as above and define
	\begin{equation*}
		F_\eps ( \xi )
		\coloneqq 
		\sum_{ x \in \lattice_{ 1 } }
		\psi ( x ) \exp ( - \eps \abs{ x}) K ( x ) \exp( - 2 \pi i x \cdot \xi ) .
	\end{equation*}
	Then the following hold.
	\begin{enumerate}[label=(\roman*)]
		\item \label{item:Feps_ptw_convergence}
		For all $ \xi \notin \lattice_{ 1 }^{\ast} $, $ \lim_{ \eps \to 0} F_\eps ( \xi ) \eqqcolon F ( \xi ) $ exists and $ F \in C^{ \infty } ( \R^{ n } \setminus \lattice_{ 1 }^{\ast}) $ is periodic with respect to the reciprocal lattice $ \lattice_{ 1 }^{ \ast } $.
		\item \label{item:Feps_linfty_bound}
		$ F_\eps $ is dominated by an $ \lp^{ \infty } $-function on $ U^{*} $ as $ \eps \to 0 $.
	\end{enumerate}
\end{proposition}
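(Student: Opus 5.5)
The plan is to compare the lattice sum with a continuous integral via Poisson summation. Concretely, write $K_\eps(x) \coloneqq \psi(x)\exp(-\eps\abs{x})K(x)$, which is a Schwartz-type function (smooth, exponentially decaying), so Poisson summation applies and gives
\begin{equation*}
	F_\eps(\xi) = \sum_{ k \in \lattice_1^{\ast} } \fourier K_\eps ( \xi + k ).
\end{equation*}
Periodicity with respect to $\lattice_1^\ast$ is then immediate. The problem reduces to understanding $\fourier K_\eps$ uniformly in $\eps$. This is not directly summable, since $\fourier(\psi K)$ is only bounded, not decaying. I will therefore avoid Poisson summation for the whole sum and instead split the kernel dyadically.

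\textbf{Splitting of the sum.} Fix $\xi \in U^\ast \setminus \{0\}$ with $\abs{\xi}$ small, since away from $\lattice_1^\ast$ the roles are symmetric after translating. Split at the radius $R = \abs{\xi}^{-1}$:
\begin{equation*}
	F_\eps(\xi) = \sum_{\abs{x}\le R} K_\eps(x) e^{-2\pi i x\cdot\xi} + \sum_{\abs{x}>R} K_\eps(x)e^{-2\pi i x\cdot\xi}.
\end{equation*}

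\textbf{Near part.} Here I write $e^{-2\pi i x\cdot\xi} = 1 + O(\abs{x}\abs{\xi})$. The error term is bounded by $\abs{\xi}\sum_{\abs{x}\le R}\abs{x}^{1-n} \lesssim \abs{\xi}R = 1$, uniformly in $\eps$. For the main term $\sum_{\omega\le\abs{x}\le R}K(x)e^{-\eps\abs x}$ I need uniform boundedness. I compare each lattice point with the integral over its cell $x+U$. The error per cell is $O(\abs{x}^{-n-1})$, which is summable. The integral $\int_{\omega\le\abs{y}\le R}K(y)e^{-\eps\abs y}\dd y$ vanishes by the cancellation hypothesis in polar coordinates; the radial factor is harmless because $e^{-\eps\abs y}$ is radial. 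The boundary mismatch between the union of cells and the ball is supported in a shell of width $O(1)$ at radius $R$, and contributes $O(R^{n-1}R^{-n}) = O(R^{-1})$.

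\textbf{Far part.} Here I use summation by parts (the discrete analogue of integrating by parts in $x$) against the oscillation $e^{-2\pi i x\cdot\xi}$. Pick a lattice vector $v$ with $\abs{v\cdot\xi}\gtrsim\abs{\xi}$ up to a reciprocal-lattice shift. The key observation is that $\abs{1-e^{-2\pi i v\cdot\xi}}\gtrsim\operatorname{dist}(\xi,\lattice_1^\ast)$ for a suitable choice among the three basis vectors. Summation by parts replaces $K_\eps$ by the difference $K_\eps(x)-K_\eps(x-v)$, which is $O(\abs{x}^{-n-1})$ uniformly in $\eps$ because $\abs{\nabla(e^{-\eps r}K)}\lesssim \abs{x}^{-n-1} + \eps\abs{x}^{-n}e^{-\eps\abs x}$. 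The second piece sums over $\abs{x}>R$ to at most $\int_R^\infty \eps e^{-\eps r}r^{-1}\dd r \lesssim 1$, which is fine after dividing by $\abs{\xi}$? Not quite, so I will instead use the crude bound $\eps\int_R^\infty e^{-\eps r}\dd r/R\le R^{-1}$. The boundary terms of the summation by parts at $\abs{x}\approx R$ are $O(R^{n-1}R^{-n}/\abs{\xi})=O(1)$. The interior sum is $O(\abs{\xi}^{-1}R^{-1})=O(1)$. Altogether $\abs{F_\eps(\xi)}\le C$ uniformly in $\eps$ and in $\xi\in U^\ast\setminus\lattice_1^\ast$, which is \ref{item:Feps_linfty_bound}.

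\textbf{Pointwise limit and smoothness.} For \ref{item:Feps_ptw_convergence} with $\xi\notin\lattice_1^\ast$ fixed, the same summation by parts, iterated $m$ times with $\abs{1-e^{-2\pi i v\cdot\xi}}$ bounded below, gives absolutely convergent sums with summands $O(\abs{x}^{-n-m})$. Dominated convergence then yields the existence of the limit as $\eps\to0$. Iterating further also justifies differentiating term-by-term in $\xi$ locally uniformly on compact subsets of $\R^n\setminus\lattice_1^\ast$: each $\xi$-derivative produces a factor $x$, which is absorbed by more summations by parts. This gives $F\in C^\infty$.

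\textbf{Main obstacle.} I expect the main obstacle to be the near part: getting uniform control of $\sum_{\abs x\le R}K(x)e^{-\eps\abs x}$ independent of both $R$ and $\eps$. This is where the hypothesis $\int_{\Sph^{n-1}}K=0$ and the cell-by-cell comparison with the integral must be combined carefully, including the boundary-shell estimate.
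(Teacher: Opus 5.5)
Your proof is correct, and it takes a genuinely different route from the paper's.

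\textbf{The paper's route.} The paper works on the Fourier side:
\begin{itemize}
\item Poisson summation gives $F_\eps(\xi)=\sum_{\zeta\in\lattice_1^\ast}\fourier(\psi e^{-\eps\abs{\cdot}}K)(\zeta+\xi)$.
\item $K$ is expanded in spherical harmonics, and Bochner's formula turns each transform into a Bessel integral. The small/large-argument asymptotics of that integral give boundedness near frequency $0$ uniformly in $\eps$.
\item Wainger's theorem gives rapid decay at infinity uniformly in $\eps$.
\item The coefficient decay together with $b_l\lesssim l^{n-2}$ makes the double series converge. Part (i) is also read off from Wainger.
\end{itemize}

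\textbf{Your route.} You stay on the lattice side, with a near/far split at $R=\operatorname{dist}(\xi,\lattice_1^\ast)^{-1}$:
\begin{itemize}
\item In the near part you use the cancellation of $\int_{B_R}\psi e^{-\eps\abs{y}}K$ plus a Riemann-sum comparison.
\item In the far part you use Abel summation by parts with a basis vector $v_j$ satisfying $\abs{1-e^{-2\pi i v_j\cdot\xi}}\gtrsim\operatorname{dist}(\xi,\lattice_1^\ast)$. The boundary shell and the interior are each $O(R^{-1})$, divided by a factor $\gtrsim R^{-1}$.
\item Part (i) follows from the exact identity $F_\eps=(1-e^{-2\pi i v\cdot\xi})^{-m}\sum_x e^{-2\pi i x\cdot\xi}\Delta_v^mK_\eps(x)$ and dominated convergence. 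This gives $C^{m-1}$ regularity for every $m$.
\end{itemize}

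\textbf{What each route buys.} Yours is elementary and self-contained: no spherical harmonics, no Bessel asymptotics, no appeal to Wainger. It needs only $K\in C^1$ for (ii) and for existence of the limit. It makes visible that the mean-zero hypothesis enters only in the near part. It also gives an explicit bound depending only on $\|K\|_{C^1(\Sph^{n-1})}$ and the lattice. The Fourier route instead isolates the singular behaviour of $F$ at $\lattice_1^\ast$ in the single term $\fourier(\psi e^{-\eps\abs{\cdot}}K)$, which in the limit differs from the $0$-homogeneous $\fourier K$ by a smooth function. The paper also reuses these bounds on the continuous transform for $\fourier M^{(\eps)}$ in Lemma~\ref{lem:weak_long_energy_rewritten}.

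\textbf{Small corrections.}
\begin{itemize}
\item Your stated reason for abandoning Poisson summation is false. $\psi e^{-\eps\abs{\cdot}}K$ is smooth, and its derivatives of order $\ge1$ lie in $L^1$ uniformly in $\eps\le1$. So its Fourier transform decays faster than any power at infinity, uniformly in $\eps$; this is exactly what the paper uses. The only difficulty on that side is uniform boundedness near frequency $0$.
\item In the near part, the cell comparison must be made with the smooth function $\psi e^{-\eps\abs{\cdot}}K$ rather than with $K$. Cells adjacent to the origin contain the non-integrable singularity of $K$.
\item All the uniform-in-$\eps$ estimates follow from $\eps^je^{-\eps r}\le (j/(er))^j$. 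This gives $\abs{\nabla^mK_\eps(x)}\lesssim_m(1+\abs{x})^{-n-m}$ uniformly in $\eps>0$. That is the summable majorant needed in (i), and it makes your detour in the far part unnecessary.
\end{itemize}
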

\begin{proof}
	By the exponential decay of $ \exp ( - \eps \abs{ x } ) $, we can apply the Poisson summation formula (see \cite[Cor.~7.2.6]{stein_weiss_introduction_to_fourier_analysis_on_euclidean_spaces}) to obtain that
	\begin{equation}
		\label{eq:Feps_poisson_applied}
		F_\eps ( \xi )
		=
		\sum_{ \zeta \in \lattice_{ 1 }^{ \ast } }
		\fourier ( \psi ( x ) \exp ( - \eps \abs{x} )  K ( x ) ) ( 
		\zeta + \xi  ).
	\end{equation}
	As in \cite[Thm.~2]{wainger_65_special_trigonometric_series_in_kd}, we decompose $ K $ into its spherical harmonics via
	\begin{equation}
		\label{eq:K_disintegrated_into_spherics}
		K ( x )
		=
		\sum_{ l = 0}^{ \infty }
		\sum_{ k = 1 }^{ b_l }
		a_{ k }^{(l)} 
		\abs{x}^{-n}
		Y_{ k }^{ ( l ) } ( x/\abs{x} ),
	\end{equation}
	where $b_l $ is the dimension of the spherical harmonics of degree $ l $, $ a_k^{ (l ) } \in \C $, and $ Y_k^{ (l ) } $ is a spherical harmonic of degree $ l $, which form an orthonormal basis of $ \lp^{ 2 } ( \Sph^{ n - 1 } ; \hm^{ n - 1 } ) $. 
	
	We first note that due to $ K $ having zero mean, we must have $ 
	a_1^{ ( 0 ) } = 0 $ since by 
	\cite[Cor.~4.2.4]{stein_weiss_introduction_to_fourier_analysis_on_euclidean_spaces},
	we have $ \int_{ \Sph^{ n-1 } } Y_{ k }^{(l) } \dd{ \hm^{ n -1  } 
	} = 0 $ for all $ l \geq 1 $ and $ 1 \leq k \leq b_l $. Moreover 
	the coefficients $ a_k^{ (l ) } $ decay fast by 
	\Cref{lem:spherical_harmonics}.
	This decay combined with equations (\ref{eq:Feps_poisson_applied}) and (\ref{eq:K_disintegrated_into_spherics}) yields that
	\begin{equation*}
		F_\eps ( \xi ) 
		=
		\sum_{ \zeta \in \lattice_{ 1 }^{\ast} }
		\sum_{ l, k } a_{ k }^{ (l ) } \fourier \left( \psi( x ) \exp ( 
		- \eps \abs{ x } ) \abs{x}^{-n} Y_{ k }^{ (l ) } ( x/\abs{x}) 
		\right) ( \zeta + \xi ).
	\end{equation*}
	
	Now we prove \ref{item:Feps_linfty_bound}.
	Since $ \psi $ is radially symmetric, we have by \cite[Thm.~2.6.1]{bochner_55_harmonic_analysis_and_probability} that
	\begin{align*}
		& \abs{\fourier ( \psi ( x ) \exp ( - \eps \abs{x} ) 
			\abs{x}^{-n} Y_{k}^{ (l)} ( x / \abs{x}) ) ( \xi )  }
		\\
		\leq{} &
		C \abs{\xi}^{(2-n)/2}
		\abs{ Y_{k}^{ ( l ) } \left(\frac{\xi}{\abs{ \xi } } \right) }
		\int_{ 0 }^{ \infty }
		\psi ( t )  \exp ( - \eps t ) t^{-n} t^{n/2}
		J_{l + (n-2)/2} ( 2 \pi \abs{\xi} t )
		\dd{ t }, 
	\end{align*}
	where $ J_\alpha $ denotes the Bessel function with parameter $ \alpha \in \R $. This already yields a uniform $ \lp^{ \infty } $-bound close to zero: We know that as $ h \to 0 $, $ J_\alpha ( h ) \lesssim h^{ \alpha } $, and as $ h \to \infty $, $ J_\alpha ( h ) \lesssim h^{ -1/2 } $ (see \cite[(9.1.7),(9.1.61)]{abramowitz_stegun_64_handbook_of_math_functions}). 
	Assuming for ease of notation that $ \psi = 0 $ on $(0,1) $, we get that as $ r \to 0 $,
	\begin{align*}
		& r^{(2-n)/2}\int_{ 0 }^{ \infty } \psi ( t ) \exp ( - \eps t ) t^{ -n/2 }
		J_{ l + (n-2)/2} ( 2 \pi r t) 
		\dd{ t }
		\\
		\lesssim{}&
		r^{(2-n)/2}
		\int_{ 1 }^{ r^{-1} } 
		t^{-n/2} (rt)^{l+ (n-2)/2}
		\dd{ t }
		+
		r^{(2-n)/2}
		\int_{ r^{-1}}^{ \infty }
		t^{-n/2} (rt)^{-1/2}
		\dd{ t }
		\\
		={} &
		r^{l} \int_{ 1 }^{r^{-1}} t^{l-1} \dd{ t }
		+
		r^{(1-n)/2}\int_{r^{-1}}^{ \infty } t^{-(n+1)/2}
		\dd{ t } \lesssim 1.
	\end{align*}
	Moreover we can apply \cite[Thm.~1]{wainger_65_special_trigonometric_series_in_kd} to also deduce a suitable decay at infinity, namely that for every integer $ m \in \N $,
	\begin{equation*}
		\fourier ( \psi(x) \exp ( - \eps \abs{x}) \abs{x}^{-n} 
		Y_{k}^{(l)} ( x/\abs{x} ) ) ( \xi ) \in 
		\mathcal{O}(\abs{\xi}^{-m} )
		\text{ uniformly in }\eps > 0 \text{ as }\abs{\xi } \to \infty.
	\end{equation*}
	Combining the decay at infinity with the boundedness at zero, the decay of the coefficients $a_k^{(l)} $ and the dimensional bound $ d_l \lesssim l^{n-2} $ (see \Cref{lem:spherical_harmonics}), we get that $ F_\eps $ is uniformly bounded in $ \lp^{ \infty } ( U^{ \ast } ) $ as $ \eps \to 0 $.
	
	The claim \ref{item:Feps_ptw_convergence} is now an immediate 
	consequence of equations (\ref{eq:Feps_poisson_applied}), 
	(\ref{eq:K_disintegrated_into_spherics}) and the decay of the 
	coefficients $a_k^{(l)} $ given by  \Cref{lem:spherical_harmonics} 
	and 
	\cite[Thm.~1,i),ii)]{wainger_65_special_trigonometric_series_in_kd}.
\end{proof}

Applying \Cref{lem:firoozye_rewriting},  \Cref{prop:Feps_behaviour} and, denoting for $ \xi \in U^{ \ast } \setminus \{ 0 \} $,
\begin{equation*}
	\idfs{M} ( \xi ) 
	\coloneqq 
	\lim_{ \eps \to 0 }
	\idfs{M_\eps} ( \xi ),
\end{equation*}
we obtain that
\begin{equation}
	\label{eq:int_energy_via_Wigner}
	\frac{1}{2 } \sum_{ \substack{x, y \in \lattice_{ \rho } \\ x \neq y } }
	\of_{ \rho } ( x ) \colon M ( x - y ) \of_{ \rho } ( y )
	=
	\frac{1}{2}
	\int_{ U^{ \ast } }
	\idfs{M} ( \xi ) \cdot 
	\left( 
	\idfs{\rof_{\rho}} ( \xi ) \otimes \overline{ \idfs{\rof_{\rho}} ( \xi ) } 
	\right)
	\dd{ \xi }
	\eqqcolon 
	\frac{1}{2}
	\int_{ U^{ \ast } }
	\idfs{M} ( \xi )
	\cdot
	\dd{ \mu_{ \rho } ( \xi ) }.
\end{equation}
Here the \emph{discrete Wigner measure} is defined as 
\begin{equation}
	\label{eq:wigner_measure}
	\mu_{ \rho } 
	\coloneqq 
	\idfs{\rof_{ \rho } } ( \xi ) 
	\otimes 
	\overline{
		\idfs{\rof_{ \rho } } ( \xi )
	}
	\dd{ \xi }
\end{equation}
Due to $ \idfs{M} $ potentially not being continuous in $ 0 $, passing to the limit $ \rho \to 0 $ in the above equation is not immediate after having identified a weak limit $ \mu_\rho \rightharpoonup \mu $. In fact we can show that in the case of weak-long oscillations, the Wigner measures $ \mu_\rho $		
concentrate on the set $\{0\} $ as $ \rho \to 0 $. This is the content of the following Lemma, see also \cite[Lem.~3]{firoozye_93_homogenization_on_lattices}.
\begin{lemma}
	Assume that the sequence $ \of_{ \rho } $ has only 
	long-range oscillations in the sense of \Cref{def:weak_long}. If 
	the associated Wigner measures $ \mu_{ \rho } $ converge weakly to 
	some periodic Radon measure $ \mu $ on $ U^{ \ast} $, then 
	\begin{equation*}
		\mu ( U^{ \ast } \setminus \{ 0 \} )
		=
		0.
	\end{equation*}
\end{lemma}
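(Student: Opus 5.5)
The plan is to show that for every compact $K \subseteq U^{\ast} \setminus \{0\}$ (working on the torus $\R^3/\lattice_1^\ast$, so that ``away from $0$'' means away from $\lattice_1^\ast$) we have $\mu_\rho(K) \to 0$. It then suffices to find, for each $\delta>0$, a continuous periodic cutoff $\chi_\delta$ with $0\le \chi_\delta \le 1$, $\chi_\delta=0$ near $\lattice_1^\ast$ within distance $\delta/2$, $\chi_\delta = 1$ outside the $\delta$-neighbourhood, and prove
\begin{equation*}
\lim_{\rho\to0}\int_{U^\ast} \chi_\delta(\xi)\,\abs{\idfs{\rof_\rho}(\xi)}^2 \dd{\xi} = 0 .
\end{equation*}
Weak convergence $\mu_\rho\rightharpoonup\mu$ and nonnegativity of the trace of $\mu_\rho$ then give $\mu(\{\chi_\delta=1\})=0$, and letting $\delta\to0$ yields $\mu(U^\ast\setminus\{0\})=0$. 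The off-diagonal entries are controlled by the trace via Cauchy--Schwarz, so it is enough to treat the trace.

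The key step is to translate the weak-long condition (\ref{eq:weak_long}) into discrete Fourier language. For a lattice vector $v\in\lattice_1$, translating $\widetilde{\of_\rho}$ by $\rho v$ corresponds exactly to the lattice shift $\rof_\rho(\cdot+v)$. Using $\abs{\rho v}\le \rho\abs v$, I would first upgrade (\ref{eq:weak_long}) from $\abs{\xi}\le\rho$ to $\abs{\xi}\le C\rho$ for each fixed $C$, by the triangle inequality over a chain of at most $\lceil C\rceil$ steps. The scaling $\rof_\rho(x)=\rho^{-3/2}\of_\rho(\rho x)$ together with $\widetilde{\of_\rho}=\rho^{-3}\of_\rho$ on cells of volume $\rho^3$ gives
\begin{equation*}
\sum_{x\in\lattice_1}\abs{\rof_\rho(x+v)-\rof_\rho(x)}^2=\norm{\widetilde{\of_\rho}(\cdot+\rho v)-\widetilde{\of_\rho}}_{\lp^2}^2 \to 0 .
\end{equation*}
By the discrete Plancherel identity (\ref{eq:plancherel_idfs}), the left side equals $\int_{U^\ast}\abs{e^{2\pi i v\cdot\xi}-1}^2\abs{\idfs{\rof_\rho}(\xi)}^2\dd{\xi}$, up to the sign convention in the exponent. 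Applying this to $v=v_1,v_2,v_3$ and summing yields
\begin{equation*}
\int_{U^\ast} w(\xi)\,\abs{\idfs{\rof_\rho}(\xi)}^2\dd{\xi}\to0,\qquad w(\xi)\coloneqq\sum_{i=1}^3\abs{e^{2\pi i v_i\cdot\xi}-1}^2 .
\end{equation*}

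It remains to note that $w$ is continuous, $\lattice_1^\ast$-periodic, and vanishes exactly on $\lattice_1^\ast$. Indeed, $w(\xi)=0$ forces $v_i\cdot\xi\in\Z$ for all $i$, which means $\xi\in\lattice_1^\ast$ by the duality $\inner{v_i}{w_j}=\delta_{ij}$. By compactness of the torus, $w\ge c_\delta>0$ on $\{\chi_\delta\neq 0\}$, so $\chi_\delta\le c_\delta^{-1}w$, and the displayed limit follows. I do not expect a serious obstacle. The only points needing care are the bookkeeping of the scaling between $\widetilde{\of_\rho}$ and $\rof_\rho$, and checking that the hypothesis in (\ref{eq:weak_long}) covers shifts by $\rho v_i$, whose length may exceed $\rho$; the chaining argument above handles the latter.
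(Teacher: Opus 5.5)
Your proposal is correct and follows essentially the same route as the paper: shift $\rof_\rho$ by the lattice basis vectors $v_j$, use the discrete Plancherel identity to get $\int_{U^\ast}\abs{1-e^{\pm 2\pi i v_j\cdot\xi}}^2 \dd{\tr\mu_\rho(\xi)}\to 0$, and conclude from $\tr\mu\ge 0$, the hermitian positive semidefinite structure of $\mu$, and the fact that these weights vanish simultaneously only on $\lattice_1^\ast$. Your cutoff bound $\chi_\delta\le c_\delta^{-1}w$ and your chaining argument for shifts of length $\rho\abs{v_j}>\rho$ only make explicit two steps that the paper leaves implicit.
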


\begin{proof}
	Let $ \varphi \in \ell^{1} ( \lattice_{ 1 }; [0,\infty) ) $ have 
	finite support with $ \sum_{ x \in \lattice_{ 1 } } \varphi ( x ) = 
	1 $. As in (\ref{eq:def_extension_function}), we denote the 
	piecewise constant extension of a function $ \rho^{-3 } g $ on the 
	lattice $ \lattice_{ \rho } $ to $ \R^{ 3 } $ by $ \tilde{g} $. 
	Moreover we define $ \varphi^{ (\rho ) } ( x ) \coloneqq \varphi ( 
	x/\rho ) $ as a function on $ \lattice_{ \rho } $. We first note 
	that
	\begin{align*}
		\varphi \ast \rof_{ \rho } ( x ) 
		=
		\rho^{-3/2} \sum_{ y \in \lattice_{ \rho } }
		\of_{ \rho } ( \rho x - y ) \varphi ( y/\rho )
		=
		\rho^{-3/2} \of_{ \rho } \ast \varphi^{(\rho )} ( \rho x ). 
	\end{align*}
	Using the Plancherel identity, we thus compute that since $ \tr  
	\mu_{\rho }  = \abs{ \idfs{\rof_{ \rho }} ( \xi )}^{ 2 } \dd{ \xi 
	}$, we 
	have
	\begin{align*}
		\int_{ U^{ \ast } }
		\abs{1 - \idfs{\varphi} ( \xi ) }^{2} \dd{ \tr  \mu_{ \rho } ( \xi ) }
		&=
		\norm{ \varphi \ast \rof_{\rho} - \rof_{\rho} }_{ \ell^{2 } ( \lattice_{ 1 } ) }^{2}
		\\
		& =
		\rho^{-3 } \sum_{ x \in \lattice_{ \rho } }
		\abs{ \of_{ \rho } \ast \varphi^{ ( \rho ) } ( x ) - \of_{ \rho } ( x ) }^{ 2 }
		\\
		& =
		\norm{ \widetilde{ \of_{ \rho } \ast \varphi^{ ( \rho ) } } - 
			\widetilde{ \of_{ \rho } } }_{ \lp^{ 2 } ( \R^{ 3 } ) }^2.
	\end{align*}
	The last expression vanishes in the limit $ \rho \to 0 $
	since $ \varphi^{ (\rho ) } $ has finite support and $ \of_{ \rho } $ 
	has only weak-long oscillations.
	By choosing $ \varphi ( x ) = \delta_{ v_{ j } } ( x ) $, where we recall from (\ref{eq:unit_cell}) that $ ( v_{ j })_{ j \in \{1,2,3\}} $ yield a basis for the lattice $ \lattice_{ 1 } $, we note that $ \idfs{ \delta_{ v_{ j } } } ( \xi ) = \exp( - 2 \pi i \xi \cdot v_{ j }) $, and thus
	\begin{equation*}
		\int_{ U^{ \ast } } 
		\abs{ 1 - \exp ( - 2 \pi i \xi \cdot v_{ j } ) }^2 \dd{ \tr \mu 
			( \xi ) }
		= 0
		\quad
		\text{ for all } j \in \{1,2,3\}.
	\end{equation*}
	Since $ \tr \mu  \geq 0 $, this already yields that $ \tr \mu ( U^{ \ast } \setminus \{ 0 \}) = 0 $. Since $ \mu $ is hermitian positive semidefinite as a weak limit of hermitian positive semidefinite measures, this already yields that $ \mu ( U^{\ast} \setminus \{0\}) = 0 $, finishing the proof.
\end{proof}
We thus see that using Wigner measures is not useful for handling long-range oscillations. However, recalling the definition of weak-short oscillations in \Cref{def:weak_short}, we will see that the Wigner measures of those sequences who only exhibit short-range oscillations do not have concentration in zero.
This will be the content of \Cref{lem:weak_short_implies_no_zero}. 
Therefore the limiting energies of such sequences can be fully understood by investigating
\begin{equation*}
	\lim_{ \rho \to 0 }
	\int_{ U^{ \ast } }
	\idfs{M} ( \xi ) \cdot \dd{ \mu_{ \rho } ( \xi ) }.
\end{equation*}

Weak-short convergence can be roughly understood as $ \of_\rho $ only having oscillations on the scale of lattice, and additionally a local average of zero. We first show how this property relates to the rescaled function $ \rof_{ \rho} $ on $ \lattice_{ 1 } $.
To this end, we introduce the additional notation
\begin{equation*}
	\phi_{ \eps } ( \xi )
	\coloneqq 
	\sum_{ \zeta \in \lattice_{ 1 }^{ \ast } }
	\phi \left(\frac{ \xi + \zeta }{\eps }
	\right).
\end{equation*}
Recall from the conventions before \Cref{def:weak_short} the definition 
of 
$ \phi  $, the definition of $ \Phi \coloneqq \fourier^{-1} \phi  $ 
and $ \Phi_\eps 
( x ) = \eps^{ - 3 } \Phi ( x/\eps ) $.
Using the Poisson summation formula, 
\cite[Cor.~7.2.6]{stein_weiss_introduction_to_fourier_analysis_on_euclidean_spaces},
and the action of the Fourier transform on dilations, we note that for 
$ \lambda > 0 $, we have
\begin{equation}
	\label{eq:phi_idfs}
	\idfs{ (\Phi_{\lambda^{-1} } ) } ( \xi )
	=
	\sum_{ x \in \lattice_{ 1 } }
	\lambda^{ 3 } \Phi ( \lambda x ) \exp ( - 2 \pi i x \cdot \xi )
	=
	\sum_{ \zeta \in \lattice_{ 1 }^{ \ast  }}
	\phi \left(\frac{\zeta + \xi }{ \lambda }\right)
	=
	\phi_{ \lambda } ( \xi ).
\end{equation}
The following result is similar to \cite[Rmk.~2]{firoozye_93_homogenization_on_lattices}.
\begin{lemma}
	\label{lem:weak_short_equivalence}
	Assume that $ (\rof_{ \rho})_{ \rho>0 } $ is bounded in $ \lps^{ 2 
	} ( \lattice_{ 1 } ) $. Then the sequence $ \of_{ \rho } $ 
	converges weak-short if and only if
	\begin{equation*}
		\lim_{ \eps \to 0 }
		\lim_{ \rho \to 0 }
		\sum_{ x \in \lattice_{ 1 } }
		\abs{ \Phi_{1/\eps } \ast \rof_{ \rho} ( x ) }^{ 2 }
		=0.
	\end{equation*}
\end{lemma}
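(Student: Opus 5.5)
The plan is to compare the continuum quantity $\norm{\Phi_{\rho/\eps} \ast \widetilde{\of_\rho}}_{\lp^2(\R^3)}^2$ with the discrete quantity $\sum_{x\in\lattice_1}\abs{\Phi_{1/\eps}\ast\rof_\rho(x)}^2$. Both will be written in Fourier variables, and we will show that their difference tends to zero as first $\rho\to0$ and then $\eps\to0$.

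\textbf{Continuum side.} Since $\widetilde{\of_\rho}=\rho^{-3}\sum_{x\in\lattice_\rho}\of_\rho(x)\chi_{x+\rho U}$, we have
\begin{equation*}
\fourier\widetilde{\of_\rho}(\xi)=\rho^{-3}\,\widehat{\chi_{\rho U}}(\xi)\sum_{x\in\lattice_\rho}\of_\rho(x)e^{-2\pi i x\cdot\xi}.
\end{equation*}
After the substitution $\xi=\eta/\rho$ this equals, up to the factor $\rho^{3/2}$, the expression $\widehat{\chi_U}(\eta)\,\idfs{\rof_\rho}(\eta)$. Hence, by Plancherel and the scaling $\fourier\Phi_{\rho/\eps}(\xi)=\phi(\rho\xi/\eps)$,
\begin{equation*}
\norm{\Phi_{\rho/\eps}\ast\widetilde{\of_\rho}}_{\lp^2}^2=\int_{\R^3}\abs{\phi(\eta/\eps)}^2\abs{\widehat{\chi_U}(\eta)}^2\abs{\idfs{\rof_\rho}(\eta)}^2\dd{\eta}.
\end{equation*}
The factor $\rho$ drops out completely.

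\textbf{Discrete side.} By the Plancherel identity (\ref{eq:plancherel_idfs}) and formula (\ref{eq:phi_idfs}),
\begin{equation*}
\sum_{x\in\lattice_1}\abs{\Phi_{1/\eps}\ast\rof_\rho(x)}^2=\int_{U^\ast}\abs{\phi_\eps(\eta)}^2\abs{\idfs{\rof_\rho}(\eta)}^2\dd{\eta}.
\end{equation*}

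\textbf{Comparison.} For $\eps$ small, $\phi_\eps$ restricted to a fundamental cell centered at $0$ equals $\phi(\cdot/\eps)$, because the supports of the translates are disjoint. The continuum integral over $\R^3$ is likewise supported in $B_\eps(0)$, so both integrals live on the same small ball. The difference is therefore bounded by
\begin{equation*}
\sup_{\abs{\eta}\le\eps}\abs{1-\abs{\widehat{\chi_U}(\eta)}^2}\;\norm{\rof_\rho}_{\lps^2}^2\lesssim \eps\,\sup_\rho\norm{\rof_\rho}_{\lps^2}^2 .
\end{equation*}
This uses $\widehat{\chi_U}(0)=\lm^3(U)=1$ and the smoothness of $\widehat{\chi_U}$. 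The bound is uniform in $\rho$ by the assumed $\lps^2$ bound, and it vanishes as $\eps\to0$.

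Consequently, for small $\eps$ the two quantities differ by at most $C\eps$ uniformly in $\rho$. The iterated limits ($\rho\to0$ first, then $\eps\to0$, using $\limsup$ if needed) coincide, and the equivalence follows. One should note that the weak-short definition uses the norm rather than its square, which is harmless.

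\textbf{Expected obstacle.} The main care point is the bookkeeping of the two Fourier conventions: the continuous transform of the piecewise-constant extension must match the discrete Fourier series $\idfs{\rof_\rho}$ with the correct scaling, with the factor $\rho^{3/2}$ and the factors $\rho^{-3}$ cancelling. One also has to verify that the periodization in $\phi_\eps$ and the aliasing in the continuum integral over $\R^3$ agree near $0$. This requires $\eps$ to be smaller than half the minimal distance in $\lattice_1^\ast\setminus\{0\}$, which is the only place the restriction to small $\eps$ enters.
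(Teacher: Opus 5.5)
Your proposal is correct and is essentially the paper's own proof. Both rescale the continuum quantity to $\int_{\R^3}\phi(\eta/\eps)^2\abs{\widehat{\chi_U}(\eta)}^2\abs{\idfs{\rof_\rho}(\eta)}^2\dd{\eta}$, identify the discrete sum with $\int_{U^{\ast}}\phi_\eps^2\abs{\idfs{\rof_\rho}}^2=\int_{\R^3}\phi(\eta/\eps)^2\abs{\idfs{\rof_\rho}}^2$ via Plancherel and the support of $\phi$, and compare the two using $\widehat{\chi_U}(0)=1$.

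The only difference is cosmetic. The paper compares the two quantities multiplicatively: it uses $\abs{\widehat{\chi_U}}\le 1$ in one direction and $\abs{\widehat{\chi_U}}^2\ge 1/2$ on $B_\eps(0)$ in the other, which does not even need the $\lps^2$ bound. Your additive $O(\eps)$ error estimate instead uses the assumed uniform $\lps^2$ bound; both arguments work.
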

\begin{proof}
	Setting $ S_\lambda (f) ( x ) \coloneqq \lambda^{3/2} f ( \lambda x ) $, we note that 
	\begin{equation*}
		\fourier ( S_{ \lambda } (f) ) ( \xi ) = S_{ 1/\lambda } ( \fourier ( f ) ) ( \xi )
	\end{equation*}
	and 
	\begin{equation*}
		(f \ast g) ( x/\lambda )
		=
		S_{ 1/\lambda } ( f ) \ast S_{ 1/\lambda } ( g ) ( x ).
	\end{equation*}
	Thus we have by Plancherel that
	\begin{align}
		\notag
		\int_{ \R^{ 3 } } 
		\abs{ \Phi_{ \rho/\eps } \ast \widetilde{ \of_{ \rho } }}^{ 2 }
		\dd{ x }
		& =
		\rho^{ 3 }
		\int_{ \R^{ 3 } }
		\abs{ S_{ \rho } ( \Phi_{ \rho/ \eps } ) \ast S_{ \rho } ( \widetilde{ \of_{ \rho } })}^{2}
		\dd{ x }
		\\
		\notag
		& =
		\int_{ \R^{ 3 } }
		\abs{ \Phi_{1/\eps } \ast S_{ \rho} ( \widetilde{ \of_{ \rho } }) }^{ 2 } \dd{ x }
		\\
		\label{eq:weak_short_equivalence_1}
		& =
		\int_{ \R^{ 3 } }
		\phi \left(\frac{\xi}{\eps}\right)^{ 2 } 
		\abs{ S_{ 1/\rho } ( \fourier \widetilde{ \of_{ \rho } } )  ( \xi ) }^{ 2 }
		\dd{ \xi }.
	\end{align}
	In order to connect this to $ \rof_{ \rho} $, we note that
	\begin{align*}
		S_{1/\rho } (\fourier \widetilde{ \of_{ \rho } }) ( \xi )
		& =
		\rho^{3/2}
		\int_{ \R^{ n } }
		\widetilde{ \of_{ \rho } } ( \rho y ) \exp ( - 2 \pi i y \cdot \xi ) \dd{ y }
		\\
		& =
		\sum_{ x \in \lattice_{ 1 } }
		\rof_{ \rho } ( x ) \exp ( - 2 \pi i x \cdot \xi )
		\int_{ U } \exp ( - 2 \pi i y \cdot \xi )
		\dd{ y }
		\\
		& =
		\idfs{ \rof_{ \rho } } ( \xi ) \fourier (\chi_{ U } )( \xi ).
	\end{align*}
	Using that $ \lm^{ 3 } ( U ) = 1 $, we note that $ \abs{ \fourier ( 
		\chi_{ U } ) } \leq 1 $.
	Moreover we recall  $ \phi_\eps = \idfs{ ( \Phi_{ \eps^{-1} } )  } 
	$ from equality (\ref{eq:phi_idfs}).
	By exploiting the Plancherel identity and the fact that $ \phi $ is 
	supported in the unit ball, we can estimate 
	(\ref{eq:weak_short_equivalence_1}) by
	\begin{align}
		\notag
		\int_{ \R^{ 3 } }
		\abs{ \Phi_{ \rho/\eps} \ast \widetilde{ \of_{ \rho } }}^{ 2 }
		\dd{ x }
		&=
		\int_{ \R^{ 3 } }
		\phi \left( \frac{ \xi }{ \eps }\right)^{ 2 }
		\abs{ \idfs{ \rof_{ \rho } } ( \xi )}^{ 2 }
		\abs{ \fourier ( \chi_{ U } ) ( \xi ) }^{ 2 }
		\dd{ \xi }
		\\
		\label{eq:weak_short_number_1}
		& \leq
		\int_{ \R^{ 3 } }
		\phi\left(\frac{\xi}{\eps}\right)^{2}
		\abs{ \idfs{\rof_{ \rho } } ( \xi )}^{ 2 }
		\dd{ \xi }
		\\
		& =
		\int_{ U^{ \ast }}
		\phi_{ \eps } ( \xi )^{ 2 } 
		\abs{ \idfs{\rof_{ \rho } } ( \xi )}^{ 2 }
		\dd{ \xi }
		\\
		\notag
		& =
		\int_{ U^{\ast} }
		\abs{ \idfs{ \Phi_{1/\eps } } ( \xi ) }^{ 2 }
		\abs{ \idfs{ \rof_{ \rho } } ( \xi ) }^{ 2 }
		\dd{ \xi }
		\\
		\notag
		& = 
		\sum_{ x \in \lattice_{ 1 } }
		\abs{ \Phi_{1/\eps} \ast \rof_{ \rho } }^{ 2 }
	\end{align}
	The reverse inequality of (\ref{eq:weak_short_number_1}) follows up to a factor $ 2 $ for $ \eps > 0 $ small. This is due to the identity $ \fourier ( \chi_{ U } ) ( 0 ) = 1 $, the continuity of $ \fourier ( \chi_{ U } ) $ at $ 0 $, and because $ \phi $ has support in $ B_{ 1 } ( 0 ) $.
\end{proof}
The next observation is that if $ \of_{ \rho } $ converges weak-short, then the associated Wigner measures $ \mu_{ \rho } $ do not have any concentration at $ 0 $, see also \cite[Lem.~4]{firoozye_93_homogenization_on_lattices}.
\begin{lemma}
	\label{lem:weak_short_implies_no_zero}
	Assume that $ \of_{ \rho } $ converges weak-short and that the associated Wigner measures $ \mu_{ \rho } $ converge weakly to some $ \mu $. Then $ \abs{\mu} \perp \delta_{0 } $.
\end{lemma}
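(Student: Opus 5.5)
We have to show that the limit measure $\mu$ puts no mass at $0$. Since $\mu$ is a hermitian non-negative matrix-valued measure (a weak limit of the hermitian non-negative measures $\mu_\rho$), it suffices to show $\tr\mu(\{0\})=0$. Indeed, then every entry satisfies $|\mu_{ij}|(\{0\})\le \tr\mu(\{0\})=0$, so $|\mu|(\{0\})=0$, which is the statement $|\mu|\perp\delta_0$.

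To control the mass at $0$ we test against the periodized bump $\phi_\eps^2$. Fix $\eps>0$ small, with $\eps$ below the distance from $0$ to $\lattice_1^\ast\setminus\{0\}$. Then $\phi_\eps$ is continuous and $\lattice_1^\ast$-periodic on $U^\ast$ (viewed as a torus). It takes values in $[0,1]$, and since $\phi(0)=1$ we have $\phi_\eps(0)=1$.

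Since $\tr\mu\ge 0$ and $\phi_\eps^2\ge 0$, we get
\begin{equation*}
\tr\mu(\{0\})\le\int_{U^\ast}\phi_\eps(\xi)^2\dd{\tr\mu(\xi)}.
\end{equation*}
The integrand $\phi_\eps^2$ is continuous and periodic, so weak convergence of $\mu_\rho$ (understood on the torus, so periodic continuous test functions are admissible) gives
\begin{equation*}
\int_{U^\ast}\phi_\eps^2\dd{\tr\mu}=\lim_{\rho\to0}\int_{U^\ast}\phi_\eps(\xi)^2\,\abs{\idfs{\rof_\rho}(\xi)}^2\dd{\xi}.
\end{equation*}

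We now identify the right-hand side with a lattice sum. By (\ref{eq:phi_idfs}) we have $\phi_\eps=\idfs{(\Phi_{1/\eps})}$, and by the Plancherel identity (\ref{eq:plancherel_idfs}), the integral equals
\begin{equation*}
\sum_{x\in\lattice_1}\abs{\Phi_{1/\eps}\ast\rof_\rho(x)}^2 .
\end{equation*}
This is exactly the computation carried out in the proof of \Cref{lem:weak_short_equivalence}. We may invoke that lemma because $(\rof_\rho)$ is bounded in $\lps^2(\lattice_1)$ by (\ref{eq:l2_bound_h_rho}) and \ref{item:l2BoundOnArea}. Therefore
\begin{equation*}
\tr\mu(\{0\})\le\lim_{\rho\to0}\sum_{x\in\lattice_1}\abs{\Phi_{1/\eps}\ast\rof_\rho(x)}^2 \quad\text{for every small }\eps>0.
\end{equation*}

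Letting $\eps\to0$, the weak-short characterization in \Cref{lem:weak_short_equivalence} says the right-hand side tends to $0$. Hence $\tr\mu(\{0\})=0$, and the conclusion follows by the positivity argument in the first paragraph.

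Two technical points need care.

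First, the limit in $\rho$ must exist. If it does not, we replace $\lim_{\rho\to0}$ by $\limsup_{\rho\to0}$ throughout, or pass to a further subsequence; the inequalities are unaffected.

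Second, the test function must be admissible for the weak convergence, which we take on the torus $\R^3/\lattice_1^\ast$. If $\phi$ were merely continuous this would still be fine, but here it is smooth anyway. The real concern is whether "weak convergence on $U^\ast$" allows test functions that are nonzero at the boundary of the cell. Mass near $0$ can appear near the corners of the half-open cell $U^\ast$, which is why periodicity matters. Working on the torus settles this.
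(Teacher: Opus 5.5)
Your proof is correct and follows the paper's argument. You bound $\tr\mu(\{0\})$ by $\int_{U^\ast}\phi_\eps^2\dd{\tr\mu}$, rewrite this as $\lim_{\rho\to0}\sum_{x\in\lattice_1}\abs{\Phi_{1/\eps}\ast\rof_\rho(x)}^2$ via $\phi_\eps=\idfs{(\Phi_{1/\eps})}$ and Plancherel, send $\eps\to0$ using \Cref{lem:weak_short_equivalence}, and conclude from the hermitian positivity of $\mu$; this is the paper's route, with the inequality step and the periodic test functions made explicit. One cosmetic slip: $\phi_\eps\le 1$ would need $\eps$ below half the minimal distance in $\lattice_1^\ast$, but you only use $\phi_\eps\ge 0$ and $\phi_\eps(0)=1$, which your choice of $\eps$ does guarantee.
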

\begin{proof}
	We want to show that $ \abs{\mu }( \{ 0 \}) = 0 $, for which it suffices to show that
	\begin{equation*}
		\lim_{ \eps \to 0 }
		\int_{ U^{ \ast } }
		\phi_{ \eps } ( \xi ) \dd{ \abs{\mu} ( \xi ) }
		=
		0.
	\end{equation*}
	To this end, we note that by the Plancherel identity, it holds
	\begin{align*}
		\int_{ U^{ \ast } }
		\phi_{ \eps } ( \xi )^{2} \dd{ \tr \mu( \xi ) }
		&
		=
		\lim_{ \rho \to 0 }
		\int_{ U^{ \ast } }
		\phi_{ \eps } ( \xi )^2
		\abs{ \idfs{\rof_{ \rho} }( \xi)}^{ 2 }
		\dd{ \xi }
		\\
		& =
		\lim_{ \rho \to 0 }
		\int_{ U^{ \ast } }
		\abs{ \idfs{ (\Phi_{1/\eps } \ast \rof_{ \rho } ) } ( \xi )}^{ 2 }
		\dd{ \xi }
		\\
		& = 
		\lim_{ \rho \to 0 }
		\sum_{ x \in \lattice_{ 1 } }
		\abs{ \Phi_{ 1/\eps } \ast \rof_{ \rho } }^{ 2 }.  
	\end{align*}
	Applying \Cref{lem:weak_short_equivalence}, we thus deduce that $ 
	\tr ( \mu ) ( \{0 \}) = 0 $. Combining this with the fact that $ 
	\mu $ is hermitian and positive semidefinite, we get the claim.
\end{proof}

We conclude this section with the proof of \Cref{prop:weak_short_energy_convergence}

\begin{proof}[Proof of \Cref{prop:weak_short_energy_convergence}]
	By inequality (\ref{eq:l2_bound_h_rho}), we have that $ \rof_{ \rho 
	} $ is bounded in $ \lps^{ 2 } ( \lattice_{ 1 }) $, and thus by the 
	Plancherel identity, the associated Wigner measures $ \mu_{ \rho } 
	$ (see (\ref{eq:wigner_measure})) have uniformly bounded mass. 
	Therefore by passing to a non-relabelled sequence, we may assume that there exists a Radon measure $ \mu_{ \mathrm{W}} $ on $ U^{ \ast } $ such that $ \mu_{ \rho } \rightharpoonup \mu_{ \mathrm{W}} $. By \Cref{lem:weak_short_implies_no_zero}, $ \mu_{ \mathrm{W}} $ has no concentration at zero, and since $ \idfs{M} $ is continuous everywhere except at 0 by \Cref{prop:Feps_behaviour}, we deduce that
	\begin{equation}
		\label{eq:limit_weak_short_conclusion}
		\lim_{ \rho \to 0 }
		\frac{1}{2}
		\int_{ U^{ \ast } } 
		\idfs{M} ( \xi ) \cdot \dd{ \mu_{ \rho } ( \xi ) }
		=
		\frac{1}{2}
		\int_{ U^{ \ast } }
		\idfs{ M } ( \xi ) \cdot \dd{ \mu_{ \mathrm{W}} ( \xi ) }.
	\end{equation}
	Since the limit of the total interaction energy can be rewritten as the left hand side of equation (\ref{eq:limit_weak_short_conclusion}) by \Cref{prop:energy_asymptotics} and \Cref{lem:firoozye_rewriting}, this concludes the proof.
\end{proof}

\subsection{Scale Separation}
\label{sct:scale_separation}

Since we have shown that oscillations on a scale comparable to $ \rho $ 
and oscillations on larger scales require different methods, the next 
step is to divide the sequence $ \disdens_{ \rho } $ (see 
(\ref{eq:disdens_def})), resp. $ \of_{ \rho} $ (see 
(\ref{eq:def_or_area_function})), into its long- and short-range 
oscillatory part. 
With the abbreviation $ e_0 \coloneqq \chi_{\{0\}} \colon \lattice_{ 
	\rho }\to \R $ and recalling $ \Phi_\eps = \eps^{-3} \Phi ( \cdot / 
\eps ) $, we write
\begin{align*}
	\disdens_{ \rho } ( x )
	& =
	\disdens_{ \rho }^{ \mathrm{L}} ( x ) 
	+
	\disdens_{ \rho }^{ \mathrm{S} } ( x ) 
	\\
	& \coloneqq
	\sum_{ y \in \lattice_{ \rho }}
	\rho^{ 3 } \Phi_{ \rho / \delta ( \rho ) } ( x - y )
	(T_{ x- y } )_{\#} \disdens_{ \rho } ( y )  
	+
	\left(\disdens_{ \rho } ( x ) - 
	\sum_{ y \in \lattice_{ \rho } }
	\rho^{ 3 } \Phi_{ \rho / \delta ( \rho ) } ( x - y ) 
	(T_{ x- y } )_{\#} \disdens_{ \rho } ( y )   \right).
\end{align*} 
This corresponds to decomposing the associated oriented surface area 
functions via
\begin{align}
	\label{eq:decomposition_LS}
	\of_{ \rho } ( x )
	&=
	\of_{ \rho }^{\mathrm{ L }} ( x ) + \of_{\rho}^{ \mathrm{ S } } ( x ) 
	\\
	\notag
	&\eqqcolon
	\rho^{3} \Phi_{ \rho/\delta} \ast \of_{ \rho } ( x ) 
	+
	(e_{ 0 } - \rho^{ 3 } \Phi_{ \rho /\delta} ) \ast \of_{ \rho} ( x )
	\\
	\notag
	&=
	\sum_{ y \in \lattice_{ \rho } }
	\rho^{ 3 } \Phi_{ \rho/\delta} ( x- y ) \of_{ \rho } ( y ) 
	+
	\left(\of_{ \rho } ( x ) - 
	\sum_{ y \in \lattice_{ \rho } }
	\rho^{ 3 } \Phi_{ \rho/\delta} ( x- y ) \of_{ \rho } ( y )
	\right).
\end{align}
The parameter $ \delta > 0 $ depends on $ \rho $ and will be chosen such that $ \rho/\delta \to 0 $, but $ \delta \to 0 $. 
The exact choice of $ \delta $ will depend, however, on the sequence $ \of_{ \rho } $ and can not, at least with the methods of our proof, be chosen independently. The factor $ \rho^{ 3 } $ compensates for the discrepancy of summation over $ \lattice_{ \rho } $ and integration over $ \R^{ 3 } $.
We first show that the sequences $ \of_{ \rho }^{\mathrm{L}} $ and $ 
\of_{ \rho }^{ \mathrm{ S } } $ converge weak-long and weak-short, 
respectively.
\begin{lemma}
	\label{lem:decomposition}
	Given a sequence $ (\of_{ \rho })_{ \rho > 0  } $ satisfying the $ 
	\ell^{2} $-boundedness assumption \ref{item:l2BoundOnArea}, there 
	exists a non-relabelled subsequence of $ \rho \to 0 $ and $ \rho 
	\mapsto \delta ( \rho ) $ with $ \delta ( \rho ) \gg \rho $ 
	such that the 
	decomposition (\ref{eq:decomposition_LS}) satisfies
	\begin{enumerate}[label=(\roman*)]
		\item \label{item:weak_long_claim}
		$ \of_{ \rho }^{ \mathrm{ L } } $ converges weak-long in the sense of (\ref{eq:weak_long}),
		\item \label{item:weak_short_claim}
		$ \of_{ \rho }^{ \mathrm{ S } } $ converges weak-short in the sense of (\ref{eq:weak_short_def}) and
		\item \label{item:no_interaction} $ \of_{ \rho }^{\mathrm{ L }} 
		$ and $ \of_{ \rho }^{ \mathrm{ S } } $ have vanishing total 
		interaction energy in the sense that
		\begin{equation*}
			\lim_{ \rho \to 0 }
			\sum_{ \substack{x, y \in \lattice_{ \rho } \\ x \neq y }}
			\of_{ \rho }^{ \mathrm{ S } } ( x )
			\colon 
			M ( x - y )
			\of_{\rho }^{ \mathrm{ L } } ( y ) 
			=
			0.
		\end{equation*}
	\end{enumerate}
\end{lemma}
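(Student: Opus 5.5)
The plan is to work entirely on the Fourier side of the rescaled lattice functions $\rof_\rho$, where the decomposition (\ref{eq:decomposition_LS}) becomes multiplication by a cutoff. By the Poisson summation identity (\ref{eq:phi_idfs}), after rescaling to $\lattice_1$ the $\mathrm{L}$-part is $\Phi_{1/\lambda}\ast\rof_\rho$ with $\lambda=\rho/\delta\to 0$. Its inverse discrete Fourier series is $\phi_{\lambda}\,\idfs{\rof_\rho}$, and the $\mathrm{S}$-part has symbol $(1-\phi_\lambda)\idfs{\rof_\rho}$. Throughout, $\idfs{\rof_\rho}$ is bounded in $\lp^2(U^\ast)$ by (\ref{eq:l2_bound_h_rho}). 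The choice of $\delta(\rho)$ will be made by a diagonal argument at the end, and all three claims are reduced to statements about how much of the mass of $\abs{\idfs{\rof_\rho}}^2$ lies in annuli around the reciprocal lattice points.

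For \ref{item:weak_long_claim}, I compute the translation defect of $\widetilde{\of_\rho^{\mathrm L}}$ by a shift $\abs{h}\le\rho$ with Plancherel. Up to the harmless factor $\fourier\chi_U$, it is controlled by
\[
\int_{U^\ast}\abs{1-e^{-2\pi i h\cdot\xi/\rho}}^2\phi_\lambda(\xi)^2\abs{\idfs{\rof_\rho}(\xi)}^2\dd{\xi}.
\]
On the support of $\phi_\lambda$ we have $\abs{\xi}\lesssim\lambda$ near each reciprocal lattice point, so the multiplier is $\lesssim\lambda^2=(\rho/\delta)^2\to0$. For shifts that are not lattice vectors, I would handle the piecewise constant extension by comparing with the continuous Fourier transform $S_{1/\rho}\fourier\widetilde{\of^{\mathrm L}_\rho}=\phi_\lambda\idfs{\rof_\rho}\fourier\chi_U$. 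The only real-space translation acting there is $e^{-2\pi i h\cdot\xi/\rho}$ with $\abs{\xi}\lesssim\lambda$ plus contributions from the other reciprocal lattice cells. Those cells are weighted by $\fourier\chi_U(\xi+\zeta)$, so they need an additional tail estimate. Dominated convergence in $\zeta$, using that $\sum_\zeta\abs{\fourier\chi_U(\xi+\zeta)}^2=1$, should give this estimate. The upshot is that \ref{item:weak_long_claim} holds for any $\delta\gg\rho$.

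For \ref{item:weak_short_claim}, Lemma~\ref{lem:weak_short_equivalence} reduces the claim to
\[
\lim_{\eps\to0}\lim_{\rho\to0}\int_{U^\ast}\phi_\eps^2(1-\phi_\lambda)^2\abs{\idfs{\rof_\rho}}^2=0.
\]
Since $\phi$ is radially decreasing with $\phi(0)=1$, the integrand is supported on the annulus $c\lambda\le\abs{\xi}\le\eps$. So it suffices to choose $\delta$ so that the mass of $\tr\mu_\rho$ on $\{\lambda(\rho)\lesssim\abs\xi\le\eps\}$ vanishes in the iterated limit. This is where $\delta$ must depend on the sequence. Pass to a subsequence with $\mu_\rho\rightharpoonup\mu$, and let $m(\eps)=\tr\mu(\overline{B_\eps}\setminus\{0\})\to0$. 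For each $k$, pick $\rho_k$ such that for $\rho\le\rho_k$ the mass of $\tr\mu_\rho$ on $\{1/k\le\abs\xi\le\eps\}$ is at most $m(\eps)+1/k$, uniformly for $\eps$ in a finite grid. Then set $\lambda(\rho)=1/k$ for $\rho\in(\rho_{k+1},\rho_k]$, decreasing $\rho_k$ further to also force $\lambda\ge\rho^{1/2}$, say, so that $\delta\to0$. This diagonal choice is the step I expect to be the main obstacle: the weak limit does not control mass escaping into shrinking annuli, and the construction must respect both $\rho/\delta\to0$ and $\delta\to0$.

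For \ref{item:no_interaction}, use the rewriting of Lemma~\ref{lem:firoozye_rewriting} with polarization. The mixed energy equals
\[
\int_{U^\ast}\idfs{M}\cdot\bigl(\phi_\lambda(1-\phi_\lambda)\idfs{\rof_\rho}\otimes\overline{\idfs{\rof_\rho}}\bigr),
\]
where $\idfs M$ is bounded by Proposition~\ref{prop:Feps_behaviour}\ref{item:Feps_linfty_bound}. The factor $\phi_\lambda(1-\phi_\lambda)$ is supported in the annulus $c\lambda\le\abs\xi\le\lambda$. By the same diagonal choice, now with $\eps$ replaced by anything fixed, the mass there tends to zero. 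The mixed energy is therefore bounded by $\norm{\idfs M}_\infty\tr\mu_\rho(\{c\lambda\le\abs\xi\le\lambda\})\to0$.

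Finally, the replacement of the loop energy by the lattice sum is covered by Proposition~\ref{prop:energy_asymptotics}. For the generalized arrays this uses that $\rho^3\sum_y\abs{\Phi_{\rho/\delta}(x-y)}$ is bounded, so assumptions \ref{item:l2BoundOnArea} and \ref{item:wellSeperated} persist.
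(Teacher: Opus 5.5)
Your overall route is the paper's: you work with the symbol of the splitting on $U^{\ast}$, choose $\delta(\rho)$ by a diagonal argument from the discrete Wigner measures, and bound the mixed term by $\norm{\idfs{M}}_{\lp^{\infty}}$. But the scaling at the start is inverted, and this breaks the construction as written. Since $\rho^{3}\Phi_{\rho/\delta}(\rho w)=\delta^{3}\Phi(\delta w)=\Phi_{1/\delta}(w)$, one has $\rof^{\mathrm{L}}_{\rho}=\Phi_{1/\delta}\ast\rof_{\rho}$ on $\lattice_{1}$. Hence, by (\ref{eq:phi_idfs}), $\idfs{\rof^{\mathrm{L}}_{\rho}}=\phi_{\delta}\,\idfs{\rof_{\rho}}$. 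The frequency cutoff sits at $\abs{\xi}\sim\delta$, not at $\rho/\delta$.

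There are two consequences.
\begin{itemize}
\item In (i) the multiplier is $\lesssim\delta^{2}$ on the support. So what (i) needs is $\delta\to0$, and it holds for every such $\delta$; it is not a consequence of $\delta\gg\rho$.
\item More seriously, setting $\delta:=\rho/\lambda$ after your diagonal argument puts the actual cutoff at $\rho/\lambda$, a scale where you have no mass control. Take the paper's example (\ref{eq:example_weak_long}). Its discrete Wigner mass sits at $\abs{\xi}\approx\rho^{1/2}/(2\pi)$, so every annulus $\{1/k\le\abs{\xi}\le\eps\}$ carries vanishing mass. Your construction therefore permits $\lambda\gg\rho^{1/2}$ (any fast-decreasing choice of $\rho_{k}$ produces this). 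Then $\rho/\lambda\ll\rho^{1/2}$, and $\phi_{\rho/\lambda}$ annihilates all the mass. So $\of^{\mathrm{L}}_{\rho}\to0$, and $\of^{\mathrm{S}}_{\rho}$ is essentially $\of_{\rho}$, whose Wigner limit is a nonzero atom at $0$. This contradicts (ii) by \Cref{lem:weak_short_implies_no_zero}.
\end{itemize}
The repair is to take $\delta:=\lambda$ itself. Then $\lambda\geq\rho^{1/2}$ gives $\delta\gg\rho$, and $\lambda=1/k\to0$ gives $\delta\to0$. With that correction, your Fourier-side treatment of (i), using $\sum_{\zeta}\abs{\fourier\chi_{U}(\xi+\zeta)}^{2}=1$ for the other cells, is fine. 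The paper instead argues in real space with lattice shifts and obtains the bound $\delta^{2}\norm{\nabla\Phi}_{\lp^{1}}^{2}$.

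There is a second, smaller gap. The cutoff $\phi$ is only assumed radially decreasing with $\phi(0)=1$, not $\phi\equiv1$ near $0$. So $(1-\phi_{\delta})^{2}$ and $\phi_{\delta}(1-\phi_{\delta})$ are not supported in annuli $\{c\delta\le\abs{\xi}\}$. Even if they were, your diagonal choice controls the mass only on $\{\delta\le\abs{\xi}\le\eps\}$, not on $\{c\delta\le\abs{\xi}\le\eps\}$. Your construction does not exclude mass at $\abs{\xi}=\theta\delta(\rho)$ with $\theta\in(0,1)$ fixed, and such mass contributes $(1-\phi(\theta))^{2}$ times its size to the quantity in (ii). This is repairable: control the mass on $\{1/k^{2}\le\abs{\xi}\le\eps\}$ while setting $\delta=1/k$, and use $\sup_{B_{\theta}}(1-\phi)\to0$ as $\theta\to0$.

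The paper's device avoids annuli altogether. It chooses $\delta(\rho)$ so that $\int\phi_{\delta}\dd{\mu_{\rho}}$ and $\int\phi_{\delta}^{2}\dd{\mu_{\rho}}$ both tend to $\mu(\{0\})$, and likewise with $\abs{\mu_{\rho}}$ and $\omega(\{0\})$. Then (ii) follows by expanding $(1-\phi_{\delta})^{2}=1-2\phi_{\delta}+\phi_{\delta}^{2}$ against $\phi_{\eps}^{2}\,\tr\mu_{\rho}$; the three terms tend to $\tr\mu(\{0\})$, $-2\tr\mu(\{0\})$ and $\tr\mu(\{0\})$. Claim (iii) follows from $\int(\phi_{\delta}-\phi_{\delta}^{2})\dd{\abs{\mu_{\rho}}}\to\omega(\{0\})-\omega(\{0\})=0$. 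No support property of $\phi$ is needed.
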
 
\begin{proof}
	Since $ \abs{ \mu_{ \rho } } $ is a bounded sequence of measures on $ U^{ \ast } $, we find a non-relabelled subsequence and Radon measures $ \omega, \mu $ such that $ \abs{ \mu_{ \rho } } \rightharpoonup \omega $ and $ \mu_{ \rho } \rightharpoonup \mu $.
	
	We start by showing that there exists  $\rho \mapsto \delta ( \rho 
	) $ with $ \delta ( \rho ) \to 0 $ as $ \rho \to 0 $ such that
	\begin{align}
		\label{eq:delta_rho_choice_one}
		\lim_{ \rho \to 0 }
		\int_{ U^{ \ast } }
		\phi_{ \delta ( \rho ) } \left( \xi  \right)
		\dd{ \mu_{ \rho } ( \xi ) }
		& = 
		\mu ( \{ 0 \}),
		\\
		\label{eq:delta_rho_choice_squared}
		\lim_{ \rho \to 0 }
		\int_{ U^{ \ast } }
		\phi_{ \delta ( \rho ) } \left( \xi  \right)^2
		\dd{ \mu_{ \rho } ( \xi ) }
		& = 
		\mu ( \{ 0 \}),
		\\
		\label{eq:delta_rho_mass_one}
		\lim_{\rho \to 0 }
		\int_{ U^{ \ast } }
		\phi_{ \delta ( \rho ) } \left( \xi  \right)
		\dd{ \abs{ \mu_{ \rho }  } ( \xi ) }
		& =
		\omega ( \{ 0 \}),
		\\
		\label{eq:delta_rho_mass_squared}
		\lim_{\rho \to 0 }
		\int_{ U^{ \ast } }
		\phi_{ \delta ( \rho ) } \left( \xi  \right)^2
		\dd{ \abs{ \mu_{ \rho } }( \xi ) }
		& =
		\omega ( \{ 0 \}) \quad \text{and}
		\\
		\label{eq:delta_rho_decay}
		\lim_{ \rho \to 0 }
		\frac{ \rho }{ \delta(\rho) }
		& = 0.
	\end{align} 
	In fact, we notice that due to the monotonicity of $ \eps \mapsto 
	\phi ( \xi / \eps ) $, the weak-$\ast $ convergence of $ \mu_{ \rho 
	} $ to $ \mu $ and the fact that $ \phi ( 0 ) = 1 $, we have
	\begin{equation*}
		\mu(\{ 0\})
		=
		\lim_{ \eps \to 0 }
		\int_{ U^{ \ast } }
		\phi \left(\frac{\xi }{ \eps  } \right)
		\dd{ \mu ( \xi ) }
		=
		\lim_{ \eps \to 0 }
		\lim_{ \rho \to 0 }
		\int_{ U^{ \ast } }
		\phi \left(\frac{\xi }{ \eps }\right)
		\dd{ \mu_{ \rho } ( \xi ) }.
	\end{equation*}
	The same computation holds with a square of $ \phi $, and also with $ \abs{ \mu_{ \rho } } $ and $ \omega $ instead of $ \mu_{ \rho } $ and $ \mu $.
	Thus by choosing a suitable diagonal sequence, we find $ \rho 
	\mapsto \delta ( \rho ) $ with $ \rho \ll \delta ( \rho ) $ and 
	such that equations 
	(\ref{eq:delta_rho_choice_one})-(\ref{eq:delta_rho_mass_squared}) 
	are satisfied. 
	
	We note that due to Young's inequality, the resulting long- and 
	short-range parts still satisfy the $ \lp^{ 2 } $-bound 
	(\ref{eq:l2_area}). Moreover since $ \Phi $ does not have compact 
	support, the convolutions might not be compactly support anymore, 
	which has so far been an assumption in our results. 
	However the tail is small since $ \Phi $ is a Schwartz 
	function, and thus can be trivially estimated.
	
	Having chosen $ \delta ( \rho ) $ and splitting via equation 
	(\ref{eq:decomposition_LS}), we now prove 
	\ref{item:weak_long_claim}. 
	In order to prove that $ \of_{ \rho }^{\mathrm{ L }} $ only has 
	long-range oscillations, it suffices to consider $ \xi \in 
	\lattice_{ \rho } $ with $ \abs{ \xi } \lesssim \rho $ in 
	\Cref{def:weak_long} since $ \widetilde{ \of_{ \rho} } $ is
	piecewise constant. 
	We have that
	\begin{align}
		\notag
		\norm{ \widetilde{ \of_{ \rho }^{ \mathrm{ L } } } ( \cdot + \xi ) - 
			\widetilde{ \of_{ \rho }^{ \mathrm{ L } } } }_{ \lp^{ 2 } ( \R^{ 3 } ) }^{2}
		&
		=
		\sum_{ x \in \lattice_{ \rho } }
		\frac{1}{\rho^{ 3 }}
		\abs{  \of_{ \rho }^{ \mathrm{ L } }( x + \xi ) -  \of_{ \rho }^{ \mathrm{ L } }  ( x ) }^2
		\\
		\notag
		& =
		\rho^{3}
		\norm{ \left( \Phi_{ \rho/\delta} ( \xi + \cdot ) - \Phi_{ \rho/\delta } \right) \ast \of_{ \rho } }_{ \ell^{ 2 } ( \lattice_{ \rho } ) }^{ 2 }
		\\
		\label{eq:weak_long_confirm_1}
		& \leq  
		\rho^{ 6 } \norm{ \Phi_{ \rho/\delta} ( \xi + \cdot ) - \Phi_{ \rho/\delta } }_{ \ell^{1}}^{ 2 }
		\rho^{ - 3 } \norm{ \of_{ \rho } }_{ \ell^{ 2 } ( \lattice_{ \rho } ) }^2.
	\end{align}
	By the boundedness assumption, the factor $ \rho^{ - 3 } \norm{ 
		\of_{ \rho } }_{ \ell^{ 2 } ( \lattice_{ \rho } ) }^2 $ stays 
	uniformly bounded. For the other factor, we use the fundamental 
	theorem of calculus and $ \abs{ \xi } \lesssim \rho $ to deduce 
	\begin{align*}
		\rho^{ 6 }\norm{ \Phi_{ \rho/\delta } ( \xi + \cdot ) - \Phi_{ \rho/\delta } }_{ \ell^{ 1 } ( \lattice_{ \rho } ) }^{2 }
		& =
		\delta^{ 6 }
		\left( \sum_{ x \in \lattice_{ 1 } }
		\abs{ \Phi ( \delta x + \frac{ \delta}{\rho } \xi ) - \Phi ( \delta x ) }
		\right)^{ 2 }
		\\
		& \lesssim 
		\delta^{ 6 }
		\left(
		\sum_{ x \in \lattice_{ 1 } } 
		\abs{ 
			\int_0^1 \nabla \Phi \left( \delta x + t \frac{\delta}{\rho } \xi  \right) \cdot \frac{ \delta}{\rho } \xi   \dd{ t } } 
		\right)^{ 2 }
		\\
		& \lesssim 
		\delta^{2}
		\left( \int_{ \R^{ 3 } } \abs{ \nabla \Phi ( x ) } \dd{ x } \right)^{ 2 },
	\end{align*}
	which vanishes as $ \delta \to 0 $.
	
	We now show the claim \ref{item:weak_short_claim}. Denote by $ 
	\rof_{ \rho }^{ \mathrm{ S } } ( x ) \coloneqq \rho^{ -3/2 } \of_{ 
		\rho}^{ \mathrm{ S } } ( \rho x ) $ the rescaling of $ \of_{ \rho 
	}^{ 
		\mathrm{ S } } $ as in (\ref{eq:rof_definition}). 
	It follows that $ \idfs{ \rof_\rho^{ \mathrm{ S } } } ( \xi ) = ( 1- \phi_{ \delta } ( \xi ) ) \idfs{ \rof_{ \rho } } ( \xi ) $.
	By applying the Plancherel identity (\ref{eq:plancherel_idfs}) and 
	(\ref{eq:phi_idfs}), we obtain
	\begin{align}
		\notag
		\lim_{ \eps \to 0 }
		\lim_{ \rho \to 0 }
		\sum_{ x \in \lattice_{ 1 } }
		\abs{ \Phi_{ 1/\eps } \ast \rof_{ \rho }^{ \mathrm{ S } } }^{ 2 }
		& =
		\lim_{ \eps \to 0 }
		\lim_{ \rho \to 0 }
		\int_{ U^{ \ast } }
		\phi_{ \eps } ( \xi )^{ 2 } 
		\abs{ \idfs{\rof_{ \rho }^{ \mathrm{ S } }} ( \xi )}^{ 2 } \dd{ \xi }
		\\
		\notag
		& =
		\lim_{ \eps \to 0 }
		\lim_{ \rho \to 0 }
		\int_{ U^{ \ast } }
		\phi_{ \eps } ( \xi )^{ 2 }
		\abs{ \idfs{ \rof_{ \rho}} ( \xi ) }^{ 2 }
		( 1 - \phi_{ \delta ( \rho ) } )^{ 2 }
		\dd{ \xi }
		\\
		\label{eq:three_sumands}
		& =
		\lim_{ \eps \to 0 }
		\lim_{ \rho \to 0 }
		\int_{ U^{ \ast } }
		\phi_{ \eps } ( \xi )^{ 2 }
		( 1  - 2 \phi_{ \delta ( \rho ) } ( \xi ) + \phi_{ \delta ( \rho ) } ( \xi )^{ 2 } )
		\dd{ \tr \mu_{ \rho } ( \xi ) }.
	\end{align}
	For the first summand, we note that by weak-$\ast$ convergence of $ \mu_{ \rho } $ to $ \mu $, we have
	\begin{equation*}
		\lim_{ \eps \to 0 }
		\lim_{ \rho \to 0 }
		\int_{ U^{ \ast } }
		\phi_{ \eps } ( \xi )^{ 2 }
		\dd{ \tr \mu_{ \rho } ( \xi ) }
		=
		\lim_{ \eps \to 0 }
		\int_{ U^{ \ast } }
		\phi_{ \eps } ( \xi )^{ 2 }
		\dd{  \tr \mu ( \xi ) }
		=
		\tr \mu ( \{ 0 \} ).
	\end{equation*}
	For the second summand, we note that by equation (\ref{eq:delta_rho_choice_one}), we have
	\begin{align*}
		\lim_{ \eps \to 0 }
		\lim_{ \rho \to 0 }
		\abs{ \tr \mu ( \{ 0  \} )
			-
			\int_{ U^{ \ast }}
			\phi_{ \eps } ( \xi )^{ 2 }
			\phi_{ \delta ( \rho ) } ( \xi )
			\dd{ \tr \mu_{ \rho } ( \xi ) }
		}
		& =
		\lim_{ \eps \to 0 }
		\lim_{ \rho \to 0 }
		\abs{ \int_{ U^{ \ast } }
			\phi_{ \delta ( \rho ) } ( \xi )
			-
			\phi_{ \eps } ( \xi )^{ 2 }
			\phi_{ \delta ( \rho ) } ( \xi )
			\dd{ \tr \mu_{ \rho } ( \xi ) }
		}
		\\
		& \leq
		\limsup_{ \eps \to 0 }
		\limsup_{ \rho \to 0 }
		\sup_{ B_{ \delta ( \rho ) } ( 0 ) }
		\abs{ 1 - \phi_{ \eps }^{2} }
		\tr \mu_{ \rho } ( U^{ \ast } )
		=0.
	\end{align*}
	The same computation but using 
	(\ref{eq:delta_rho_choice_squared}) instead of 
	(\ref{eq:delta_rho_choice_one}) shows that the third term converges to $ \mu ( \{ 0 \}) $. We thus conclude that the term (\ref{eq:three_sumands}) is equal to zero, which proves by \Cref{lem:weak_short_equivalence} that $ \of_{ \rho }^{ \mathrm{ S } } $ converges weak-short.
	
	The last part of the proof concerns the claim 
	\ref{item:no_interaction}. 
	We rescale to the unit lattice $ 
	\lattice_{ 1 } $ as in (\ref{eq:firoozye_sum}) and use the 
	Plancherel identity as for the proof of 
	\ref{item:weak_short_claim}. 
	By using that $ ( 1 - \phi_\delta ) \phi_\delta \geq 0 $, we obtain 
	that
	\begin{align*}
		\abs{
			\sum_{ \substack{x, y \in \lattice_{ \rho } \\ x \neq y }}
			\of_{ \rho }^{ \mathrm{ S } } ( x ) \colon M ( x - y ) \of_{ \rho }^{ \mathrm{ L } } ( y )
		}
		& =
		\abs{
			\int_{ U^{ \ast } }
			(1- \phi_{ \delta ( \rho ) } ( \xi ) )
			\phi_{ \delta ( \rho ) } ( \xi )
			\idfs{M} ( \xi ) \colon \dd{ \mu_{ \rho } ( \xi ) }
		}
		\\
		& \leq
		\norm{ \idfs{ M } }_{ \lp^{ \infty } ( U^{ \ast } ) }
		\int_{ U^{ \ast } }
		\phi_{ \delta( \rho )} ( \xi )
		-
		\phi_{ \delta ( \rho ) } ( \xi )^{ 2 }
		\dd{ \abs{ \mu_{ \rho } } ( \xi ) }
		\\
		& =
		\norm{ \idfs{ M } }_{ \lp^{ \infty } ( U^{ \ast } ) }
		\left(
		\int_{ U^{ \ast } }
		\phi_{ \delta ( \rho ) } ( \xi )
		\dd{ \abs{ \mu_{ \rho } } ( \xi ) }
		-
		\int_{ U^{ \ast } }
		\phi_{ \delta( \rho ) } ( \xi )^{2}
		\dd{ \abs{ \mu_{ \rho } } ( \xi ) }
		\right).
	\end{align*}
	This term converges to zero as $ \rho \to 0 $ by equations (\ref{eq:delta_rho_mass_one}) and (\ref{eq:delta_rho_mass_squared}), which finishes the proof.
\end{proof}

We conclude this section with the proof of the main result \Cref{thm:main_theorem}.

\begin{proof}[Proof of \Cref{thm:main_theorem}]
	We choose the decomposition according to \Cref{lem:decomposition}, which preserves the $ \lp^{ 2 } $-boundedness. 
	In view of \Cref{lem:decomposition} \ref{item:no_interaction} and 
	\Cref{prop:energy_asymptotics}, we get that
	\begin{equation*}
		\lim_{ \rho \to 0 }
		\interactionEnergy ( \disdens_{ \rho })
		=
		\lim_{ \rho \to 0 }
		\frac{1}{2}
		\sum_{ \substack{x, y \in \lattice_{ \rho } \\ x \neq y }}
		\of_{ \rho }^{ \mathrm{ L } } ( x ) \colon M ( x- y ) \of_{ \rho }^{ \mathrm{ L } } ( y )
		+
		\lim_{ \rho \to 0 }
		\frac{1}{2}
		\sum_{ \substack{x, y \in \lattice_{ \rho } \\ x \neq y }}
		\of_{ \rho }^{ \mathrm{ S } } ( x ) \colon M ( x- y ) \of_{ \rho }^{ \mathrm{ S } } ( y )
	\end{equation*}
	We then apply \Cref{prop:weak_long_energy_convergence} and \Cref{prop:weak_short_energy_convergence} to the weak-long and weak-short part respectively. By choosing a suitable subsequence, we obtain the desired result. 
\end{proof}
We want to finish by presenting an example on how to compute the splitting and the corresponding $ \Hm $-measure and Wigner measure.
\begin{example}
	Let $ \gamma \coloneqq \Sph^1 \times \{ 0 \} \subseteq \R^{ 3 } $ 
	be a flat loop in $ \R^{ 3 } $, and choose a fixed tangent 
	orientation $ \tau $. For parameters $ \eps > 0 $, the atomic 
	lattice spacing, $ r > 0 $, the radius of the dislocation loops, 
	and 
	$ \rho > 0 $, the spacing of the dislocation loops, consider the 
	following sequence 
	of arrays of dislocation loops:
	\begin{equation*}
		\disdens_{ \rho }
		\coloneqq 
		\frac{1}{ \pi }
		\sum_{ x \in \rho \Z^{ 3 } \cap Q }
		\left( \sin ( x_1 \rho^{ -1/2 } ) + (-1)^{\rho^{-1 } x_1 } \right)
		\eps^{ - 1 } e_{ 1 }
		\otimes 
		\tau \hm^{ 1 } \llcorner_{ x + r \gamma },
	\end{equation*}
	where $ Q $ is the unit cube.
	Moreover we choose $ \eps \ll r \ll \rho $ such that $ \eps^{ - 1 } 
	r^2 = \rho^{ 3 } $. With this choice of parameters, we note that $ 
	\disdens_{ \rho } $ is $ \lp^{ 2 } $-bounded in the sense of 
	\ref{item:l2BoundOnArea} since
	\begin{equation*}
		\sum_{ x \in \rho \Z^{ 3 } \cap Q }
		\frac{1}{\rho^3 } (\eps^{-1} e_1 r^{ 2 } )^2
		=
		\rho^{ -6 } ( \eps^{ -1 } r^{ 2 } )^2,
	\end{equation*}
	which stays uniformly bounded as $ \rho \to 0 $. Moreover we compute that
	\begin{align*}
		\of_{ \rho } ( x ) 
		&= 
		\chi_{ Q } ( x ) ( \sin ( x_1 \rho^{ -1/2 } ) + (-1)^{ 
			\rho^{-1} x_1 } ) \eps^{ -1 } r^{ 2 } e_1 \otimes e_3
		\\
		&=
		\chi_{ Q } ( x ) \left( \sin ( x_1 \rho^{ -1/2 } ) + (-1)^{ \rho^{-1} x_1 } \right) \rho^{3 } e_1 \otimes e_3.
	\end{align*} 
	This sequence splits into its weak-long and weak-short oscillatory part via
	\begin{align*}
		\disdens_{ \rho }^{ \mathrm{ L } } 
		& =
		\frac{1}{\pi}
		\sum_{ x \in \rho \Z^{ 3 } \cap Q }
		\sin ( x_1 \rho^{ -1/2 } ) 
		\eps^{ - 1 } e_{ 1 }
		\otimes 
		\tau \hm^{ 1 } \llcorner_{ x + r \gamma },
		\\
		\disdens_{ \rho }^{ \mathrm{ S } } 
		& =
		\frac{1}{\pi}
		\sum_{ x \in \rho \Z^{ 3 } \cap Q }
		(-1)^{ \rho^{ -1 } x_1 }
		\eps^{ - 1 } e_{ 1 }
		\otimes 
		\tau \hm^{ 1 } \llcorner_{ x + r \gamma }.
	\end{align*}
	That this is consistent with the definition of the splitting in 
	\Cref{thm:main_theorem} can be seen by showing that for  $ \delta ( 
	\rho ) \coloneqq \rho^{1/4} $ , we have that
	\begin{equation*}
		\sum_{ y \in \rho \Z^{ 3 } } \rho^{ 3 } \Phi_{ \rho / \delta } ( x -y ) (-1)^{ \rho^{ - 1 } y_1 } \eps^{ -1 } r^2 e_1 \otimes e_3 
	\end{equation*} 
	goes to zero in $ \lp^{ 2 } ( \R^{ 3 } ) $.
	
	For the long-range oscillatory part, we note that by writing $ \sin 
	( x ) = (\exp ( i x ) - \exp ( - i x ))/(2i) $, the sequence $ 
	\widetilde{ 
		\of_{ \rho} } $ generates, as in 
	\Cref{ex:h_measure_generated_by_sin_cos}, the $ \Hm $-measure
	\begin{equation*}
		\mu_{ \Hm } 
		=
		\lm^{ 3 } \llcorner_{ Q } \otimes \frac{\delta_{ e_1 } + 
			\delta_{ - e_1 } }{ 4 } (e_{ 1 } \otimes e_{ 3 }) \otimes (e_1 
		\otimes e_3 ).
	\end{equation*}
	For the short-range oscillatory part, we first note that
	\begin{equation*}
		\rof_{ \rho } ( x ) =\rho^{ -3/2 } \of_{ \rho } ( \rho x )
		=
		\rho^{ 3/2 }
		\chi_{ \rho^{ -1 } Q } ( x )
		( - 1 )^{ x_1 } e_{ 1 } \otimes e_{ 3 }.
	\end{equation*}
	By using that $ (-1)^{ x_1} = \exp ( 2 \pi i x_1/2) $ for $ x \in 
	\Z^{ 3 } $, we have that the inverse discrete Fourier series of the 
	sequence $ \rof_{ \rho} $ is given by
	\begin{align*}
		\idfs{ \rof_{ \rho} } ( \xi )
		& =
		\sum_{ x \in \Z^3} \rof_{ \rho } ( x ) \exp ( - 2 \pi i x \cdot \xi )
		\\
		& =
		\sum_{ x \in \Z^{ 3 } \cap \rho^{ -1 } Q }
		\rho^{ 3/2 } 
		\exp \left( - 2 \pi i x \cdot \left( \xi - \frac{1}{2} e_1 \right) \right) e_1 \otimes e_3.
	\end{align*}
	Thus the corresponding Wigner measure is given by
	\begin{align*}
		\idfs{ \rof_{ \rho } } ( \xi ) \otimes \overline{ \idfs{ \rof_{ 
					\rho } } ( \xi ) } \dd{ \xi }
		=
		\rho^{ 3 } \abs{ \sum_{ x \in \Z^{ 3 } \cap \rho^{ -1 } Q } 
			\exp \left( - 2 \pi i x \cdot \left( \xi - \frac{1}{2} e_1 
			\right) \right) }^{ 2 } ( e_1 \otimes e_3 ) \otimes ( e_1 
		\otimes e_3 ) \dd{ \xi }.
	\end{align*}
	Using Riemann sums, this measure has the same limit as
	\begin{align*}
		& \rho^{ 3 } \abs{ \int_{ \rho^{ - 1 } Q } \exp \left( - 2 \pi 
			i x \cdot \left( \xi - \frac{1}{2} e_1 \right)\right) \dd{ x } 
		}^{2 }  
		( e_1 \otimes e_3 ) \otimes ( e_1 \otimes e_3 )
		\dd{ \xi }
		\\
		={} & 
		\rho^{ - 3 } \abs{ \fourier \chi_{ Q } \left( \frac{1}{\rho } \left( \xi - \frac{1}{2} e_1 \right)\right)}^{ 2 }
		( e_1 \otimes e_3 ) \otimes ( e_1 \otimes e_3 ) \dd{ \xi }.
	\end{align*}
	This sequence of measures converges to $ \delta_{ \frac{1}{2} e_1 } 
	( e_1 \otimes e_3 ) \otimes ( e_1 \otimes e_3 )$, since for any 
	periodic test function $ \varphi \in C ( Q ) $, we have, due to the 
	square-integrability of $ \fourier \chi_{ Q } $, that
	\begin{equation*}
		\int_{ Q } \varphi ( \xi ) \rho^{ - 3 } \abs{ \fourier \chi_{ Q } \left( \frac{1}{ \rho } \left( \xi - \frac{1}{2} e_ 1 \right) \right) }^2 \dd{ \xi }
		\to 
		\varphi \left( \frac{1}{2} e_1 \right) \int_{ \R^{ 3 } } \abs{ \fourier \chi_{ Q } }^{ 2 } \dd { \xi }
		= 
		\varphi \left( \frac{1}{2} e_1 \right).
	\end{equation*}
	By combining all the above observations and using \Cref{thm:main_theorem}, we thus conclude that
	\begin{equation*}
		\lim_{ \rho \to 0 }
		\interactionEnergy ( \disdens_{ \rho } )
		=
		\frac{1}{2}
		\left(\frac{1}{4} ( \Psi_{1313} ( e_{ 1 } ) + \Psi_{1313} ( -e_{ 1 } ) ) +  \idfs{M}_{1313} \left( \frac{1}{2} e_1 \right)\right).
	\end{equation*}
\end{example}

\section{Relaxation}
\label{sct:relaxation}

We want to close the paper by proving \Cref{cor:negative_interaction}. We start by finding out more about the structure of the $ 0 $-homogeneous function $ \Psi $ appearing in the main result \Cref{thm:main_theorem}.

\subsection{Symmetries}

To start off, we show that using the symmetries of the cubic lattice, we can deduce that the lattice sum (\ref{eq:def_lattice_sum}) is already determined up to a scalar if the lattice has cubic symmetry.
\begin{lemma}
	Assume that the lattice $ \lattice_{ 1 } $ has cubic symmetry in the sense of equation (\ref{eq:cubic_symmetry}) and that $ \eltensor $ is an isotropic elasticity tensor in the sense of equation (\ref{eq:def_isotropic_elasticity_tensor}). Then there exists $ \lambda \in \R $ such that
	\begin{equation}
		\label{eq:structure_of_lattice_sum}
		S =
		\lambda 	
		\begin{pmatrix}
			\begin{pmatrix}
				1 & 0 & 0 \\
				0 & -1/2 & 0\\
				0 & 0 & -1/2 
			\end{pmatrix}
			& 	
			\begin{pmatrix}
				0 &  -1/2& 0\\
				-1/2 & 0 & 0\\
				0 & 0 & 0
			\end{pmatrix}
			&
			\begin{pmatrix}
				0 &  0& -1/2\\
				0 & 0 & 0\\
				-1/2 & 0 & 0
			\end{pmatrix}
			\\
			\begin{pmatrix}
				0 &  -1/2& 0\\
				-1/2 & 0 & 0\\
				0 & 0 & 0
			\end{pmatrix}
			&
			\begin{pmatrix}
				-1/2 & 0 & 0 \\
				0 & 1 & 0\\
				0 & 0 & -1/2 
			\end{pmatrix}
			&
			\begin{pmatrix}
				0 & 0 & 0 \\
				0 & 0 & -1/2\\
				0 & -1/2 & 0
			\end{pmatrix}
			\\
			\begin{pmatrix}
				0 &  0& -1/2\\
				0 & 0 & 0\\
				-1/2 & 0 & 0
			\end{pmatrix}
			&
			\begin{pmatrix}
				0 & 0 & 0 \\
				0 & 0 & -1/2\\
				0 & -1/2 & 0
			\end{pmatrix}
			&
			\begin{pmatrix}
				-1/2 & 0 & 0 \\
				0 & -1/2 & 0\\
				0 & 0 & 1 
			\end{pmatrix}
		\end{pmatrix}
	\end{equation}
\end{lemma}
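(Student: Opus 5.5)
The plan is to pin down $S$ from its symmetries, using the definition (\ref{eq:def_lattice_sum}) together with the fact, from the isotropic formula (\ref{eq:fourier_K_formula}), that $K$ is built from $\abs{x}^{-1}$ and $\abs{x}^{-3}x\otimes x$. The steps are: establish the invariances of $S$, parametrize all operators with those invariances, and then cut down the free parameters using the Laplace-type structure of $K$.

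\emph{Step 1 (equivariance).} For isotropic $\eltensor$ and $R\in\orth(3)$ I will check that $\eltensor(RAR^\top)=R(\eltensor A)R^\top$ and that $K(Rx)=RK(x)R^\top$; the latter follows from the Fourier formula, since $\fourier K(R\xi)=R\,\fourier K(\xi)R^\top$. Inserting both into (\ref{eq:formula_M}) gives
\[
M(Rx)A = R\,\bigl(M(x)[R^\top A R]\bigr)R^\top .
\]
Now take $R$ in the cubic group, which lies in $P_1$ by (\ref{eq:cubic_symmetry}). Then $x\mapsto Rx$ is a bijection of $\lattice_1\setminus\{0\}$ that preserves each ball $B_R(0)$. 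Hence every partial sum, and therefore $S$, commutes with the action $A\mapsto RAR^\top$. In addition, $S$ is symmetric, vanishes on skew matrices (\Cref{lem:interaction_energy_via_surface}), and takes values in symmetric matrices because $\eltensor$ does. So $S$ is a cubic-equivariant symmetric operator on $\R^{3\times3}_{\mathrm{sym}}$.

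\emph{Step 2 (parametrization).} Under the cubic group, $\R^{3\times 3}_{\mathrm{sym}}$ decomposes into three pairwise inequivalent irreducible pieces: $\R E_3$, the traceless diagonal matrices, and the off-diagonal symmetric matrices. By Schur's lemma,
\[
S=\gamma\,P_{\mathrm{tr}}+\alpha\,P_{\mathrm{diag},0}+\beta\,P_{\mathrm{off}} .
\]
I expect this to be routine. Alternatively, one can check directly that the sign changes $\mathrm{diag}(\pm1,\pm1,\pm1)$ and the coordinate permutations force exactly this block pattern.

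\emph{Step 3 (two relations).} This is the main step. First I aim to show $SE_3=0$. Since $\eltensor E_3=(1+3a)E_3$, we get $M(x)E_3=(1+3a)\,\eltensor\,\nabla(\divg K)(x)$. From the Fourier formula, $\xi^\top\fourier K(\xi)$ is a multiple of $\xi/\abs{\xi}^2$, so $\divg K$ is a multiple of $\nabla\abs{x}^{-1}$. Consequently $M(x)E_3$ equals $\eltensor$ applied to a multiple of $\diff^2\abs{x}^{-1}$, a traceless matrix because $\Delta\abs{x}^{-1}=0$ away from $0$. The lattice sum of a cubic-invariant traceless symmetric matrix must be $0$, since a cubic-invariant symmetric matrix is a multiple of $E_3$. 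This gives $\gamma=0$.

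Second, I want the operator trace $\sum_{i,j}M_{ijij}(x)$ to vanish for $x\neq0$. Expanding (\ref{eq:formula_M}) with the isotropic $\eltensor$, each term is a contraction of $\partial_l\partial_{l'}K_{kk'}$ that reduces to $\Delta\tr K$, $\divg\divg K$ or $\Delta K_{kk}$. All of these are combinations of $\Delta\abs{x}^{-1}$ and derivatives of harmonic expressions, so they vanish off the origin. This computation is the part I expect to be the main obstacle. If it does not close cleanly in physical space, I would fall back to Fourier space, where it amounts to an identity for $\fourier M$ on the sphere, and relate the two through the cancellation property \Cref{lem:cancellation_of_M}. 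The vanishing trace gives $\tr S = 2\alpha+3\beta=0$.

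\emph{Step 4 (matching).} With $\gamma=0$ and $\beta=-\tfrac23\alpha$, set $\lambda=\tfrac23\alpha$. The diagonal block of $S$ is then $\lambda\bigl(\tfrac32 E_3-\tfrac12\mathbf 1\mathbf 1^\top\bigr)$, and $S$ maps $e_i\otimes e_j+e_j\otimes e_i$ to $-\lambda$ times itself for $i\ne j$. In the $\R^{(3\times3)\times(3\times3)}$ index layout this is exactly (\ref{eq:structure_of_lattice_sum}).
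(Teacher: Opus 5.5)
Your proof is correct, and its skeleton matches the paper's: cubic invariance leaves three free parameters, and two harmonicity identities for $K$ away from the origin eliminate two of them. The execution, however, is organized differently.

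The paper does the symmetry reduction not on $S$ but on the Hessian lattice sum $T=\lim_n\sum_{x\in\lattice_1\cap B_n\setminus\{0\}}\diff^2K(x)$, using an explicit case analysis with coordinate reflections and permutations. It then derives $3\lambda+6\mu=\sum_{i,j}T_{ijij}=0$ and $\nu=\mu$ on the Fourier side: the relevant symbols are constant, so their inverse transforms are multiples of $\delta_0$ and contribute nothing on $\lattice_1\setminus\{0\}$. Only at the end does it pass to $S=\eltensor T\eltensor$, arguing that this equals $T$ because $T$ annihilates skew matrices and $E_3$ and has trace-free values.

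You instead parametrize $S$ itself by Schur's lemma on $\R^{3\times3}_{\mathrm{sym}}$, and impose $SE_3=0$ and $\mathrm{Tr}\,S=0$ through pointwise identities for $M(x)$, $x\neq0$. This buys you two things.
\begin{itemize}
\item The final sandwich step disappears. Note that $\diff^2K(x)$ does not annihilate skew matrices pointwise, so in the paper those properties of $T$ are only available once its structure is already established.
\item The index bookkeeping is avoided. With the paper's ordering $(\diff^2K)_{ijkl}=\partial_{ik}K_{jl}$ one has $T_{jiij}=T_{iijj}$ identically, so their equality with $T_{ijij}$ is not a symmetry fact; it follows only from the relation $\nu=\mu$.
\end{itemize}
In exchange, the paper's route uses no representation theory, phrases all analytic input on the Fourier side, and describes the Hessian sum of the Green's function itself before the outer factors of $\eltensor$ are applied.

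The step you flagged as the main obstacle closes without difficulty. Write $M=\eltensor N\eltensor$ with $N_{klk'l'}=\partial_{ll'}K_{kk'}$. Cyclicity gives $\mathrm{Tr}\,M=\mathrm{Tr}(N\eltensor^2)$, and $\eltensor^2A=\frac{A+A^\top}{2}+(2a+3a^2)\tr(A)E_3$, so
\[
\sum_{i,j}M_{ijij}=\frac12\Delta\tr K+\Bigl(\frac12+2a+3a^2\Bigr)\divg\divg K .
\]
Both $\Delta\tr K$ and $\divg\divg K$ have constant Fourier symbols, hence vanish away from the origin. These are exactly the two identities behind the paper's relations. Your relation $SE_3=0$ is the $\divg\divg K$ identity again, since $\tr(M(x)E_3)=(1+3a)^2\divg\divg K(x)$.

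One small correction: only the summed quantity $\Delta\tr K$ vanishes off the origin, not the individual $\Delta K_{kk}$, whose Fourier symbols $-2\bigl(1-c\,\xi_k^2/\abs{\xi}^2\bigr)$ are not constant. That sentence in your proof should refer to the sum.
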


\begin{proof}
	We first note by the definition of $ S $ in equation (\ref{eq:def_lattice_sum}) and the definition of $ M $ in equation (\ref{eq:formula_M}), we have
	\begin{equation}
		\label{eq:S_relation_to_T}
		S_{ i j i' j'} = 
		\sum_{ k l k' l' }
		\eltensor_{ i j k l }
		\eltensor_{ i' j' k' l' }
		\left(
		\lim_{ n \to \infty }
		\sum_{ x \in \lattice_{ 1 } \cap B_{ n } \setminus \{ 0 \} }
		\partial_{ l l' } K_{ k k'} ( x ) 
		\right).
	\end{equation}
	Let $ f \colon \R^{ 3 } \to \R $ be a twice continuously 
	differentiable function and $ R \in \mathrm{O} ( 3 ) $ . Then
	\begin{equation*}
		\diff^{ 2 } f ( R y )
		=
		R ( \diff^{ 2 } ( f \circ R ) ( y ) ) R^{ \top }.
	\end{equation*}
	We recall that the Fourier transformation of $ K $ is given by 
	(\ref{eq:fourier_K_formula}), and thus satisfies for all rotations 
	$ R $ that $ \fourier K ( R \xi ) = R \fourier K ( \xi ) R^{ \top } 
	$. In particular $ K $ also satisfies this identity since $ R $ is 
	orthogonal.
	Thus the action of $ \diff^{ 2 } K ( R y ) $ on matrices A is described by
	\begin{equation}
		\label{eq:rotational_symmetry_K}
		\diff^{ 2 } K ( R y ) ( A )
		=
		R \diff^{ 2 } K ( y ) ( R^{ \top } A R ) R^{ \top }.
	\end{equation}
	In order to study the lattice sum (\ref{eq:S_relation_to_T}) we first consider
	\begin{equation}
		\label{eq:def_of_T}
		T \coloneqq \lim_{ n \to \infty }
		\sum_{ x \in \lattice_{ 1 } \cap B_{ n } \setminus \{ 0 \} }
		\diff^{ 2 } K ( x ).
	\end{equation}
	Taking any element $ R $ of the point group $ P_{ 1 } $ (see \Cref{def:point_group_and_cubic_symmetry}) reveals that for any matrix $ A $, we have
	\begin{align}
		\notag
		T (A)&=
		\lim_{ n \to \infty }
		\sum_{ x \in R( \lattice_{ 1 } \cap B_{n } \setminus \{ 0 \} ) }
		\diff^{ 2 } K ( x ) ( A )
		\\
		\notag
		& =
		\lim_{ n \to \infty }
		\sum_{ x \in  \lattice_{ 1 } \cap B_{n } \setminus \{ 0 \} }
		\diff^{ 2 }K ( Ry ) ( A )
		\\
		\label{eq:id_for_lattice_sum}
		& =
		R T ( R^{ \top } A R ) R^{ \top }. 
	\end{align} 
	We want to use this equality to show that $ T $ must already be of 
	the form (\ref{eq:structure_of_lattice_sum}). 
	We order the indices by $ (\diff^{ 2 } K)_{ i j k l } \coloneqq 
	\partial_{ i k } K_{ jl } $.
	Throughout this paragraph, let $ i, j, k , l \in \{1,2,3\} $.
	First we insert a reflection $ R_{ i } \coloneqq - e_{ i } \otimes 
	e_{ i } + \sum_{ j \neq i } e_{ j } \otimes e_{ j } $  and a matrix 
	$ A = e_{ k } \otimes e_{ l } $ into the identity 
	(\ref{eq:id_for_lattice_sum}). We note that
	\begin{equation*}
		R_{ i }^{ \top } e_{ k } \otimes e_{ l } R_{ i }
		=
		\sigma ( i k l ) e_{ k } \otimes e_{ l },
	\end{equation*}
	where $ \sigma (i k l ) $ is equal to $ 1 $ if either $ k,l= i $ or $ k,l\neq i $, and $ -1 $ else. We thus obtain that 
	\begin{equation}
		\label{eq:id_for_T_thorugh_reflection}
		T (e_{ k } \otimes e_{ l } )
		= 
		\sigma(i k l ) R_{ i } T(e_{ k } \otimes e_{ l } ) R_{ i }^{ \top}
		\text{ for all } i, k, l.
	\end{equation}
	Note that the linear map 
	\begin{equation*}
		B \mapsto R_{ i } B R_{ i }^{ \top }
	\end{equation*}
	multiplies every entry of $ B $ with a minus one which has \emph{exactly one} index equal to $ i $. Thus the identity (\ref{eq:id_for_T_thorugh_reflection}) can be written
	\begin{equation*}
		T_{ p q k l } = \sigma( i k l ) \sigma( i p q ) T_{ p q k l },
	\end{equation*}
	from which we may deduce that $ T_{ p q k l }$ is $ 0 $ if one 
	of the following two cases holds for some $ i \in \{1,2,3\} $:
	\begin{enumerate}
		\item\label{item:case_1} $ \sigma(ikl)=1$, $ \sigma (ipq)=-1$ : 
		[$ k=l $ or $ k, l \neq i $ ] and [ $ p\neq q $ and ($ p=i $ or 
		$ q= i $)].
		\item\label{item:case_2}
		$ \sigma ( i k l ) = -1 $, $ \sigma ( i p q ) = 1 $:
		[$ k \neq l $ and ($ k= i $ or $ l = i $)] and  [$ p=q $ or $ 
		p, q \neq i $].
	\end{enumerate}
	By a careful case distinction, it follows that
	\begin{equation}
		\label{eq:T_zero_off_diagonal}
		T_{ p q k l } = 0 \text{ if } (p\neq q \text{ or } k \neq l) \text{ and } \{ p,q \} \neq \{ k , l \}.
	\end{equation}
	From equation (\ref{eq:T_zero_off_diagonal}) we deduce that the only non-zero entries of $ T $  have to be of the form
	\begin{equation*}
		T_{iiii}, T_{ i i j j }, T_{ i j i j }, T_{ j i i j }.
	\end{equation*}

	Next up we want to consider elements $ R_{ \sigma } $ of the point group corresponding to permutations $ \sigma \in S_{ 3 } $ via $ R_{ \sigma } ( e_{ i } ) = e_{ \sigma ( i ) } $ for all $ i \in \{1,2,3\} $. 
	We first note that
	\begin{align*}
		R_{ \sigma } e_{ k } \otimes e_{ l } R_{ \sigma }^{ \top }
		& =
		(R_{ \sigma } e_{ k }) \otimes (R_{ \sigma } e_{ l } )
		=
		e_{ \sigma( k ) }\otimes e_{ \sigma( l ) }
		\shortintertext{and thus we have for every $ B = (b_{ i j })_{ i j } \in \R^{ 3 \times 3 } $ that}
		(R_{ \sigma } B R_{ \sigma }^{ \top })_{ i j } &=
		\sum_{ k l } b_{ k l } (R_{ \sigma } e_{ k } \otimes e_{ l } 
		R_{ \sigma }^{ \top } )_{ i j }
		\\
		& = \sum_{ k l } b_{ k l } (e_{ \sigma(k) } \otimes e_{ \sigma ( l ) } )_{ i j }
		\\
		& =
		\sum_{ k l } b_{ k l } \delta_{ \sigma( k ) i } \delta_{ \sigma( l ) j }
		\\
		& = B_{ \sigma^{-1}(i) \sigma^{-1}(j) }.
	\end{align*}
	From identity (\ref{eq:id_for_lattice_sum}) we can therefore deduce 
	by using $ R_\sigma^{\top } = R_{ \sigma^{-1}} $ that for all $ 
	\sigma \in S_{ 3 } $ and all $ i, j, k, l \in \{1,2,3\} $, we have
	\begin{align*}
		T_{ i j k l }
		&=
		T_{ \sigma^{ - 1} ( i ) \sigma^{ - 1 } ( j ) \sigma^{ - 1 } ( k ) \sigma^{ - 1 } ( l ) }
		\shortintertext{or equivalently}
		T_{ i j k l }
		&=
		T_{ \sigma ( i ) \sigma ( j ) \sigma ( k ) \sigma ( l ) } 
		\text{ for all } \sigma \in S_{3} \text{ and for all } i, j , k 
		,l \in \{1,2,3 \}.
	\end{align*}
	By combining this with equation (\ref{eq:T_zero_off_diagonal}) and 
	using $ K_{ i j } = K_{ j i } $, we can thus deduce that there must 
	exist $ \lambda, \mu , \nu \in \R $ such that for all $ i\neq j $, 
	we have
	\begin{align*}
		T_{ i i i i} &= \lambda
		\\
		T_{ i i j j } & = \nu 
		\\
		T_{ i j i j } = T_{ j i i j } &= \mu.
	\end{align*}
	We recall by equations (\ref{eq:greens_function_as_inverse}) and (\ref{eq:A_inverse_isotropic}) that in the isotropic case, $ K $ is given  by
	\begin{equation}
		\label{eq:k_in_isotropic_case}
		K( x) =  \fourier^{ - 1 } \left( \frac{ 1 }{ 2 \pi^2 \abs{ \xi 
			}^{ 2 } } \left( \mathrm{Id} - c \frac{ \xi }{ \abs{ \xi }} 
		\otimes \frac{\xi }{\abs{ \xi } } \right)\right) ( x )
	\end{equation}
	for constants $ c, C \in \R $.
	Thus we have that
	\begin{align*}
		6 \mu  + 3 \lambda 
		& =
		\sum_{i, j = 1  }^{ 3 } T_{ i j i j }
		\\
		& = 
		C
		\lim_{ n \to \infty }
		\sum_{ x \in \lattice_{ 1 } \cap B_{n } \setminus \{ 0 \} }
		\sum_{ i ,j = 1 }^{ 3 }
		\partial_{ i i } \fourier^{ - 1 } \left(
		\frac{ 1 }{ 2 \pi^2\abs{ \xi }^{ 2 } } \left( 1 - c \frac{ 
			\xi_{ j }^{ 2 } }{ \abs{ \xi }^{ 2 } } \right)\right) ( x )
		\\
		& = -C
		\lim_{ n \to \infty }
		\sum_{ x \in \lattice_{ 1 } \cap B_{n } \setminus \{ 0 \} }
		\fourier^{ -1 }
		\left(
		\sum_{ i ,j=1 }^3
		\frac{2\xi_{ i }^{ 2}}{ \abs{ \xi }^{ 2 } }
		\left(
		1 -c \frac{ \xi_{ j }^{ 2 } }{ \abs{ \xi }^{ 2 } }
		\right)
		\right) ( x )
		\\
		& = -C 
		\lim_{ n \to \infty }
		\sum_{ x \in \lattice_{ 1 } \cap B_{n } \setminus \{ 0 \} }
		\fourier^{ -1 }
		\left(
		2 (3-c)
		\right)
		( x )
		\\
		& = 0
	\end{align*}
	since the inverse Fourier transform of a constant function is the 
	Dirac measure at zero. We thus obtain the relation
	\begin{equation}
		\label{eq:relation_mu_lambda}
		\mu = - \frac{ \lambda }{ 2 }.
	\end{equation}
	Using a similar argument, we can also relate $ \nu $ in the same manner by computing
	\begin{align*}
		6 \nu 
		&=
		-C 
		\lim_{ n \to \infty }
		\sum_{ x \in \lattice_{ 1 } \cap B_{ n } \setminus \{ 0 \} }
		\sum_{ i \neq j }\fourier^{ - 1 } \left(
		\frac{ 2 \xi_{ i } \xi_{ j }}{ \abs{ \xi }^{ 2 } } \left( 0 - c \frac{ \xi_{ i }\xi_{ j } }{ \abs{ \xi }^{ 2 } } \right)\right) ( x )
		\\
		&=
		-C 
		\lim_{n\to \infty }
		\sum_{ x \in \lattice_{ 1 } \cap B_{ n } \setminus \{ 0 \} }
		\sum_{ i \neq j }
		\fourier^{ - 1 } \left(
		\frac{ 2 \xi_{ i }^{ 2 } }{ \abs{ \xi }^{ 2 } } \left( 1 - c \frac{ \xi_{ j }^{ 2 } }{ \abs{ \xi }^{ 2 } } \right) \right) ( x )
		-
		\fourier^{ - 1 } \left(
		\frac{ 2 \xi_{ i }^{ 2 } }{ \abs{ \xi }^{ 2 } } \right) ( x )
		\\
		& =
		-C 
		\lim_{ n \to \infty }
		\sum_{ x \in \lattice_{ 1 } \cap B_{ n } \setminus \{ 0 \} }
		\left( \sum_{ i \neq j }
		\fourier^{ - 1 } \left(
		\frac{ 2 \xi_{ i }^{ 2 } }{ \abs{ \xi }^{ 2 } } \left( 1 - c \frac{ \xi_{ j }^{ 2 } }{ \abs{ \xi }^{ 2 } } \right) \right) ( x )
		\right)
		-
		2 \fourier^{ - 1 } ( 2 ) ( x ) 
		\\
		&=6 \mu + 0,
	\end{align*}
	from which we deduce together with equation (\ref{eq:relation_mu_lambda}) that
	\begin{equation*}
		\nu = \mu = - \frac{ \lambda}{ 2 }.
	\end{equation*}
	This concludes the proof of the claim that $ T $ has the desired structure (\ref{eq:structure_of_lattice_sum}). 
	To conclude that $ S $ satisfies equation (\ref{eq:structure_of_lattice_sum}), we note that 
	\begin{enumerate}
		\item \label{item:symmetry_T} $ T_{ i j k l } = T_{ k l i j } = T_{  l k i j } $ for all $ i,j,k,l\in \{1,2,3\} $,
		\item \label{item:zero_on_skeq}$ T F = 0 $ for every $ F \in \R^{ 3 \times 3 }_{ \mathrm{skew} } $, 
		\item $ T ( \mathrm{Id} ) = 0 $ and
		\item \label{item:trace_zero} $ \trace T B = 0 $ for every $ B \in \R^{ 3 \times 3 } $.
	\end{enumerate}
	Using the relation of $ S $ and $ T $ (\ref{eq:S_relation_to_T}), 
	we get
	\begin{equation*}
		S = L T L^{ \top }.
	\end{equation*}
	To conclude that $ S = T $, we compute using items \ref{item:symmetry_T}-\ref{item:trace_zero} that for any given matrix $ A \in \R^{ 3 \times 3 } $, we have
	\begin{align*}
		L T L^{ \top } ( A ) & =
		L T L ( A )
		\\
		& = \frac{ T ( L A ) + ( T ( L A ))^{ \top } }{ 2 }
		+ 
		a \trace ( T ( LA ) ) \mathrm{Id} 
		\\
		& = 
		T ( L A )
		\\
		& = 
		T \left( \frac{ A + A^{ \top } }{ 2 } + a \trace ( A ) \mathrm{Id} \right)
		\\
		& = 
		T \left( A \right),
	\end{align*}
	which proves $ T = S $, finishing the proof.
\end{proof}

\subsection{Indefiniteness}

By using the structure (\ref{eq:structure_of_lattice_sum}) of the lattice sum, we can now show that $ \Psi $ is indefinite on the space of $ \Hm $-measures.
\begin{lemma}
	\label{lem:indefiniteness}
	Suppose that $ \lattice_{ 1 } $ has cubic symmetry in the sense of equation (\ref{eq:cubic_symmetry}) and that $ \eltensor $ is an isotropic elasticity tensor in the sense of (\ref{eq:def_isotropic_elasticity_tensor}). Let the lattice sum $ S $ and kernel $ M $ be defined by equation (\ref{eq:def_lattice_sum}) respectively equation (\ref{eq:formula_M}). Then $ (\hat{ M } ( \xi ) - (\hat{ M } )_{ \Sph^{ 2 } } + S ) $  is indefinite on the set of $ \Hm $-measures in the sense that we find $ \Hm $-measures $ \mu_{ 1} $ and $ \mu_{ 2 } $ such that 
	\begin{align}
		\label{eq:negative_h_measure}
		\int_{ \R^{ 3 } \times \Sph^{ 2 } }
		(\hat{ M } ( \xi ) - (\hat{ M } )_{ \Sph^{ 2 } } + S )
		\cdot
		\dd{ \mu_{ 1 } ( x, \xi ) } &< 0 
		\shortintertext{and}
		\label{eq:positive_h_measure}
		\int_{ \R^{ 3 } \times \Sph^{ 2 } }
		(\hat{ M } ( \xi ) - (\hat{ M } )_{ \Sph^{ 2 } } + S )
		\cdot
		\dd{ \mu_{ 2 } ( x, \xi ) } &> 0.
	\end{align}
	Moreover $ \mu_{ 1 } $ and $ \mu_{ 2 } $ can be chosen such that they are generated by sequences of real-valued $ \lp^{ 2 } $-functions.
\end{lemma}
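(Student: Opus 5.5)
Our plan is to test $\Psi = \hat M - (\hat M)_{\Sph^2} + S$ against the simplest $\Hm$-measures, those of \Cref{ex:h_measure_generated_by_sin_cos}. Fix $f \in \lp^2(\R^3)$ with $\norm{f}_{\lp^2}=1$, a direction $\nu \in \Sph^2$ and a real matrix $B \in \R^{3\times 3}$. The real-valued sequence $f(x)\sin(n\inner*{x}{\nu})\,B$ generates
\begin{equation*}
	\mu = \abs{f}^2 \dd{\lm^3} \otimes \tfrac{\delta_\nu+\delta_{-\nu}}{4}\, B\otimes B .
\end{equation*}
Since $\hat M(\xi)=\hat M(-\xi)$ by \Cref{lem:symmetry_properties}, this gives
\begin{equation*}
	\int \Psi \cdot \dd{\mu} = \tfrac12\, \Psi(\nu) B\colon B .
\end{equation*}
So it suffices to find $(\nu_1,B_1)$ and $(\nu_2,B_2)$ with $\Psi(\nu_1)B_1\colon B_1<0<\Psi(\nu_2)B_2\colon B_2$. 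The realness requirement is then automatic.

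First we compute everything explicitly in the isotropic case. From (\ref{eq:fourier_K_formula}) and (\ref{eq:fourer_M_on_matrices}),
\begin{equation*}
	\hat M(\xi)A\colon B = -4\pi^2 \inner*{\fourier K(\xi)a}{b}, \qquad a=(\eltensor A)\xi,\ b=(\eltensor B)\xi .
\end{equation*}
Take $\abs{\xi}=1$ and a symmetric $B$. Then
\begin{equation*}
	\hat M(\xi)B\colon B = -2\Bigl(\abs{(\eltensor B)\xi}^2 - \tfrac{1+2a}{2+2a}\inner*{(\eltensor B)\xi}{\xi}^2\Bigr).
\end{equation*}
Next we compute the spherical average $(\hat M)_{\Sph^2}B\colon B$ with the moments $\fint \xi_i\xi_j = \delta_{ij}/3$ and $\fint \xi_i\xi_j\xi_k\xi_l = (\delta_{ij}\delta_{kl}+\delta_{ik}\delta_{jl}+\delta_{il}\delta_{jk})/15$. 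By the lattice-sum lemma, $S B\colon B = \lambda\, Q(B)$, where $Q$ is the explicit quadratic form in (\ref{eq:structure_of_lattice_sum}). A key observation is that $Q$ is traceless in the sense that $Q(B)$ averages to zero under rotations of $B$, and $Q$ vanishes on multiples of the identity.

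Second, we choose test matrices that make the sign independent of the unknown $\lambda$. The cleanest choice is a trace-free symmetric $B$ whose entries are purely off-diagonal in the cubic frame, for example $B=e_1\otimes e_2+e_2\otimes e_1$. Compare it with a diagonal trace-free matrix such as $B'=\mathrm{diag}(1,-1,0)$, which is a rotation of $B$ by $45^\circ$ about $e_3$. The terms $\hat M(\nu)B\colon B$ and $(\hat M)_{\Sph^2}B\colon B$ are rotation-covariant, and the average is rotation-invariant. Hence we may keep $B$ fixed and vary $\nu$.
\begin{itemize}
	\item For $\nu=e_3$ we have $(\eltensor B)\nu=0$, so $\hat M(\nu)B\colon B=0$, the maximal possible value since $\hat M\le 0$.
	\item For $\nu=(e_1+e_2)/\sqrt2$ we have $(\eltensor B)\nu=\nu$, so $\hat M(\nu)B\colon B = -2\bigl(1-\tfrac{1+2a}{2+2a}\bigr) = -\tfrac{1}{1+a} <0$.
\end{itemize}
Thus $\Psi(\nu)B\colon B$ takes the two values $c_0 + \lambda Q(B)$ and $c_0 + \lambda Q(B) - \tfrac1{1+a}$ for the same constant $c_0 = -(\hat M)_{\Sph^2}B\colon B + 0>0$. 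Both share the same $S$-contribution, so their difference is strictly negative regardless of $\lambda$. Positivity of $c_0$ follows from $\hat M \le 0$ with $\hat M$ not identically zero on $B$.

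This shows $\Psi$ takes two distinct values, but not yet opposite signs. This is the main obstacle: we must control $\lambda$, or exploit the averaging to avoid it. Our fix is to average over $B$ instead. Let $B$ range over the five-dimensional space of trace-free symmetric matrices with the normalized Haar average. The $S$-term then contributes $\lambda \fint Q = 0$, because $Q$ is traceless on that space by its explicit form. Meanwhile $\fint_B \hat M(\nu)B\colon B = \fint_B (\hat M)_{\Sph^2}B\colon B$ by isotropy. Hence $\fint_B \Psi(\nu)B\colon B=0$ for each $\nu$. Since $\Psi(\nu)$ is not the zero form on this space (the first step shows it takes distinct values), it must take both signs there. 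Concretely, some $B_1$ gives a negative value and some $B_2$ a positive one. Plugging these into the sin-sequences yields $\mu_1,\mu_2$ generated by real $\lp^2$ functions, which proves (\ref{eq:negative_h_measure}) and (\ref{eq:positive_h_measure}).

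The points to verify carefully are the claim that $Q$ averages to zero over the five-dimensional space, and that the rotation invariance of $(\hat M)_{\Sph^2}$ makes the averaged $\hat M$-terms cancel exactly.
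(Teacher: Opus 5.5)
Your proof is correct, but it is organized differently from the paper's.

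\textbf{The paper's route.} It tests $\Psi$ on the elementary tensors $A=(e_i\otimes e_j)\otimes(e_i\otimes e_j)$ and uses the lattice sum only through $S_{iiii}=\lambda$ and $S_{ijij}=-\lambda/2$ for $i\neq j$. Depending on the sign of $\lambda$, it picks $A_1,A_2$ with $\langle S,A_1\rangle\le 0\le\langle S,A_2\rangle$. It then uses the non-constancy of $\xi\mapsto\hat M_{ijij}(\xi)$ (\Cref{lem:M_not_constant}) to find directions $\xi_1,\xi_2$ where $\hat M-(\hat M)_{\Sph^2}$ has the required strict sign. So both the matrix and the direction change between $\mu_1$ and $\mu_2$.

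\textbf{Your route.} You fix the direction and vary the matrix. On the space $\mathrm{Sym}_0$ of trace-free symmetric matrices, the quadratic form $B\mapsto\Psi(\nu)B\colon B$ has zero trace. The $\hat M$-part is traceless by isotropy. The $S$-part is traceless because $Q$ is: $Q=-1$ on each of the three normalized off-diagonal basis elements, and $Q=3/2$ on $\mathrm{diag}(1,-1,0)/\sqrt2$ and $\mathrm{diag}(1,1,-2)/\sqrt6$. Once your explicit evaluation shows the form is not identically zero, it must take both signs.

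\textbf{What each route buys.} Your argument avoids any case distinction on $\lambda$ and yields $\mu_1,\mu_2$ concentrated in the same direction. It also needs less about $S$: only $\mathrm{tr}_{\mathrm{Sym}_0}S=0$. This holds for every Bravais lattice. Indeed, $\mathrm{tr}_{\mathrm{Sym}_0}\hat M(\xi)$ is constant by isotropy, so $\mathrm{tr}_{\mathrm{Sym}_0}M(x)=0$ for all $x\neq 0$. This follows from the same ``inverse Fourier transform of a constant is a multiple of $\delta_0$'' argument used in the lattice-sum lemma. Your route could therefore drop the cubic symmetry hypothesis, whereas the paper's relies on the explicit entries of $S$.

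\textbf{One point to state more carefully.} The nonvanishing of $\Psi(\nu)$ on $\mathrm{Sym}_0$ holds for at least one of your two directions $e_3$ and $(e_1+e_2)/\sqrt2$, since their values on your $B$ differ by $1/(1+a)$. You should then say explicitly that you work with that $\nu$.
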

\begin{proof}
	We first note that $ S $ is either indefinite or 0 in the following sense.
	Consider indices $ i, j \in \{1, 2, 3 \} $  and let $ A = (e_{ i } \otimes e_{ j }) \otimes ( e_{ i } \otimes e_{ j } ) $, which is symmetric and positive-semidefinite. Moreover if $ i \neq j $, we have
	\begin{equation*}
		\inner*{ S }{ A } = - \frac{ \lambda }{ 2 }.
	\end{equation*}
	Similarly if $ i = j $, we see that
	\begin{equation*}
		\inner*{ S }{ A } = \lambda.
	\end{equation*}
	Next we note that for any $ i, j \in \{1,2,3\} $, the function $ \hat{ M }_{ i j i j } $ is not constant as can be seen by considering equation (\ref{eq:fourier_transform_of_M}) and the explicit computation of the Green's function in the isotropic case, see equation (\ref{eq:k_in_isotropic_case}). 
	
	By combining these two facts, we already get the claim: Since $ S $ is indefinite or zero, we find $ A_{ 1 }, A_{ 2 } $ symmetric and positive-semidefinite of the form $ (e_{ i } \otimes e_{ j } ) \otimes ( e_{ i } \otimes e_{j } ) $ such that 
	\begin{equation} 
		\label{eq:choice_of_Ai}
		\inner*{ S }{ A_{ 1 } } \leq 0 
		\quad \text{and} \quad \inner*{ S }{ A_{ 2 } } \geq 0 .
	\end{equation} 
	Consider the formula (\ref{eq:fourier_transform_of_M}) for $ \hat{ 
		M } $. We note that $ \hat{ M }_{i j i j } $ is not constant for 
	any choice of indices $ i , j \in \{1,2,3 \} $ due to 
	\Cref{lem:M_not_constant}.
	Thus we find $ \xi_{ 1 }, \xi_{ 2 } \in \Sph^{ 2 } $ such that
	\begin{equation}
		\label{eq:choice_of_xi}
		\inner*{ \hat{ M } ( \xi_{ 1 } ) - (\hat{ M } )_{ \Sph^{ 2 } } }{ A_{ 1 } } < 0
		\quad \text{and} \quad
		\inner*{ \hat{ M } ( \xi_{ 2 } ) - (\hat{ M } )_{ \Sph^{ 2 } } }{ A_{ 2 } } > 0.		
	\end{equation}
	Take any $ f \in \ccinf ( \R^{ 3 } ) $ which is not zero. By \Cref{ex:h_measure_generated_by_sin_cos}, the measures 
	\begin{equation*} 
		\mu_{ 1  } \coloneqq  \abs{ f }^{ 2 } A_{ 1 } \dd{ \lm^{ d } } \otimes \frac{ \delta_{ \xi_{ 1 } } + \delta_{ - \xi_{ 1 } } }{ 4 }
		\quad \text{and} \quad
		\mu_{ 2  } \coloneqq  \abs{ f }^{ 2 } A_{ 2 } \dd{ \lm^{ d } } \otimes \frac{ \delta_{ \xi_{ 2 } } + \delta_{ - \xi_{ 2 } } }{ 4 }
	\end{equation*} 
	are $ \Hm $-measures generated by sequences of real-valued $ \lp^{ 2 } $-functions. 
	By \Cref{lem:symmetry_properties}, the kernel $ \hat{ M } $ is even.
	Combining this with the previous inequalities (\ref{eq:choice_of_Ai}) and (\ref{eq:choice_of_xi}), it follows that
	\begin{align*}
		& \int_{ \R^{ 3 } \times \Sph^{ 2 } }
		( \hat{ M } ( \xi ) - ( \hat{ M } )_{ \Sph^{ 2 } } + S )
		\cdot 
		\dd{ \mu_{ 1 } ( x, \xi ) }
		\\
		={} & 
		\frac{1}{2}
		\norm{ f }_{ \lp^{ 2 } ( \R^{ 3 } ) }^{ 2 }
		\inner*{\frac{ \hat{ M } ( \xi_{ 1 } ) + \hat{ M } ( - \xi_{ 1 } ) }{ 2 } - ( \hat{ M } )_{ \Sph^{ 2 } }  + S }{ A_{ 1 } }
		\\
		= {} &
		\frac{1}{2}
		\norm{ f }_{\lp^{ 2 } ( \R^{ 3 } )}^{ 2 }
		\left(
		\inner*{ \hat{ M } ( \xi_{ 1 } )  - ( \hat{ M }  )_{ \Sph^{ 2 } } }{A_{ 1 } } + \inner*{ S}{ A_{ 1 } } 
		\right) 
		< 0, 
	\end{align*}
	and the same holds true for $ \mu_{ 2 } $ with the opposite sign. Thus the claimed inequalities (\ref{eq:negative_h_measure}) and (\ref{eq:positive_h_measure}) follow.
\end{proof}

By constructing the appropriate sequence of arrays of dislocations using \Cref{lem:indefiniteness}, we are able to finish the proof of \Cref{cor:negative_interaction}.
\begin{proof}[Proof of \Cref{cor:negative_interaction}]
	By \Cref{lem:indefiniteness} and its proof, we find indices $ i, j \in \{ 1, 2, 3 \} $, some $ \nu \in \Sph^{ 2 } $ and some real-valued $ f \in \ccinf ( \R^{ 3 } ) $ such that the sequence (for some given $ \alpha \in (0,1 ) $)
	\begin{equation*}
		f_{ \rho } ( x ) 
		\coloneqq
		\ef ( x )
		\sin \left( 2 \pi  \rho^{ - \alpha } \inner*{ x }{ \nu } 
		\right) e_{ i } \otimes e_{ j }
	\end{equation*}
	generates an $ \Hm $-measure $ \mu $ which satisfies
	\begin{equation}
		\label{eq:choice_of_mu}
		\int_{ \R^{ 3 } \times \Sph^{ 2 } }
		( \hat{ M } ( \xi ) - ( \hat{ M } )_{ \Sph^{ 2 } } + S )
		\cdot
		\dd{ \mu ( x, \xi ) }
		< 
		0.
	\end{equation}
	In order to generate the same $ \Hm $-measure by a sequence of arrays of dislocation loops, we consider the piecewise constant function
	\begin{equation*}
		\widetilde{ \of_{ \rho } } ( y )
		\coloneqq
		\sum_{ x \in \lattice_{ \rho } }
		\chi_{ x + \rho U } ( y )
		f_{ \rho } ( x ).
	\end{equation*}
	Since $ \alpha \in (0, 1 ) $, the scale of oscillation of $ f_{ \rho } $ is larger than the lattice size $ \rho $, from which we can deduce that
	\begin{equation*}
		\lim_{ \rho \to 0 }
		\norm{ \widetilde{\of_{ \rho } }- f_{ \rho } }_{ \lp^{ 2 } ( \R^{ 3 } ) }
		= 
		0. 
	\end{equation*}
	Thus $\widetilde{ \of_{ \rho} } $ also generates the $ \Hm 
	$-measure $ \mu $.
	
	Let $ r_{ \rho } $ be a sequence such that $ r_{ \rho }/\rho \to 0 $ as $ \rho \to 0 $. At each $ x \in \lattice_{ \rho } $ place an oriented circle $ \gamma_{ \rho } ( x ) $ with diameter $ r_{ \rho } $ such that the oriented disc $ S_{ \rho } ( x ) $ which satisfies $  \partial S_{ \rho } ( x ) = \gamma_{ \rho } ( x ) $ has the normal $ e_{ j } $, where $ j $ is chosen as above. Furthermore we define the Burgers vector
	\begin{equation}
		\label{eq:burgers_vector_choice}
		b_{ \rho } ( x )
		\coloneqq
		\frac{ \rho^{ 3 } }{  \hm^{ 2 } ( S_{ \rho } ( x ) ) }
		f ( x ) \sin( 2 \pi \rho^{ - \alpha } \inner*{ x }{ \nu } )
		e_{ i }
	\end{equation}
	and let $ \disdens_{ \rho } \coloneqq \left( b_{ \rho } ( x ) 
	\otimes \tau \hm^{ 1 } \llcorner_{ \gamma_{ \rho } ( x ) } 
	\right)_{ x \in \lattice_{ \rho } } $. The corresponding oriented 
	surface area function (see equation 
	(\ref{eq:def_or_area_function})) is then given by
	\begin{equation*}
		\of_{ \rho } ( x )
		=
		\rho^{ 3 }
		f ( x ) \sin ( 2 \pi \rho^{ - \alpha } \inner*{ x }{ \nu } ) 
		e_{ i } \otimes e_{ j },
	\end{equation*}
	and thus its piecewise constant extension defined by equation 
	(\ref{eq:def_extension_function}) coincides with $ \widetilde{ 
		\of_{ \rho} } $. 
	Using \Cref{prop:weak_long_energy_convergence}, we thus obtain that
	\begin{equation*}
		\lim_{ \rho \to 0 }
		\interactionEnergy ( \disdens_{ \rho } )
		=
		\frac{1}{2}
		\int_{ \R^{ 3 } \times \Sph^{ 2 } }
		( \hat{ M } ( \xi ) - ( \hat{ M } )_{\Sph^{ 2 } } + S )
		\cdot
		\dd{ \mu ( x , \xi ) },
	\end{equation*} 
	as long as we can show that $ \disdens_{ \rho } $ satisfies the assumptions \ref{item:l2BoundOnArea} and \ref{item:wellSeperated} and only has long-range oscillations. If they hold, then the proof is complete by inequality (\ref{eq:choice_of_mu}). Indeed for the desired $ \lp^{ 2 } $-bound \ref{item:l2BoundOnArea}, we compute
	that 
	\begin{equation*}
		\sum_{ x \in \lattice_{ \rho } }
		\frac{ 1 }{ \rho^{ 3 } }
		\abs{ \int_{ S_{\rho } ( x ) } b ( x ) \dd{ \hm^{ 2 } } }^{ 2 }
		\leq
		\sum_{ x \in \lattice_{ \rho } }
		\rho^{ 3 } \abs{ f ( x ) }^{ 2 }
	\end{equation*}
	which due to $ f \in \ccinf ( \R^{ 3 } ) $ stays uniformly bounded. The well-separatedness condition \ref{item:wellSeperated} follows by our choice of $ r_{ \rho } $ and the loops $ \gamma_{ \rho } $. Finally no short-scale oscillations occur by our choice of $ \alpha \in (0,1 ) $.
\end{proof}

We want to conclude this Section by showing how our result points towards the formation of microstructures due to the possibly negative interaction energy.
\begin{example}
	\label{ex:rotations_energy}
	Assume that we are still in the setting of isotropic elasticity 
	(\ref{eq:def_isotropic_elasticity_tensor}).
	Imagine a setting where dislocation loops are placed on a lattice as described in our model. We assume that for some reason, the self-energy and location of the dislocations are fixed. Our aim is to show that it is then favourable for the configuration to produce small-scale oscillations. The special case we consider is that all dislocation loops face the same direction and have the same Burgers vector. We show that, without changing the location and the self-energy of the loops, a configuration with less total interaction energy exists by introducing rotations of the loops.
	
	We assume that $ \gamma \subseteq \R^{ 3 } $ is an oriented dislocation loop with tangent vector $ \tau $ such that
	for a corresponding slip surface with $ \partial S = \gamma $, we have $ \int_{ S } n \dd{ \hm^{ 2} } \eqqcolon a $. 
	Let $ b \in \R^{ 3 } \setminus \{ 0 \} $ be a Burgers vector, which 
	has been rescaled so that its length stays equal to $ 1 $. Moreover 
	let $ 0 < r = r ( \rho ) \ll \rho  $. Since the dislocation loops 
	should have to be volume-preserving, it follows that $ 
	\inner*{b}{a} = 0 $. 
	The configuration we want to consider is given by
	\begin{equation}
		\label{eq:constant_config}
		\disdens_{ \rho } 
		\coloneqq 
		\frac{\rho^{ 3 }}{ r^{ 2 } }
		\sum_{ x \in \lattice_{ \rho } \cap Q }
		b \otimes \tau \hm^{ 1 } \llcorner_{ r \gamma + x  },
	\end{equation}
	where $ Q \subseteq \R^{ 3 } $ is the unit cube.
	We want to find  oscillating rotations $ R_{ \theta } \in \sporth ( 
	3 ) $ such that the array of dislocation loops
	\begin{equation}
		\label{eq:E_rho_def}
		\mathcal{E}_{ \rho }
		\coloneqq
		\frac{\rho^{ 3 } }{ r^{ 2 } }
		\sum_{ x \in \lattice_{ \rho } \cap Q}
		(R_\theta b) \otimes (R_\theta \tau) \hm^{ 1 } \llcorner_{ r 
			R_{ \theta }( 
			\gamma ) + x }
	\end{equation}
	has less total interaction energy in the limit $ \rho \to 0 $.
	Since the only modification is the same rotation of the dislocation 
	line and the Burgers vector, the self-energy of $ \mathcal{E}_{ 
		\rho } $ is the same as the self-energy of $ \disdens_\rho $, and 
	we 
	do not change the position of the loops.
	
	As observed above, the Burgers vector $ b $ and the oriented 
	surface area $ a $ are orthogonal, thus we find some $ R_{ - 1 } 
	\in 
	\sporth( 3 ) $ such that $ R_{-1} ( b ) = - b $ and $ R_{-1} ( a ) 
	= a $. 
	Define $ R_{ 1 } \coloneqq \Id $. Let $ \nu \in \Sph^{ 2 } $ 
	be a unit vector. Define $ \sigma \colon \R \to 
	\{ - 1 , 1\} $ as the sign-function and denote
	\begin{equation*}
		\theta_{ \rho } ( x ) \coloneqq 
		\sigma ( \sin ( 2 \pi \rho^{ -1/2} \inner*{x}{\nu} ) ).
	\end{equation*}
	We then define as in (\ref{eq:E_rho_def}) the sequence of arrays of 
	dislocation loops $ \mathcal{E}_{ \rho } $ by 
	setting $ R_{ \theta } = R_{ \theta_{ \rho } ( x ) } $.
	
	By inserting the definition of the oriented surface area function 
	(\ref{eq:def_or_area_function}), we note that
	\begin{equation*}
		\of_{ \rho} ( x ) 
		=
		\rho^{ 3 }
		(R_{ \theta } b ) \otimes ( R_{ \theta } a )
		\chi_{ Q } ( x )
		=
		\rho^{ 3 }
		\theta_{ \rho } ( x ) (b \otimes a) \chi_{ Q } ( x ) 
	\end{equation*}
	To compute the limit of the total interaction energy of $ 
	\mathcal{E}_{ \rho } $, we first note that $ \theta_{ \rho } \chi_{ 
		Q } $ converges to zero weakly in $ \lp^{ 2 } ( \R^{ 3 } ) $ and 
	only has oscillations of weak-long type.
	Thus, by \Cref{prop:weak_long_energy_convergence}, we need to 
	compute the $ \Hm $-measure the sequence $ \of_{ \rho} $ generates. 
	In fact, since $ \theta_{ \rho } $ oscillates between $ 1 $ and $ - 
	1 $, and the direction of oscillations is $ \nu $, we should have 
	that
	\begin{equation}
		\label{eq:Hm_claim}
		\mu_{ \Hm }
		=
		\lm^{ 3 } \llcorner_{ Q } \otimes \frac{1}{2} ( \delta_{ \nu } 
		+ \delta_{ -\nu } )
		(b \otimes a ) \otimes ( b \otimes a ).
	\end{equation}
	We are now going to show this claim. First define the function $ f 
	\colon \R \to \R $ via $ f ( t ) \coloneqq \sigma ( \sin ( 2 \pi t 
	) ) $. Since $ f $ is locally integrable and periodic, we can write 
	it as its Fourier series
	\begin{equation*}
		f ( t ) = \sum_{ n \in \Z }
		f_{ n } \exp (  2 \pi i n t ),
	\end{equation*}
	where the Fourier coefficients $ f_n \in \C $ are given by
	\begin{equation*}
		f_n \coloneqq \int_0^1 f ( t ) \exp ( - 2 \pi i n t ) \dd{ t }.
	\end{equation*}
	We consider the definition of the $ \Hm $ measure given by 
	\Cref{thm:H_measures}. To this end, take test functions $ \phi_{ 1 
	}, \phi_{2} \in \cont_{\mathrm{c}} ( \R^{3 } ) $ and a $ 0 
	$-homogeneous function $ \psi \in \cont ( \R^{ 3 } \setminus \{ 0 
	\} ) $. 
	Since the 
	Fourier coefficients $ f_{ n } $ are square-summable, we have by 
	the dominated convergence theorem that
	\begin{align*}
		&
		\int_{ \R^{ 3 } }
		\fourier ( \phi_1 \theta_{ \rho }\chi_{ Q }  ) 
		\overline{ \fourier \left( \phi_2 \theta_{ \rho } \chi_{ Q } 
			\right) \psi  
		}
		\dd{ \xi }
		\\
		={}&
		\sum_{ n, m \in \Z }
		f_{ n } \overline{f_{ m } }
		\int_{ \R^{ 3 } }
		\fourier ( \phi_1 \chi_Q ) \ast \fourier (\exp ( - 2 \pi i n  
		\rho^{-1/2 } \inner*{\cdot}{ \nu } ) )
		\overline{ \fourier ( \phi_2 \chi_Q ) \ast \fourier  (\exp ( - 
			2 \pi i m \rho^{-1/2 } \inner*{\cdot }{ \nu } ) \psi }
		\dd{ \xi }.  
	\end{align*}
	By using that $ \fourier ( \exp ( - 2 \pi i n \rho^{-1/2} 
	\inner*{\cdot}{\nu} ) ) 
	= \delta_{ -n \rho^{-1/2}\nu } $, this term can be rewritten as
	\begin{equation}
		\label{eq:sum_over_nm}
		\sum_{ n, m \in \Z }
		f_n \overline{f_m}
		\int_{ \R^{ 3 } } 
		\fourier ( \phi_1 \chi_{ Q} ) ( \xi + n \rho^{-1/2} \nu )
		\overline{\fourier ( \phi_2 \chi_{ Q } ) ( \xi + m \rho^{-1/2} 
			\nu ) \psi }\dd{ \xi }.
	\end{equation}
	As in \Cref{ex:h_measures_of_simple_form}, the off-diagonal terms 
	in (\ref{eq:sum_over_nm}) cancel out in the limit $ \rho \to 0 $, 
	and for $ n \in \Z $, the diagonal terms converge to 
	\begin{equation*}
		\abs{f_n}^2
		\int_{ \R^{ 3 } }
		\fourier ( \phi_1 \chi_{ Q } ) \overline{ \fourier ( \phi_2 
			\chi_Q ) } \dd{ \xi }
		\overline{\psi ( \sigma ( n ) \nu ) }.
	\end{equation*}
	Note that due to $ f $ having mean zero, $ f_0 = 0 $, and since $ f 
	$ is real-valued, its Fourier coefficients satisfy $ f_n = 
	\overline{f_{-n}} $.
	Applying the Plancherel identity and the dominated convergence 
	theorem thus yields that 
	\begin{equation}
		\label{eq:limit_to_compute_muh}
		\lim_{ \rho \to 0 }
		\int_{ \R^{ 3 } }
		\fourier ( \phi_1 \theta_{ \rho }\chi_{ Q }  ) 
		\overline{ \fourier \left( \phi_2 \theta_{ \rho } \chi_{ Q } 
			\right) \psi  
		}
		\dd{ \xi }
		=
		\sum_{ n \in \N_{\geq 1 } }
		\abs{f_n}^{ 2 }
		\int_{ Q } \phi_1 \overline{ \phi_2 } \dd{ x }
		( \overline{\psi ( \nu ) + \psi ( - \nu ) } ).
	\end{equation}
	Again by the Plancherel identity and the above identities for $ f_n 
	$, we have that 
	\begin{equation*}
		\sum_{ n \in \N_{ \geq 1 } }
		\abs{f_n}^{ 2 }
		=
		\frac{1}{2}
		\int_0^{ 1 }
		\abs{ f ( t ) }^2
		\dd{ t }
		=\frac{1}{2}.
	\end{equation*}
	By combining this with equation (\ref{eq:limit_to_compute_muh}), we 
	conclude that $ \theta_\rho \chi_Q $ generates the $ \Hm $-measure
	\begin{equation*}
		\lm^{ 3 } \llcorner_{ Q } \otimes \frac{1}{2 } \left( 
		\delta_\nu + \delta_{ - \nu }  \right).
	\end{equation*}
	This proves the claim (\ref{eq:Hm_claim}).
	
	By now applying \Cref{prop:weak_long_energy_convergence}, we obtain 
	that
	\begin{equation}
		\label{eq:Erho_limit}
		\lim_{ \rho \to 0 }
		\interactionEnergy ( \mathcal{E}_{ \rho } )
		=
		\frac{1}{2}
		\int_{ \R^{ 3 } \times \Sph^{ 2 } }
		\Psi( \xi ) \cdot \dd{ \mu_{ \Hm } ( x, \xi ) }
		=
		\frac{1}{2}
		\int_{ \R^{ 3 } } \chi_Q \dd{ x }
		\frac{1}{2}
		\inner*{ (\Psi ( \nu ) + \Psi ( - \nu ) ) (b \otimes a) 
		}{b\otimes a}.
	\end{equation}
	We recall that we assume $ \eltensor $ is isotropic.
	From \Cref{lem:symmetry_properties} and the definition of $ \Psi $ 
	via (\ref{eq:Psi_def}), we can deduce that $ \Psi ( \nu ) = \Psi ( 
	- \nu ) $. Additionally applying the Plancherel identity to the 
	right hand side of (\ref{eq:Erho_limit}) thus yields that
	\begin{equation}
		\label{eq:comparison_E}
		\lim_{ \rho \to 0 }
		\interactionEnergy ( \mathcal{E}_{ \rho } )
		=
		\frac{1}{2}
		\int_{ \R^{ 3 } }
		\abs{ \fourier  \chi_{ Q } ( \xi ) }^{ 2 }
		\inner*{ \Psi ( \nu ) ( b \otimes a ) }{ b \otimes a }
		\dd{ \xi }
	\end{equation}
	On the other hand, the constant array of dislocation loops $ 
	\disdens_{ \rho } $ defined in (\ref{eq:constant_config}) has no 
	oscillations, and its associated oriented surface area function 
	weakly converges to $ \chi_{ Q } (b \otimes a ) 
	$.
	By again applying 
	\Cref{prop:weak_long_energy_convergence}, we obtain
	\begin{equation}
		\label{eq:comparison_D}
		\lim_{ \rho \to 0 }
		\interactionEnergy ( \disdens_\rho )
		=
		\frac{1}{2}
		\int_{ \R^{ 3 } }
		\abs{ \fourier \chi_{ Q } ( \xi ) }^{ 2 }
		\inner*{ \Psi ( \xi ) ( b \otimes a)  }{b \otimes a }
		\dd{ \xi }.
	\end{equation}
	By comparing the right hand sides of equations 
	(\ref{eq:comparison_E}) and (\ref{eq:comparison_D}), we see that 
	since $ \nu $ can be freely chosen, the rotated sequence $ 
	\mathcal{E}_{ \rho } $ can be constructed to have less total 
	(interaction) energy in the limit than 
	$ \disdens_\rho $. In fact, it suffices to show that the function
	\begin{equation*}
		\nu \mapsto \inner*{ \Psi ( \nu ) ( b\otimes a ) }{b \otimes a 
		}
	\end{equation*}
	is not constant, which is the content of \Cref{lem:M_not_constant} 
	since $ \Psi $ is the sum of $\hat{ M } $ and a constant term. This 
	finishes 
	our example.

\end{example}
\appendix

\section*{Acknowledgements}
\phantomsection
\addcontentsline{toc}{section}{Acknowledgements}

The author is grateful for the guidance provided by his advisors Prof. 
Sergio Conti and Prof. Stefan Müller. He also thanks Dr. Camillo Tissot 
for the discussions about $ \Hm $-measures, and Alexander West for his 
insights regarding the oriented surface area.

The author is grateful for the funding by the Deutsche 
Forschungsgemeinschaft (DFG, 
German Research Foundation) under Germany’s Excellence Strategy – 
EXC-2047/1 – 390685813, the DFG project 
211504053 - SFB 1060 and 539309657 - SFB 1720.

\section{Appendix}
\label{sct:appendix}
In the following Lemma, we show the existence of a strain field given a dislocation density. This has already been shown in \cite[Thm.~4.1]{conti_garroni_ortiz_the_line_tension_approximation_as_the_diluate_limit_of_linear_elastic_dislocations}, and we present a condensed version here.
\begin{lemma}
	\label{lemma:existence_of_strain_field}
	Given $ \rho \in \ccinf \left( \R^{ 3 }; \mathbb{R}^{ 3 \times 3 } \right) $ with $ \divg \rho = 0 $ and an elastic tensor $ \eltensor $, the equation
	\begin{align}
		\label{eq:curl_equation}
		\cur F & = \rho \\
		\label{eq:divergence_free}
		\divg (\eltensor F ) &= 0 
	\end{align}
	has a unique solution in $ \lp^{ 2 } \left( \R^{ 3 } ; \R^{ 3 \times 3 } \right) $.
\end{lemma}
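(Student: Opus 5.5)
We will work in Fourier space and reduce the system to an elliptic equation for a displacement. Because $\divg \rho = 0$ row-wise, we first build a particular solution $G \in \lp^{2}(\R^{3};\R^{3\times 3})$ of $\cur G = \rho$.

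\textbf{Particular solution.} For each row $\rho_k$ we set $G_k \coloneqq \cur(-\Delta)^{-1}\rho_k$. In Fourier variables this reads
\begin{equation*}
	\hat G_k(\xi) = \frac{2\pi i\, \xi \times \hat\rho_k(\xi)}{4\pi^2\abs{\xi}^2}.
\end{equation*}
The identity $\cur\cur = \nabla\divg - \Delta$, combined with $\divg\rho_k = 0$, gives $\cur G_k = \rho_k$. Square integrability holds because $\abs{\hat G_k(\xi)} \lesssim \abs{\xi}^{-1}\abs{\hat\rho_k(\xi)}$. Near $\xi=0$ the function $\abs{\xi}^{-2}$ is integrable in three dimensions and $\hat\rho_k$ is bounded. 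Away from zero, $\hat\rho_k$ is a Schwartz function.

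\textbf{Reduction to an elliptic problem.} We look for $F = G + \diff u$ with $u \in \dot{\ha}^{1}(\R^{3};\R^{3})$. Then $\cur F = \rho$ holds automatically, and the condition $\divg \eltensor F = 0$ becomes
\begin{equation*}
	\mathcal{L}u = -\divg(\eltensor \diff u) = \divg(\eltensor G).
\end{equation*}
We solve this by Lax--Milgram on $\dot{\ha}^{1}$ applied to the bilinear form $\int \eltensor \diff u \colon \diff v$. Coercivity does not follow directly, because $\eltensor$ is only positive definite on symmetric matrices. It follows from Korn's inequality $\norm{\diff u}_{\lp^2} \lesssim \norm{\mathrm{sym}\,\diff u}_{\lp^2}$ on $\dot{\ha}^{1}(\R^{3})$. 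On the whole space Korn is immediate by Plancherel: $\abs{\xi\otimes a + a\otimes\xi}^2 \ge \abs{\xi}^2\abs{a}^2$ for all $a \in \C^3$.

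Alternatively, we can solve the equation directly in Fourier space. There $\hat u = (4\pi^2)^{-1}A(\xi)^{-1}\widehat{\divg \eltensor G}$, and $A(\xi) \gtrsim \abs{\xi}^2$ by \Cref{lem:symmetry_properties}, so $\widehat{\diff u} = \mathcal{O}(\abs{\hat G})$ and $\diff u \in \lp^{2}$. The right-hand side $\phi \mapsto -\int \eltensor G \colon \diff\phi$ is bounded on $\dot{\ha}^{1}$ since $G \in \lp^{2}$. Either route yields existence.

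\textbf{Uniqueness.} Suppose $F_1, F_2$ are two solutions and set $H = F_1 - F_2 \in \lp^2$. Then $\cur H = 0$, so row-wise in Fourier space $\xi \times \hat H_k = 0$, that is $\hat H(\xi) = \hat a(\xi)\otimes\xi$ for $\xi \ne 0$. This means $H = \diff w$ for some $w \in \dot{\ha}^{1}$. The equation $\divg \eltensor \diff w = 0$ then gives $A(\xi)\hat a(\xi)\abs{\xi}^2 \cdot(\text{const}) = 0$, so $\hat a = 0$ and $H = 0$. Equivalently, testing with $w$ itself gives $\int \eltensor \diff w \colon \diff w = 0$, and Korn forces $\diff w = 0$.

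The only genuinely delicate step is the passage from $\cur H = 0$ in $\lp^{2}$ to $H$ being a gradient of a $\dot{\ha}^{1}$ function, together with making sense of $\dot{\ha}^{1}$ modulo constants. In Fourier space this is handled by the explicit representation $\hat w = \hat H\xi/(2\pi i\abs{\xi}^2)$, which lies in $\dot{\ha}^1$ precisely because $\hat H \in \lp^2$.
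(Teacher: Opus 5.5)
Your proof is correct and is essentially the paper's argument. Your $\hat G$ is exactly the paper's transversal part $\Phi(\xi)$: rows in $\xi^\perp$, with $\abs{\Phi} \lesssim \abs{\hat\rho}/\abs{\xi}$. Your gradient correction $2\pi i\,\hat u \otimes \xi$ is the paper's $b(\xi)\otimes\xi$, obtained by the same pointwise inversion of the positive definite $A(\xi)$; Lax--Milgram with the Fourier-side Korn inequality is just a variational repackaging of that coercivity. Your uniqueness argument, $\hat H = \hat a \otimes \xi$ followed by $A(\xi)\hat a = 0$, spells out what the paper only asserts. The factor $\abs{\xi}^2$ you write there is spurious, since $\widehat{\divg \eltensor H} = 2\pi i\, A(\xi)\hat a$, but this does not affect the conclusion.
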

\begin{proof}
	Applying the Fourier transform to equations (\ref{eq:curl_equation}) and (\ref{eq:divergence_free}) yields that for almost every $ \xi \in \R^{ 3 } $, we must have
	\begin{align}
		\label{eq:curl_fourier_form}
		2 \pi i \xi \times \fourier F ( \xi ) & = \fourier \rho ( \xi )
		\\
		\label{eq:div_fourier_form}
		2 \pi i \left( \eltensor \fourier F ( \xi ) \right) \xi & = 0.
	\end{align}
	Let $ \xi \neq 0 $.
	Consider the first equation (\ref{eq:curl_fourier_form}). Since $ \rho $ is divergence-free, we deduce that $ \fourier \rho ( \xi ) \xi = 0 $, or in other words, $ \fourier \rho ( \xi ) $ is row-wise orthogonal to $ \xi $. Thus we can solve equation (\ref{eq:curl_fourier_form}) with a function of the form 
	\begin{equation}
		\label{eq:decomp_fourier_F}
		\fourier F ( \xi ) 
		= 
		\Phi ( \xi ) + b ( \xi ) \otimes \xi ,
	\end{equation}
	where $ \Phi ( \xi ) $ is an almost everywhere uniquely determined function such that $ \Phi^{ i } ( \xi ) \in \xi^{ \perp } $ row-wise almost everywhere, and $ b \colon \R^{ 3 } \to \C^{ 3 } $ is some map which represents our degrees of freedom. Furthermore we must have that
	\begin{equation}
		\label{eq:growth_of_Phi}
		\abs{\Phi } \lesssim \frac{ \abs{ \fourier \rho ( \xi ) } }{ \abs{ \xi } }.
	\end{equation}
	Since $ \fourier \rho $ is a Schwartz function, this implies that $ \Phi \in \lp^{ 2 } \left( \R^{ 3 } ; \R^{ 3 \times 3 } \right) $. 
	
	To determine $ b $ we consider the second equation (\ref{eq:div_fourier_form}), which by the decomposition (\ref{eq:decomp_fourier_F}) can be written as
	\begin{equation}
		\label{eq:div_fourier_form_rewritten}
		( \eltensor b( \xi ) \otimes \xi ) \xi = 
		- ( \eltensor \Phi ( \xi ) ) \xi.
	\end{equation} 
	To solve this equation, consider the linear map $ A \colon \R^{ 3 } \to \R^{ 3 } $ defined by
	\begin{equation*}
		A ( b ) \coloneqq 
		( \eltensor b \otimes \xi ) \xi. 
	\end{equation*}
	Then $ b $ satisfies equation (\ref{eq:div_fourier_form_rewritten}) if and only if
	$ A( \xi ) b ( \xi ) = - (\eltensor\Phi ( \xi ) ) \xi $. But $ A $ is actually positive definite since
	\begin{equation*}
		\inner*{ A( \xi ) b }{ b }
		= 
		\eltensor b \otimes \xi \colon b \otimes \xi 
		\gtrsim
		\abs{ b }^{ 2 } \abs{ \xi }^{ 2 }.
	\end{equation*}
	Thus we can always solve equation (\ref{eq:div_fourier_form_rewritten}) and we moreover get that $ b $ is measurable with growth bound
	\begin{equation*}
		\abs{ b ( \xi ) } 
		\lesssim 
		\frac{ \abs{ \Phi ( \xi ) } \abs{ \xi } }{ \abs{ \xi }^{ 2 } }
		\lesssim 
		\frac{ \abs{ \fourier \rho  ( \xi ) } }{ \abs{ \xi }^{ 2 } }.
	\end{equation*}
	Looking back at equation (\ref{eq:decomp_fourier_F}), we see that $ \fourier F \in \lp^{2 } \left( \R^{ 3 } ; \R^{ 3 \times 3 } \right) $. Moreover the previous computation justifies that $ F $ is a distributional solution of equations (\ref{eq:curl_equation}) and (\ref{eq:divergence_free}) and shows that if an $ \lp^{ 2 } $ solution exists, it must be unique.
\end{proof}

\begin{lemma}
	\label{lem:cancellation_of_M}
	Let the kernel $ M $ be defined by equation (\ref{eq:formula_M}). Then we have
	\begin{equation*}
		\int_{ \Sph^{ 2 } }
		M ( z ) 
		\dd{ \hm^{ 2 } ( z ) }
		=
		0.
	\end{equation*}
\end{lemma}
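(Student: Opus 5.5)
The plan is to reduce the spherical cancellation of $M$ to a divergence (Stokes) argument, using that $M$ is a constant-coefficient combination of second derivatives of the $-1$-homogeneous function $K$. By (\ref{eq:formula_M}),
\begin{equation*}
	M_{iji'j'}(z) = \sum_{k,l,k',l'} \eltensor_{ijkl}\, \eltensor_{k'l'i'j'}\, \partial^2_{ll'} K_{kk'}(z).
\end{equation*}
Since the coefficients are constant, it suffices to show that $\int_{\Sph^2} \partial^2_{ll'} K_{kk'} \dd{\hm^2} = 0$ for every choice of indices. Set $g \coloneqq \partial_{l'} K_{kk'}$. This function is smooth on $\R^3\setminus\{0\}$ and $-2$-homogeneous, so $h \coloneqq \partial_l g$ is $-3$-homogeneous.

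\textbf{Step 1 (reduction to an annulus).} By $-3$-homogeneity and polar coordinates,
\begin{equation*}
	\int_{B_2\setminus B_1} h \dd{x} = \int_1^2 r^{-3} r^2 \dd{r} \int_{\Sph^2} h \dd{\hm^2} = \log 2 \int_{\Sph^2} h \dd{\hm^2}.
\end{equation*}
So the claim is equivalent to $\int_{B_2\setminus B_1} \partial_l g \dd{x} = 0$.

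\textbf{Step 2 (Gauss--Green).} By the divergence theorem on the annulus,
\begin{equation*}
	\int_{B_2\setminus B_1} \partial_l g \dd{x} = \int_{\partial B_2} g\, \nu_l \dd{\hm^2} - \int_{\partial B_1} g\, \nu_l \dd{\hm^2}.
\end{equation*}
Rescaling $\partial B_2$ to $\partial B_1$ and using the $-2$-homogeneity of $g$ gives
\begin{equation*}
	\int_{\partial B_2} g(x)\, \frac{x_l}{\abs{x}} \dd{\hm^2} = 2^{2}\, 2^{-2} \int_{\Sph^2} g(y)\, y_l \dd{\hm^2(y)}.
\end{equation*}
The two boundary terms therefore cancel exactly, and hence $\int_{\Sph^2} \partial_l g \dd{\hm^2} = 0$. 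This is the general fact that the derivative of a smooth $(1-n)$-homogeneous function in $\R^n$ has vanishing spherical mean.

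\textbf{Main obstacle: regularity.} The only real issue is to justify that $K$, the chosen $-1$-homogeneous representative of $\fourier^{-1}\bigl(\tfrac{1}{4\pi^2}A^{-1}\bigr)$ from \Cref{lem:symmetry_properties}, is smooth on $\R^3\setminus\{0\}$. Only then is $g$ a genuine $\cont^1$ function on the closed annulus and the classical divergence theorem applies. I would argue as follows. $\fourier K$ is smooth and $-2$-homogeneous away from the origin, being the inverse of the smooth positive definite matrix field $A(\xi)$. The inverse Fourier transform of a smooth homogeneous distribution of degree $-2$ in $\R^3$ is smooth away from the origin and homogeneous of degree $-1$; this is a standard fact, as in the treatment in \cite{stein_singular_integrals_and_differentiability_properties_of_functions}. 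This smoothness is also what the paper already assumes when it calls $M$ smooth in \Cref{lem:interaction_energy_via_surface}.

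With smoothness in hand, the argument is purely pointwise and uses no property of $\eltensor$ beyond linearity. An alternative route would be to use that $\fourier M$ is $0$-homogeneous and bounded, so the principal-value distribution has no $\delta_0$ mean part. I prefer the direct Gauss--Green argument because it is elementary.
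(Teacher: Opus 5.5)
Your proposal is correct and follows essentially the same route as the paper. Like the paper, you use the constant coefficients in (\ref{eq:formula_M}) to reduce to $\partial_l f$ with $f$ smooth and $-2$-homogeneous, then apply the divergence theorem on an annulus, where the two boundary terms cancel exactly by homogeneity. The only differences are cosmetic: you use the fixed annulus $B_2\setminus B_1$ with the exact factor $\log 2$ instead of the paper's thin annulus $B_1\setminus B_{1-\eps}$ with a limit, and you explicitly justify the smoothness of $K$ away from the origin, which the paper takes for granted.
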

\begin{proof}
	By formula (\ref{eq:formula_M}), it suffices to show the claim if $ M = \partial_l  f  $ for some $ -2 $-homogeneous function $ f \in C^{ \infty } ( \R^{ n } \setminus \{0 \} ) $. Applying the divergence theorem yields
	\begin{align*}
		\int_{ \Sph^{ 2 } }
		\partial_{ l } f 
		\dd{ \hm^{ 2 } }
		& =
		\lim_{ \eps \to 0 }
		\fint_{ B_{ 1 } \setminus B_{ 1 - \eps } }
		\partial_{ l } f 
		\dd{ x }
		\\
		& =
		\lim_{ \eps \to 0 }
		\frac{1}{ \lm^{ 3} ( B_{ 1 } \setminus B_{ 1 - \eps } ) }
		\left(\int_{ \partial B_{  1} } f x_{ l } \dd{ \hm^{ 2 } }
		-
		\int_{ \partial B_{ 1- \eps } } f \frac{x_l}{\abs{ x } } \dd{ \hm^{ 2 } ( x ) } 
		\right)
		\\
		& =
		\lim_{ \eps \to 0 }
		\frac{1}{ \lm^{ 3} ( B_{ 1 } \setminus B_{ 1 - \eps } ) }
		\int_{ \partial B_{  1} } f x_{ l } 
		-
		(1- \eps)^{ 2 }f ( (1- \eps ) x ) x_l \dd{ \hm^{ 2 } }
		\\
		& =
		0
	\end{align*}
	due to the $ - 2 $-homogeneity of $ f $, which finishes the proof.
\end{proof}

\begin{lemma}
	\label{lem:spherical_harmonics}
	Let $ K \in C^{ \infty } ( \Sph^{n-1} ) $ and decompose $ K $ into spherical harmonics via
	\begin{equation*}
		K( x ) = \sum_{ l = 0}^{ \infty } \sum_{ k = 1}^{ b_l }
		a_{ k}^{ (l)} Y_{ k}^{(l)},
	\end{equation*}
	where $ b_l = \mathrm{dim} ( \mathscr{H}_l) $ is the dimension of spherical harmonics of degree $ l $ and $ (Y_k^{(l)})_{k = 1, \ldots, b_l} $ is an orthonormal basis of $ \mathscr{H}_l $ with respect to $ \inner*{f}{g}=\int_{ \Sph^{n-1}} f \overline{g} \dd{ \hm^{ n -1 } } $. Then:
	\begin{enumerate}[label=(\roman*)]
		\item \label{item:dimension} There exists a constant $C_n> 0 $ 
		such that $ b_l \leq C_n l^{ n-2 } $ for all $ l \in \N $.
		\item \label{item:decay}For all $ j \in \N $ there exists $ A_j > 0 $ such that 
		\begin{equation*}
			\abs{a_k^{(l)}} \leq A_j l^{-j}
			\quad
			\text{for all }l \in \N , k \in \{ 1 , \ldots, b_l \}.
		\end{equation*}
	\end{enumerate}
\end{lemma}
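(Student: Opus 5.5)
We plan to prove both parts directly from the structure of spherical harmonics as restrictions of homogeneous harmonic polynomials. The key tool for (ii) is the spherical Laplacian $\Delta_{\Sph}$ (Laplace--Beltrami operator on $\Sph^{n-1}$), which acts on $\mathscr{H}_l$ as multiplication by $-l(l+n-2)$.

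For \ref{item:dimension} we use the identity $\mathscr{P}_l = \mathscr{H}_l \oplus |x|^2 \mathscr{P}_{l-2}$ for the space $\mathscr{P}_l$ of homogeneous polynomials of degree $l$ in $n$ variables (see \cite{stein_weiss_introduction_to_fourier_analysis_on_euclidean_spaces}). It gives
\begin{equation*}
	b_l = \dim \mathscr{P}_l - \dim \mathscr{P}_{l-2} = \binom{l+n-1}{n-1} - \binom{l+n-3}{n-1}.
\end{equation*}
The right hand side is a polynomial in $l$ of degree $n-2$, since the leading terms of degree $n-1$ cancel. Hence $b_l \leq C_n l^{n-2}$ for $l \geq 1$. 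The case $l = 0$ is trivial, where $b_0 = 1$.

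For \ref{item:decay} we fix $l \geq 1$ and $k$. Since $Y_k^{(l)}$ is an eigenfunction of $\Delta_{\Sph}$ and $\Delta_{\Sph}$ is symmetric on $\lp^2(\Sph^{n-1})$ with smooth arguments, we can move the operator onto $K$:
\begin{equation*}
	a_k^{(l)} = \inner*{K}{Y_k^{(l)}} = \frac{(-1)^m}{(l(l+n-2))^m} \inner*{\Delta_{\Sph}^m K}{Y_k^{(l)}}
\end{equation*}
for every $m \in \N$. By Cauchy--Schwarz and $\norm{Y_k^{(l)}}_{\lp^2} = 1$ this yields
\begin{equation*}
	\abs{a_k^{(l)}} \leq \norm{\Delta_{\Sph}^m K}_{\lp^2(\Sph^{n-1})} \, l^{-2m}.
\end{equation*}
Here the norm is finite because $K$ is smooth on the compact manifold $\Sph^{n-1}$. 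Choosing $2m \geq j$ gives the claim with $A_j \coloneqq \max\left(\norm{\Delta_{\Sph}^m K}_{\lp^2}, \abs{a_1^{(0)}}\right)$, which also covers the case $l = 0$.

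The only point needing care is the justification that $\Delta_{\Sph}$ is self-adjoint and that $Y_k^{(l)}$ is an eigenfunction with eigenvalue $-l(l+n-2)$. We would derive the eigenvalue relation from the polar decomposition of the Euclidean Laplacian,
\begin{equation*}
	\Delta = \partial_r^2 + \frac{n-1}{r}\partial_r + r^{-2}\Delta_{\Sph},
\end{equation*}
applied to the harmonic, $l$-homogeneous function $r^l Y_k^{(l)}$. Self-adjointness is the divergence theorem on the closed manifold $\Sph^{n-1}$. Both facts are standard, and we do not expect a genuine obstacle here.
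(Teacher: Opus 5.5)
Your proposal is correct and follows essentially the same route as the paper. Part (i) uses the same dimension formula $b_l = \binom{n+l-1}{l} - \binom{n+l-3}{l-2}$. In part (ii), the paper's operator $L\varphi = \abs{x}^2 \Delta \varphi$ acting on $0$-homogeneous extensions is exactly your $\Delta_{\Sph}$, with the same eigenvalue $l(2-n-l)$ and the same trick of moving the operator onto $K$. The paper checks symmetry by the Euclidean divergence theorem on thin shells around $\Sph^{n-1}$ and the eigenvalue via the solid harmonic $\abs{x}^l Y_k^{(l)}(x/\abs{x})$, whereas you argue intrinsically on the sphere and through the polar form of the Laplacian; this difference is cosmetic.
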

\begin{proof}
	We first show \ref{item:dimension}.
	As noted after \cite[Cor.~4.2.2]{stein_weiss_introduction_to_fourier_analysis_on_euclidean_spaces}, we have
	\begin{equation*}
		\mathrm{dim} ( \mathscr{H}_l)
		=
		\binom{n+l-1}{l} - \binom{n+l-3}{l-2}.
	\end{equation*}
	Thus we can estimate
	\begin{equation*}
		b_l = \binom{n+l-3}{l-1} \frac{n+2l-2}{l}
		\leq 
		C_n l^{n-2}.
	\end{equation*}
	We now show \ref{item:decay}, which has been proven in \cite[Thm.~6.5]{calderon_60_integrales_singulares}, but since the source is in Spanish, we recall the proof for the convenience of the reader. First we note that if $ f, g \in C^{ 2 } ( \R^{ n } \setminus \{0\}) $ are $ 0 $-homogeneous, then for the operator $ L \varphi \coloneqq \abs{x}^{ 2 } \Delta \varphi $, we have that
	\begin{equation}
		\label{eq:operator_l_spehre}
		\int_{ \Sph^{ n - 1 } }
		Lf g - Lg f \dd{ \hm^{ n - 1 } }
		=
		0
	\end{equation}
	which follows from the divergence theorem via
	\begin{align*}
		\int_{ \Sph^{ n - 1} }
		Lf g - Lg f \dd{ \hm^{ n - 1 } }
		&=
		\lim_{ \delta \to 0 }
		\frac{1}{2\delta}
		\int_{ B_\delta ( \Sph^{ n -1 } ) }
		Lf g - Lg f \dd{ x }
		\\
		& =
		\lim_{ \delta \to 0 }
		\frac{1}{2 \delta }
		\int_{ \partial B_{ \delta }  ( \Sph^{ n - 1 } ) }
		f \partial_{ \nu } g - g \partial_{ \nu } f 
		\dd{ \hm^{ n -1 } }
		=0,
	\end{align*}
	where the last equality is due to the $ 0 $-homogeneity of $ f $ 
	and $ g $. Moreover, we observe that for the $ 0 $-homogeneous 
	extension of a spherical harmonic $ Y_k^{ (l) } $ of degree $ l 
	\geq 1 $, we have for all $ j \in \N $ that $ L^{ (j ) } Y_{k}^{ 
		(l) } =  l^{ j } ( 2 - n -l )^{j } Y_{ k }^{ (l) } $. To see this, 
	note that if $ P_{ k }^{ (l) } $ denotes the solid spherical 
	harmonic corresponding to $ Y_k^{ (l) } $ via $ P_{ k }^{ (l) } ( x 
	) = \abs{x}^{l} Y_{k }^{ (l) } ( x / \abs{ x } ) $, we have 
	\begin{equation}
		\label{eq:L_on_spehrical_harmonic}
		L Y_{ k }^{ l } 
		= \abs{x}^2 \Delta \frac{P_{ k }^{ (l ) } }{\abs{ x }^{ l } }
		=\abs{x}^{ 2 } \left( \Delta \abs{x}^{-l} P_k^{ (l) } + 2 
		\nabla \abs{x}^{ - l} \cdot \nabla P_{ k }^{(l ) } \right)
		=
		\frac{l ( 2 - n - l )}{ \abs{x}^{ l } }
		P_{ k }^{ (l)}
		=
		l ( 2 - n - l ) Y_{ k }^{ (l ) },
	\end{equation}
	where the third equality follows from $ \Delta \abs{x}^{ - l } = l 
	( l + 2 - n ) \abs{x}^{ -l -2 } $ and $ x \cdot \nabla P_k^{ ( l ) 
	} = l P_{ k }^{ ( l ) } $ due to the $ l $-homogeneity of $ P_k^{ ( 
		l ) } $.
	Combining equations (\ref{eq:operator_l_spehre}) and 
	(\ref{eq:L_on_spehrical_harmonic}) thus yields by the 
	orthonormality of $ ( Y_k^{ ( l ) } )_k $ that
	\begin{equation*}
		a_{ k }^{ ( l ) }
		= 
		\int_{ \Sph^{ n - 1 } } K 
		\overline{Y_{ k }^{ ( l ) }}
		\dd{ \hm^{ n - 1 } }
		=
		l^{ - j } (2 - n -l)^{ -j } \int_{ \Sph^{ n - 1 } } (L^{ (j) } 
		K) 
		\overline{Y_{ k }^{ l }} \dd{ \hm^{ n -1 } }
		\quad
		\text{for all } j \in \N.
	\end{equation*}
	Combined with the smoothness of $ K $, this implies the desired decay of $ a_{ k }^{ (l) } $, finishing the proof.
\end{proof}

\begin{lemma}
	\label{lem:M_not_constant}
	Assume that $ \eltensor $ is isotropic in the sense of 
	(\ref{eq:def_isotropic_elasticity_tensor}). Let $ b, c \in \R^{ 3 } 
	\setminus \{ 0 \} $ be either orthogonal or parallel. Then 
	\begin{align*}
		\varphi \colon \Sph^{ 2} & \to \R 
		\\
		\xi & \mapsto \inner*{ \hat{ M } ( \xi ) b \otimes c }{ b 
			\otimes c }
	\end{align*}
	is nowhere locally constant
\end{lemma}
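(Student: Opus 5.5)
The plan is to compute $\varphi$ explicitly from the isotropic formulas of \Cref{lem:symmetry_properties}. Then I show that $\varphi$ is the restriction to $\Sph^{2}$ of a polynomial, so that local constancy forces global constancy. Finally I exhibit two points of $\Sph^2$ where $\varphi$ takes different values.

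\emph{Explicit formula.} By (\ref{eq:fourer_M_on_matrices}), for $A=B=b\otimes c$ we have $\varphi(\xi)=-4\pi^2\inner*{\fourier K(\xi)\,w(\xi)}{w(\xi)}$, where $w(\xi)\coloneqq(\eltensor(b\otimes c))\xi$. In the isotropic case (\ref{eq:def_isotropic_elasticity_tensor}) this vector is
\begin{equation*}
	w(\xi)=\tfrac12\big(b\,(c\cdot\xi)+c\,(b\cdot\xi)\big)+a\,(b\cdot c)\,\xi .
\end{equation*}
Inserting (\ref{eq:fourier_K_formula}) with $\kappa\coloneqq\frac{1+2a}{2+2a}$, we obtain for $\xi\in\Sph^2$
\begin{equation*}
	\varphi(\xi)=-2\big(\abs{w(\xi)}^2-\kappa\,(w(\xi)\cdot\xi)^2\big).
\end{equation*}
Since $a>-1/3$, we have $0<\kappa<1$. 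The right-hand side is a homogeneous polynomial of degree $4$ in $\xi$, so $\varphi$ is real-analytic on the connected manifold $\Sph^2$. If $\varphi$ were constant on some open subset, it would therefore be constant on all of $\Sph^2$. Hence it suffices to show that $\varphi$ is not constant, which I do in two cases.

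\emph{Orthogonal case, $b\cdot c=0$.} Here $w=\frac12(b(c\cdot\xi)+c(b\cdot\xi))$ and $w\cdot\xi=(b\cdot\xi)(c\cdot\xi)$.
\begin{itemize}
	\item For $\xi$ orthogonal to both $b$ and $c$ (possible in $\R^3$), we get $w=0$, hence $\varphi(\xi)=0$.
	\item For $\xi=b/\abs{b}$, we get $w=\frac12\abs{b}\,c$ and $w\cdot\xi=0$, hence $\varphi(\xi)=-\frac12\abs{b}^2\abs{c}^2\neq0$.
\end{itemize}

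\emph{Parallel case, $c=tb$ with $t\neq0$.} Here $w=t(b\cdot\xi)\,b+a\,t\abs{b}^2\,\xi$.
\begin{itemize}
	\item For $\xi\perp b$, we get $w=at\abs{b}^2\xi$, hence $\varphi(\xi)=-2(1-\kappa)a^2t^2\abs{b}^4$.
	\item For $\xi=b/\abs{b}$, we get $w=(1+a)t\abs{b}^2\xi$, hence $\varphi(\xi)=-2(1-\kappa)(1+a)^2t^2\abs{b}^4$.
\end{itemize}
Since $1-\kappa>0$, these two values coincide only if $(1+a)^2=a^2$, that is, only if $a=-1/2$. This is excluded by the assumption $a>-1/3$.

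The only step needing care is this parallel case. There the non-constancy hinges precisely on the admissible range of the Lamé-type parameter $a$ and on $\kappa<1$.
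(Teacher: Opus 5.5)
Your proof is correct and follows the paper's route: real-analyticity of $\varphi$ on the connected sphere reduces the claim to exhibiting two distinct values, which you read off from the explicit isotropic formula for $\hat{K}$. The only difference is that you compute coordinate-free with general $b,c$ (using $b\cdot c=0$ or $c=tb$), whereas the paper computes the model cases $e_1\otimes e_2$ and $e_1\otimes e_1$ and then transfers them to general $b,c$ via the rotational equivariance of $\hat{M}$; your direct computation makes that step unnecessary. One minor quibble: your expression is a polynomial with terms of degree $2$ and $4$ rather than homogeneous of degree $4$, which is irrelevant for analyticity.
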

\begin{proof}
	By analyticity of $ \varphi $, it suffices to find two different 
	values of $ \varphi $. First assume that $ b $ 
	and $ c $ are orthogonal, and that $ b = e_1 $, $ c = e_2 $. 
	Since $ \eltensor $ is isotropic, we obtain
	\begin{equation*}
		L ( e_{ 1 } \otimes e_2 ) =
		\frac{1}{2 } ( e_{ 1 } \otimes e_2 + e_2 \otimes e_1 ).
	\end{equation*}
	We recall from equation (\ref{eq:fourer_M_on_matrices}) how $ 
	\fourier M $ acts on matrices and deduce that
	\begin{equation*}
		\varphi ( \xi )
		=
		C
		\inner*{ \hat{K } ( \xi ) ( (\eltensor ( e_1 \otimes e_2 ) ) 
			\xi )}{ (\eltensor ( e_1 \otimes e_2 ) ) \xi}
		=
		C \inner*{ \hat{ K } ( \xi ) \begin{pmatrix}
				\xi_2 \\ \xi_1 \\ 0 
			\end{pmatrix}
		}
		{\begin{pmatrix}
				\xi_2 \\ \xi_1 \\ 0 
			\end{pmatrix}
		}
	\end{equation*}
	Since we explicitly know $ \hat{ K } $ from equation 
	(\ref{eq:fourier_K_formula}), it follows that there exists some 
	non-zero constant $ C $ such that
	\begin{equation*}
		\frac{1}{C}
		\varphi ( \xi ) 
		= 
		\left( \xi_1^2 + \xi_2^2 - 4 \frac{1+2a}{2+2a} \xi_1^2 \xi_2^2 
		\right).
	\end{equation*}
	Plugging in $ \xi = e_1 $ and $ \xi = e_3 $ yields different 
	values, proving the claim in the case $ e_1 \otimes e_2 $
	A similar computation also applies to $ e_1 \otimes e_1 $, namely 
	$ L ( e_1 \otimes e_1 ) = e_1 \otimes e_1 + a E_3 $, so that
	\begin{align*}
		\frac{1}{C }
		\varphi ( \xi )
		& =
		( 1 + a )^2 \xi_1^2 + a^2 ( \xi_2^2 + \xi_3^2 )
		-
		\frac{1+2a}{2+2a} \inner*{ \xi }{\begin{pmatrix}
				(1+a ) \xi_1 \\ a \xi_2 \\ a \xi_3 \end{pmatrix} }^2
		\\
		& =
		C_a + \xi_1^2 \left( 1 + 2a - \frac{1 + 2a }{2+2a} 2a \right)
		- 
		\xi_1^4 \frac{1 + 2 a}{ 2 + 2a }.
	\end{align*}
	Since $ (1+2a)/(2+2a) $ is non-zero due to $ a > -1/3 $, this 
	function is not constant on $ \Sph^2 $ as well.
	
	Lastly we have to argue why we may reduce the orthogonal case to $ 
	e_1 \otimes e_2 $ and the parallel case to $ e_1 \otimes e_1 $. 
	This follows from the rotational symmetry of $ \diff^{2} K $, see 
	(\ref{eq:rotational_symmetry_K}). 
	In fact, due to equation (\ref{eq:formula_M}), we have $ M = 
	\eltensor \diff^{ 2 } K \eltensor $, and thus
	for any rotation $ R \in \sporth( 3 ) $ and any $ A \in \R^{ 3 
		\times 3 } $ that
	\begin{equation*}
		M ( R x ) ( R A R^{ \top } )
		=
		R M ( x ) ( A ) R^{ \top },
	\end{equation*}
	and the same identity holds for $ \hat{ M } $. 
	If $ a $ and $ b $ are orthogonal and, without loss of generality, 
	of unit length, we find $ R \in \sporth ( 3 ) $ such that $ b = R 
	e_2 $ and $ c = R e_1 $. 
	It follows that
	\begin{equation*}
		\varphi ( R \xi )
		=
		\inner*{ \hat{ M } ( R \xi ) R e_{ 1 } \otimes e_2 R^{ \top } 
		}{ R e_1 \otimes e_2 R^{ \top } }
		=
		\inner*{ \hat{ M } ( \xi ) e_1 \otimes e_2 }{e_1 \otimes e_2 }.
	\end{equation*}
	This finishes the proof.
\end{proof}
\setlength{\emergencystretch}{2em}
		\printbibliography[
		heading=bibintoc,
		title={References}
		]

@article{blin_energy_of_dislocation_in_a_crystal,
	title={Energy of Dislocation in a Crystal},
	author={Jean Blin},
	journal={Acta Metall.},
	year={1955},
	volume={3:199}
}

@article{burgers_39_proceedings_I,
	AUTHOR = {Burgers, Johannes M.},
	TITLE = {Some considerations on the fields of stress connected with dislocations in a regular crystal lattice I},
	JOURNAL = {Proc. Kon. Ned. Akad. Wetenschap.},
	FJOURNAL = {Proceedings of the Koninklijke Nederlandse Akademie van Wetenschappen},
	YEAR = {1939},
	VOLUME = {42},
	PAGES = {293--325},
}

@article{burgers_39_proceedings_II,
	AUTHOR = {Burgers, Johannes M.},
	TITLE = {Some considerations on the fields of stress connected with dislocations in a regular crystal lattice {II}},
	JOURNAL = {Proc. Kon. Ned. Akad. Wetenschap.},
	FJOURNAL = {Proceedings of the Koninklijke Nederlandse Akademie van Wetenschappen},
	YEAR = {1939},
	VOLUME = {42},
	PAGES = {378--399},
}

@article {cermelli_leoni_05_renormalized_energy_and_forces_on_dislocations,
	AUTHOR = {Cermelli, Paolo and Leoni, Giovanni},
	TITLE = {Renormalized energy and forces on dislocations},
	JOURNAL = {SIAM J. Math. Anal.},
	FJOURNAL = {SIAM Journal on Mathematical Analysis},
	VOLUME = {37},
	YEAR = {2005},
	NUMBER = {4},
	PAGES = {1131--1160},
	ISSN = {0036-1410,1095-7154},
	MRCLASS = {74G65 (35J50 49S05 74E15 74G70)},
	MRNUMBER = {2192291},
	MRREVIEWER = {Jean--Jacques\ Marigo},
	DOI = {10.1137/040621636},
	URL = {https://doi.org/10.1137/040621636},
}

@article{conti_garroni_mueller_23_derivation_of_strain_gradient_plasticity_from_a_generalized_peierls_nabarro_model,
	AUTHOR = {Conti, Sergio and Garroni, Adriana and M\"uller, Stefan},
	TITLE = {Derivation of strain-gradient plasticity from a generalized
	{P}eierls-{N}abarro model},
	JOURNAL = {J. Eur. Math. Soc. (JEMS)},
	FJOURNAL = {Journal of the European Mathematical Society (JEMS)},
	VOLUME = {25},
	YEAR = {2023},
	NUMBER = {7},
	PAGES = {2487--2524},
	ISSN = {1435-9855,1435-9863},
	MRCLASS = {49J45 (49Q20 74E15)},
	MRNUMBER = {4612096},
	DOI = {10.4171/jems/1242},
	URL = {https://doi.org/10.4171/jems/1242},
}

@article{conti_garroni_mueller_singular_kernels_multiscale_decomposition_of_microstructure_and_dislocation_models,
	AUTHOR = {Conti, Sergio and Garroni, Adriana and M\"{u}ller, Stefan},
	TITLE = {Singular kernels, multiscale decomposition of microstructure,
	and dislocation models},
	JOURNAL = {Arch. Ration. Mech. Anal.},
	FJOURNAL = {Archive for Rational Mechanics and Analysis},
	VOLUME = {199},
	YEAR = {2011},
	NUMBER = {3},
	PAGES = {779--819},
	ISSN = {0003-9527},
	MRCLASS = {82B26 (49J45 74N05 74Q05)},
	MRNUMBER = {2771667},
	MRREVIEWER = {D. Polisevski},
	DOI = {10.1007/s00205-010-0333-7},
	URL = {https://doi.org/10.1007/s00205-010-0333-7},
}

@article {conti_garroni_ortiz_the_line_tension_approximation_as_the_diluate_limit_of_linear_elastic_dislocations,
	AUTHOR = {Conti, Sergio and Garroni, Adriana and Ortiz, Michael},
	TITLE = {The line-tension approximation as the dilute limit of
	linear-elastic dislocations},
	JOURNAL = {Arch. Ration. Mech. Anal.},
	FJOURNAL = {Archive for Rational Mechanics and Analysis},
	VOLUME = {218},
	YEAR = {2015},
	NUMBER = {2},
	PAGES = {699--755},
	ISSN = {0003-9527},
	MRCLASS = {74B05 (49J45 74N05)},
	MRNUMBER = {3375538},
	MRREVIEWER = {S. M. Giusti},
	DOI = {10.1007/s00205-015-0869-7},
	URL = {https://doi.org/10.1007/s00205-015-0869-7},
}

@article{cuitino_koslowski_ortiz_a_phase_field_theory_of_dislocation_dynamics_strain_hardening_and_hysteresis_in_ductile_single_crystals,
	author = {Marisol Koslowski and Alberto M. Cuitiño and Michael Ortiz},
	doi = {10.1016/S0022-5096(02)00037-6},
	issn = {0022-5096},
	journal ={J. Mech. Phys. Solids},
	fjournal = {Journal of the Mechanics and Physics of Solids},
	number = {12},
	pages = {2597–2635},
	title = {A phase-field theory of dislocation dynamics, strain hardening and hysteresis in ductile single crystals},
	url = {https://www.sciencedirect.com/science/article/pii/S0022509602000376},
	volume = {50},
	year = {2002}
}

@incollection{dash_57_observation_of_dislocations_in_silicon,
	AUTHOR = {Dash, William C.},
	TITLE = {The Observation of Dislocations in Silicon},
	BOOKTITLE =  {Dislocations And Mechanical Properties Of Crystals},
	PUBLISHER = {John Wiley and Sons, Inc., New York},
	YEAR = {1957}, 
	PAGES = {57--68},
}

@article{ewing_rosenhain_1899,
	AUTHOR = {Ewing, James A. and Rosenhain, Walter},
	JOURNAL = {Phil. Trans. Roy. Soc.},
	VOLUME = {A193},
	YEAR = {1899},
	PAGES = {353}
}

@article{firoozye_93_homogenization_on_lattices,
	author = {Firoozye, Nick},
	year = {1993},
	pages = {},
	title = {Homogenization on lattices: Small parameter limits, H-measures, and discrete Wigner measures}
}

@article {fonseca_ginster_wojtowytsch_2021_on_the_motion_of_curved_dislocations_in_3d,
	AUTHOR = {Fonseca, Irene and Ginster, Janusz and Wojtowytsch, Stephan},
	TITLE = {On the motion of curved dislocations in three dimensions:
	simplified linearized elasticity},
	JOURNAL = {SIAM J. Math. Anal.},
	FJOURNAL = {SIAM Journal on Mathematical Analysis},
	VOLUME = {53},
	YEAR = {2021},
	NUMBER = {2},
	PAGES = {2373--2426},
	ISSN = {0036-1410,1095-7154},
	MRCLASS = {35K93 (35Q74 74N05)},
	MRNUMBER = {4246087},
	DOI = {10.1137/20M1325654},
	URL = {https://doi.org/10.1137/20M1325654},
}

@article {fortuna_garroni_25_homogenization_of_line_tension_energies,
	AUTHOR = {Fortuna, Martino and Garroni, Adriana},
	TITLE = {Homogenization of line tension energies},
	JOURNAL = {Nonlinear Anal.},
	FJOURNAL = {Nonlinear Analysis. Theory, Methods \& Applications. An
	International Multidisciplinary Journal},
	VOLUME = {250},
	YEAR = {2025},
	PAGES = {Paper No. 113656, 16},
	ISSN = {0362-546X,1873-5215},
	MRCLASS = {49J45 (49Q20 74Q05)},
	MRNUMBER = {4802830},
	DOI = {10.1016/j.na.2024.113656},
	URL = {https://doi.org/10.1016/j.na.2024.113656},
}

@article {garroni_leoni_ponsiglione_2010_gradient_theory_for_plasticity_via_homogenization_of_discrete_dislocations,
	AUTHOR = {Garroni, Adriana and Leoni, Giovanni and Ponsiglione,
	Marcello},
	TITLE = {Gradient theory for plasticity via homogenization of discrete
	dislocations},
	JOURNAL = {J. Eur. Math. Soc. (JEMS)},
	FJOURNAL = {Journal of the European Mathematical Society (JEMS)},
	VOLUME = {12},
	YEAR = {2010},
	NUMBER = {5},
	PAGES = {1231--1266},
	ISSN = {1435-9855,1435-9863},
	MRCLASS = {35Q74 (35B27 74C05 74E15 74G70 74Q05)},
	MRNUMBER = {2677615},
	DOI = {10.4171/JEMS/228},
	URL = {https://doi.org/10.4171/JEMS/228},
}

@article {garroni_mueller_gamma_limit_of_a_phase_field_model_of_dislocations,
	AUTHOR = {Garroni, Adriana and M\"{u}ller, Stefan},
	TITLE = {{$\Gamma$}-limit of a phase-field model of dislocations},
	JOURNAL = {SIAM J. Math. Anal.},
	FJOURNAL = {SIAM Journal on Mathematical Analysis},
	VOLUME = {36},
	YEAR = {2005},
	NUMBER = {6},
	PAGES = {1943--1964},
	ISSN = {0036-1410},
	MRCLASS = {49J45 (74N05)},
	MRNUMBER = {2178227},
	MRREVIEWER = {G. Alberti},
	DOI = {10.1137/S003614100343768X},
	URL = {https://doi.org/10.1137/S003614100343768X},
}

@article {garroni_mueller_a_variational_model_for_dislocations_in_the_line_tension_limit,
	AUTHOR = {Garroni, Adriana and M\"{u}ller, Stefan},
	TITLE = {A variational model for dislocations in the line tension
	limit},
	JOURNAL = {Arch. Ration. Mech. Anal.},
	FJOURNAL = {Archive for Rational Mechanics and Analysis},
	VOLUME = {181},
	YEAR = {2006},
	NUMBER = {3},
	PAGES = {535--578},
	ISSN = {0003-9527},
	MRCLASS = {49J45 (74A50 74G10 74G65 74N05 74Q05)},
	MRNUMBER = {2231783},
	MRREVIEWER = {G. Alberti},
	DOI = {10.1007/s00205-006-0432-7},
	URL = {https://doi.org/10.1007/s00205-006-0432-7},
}

@article {gerard_markowich_mauser_97_hom_limits_and_wigner_transforms,
	AUTHOR = {G\'erard, Patrick and Markowich, Peter A. and Mauser, Norbert
	J. and Poupaud, Fr\'ed\'eric},
	TITLE = {Homogenization limits and {W}igner transforms},
	JOURNAL = {Comm. Pure Appl. Math.},
	FJOURNAL = {Communications on Pure and Applied Mathematics},
	VOLUME = {50},
	YEAR = {1997},
	NUMBER = {4},
	PAGES = {323--379},
	ISSN = {0010-3640,1097-0312},
	MRCLASS = {35B27 (35L99 35Q40 35S05)},
	MRNUMBER = {1438151},
	MRREVIEWER = {Albert\ J.\ Milani},
	DOI = {10.1002/(sici)1097-0312(199704)50:4<323::aid-cpa4>3.0.co;2-c},
	URL =
	{https://doi.org/10.1002/(sici)1097-0312(199704)50:4<323::aid-cpa4>3.0.co;2-c},
}

@article{gilman_johnston_57_origin_and_growth_of_glide_bands_in_lithium_fluoride_crystals,
	AUTHOR = {Gilman, John J. and Johnston, William G.},
	TITLE = {The Origin and Growth of Glide Bands in Lithium Fluoride Crystals},
	BOOKTITLE =  {Dislocations And Mechanical Properties Of Crystals},
	PUBLISHER = {John Wiley and Sons, Inc., New York},
	YEAR = {1957}, 
	PAGES = {116--163},
}

@article {ginster_19_plasticity_as_Gamma_without_separation,
	AUTHOR = {Ginster, Janusz},
	TITLE = {Plasticity as the {$\Gamma$}-limit of a two-dimensional
	dislocation energy: the critical regime without the assumption
	of well-separateness},
	JOURNAL = {Arch. Ration. Mech. Anal.},
	FJOURNAL = {Archive for Rational Mechanics and Analysis},
	VOLUME = {233},
	YEAR = {2019},
	NUMBER = {3},
	PAGES = {1253--1288},
	ISSN = {0003-9527,1432-0673},
	MRCLASS = {74C05 (35B27 49J45)},
	MRNUMBER = {3961298},
	MRREVIEWER = {Giovanni\ Garcea},
	DOI = {10.1007/s00205-019-01378-5},
	URL = {https://doi.org/10.1007/s00205-019-01378-5},
}

@article {ginster_19_strain_gradient_plast_mixed_growth,
	AUTHOR = {Ginster, Janusz},
	TITLE = {Strain-gradient plasticity as the {$\Gamma$}-limit of a
	nonlinear dislocation energy with mixed growth},
	JOURNAL = {SIAM J. Math. Anal.},
	FJOURNAL = {SIAM Journal on Mathematical Analysis},
	VOLUME = {51},
	YEAR = {2019},
	NUMBER = {4},
	PAGES = {3424--3464},
	ISSN = {0036-1410,1095-7154},
	MRCLASS = {49J45 (35B27 35Q74 58K45 74C05)},
	MRNUMBER = {3995039},
	MRREVIEWER = {Giovanni\ Scilla},
	DOI = {10.1137/18M1176579},
	URL = {https://doi.org/10.1137/18M1176579},
}

@article {james_mueller_internal_variables_and_fine_scale_oscillations_in_micromagnetics,
	AUTHOR = {James, Richard D. and M\"{u}ller, Stefan},
	TITLE = {Internal variables and fine-scale oscillations in
	micromagnetics},
	JOURNAL = {Contin. Mech. Thermodyn.},
	FJOURNAL = {Continuum Mechanics and Thermodynamics},
	VOLUME = {6},
	YEAR = {1994},
	NUMBER = {4},
	PAGES = {291--336},
	ISSN = {0935-1175,1432-0959},
	MRCLASS = {82D40 (49Q20 73B99 78A30)},
	MRNUMBER = {1308877},
	MRREVIEWER = {John\ M.\ Ball},
	DOI = {10.1007/BF01140633},
	URL = {https://doi.org/10.1007/BF01140633},
}

@article{koslowski_ortiz_2004_multi-phase_field_model_of_planar_dislocation_networks,
	doi = {10.1088/0965-0393/12/6/003},
	url = {https://dx.doi.org/10.1088/0965-0393/12/6/003},
	year = {2004},
	publisher = {},
	volume = {12},
	number = {6},
	pages = {1087},
	author = {Marisol Koslowski and Michael Ortiz},
	title = {A multi-phase field model of planar dislocation networks},
	journal = {Model. Simul. Mater. Sc.},
	fjournal = {Modelling and Simulation in Materials Science and Engineering}
}

@article {lions_paul_93_on_wigner_measures,
	AUTHOR = {Lions, Pierre-Louis and Paul, Thierry},
	TITLE = {Sur les mesures de {W}igner},
	JOURNAL = {Rev. Mat. Iberoamericana},
	FJOURNAL = {Revista Matem\'atica Iberoamericana},
	VOLUME = {9},
	YEAR = {1993},
	NUMBER = {3},
	PAGES = {553--618},
	ISSN = {0213-2230},
	MRCLASS = {58G15 (35Q40 47G10 81Q20)},
	MRNUMBER = {1251718},
	MRREVIEWER = {Peter\ N.\ Zhevandrov},
	DOI = {10.4171/RMI/143},
	URL = {https://doi.org/10.4171/RMI/143},
}

@article{muegge_mineralogie,
	AUTHOR = {Mügge, Otto},
	Journal = {Neues Jahrb. Min.},
	Volume = {13},
	Year = {1883},
}

@article {mueller_scardia_zeppieri_14_geometric_rigidity_straing_gradient_pl,
	AUTHOR = {M\"uller, Stefan and Scardia, Lucia and Zeppieri, Caterina
	Ida},
	TITLE = {Geometric rigidity for incompatible fields, and an application
	to strain-gradient plasticity},
	JOURNAL = {Indiana Univ. Math. J.},
	FJOURNAL = {Indiana University Mathematics Journal},
	VOLUME = {63},
	YEAR = {2014},
	NUMBER = {5},
	PAGES = {1365--1396},
	ISSN = {0022-2518,1943-5258},
	MRCLASS = {74C05 (49J45 74B20 74Q05)},
	MRNUMBER = {3283554},
	MRREVIEWER = {Heng\ Xiao},
	DOI = {10.1512/iumj.2014.63.5330},
	URL = {https://doi.org/10.1512/iumj.2014.63.5330},
}

@article{nabarro_47_dislocations,
	title = {Dislocations in a simple cubic lattice},
	volume = {59},
	url = {https://doi.org/10.1088/0959-5309/59/2/309},
	doi = {10.1088/0959-5309/59/2/309},
	pages = {256},
	number = {2},
	journal = {Proc. Phys. Soc.},
	fjournal = {Proceedings of the Physical Society},
	author = {Nabarro, Frank R. N.},
	date = {1947-03},
}

@article{orowan_34_kristallplastizitaetI,
	title = {Zur Kristallplastizität. I},
	volume = {89},
	issn = {0044-3328},
	url = {https://doi.org/10.1007/BF01341478},
	doi = {10.1007/BF01341478},
	pages = {605--613},
	number = {9},
	journal = {Z. Phys. },
	fjournal = {Zeitschrift für Physik},
	author = {Orowan, Egon},
	date = {1934-09-01},
}

@article{orowan_34_kristallplastizitaetIII,
	title = {Zur Kristallplastizität. {III}},
	volume = {89},
	issn = {0044-3328},
	url = {https://doi.org/10.1007/BF01341480},
	doi = {10.1007/BF01341480},
	pages = {634--659},
	number = {9},
	journal = {Z. Phys. },
	fjournal = {Zeitschrift für Physik},
	author = {Orowan, Egon},
	date = {1934-09-01},
}

@article{ortiz_plastic_yielding_as_phase_transition,
	author = {Ortiz, Michael},
	title = {Plastic Yielding as a Phase Transition},
	journal = {J. Appl. Mech.},
	fjournal = {Journal of Applied Mechanics},
	volume = {66},
	number = {2},
	pages = {289-298},
	year = {1999},
	issn = {0021-8936},
	doi = {10.1115/1.2791048},
	url = {https://doi.org/10.1115/1.2791048},
	eprint = {https://asmedigitalcollection.asme.org/appliedmechanics/article-pdf/66/2/289/6075698/289\_1.pdf},
}

@article{peierls_40_size_of_a_dislocation,
	title = {The size of a dislocation},
	volume = {52},
	url = {https://doi.org/10.1088/0959-5309/52/1/305},
	doi = {10.1088/0959-5309/52/1/305},
	pages = {34},
	number = {1},
	journal = {Proc. Phys. Soc.},
	fjournal = {Proceedings of the Physical Society},
	author = {Peierls, Rudolf E.},
	date = {1940-01},
}

@article{polanyi_34_gitterstoerung,
	title = {Über eine Art Gitterstörung, die einen Kristall plastisch machen könnte},
	volume = {89},
	issn = {0044-3328},
	url = {https://doi.org/10.1007/BF01341481},
	doi = {10.1007/BF01341481},
	pages = {660--664},
	number = {9},
	journal = {Z. Phys. },
	fjournal = {Zeitschrift für Physik},
	author = {Polanyi, Michael},
	date = {1934-09-01},
}

@article{ponsiglione_2007_elastic_energy_screw_dislocations_from_discrete_to_continuous,
	AUTHOR = {Ponsiglione, Marcello},
	TITLE = {Elastic energy stored in a crystal induced by screw
	dislocations: from discrete to continuous},
	JOURNAL = {SIAM J. Math. Anal.},
	FJOURNAL = {SIAM Journal on Mathematical Analysis},
	VOLUME = {39},
	YEAR = {2007},
	NUMBER = {2},
	PAGES = {449--469},
	ISSN = {0036-1410,1095-7154},
	MRCLASS = {74E15 (49J10 49J45 74C15 74G65 74N05 74N15)},
	MRNUMBER = {2338415},
	DOI = {10.1137/060657054},
	URL = {https://doi.org/10.1137/060657054},
}

@article {scardia_zeppieri_2012_line_tension_model_for_plasticity_as_gamma_limit_of_nonlinear_dislocation_energy,
	AUTHOR = {Scardia, Lucia and Zeppieri, Caterina Ida},
	TITLE = {Line-tension model for plasticity as the {$\Gamma$}-limit of a
	nonlinear dislocation energy},
	JOURNAL = {SIAM J. Math. Anal.},
	FJOURNAL = {SIAM Journal on Mathematical Analysis},
	VOLUME = {44},
	YEAR = {2012},
	NUMBER = {4},
	PAGES = {2372--2400},
	ISSN = {0036-1410,1095-7154},
	MRCLASS = {74C05 (49J45)},
	MRNUMBER = {3023380},
	MRREVIEWER = {Jos\'e\ Carlos Pedro Cardoso Matias},
	DOI = {10.1137/110824851},
	URL = {https://doi.org/10.1137/110824851},
}

@article {tartar_H_measures_a_new_approach,
	AUTHOR = {Tartar, Luc},
	TITLE = {{$H$}-measures, a new approach for studying homogenisation,
	oscillations and concentration effects in partial differential
	equations},
	JOURNAL = {Proc. Roy. Soc. Edinburgh Sect. A},
	FJOURNAL = {Proceedings of the Royal Society of Edinburgh. Section A.
	Mathematics},
	VOLUME = {115},
	YEAR = {1990},
	NUMBER = {3-4},
	PAGES = {193--230},
	ISSN = {0308-2105,1473-7124},
	MRCLASS = {35B27 (35B05)},
	MRNUMBER = {1069518},
	MRREVIEWER = {Pierre-Louis\ Lions},
	DOI = {10.1017/S0308210500020606},
	URL = {https://doi.org/10.1017/S0308210500020606},
}

@article{taylor_34_mechanism_of_plastic_deformation_of_crystals,
	author = {Taylor, Geoffrey I.},
	title = {The mechanism of plastic deformation of crystals. Part I.—Theoretical},
	journal = {Proc. Roy. Soc.},
	volume = {A145},
	number = {855},
	pages = {362-387},
	year = {1934},
	doi = {10.1098/rspa.1934.0106},
	URL = {https://royalsocietypublishing.org/doi/abs/10.1098/rspa.1934.0106},
	eprint = {https://royalsocietypublishing.org/doi/pdf/10.1098/rspa.1934.0106},
}

@article {young_42_generalized_surfaces_in_calcvar,
	AUTHOR = {Young, Laurence C.},
	TITLE = {Generalized surfaces in the calculus of variations},
	JOURNAL = {Ann. of Math. (2)},
	FJOURNAL = {Annals of Mathematics. Second Series},
	VOLUME = {43},
	YEAR = {1942},
	PAGES = {84--103},
	ISSN = {0003-486X},
	MRCLASS = {49.0X},
	MRNUMBER = {6023},
	MRREVIEWER = {E.\ J.\ McShane},
	DOI = {10.2307/1968882},
	URL = {https://doi.org/10.2307/1968882},
}

@article {volterra_07_equilibrium_of_elastic_bodies_with_multiple_components,
	AUTHOR = {Volterra, Vito},
	TITLE = {Sur l'\'equilibre des corps \'elastiques multiplement
	connexes},
	JOURNAL = {Ann. Sci. \'Ecole Norm. Sup. (3)},
	FJOURNAL = {Annales Scientifiques de l'\'Ecole Normale Sup\'erieure.
	Troisi\`eme S\'erie},
	VOLUME = {24},
	YEAR = {1907},
	PAGES = {401--517},
	ISSN = {0012-9593},
	MRCLASS = {99-04},
	MRNUMBER = {1509085},
	URL = {http://www.numdam.org/item?id=ASENS_1907_3_24__401_0},
}

@article{wigner_1932_quantum_correction,
	author  = {Wigner, Eugene P.},
	title   = {On the quantum correction for thermodynamic equilibrium},
	journal = {Phys. Rev.},
	volume  = {40},
	number  = {5},
	pages   = {749--759},
	year    = {1932},
	doi     = {10.1103/PhysRev.40.749}
}

@book {abramowitz_stegun_64_handbook_of_math_functions,
	AUTHOR = {Abramowitz, Milton and Stegun, Irene A.},
	TITLE = {Handbook of mathematical functions with formulas, graphs, and
	mathematical tables},
	SERIES = {National Bureau of Standards Applied Mathematics Series},
	VOLUME = {No. 55},
	PUBLISHER = {U. S. Government Printing Office, Washington, DC},
	YEAR = {1964},
	PAGES = {xiv+1046},
	MRCLASS = {33.00 (65.05)},
	MRNUMBER = {167642},
	MRREVIEWER = {D.\ H.\ Lehmer},
}

@book {ambrosio_fusco_pallara_functions_of_bv_and_free_discontinuity_problems,
	AUTHOR = {Ambrosio, Luigi and Fusco, Nicola and Pallara, Diego},
	TITLE = {Functions of bounded variation and free discontinuity
	problems},
	SERIES = {Oxford Mathematical Monographs},
	PUBLISHER = {The Clarendon Press, Oxford University Press, New York},
	YEAR = {2000},
	PAGES = {xviii+434},
	ISBN = {0-19-850245-1},
	MRCLASS = {49-02 (49J45 49K10 49Qxx)},
	MRNUMBER = {1857292},
	MRREVIEWER = {J. E. Brothers},
}

@book{anderson_hirth_lothe_theory_of_dislocations,
	title={Theory of Dislocations},
	author={Anderson, Peter M. and Hirth, John P. and Lothe, Jens},
	isbn={9780521864367},
	lccn={2016024204},
	url={https://books.google.de/books?id=LK7DDQAAQBAJ},
	year={2017},
	publisher={Cambridge University Press}
}

@book {bochner_55_harmonic_analysis_and_probability,
	AUTHOR = {Bochner, Salomon},
	TITLE = {Harmonic analysis and the theory of probability},
	PUBLISHER = {University of California Press, Berkeley-Los Angeles, Calif.},
	YEAR = {1955},
	PAGES = {viii+176},
	MRCLASS = {60.0X},
	MRNUMBER = {72370},
	MRREVIEWER = {J.\ L.\ Doob},
}

@book {calderon_60_integrales_singulares,
	AUTHOR = {Calder\'on, Alberto-P.},
	TITLE = {Integrales singulares y sus aplicaciones a ecuaciones
	diferenciales hiperbolicas},
	SERIES = {Cursos y Seminarios de Matem\'atica},
	VOLUME = {Fasc. 3},
	PUBLISHER = {Universidad de Buenos Aires, Buenos Aires},
	YEAR = {1960},
	PAGES = {121},
	MRCLASS = {35.53 (42.00)},
	MRNUMBER = {123834},
}

@book {rindler_18_calcvar,
	AUTHOR = {Rindler, Filip},
	TITLE = {Calculus of variations},
	SERIES = {Universitext},
	PUBLISHER = {Springer, Cham},
	YEAR = {2018},
	PAGES = {xii+444},
	ISBN = {978-3-319-77636-1},
	MRCLASS = {49-01},
	MRNUMBER = {3821514},
	DOI = {10.1007/978-3-319-77637-8},
	URL = {https://doi.org/10.1007/978-3-319-77637-8},
}

@book {stein_singular_integrals_and_differentiability_properties_of_functions,
	AUTHOR = {Stein, Elias M.},
	TITLE = {Singular integrals and differentiability properties of
	functions},
	SERIES = {Princeton Mathematical Series},
	VOLUME = {No. 30},
	PUBLISHER = {Princeton University Press, Princeton, NJ},
	YEAR = {1970},
	PAGES = {xiv+290},
	MRCLASS = {46.38 (26.00)},
	MRNUMBER = {290095},
	MRREVIEWER = {R.\ E.\ Edwards},
}

@book{stein_weiss_introduction_to_fourier_analysis_on_euclidean_spaces,
	AUTHOR = {Stein, Elias M. and Weiss, Guido},
	TITLE = {Introduction to {F}ourier analysis on {E}uclidean spaces},
	SERIES = {Princeton Mathematical Series, No. 32},
	PUBLISHER = {Princeton University Press, Princeton, N.J.},
	YEAR = {1971},
	PAGES = {x+297},
	MRCLASS = {42A92 (31B99 32A99 46F99 47G05)},
	MRNUMBER = {0304972},
	MRREVIEWER = {Edwin Hewitt},
}

@article {wainger_65_special_trigonometric_series_in_kd,
	AUTHOR = {Wainger, Stephen},
	TITLE = {Special trigonometric series in {$k$}-dimensions},
	JOURNAL = {Mem. Amer. Math. Soc.},
	FJOURNAL = {Memoirs of the American Mathematical Society},
	VOLUME = {59},
	YEAR = {1965},
	PAGES = {102},
	ISSN = {0065-9266,1947-6221},
	MRCLASS = {42.40},
	MRNUMBER = {182838},
	MRREVIEWER = {L.\ Berg},
}
	\end{document}